\numberwithin{equation}{section}
\newtheorem{dummy}{dummy}[section]
\newtheorem{lemma}[dummy]{Lemma}
\newtheorem{theorem}[dummy]{Theorem}
\newtheorem{corollary}[dummy]{Corollary}
\newtheorem{proposition}[dummy]{Proposition}
\theoremstyle{definition}
\newtheorem{definition}[dummy]{Definition}
\newtheorem{example}[dummy]{Example}
\newtheorem{remark}[dummy]{Remark}
 \newcommand{\lag}{L} % or \lag
\newcommand{\R}{\mathbb {R}}
\newcommand{\C}{\mathbb {C}}
\newcommand{\Z}{\mathbb {Z}}
\newcommand{\F}{\mathbb {F}}
\newcommand{\M}{{\mathcal{M}}}
 \newcommand{\calF}{\mathcal {F}}
\newcommand{\e}{\epsilon}
\newcommand{\dd}{\partial}
\newcommand{\alg}{\mathcal{A}}
\newcommand{\aug}{\mathcal{A}ug}
\newcommand{\wt}{\widetilde}
\newcommand{\B}{\mathbb{B}}
\newcommand{\D}{\mathbb{D}}
\newcommand{\Sim}{\sim_{Aug_{+}}}
\newcommand{\x}{\times}
\newcommand{\std}{{\operatorname{std}}}
\newcommand{\Hom}{{\operatorname{Hom}}}
\newcommand{\Ho}{{\operatorname{H}}}
\newcommand{\ind}{{\operatorname{ind}}}
\newcommand{\leg}{\Lambda}
\def\skel{\operatorname{Skel}}
\definecolor{dgr}{rgb}{0,.8, 0}
\definecolor{darkmagenta}{rgb}{.5,0,.5}
\begin{document}

\title[Obstructions to reversing Lagrangian surgery]{Obstructions to reversing Lagrangian surgery \\ in Lagrangian Fillings}

\author[Capovilla-Searle]{Orsola Capovilla-Searle}
\author[Legout]{No\'emie Legout}
\author[Limouzineau]{Ma\"ylis Limouzineau}
\author[Murphy]{Emmy Murphy}
\author[Pan]{Yu Pan}
\author[Traynor]{Lisa Traynor}

\address{Department of Mathematics\\ UC Davis \\ Davis\\ California\\U.S.A.}
\email{ocapovillasearle@ucdavis.edu}

\address{Department of Mathematics\\ Uppsala University\\ Uppsala\\ Sweden.}
\email{noemie.legout@math.uu.se}

\address{}
\email{lim.maylis@gmail.com}

\address{Department of Mathematics\\Princeton University\\Princeton\\New Jersey}
\email{em7861@princeton.edu}

\address{Center of applied Mathematics\\ Tianjin University\\ Tianjin\\P.R.China}
\email{ypan@tju.edu.cn}

\address{Department of Mathematics\\Bryn Mawr College\\Bryn Mawr \\ Pennsylvania}
\email{ltraynor@brynmawr.edu}

\subjclass[2020]{53D12, 53D42}

\begin{abstract} Given an immersed, Maslov-$0$, exact Lagrangian filling of a Legendrian knot, if the filling has a vanishing index and action double point, then through Lagrangian surgery it is possible to obtain a new immersed, Maslov-$0$, exact Lagrangian filling with one less double point and with genus increased by one. We show that it is {\it not} always possible to reverse the Lagrangian surgery: not every immersed, Maslov-$0$, exact Lagrangian filling with genus $g \geq 1$ and $p$ double points can be obtained 
from such a Lagrangian surgery
on a filling of genus $g-1$ with $p+1$ double points. 
To show this, we establish the connection between the existence of an immersed, Maslov-$0$, exact Lagrangian filling of a Legendrian $\Lambda$ that has $p$ double points with action $0$ and the existence of an embedded, Maslov-$0$, exact Lagrangian cobordism from $p$ copies of
a Hopf link to $\Lambda$. We then prove that a count of augmentations provides an 
obstruction to the existence of embedded, Maslov-$0$, exact Lagrangian cobordisms between Legendrian links.\end{abstract}

\maketitle

\section{Introduction}
An important problem in smooth topology is to understand the  $4$-ball genus and the $4$-ball crossing number of a smooth knot. Through a variety of techniques, including Heegaard Floer homology, gauge theory, and instanton homology~\cite{OS, KroMro19}, the $4$-ball genus and crossing numbers have been calculated for all prime knots with crossing number $10$ or less. Less is known about these invariants for connect sums; see, for example,~\cite{LivVanC}. In general, the $4$-ball genus and crossing numbers give information about what combinations of genus and double points can be realized by surfaces in the $4$-ball with a fixed knot as their boundary: a transverse double point can be resolved at the cost of increasing the genus of the surfaces, and sometimes a disk that intersects the surface transversely along its boundary allows one to reduce the genus at the cost of increasing the number of double points.

One can study analogous problems when the knot and surface satisfy additional geometric conditions imposed by symplectic geometry. The development of symplectic field theory~\cite{EGH} motivated the study of  {\it Lagrangian} cobordisms between {\it Legendrian} submanifolds; these are embedded Lagrangian submanifolds in the symplectization of a contact manifold that have  cylindrical ends over the Legendrians, see Figure~\ref{fig:lagcob} for a schematic picture. 
Lagrangian  fillings occur when the bottom Legendrian is the empty set.

For a fixed Legendrian knot, obstructions to the existence of embedded, exact Lagrangian fillings arise from classical and non-classical invariants of the Legendrian; see, for example,~\cite{Cha, EkhSFT, DR, SabTra}. Legendrians that admit embedded, Lagrangian fillings are relatively rare and Lagrangian fillings that do exist are known to be more topologically rigid than their smooth counterparts: an embedded, oriented, exact Lagrangian filling will always realize the smooth $4$-ball genus of the knot~\cite{Cha}.

 \emph{Immersed} Lagrangian fillings are more plentiful: any Legendrian with rotation number $0$ will admit an immersed Lagrangian filling, see, for example,~\cite[Remark 4.2]{Cha}.  
Currently, there are fewer known obstructions for {immersed} Lagrangian fillings.
Classical invariants, linearized contact homology, and generating family homology can give some insight into the possible combinations of genus and double points that can be realized in an immersed, Maslov-$0$, exact Lagrangian filling of a Legendrian knot,~\cite{Cha, Samthesis, sam_lisa, PR}. Sometimes the existence of one such immersed filling will lead to the existence of another:  if $\leg$ admits
 an immersed,  Maslov-$0$, exact Lagrangian filling of genus $g$ with $p \geq 1$ double points such that one of the double points  has ``index and action equal to $0$'' (see Section \ref{sec:bg} for definitions), then 
 through Lagrangian surgery it is possible to construct a new immersed,  Maslov-$0$, exact Lagrangian filling of genus $g+1$ with $p-1$ double points. In this paper we address the following question: is it always possible to ``reverse'' the surgery process?
 Namely, can every immersed, Maslov-$0$, exact Lagrangian filling with genus $g\geq1$ and $p$ double points be obtained by Lagrangian surgery on an action-$0$ and index-$0$ double point of  an immersed, Maslov-$0$, exact Lagrangian filling of genus $g-1$ with $p+1$ double points? See Figure~\ref{fig:line} for a schematic of this question.

 \begin{figure}[!ht]
	\labellist
	\pinlabel ${g}$ at 100 -25
	\pinlabel ${g}$ at 445 -25
	\pinlabel ${p}$ at -25 100
	\pinlabel ${p}$ at 318 100
	\pinlabel \LARGE${?}$ at 470 165
	\footnotesize
	\pinlabel $0$ at 13 -5
	\pinlabel $1$ at 45 -5
	\pinlabel $2$ at 73 -5
	\pinlabel $3$ at 102 -5
	\pinlabel $4$ at 132  -5
	\pinlabel $5$ at 164 -5
	\pinlabel $6$ at 194 -5
	\pinlabel $0$ at 360 -5
	\pinlabel $1$ at 388 -5
	\pinlabel $2$ at 418 -5
	\pinlabel $3$ at 446 -5
	\pinlabel $4$ at 475.5 -5
	\pinlabel $5$ at 508 -5
	\pinlabel $6$ at 538 -5
	\pinlabel $0$ at 0 15
	\pinlabel $1$ at 0 43
	\pinlabel $2$ at 0 75
	\pinlabel $3$ at 0 104
	\pinlabel $4$ at 0 134
	\pinlabel $5$ at 0 170
	\pinlabel $6$ at 0 200
	\pinlabel $0$ at 345 15
	\pinlabel $1$ at 345 43
	\pinlabel $2$ at 345 75
	\pinlabel $3$ at 345 104
	\pinlabel $4$ at 345 134
	\pinlabel $5$ at 345 170
	\pinlabel $6$ at 345 200
	\endlabellist
	\normalsize
	\includegraphics[width=4in]{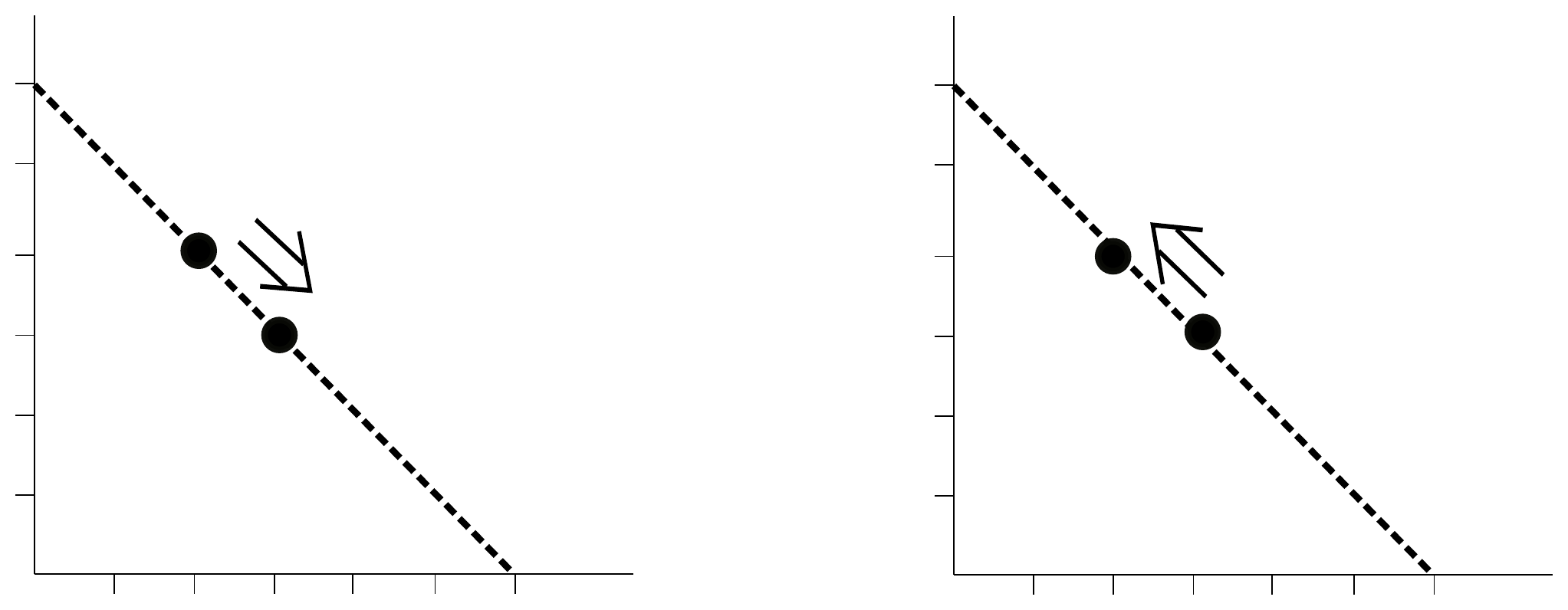}
	\vspace{0.3in}
	
	\caption{Asking if a filling arises from Lagrangian surgery is asking if it is possible to 
	decrease $g$ at the expense of increasing $p$.}\label{fig:line}
\end{figure}
 
We answer this question by first translating the existence of an immersed, Maslov-$0$, exact Lagrangian filling with  action-$0$ double points to the existence of an embedded, Maslov-$0$, exact Lagrangian cobordism from  a disjoint union of Hopf links to $\leg$.   We then construct new obstructions to the existence of {\it embedded}, 
Maslov-$0$, exact Lagrangian cobordisms between Legendrian links in $\R^3_{\std}$ through the theory of  augmentations.
  Finally, we apply our obstruction techniques to find families of Legendrian knots admitting immersed, Maslov-$0$, exact Lagrangian fillings that do not arise from Lagrangian surgery as defined in Definition~\ref{defn:not-from-surgery}.

\subsection{Immersed to embedded Lagrangian cobordisms}
In \cite[Theorem 1.3]{Cha15} Chantraine showed that the existence of an immersed, exact Lagrangian filling of $\leg$ with a single action-$0$ double point  implies the existence of an embedded, exact  Lagrangian cobordism from a Hopf link  to $\leg$.   We give an extension of this result to more general cobordisms, more double points, and higher dimensions; Definition~\ref{defn:Hopf}  defines $\leg_{\Ho}^{k}$, the Hopf link with Maslov potential induced by the integer $k$.

\begin{theorem}\label{thm:maingen1} Suppose $\leg_\pm$ are Legendrian links in $\R^{2n-1}_{std}$, $n \geq 2$.
 If there exists an immersed, Maslov-$0$, exact Lagrangian cobordism $L^{\times}$ from $\leg_{-}$ to $\leg_{+}$ with genus $g$ and $p$ double points, $m$ of which, $x_{1}, \dots, x_{m}$, have action $0$,  then there exists an immersed,  Maslov-$0$,
exact Lagrangian cobordism $L$  of genus $g$ with $(p-m)$ double points  
from  $\bigsqcup_{k=1}^{m}\leg_{\Ho}^{ i_k} \cup \leg_{-}$ to $\leg_{+}$,
where the Maslov potential on the Hopf links are induced by the indices $i_{k}$ of $x_{i_{k}}$. 
  \end{theorem}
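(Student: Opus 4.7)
The strategy is to extend the single-double-point surgery of \cite[Theorem 1.3]{Cha15} to multiple action-$0$ double points simultaneously and to the cobordism setting. Since the action-$0$ double points $x_1, \ldots, x_m$ are isolated, the surgeries may be performed independently inside pairwise disjoint Darboux balls $B_1, \ldots, B_m$ centered at them, leaving the remaining $p-m$ double points untouched. The first key step is a standard-neighborhood theorem: near each $x_k$ the immersed Lagrangian $L^\times$ consists of two transverse Lagrangian sheets, and a Weinstein-type argument applied to the data of an immersed exact Lagrangian with primitive $f$ and Maslov potential $\mu$ identifies $L^\times \cap B_k$ with the Lagrangian cone over the standard Legendrian Hopf link in $\partial B_k$. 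The action-$0$ hypothesis $f(p_1)=f(p_2)$ at the two preimages of $x_k$ is exactly what permits the identification to be exact, and the grading difference $\mu(p_2)-\mu(p_1)=i_k$ equips the Hopf link with the Maslov potential making it $\leg_{\Ho}^{i_k}$.

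Inside each $B_k$ I then perform the Chantraine local surgery: the cone on $\leg_{\Ho}^{i_k}$ is smoothly resolved by truncating near the singular point and replacing the truncated portion by two disjoint Lagrangian cylinders that match the cone along $\partial B_k$ and present a smaller Hopf link at the inner boundary. This inner Hopf link is extended to a cylindrical Lagrangian end as $t\to -\infty$; because $L^\times$ is compact modulo its existing ends over $\leg_{\pm}$, this tail can be embedded in a region of $\R^{2n-1}_{std}$ disjoint from the rest of $L^\times$, so no new double points are introduced. Exactness and the Maslov-$0$ condition are preserved because the entire modification takes place inside the exact, graded local model.

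Finally, I do the topological bookkeeping. Each surgery removes one double point from $L^\times$ and attaches a cylindrical end over $\leg_{\Ho}^{i_k}$ via a local modification of the domain manifold that replaces two disks at the preimages of $x_k$ by two annuli; an Euler-characteristic computation shows that the genus is unchanged. Iterating over $k=1,\ldots,m$ yields the desired embedded, Maslov-$0$, exact Lagrangian cobordism $L$ of genus $g$ with $p-m$ double points, with cylindrical ends $\bigsqcup_{k=1}^{m} \leg_{\Ho}^{i_k} \cup \leg_{-}$ at the bottom and $\leg_{+}$ at the top. The main technical obstacle is the parameterized standard-neighborhood theorem carrying both the grading and primitive data, together with the global placement of the new cylindrical tails so as to avoid intersecting the rest of $L^\times$; the former is handled by standard Weinstein theory, and the latter by the ample room available in $\R^{2n-1}_{std}$.
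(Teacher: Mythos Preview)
Your overall strategy—localize at each action-$0$ double point, identify with the standard cone on a Hopf link, and excise—is reasonable and mirrors Chantraine's one-double-point argument. However, there is a genuine gap at the step where you ``extend the inner Hopf link to a cylindrical Lagrangian end as $t\to -\infty$'' and justify this by saying there is ``ample room available in $\R^{2n-1}_{std}$.'' The Darboux balls $B_k$ sit at finite height in the symplectization, somewhere in $[-N,N]\times\R^{2n-1}$; to produce a genuine cobordism you must route a Lagrangian cylinder from the Hopf link in $\partial B_k$ down through the entire region $(-\infty,-N]\times\R^{2n-1}$, which is already occupied by the cylinder over $\leg_-$ (and by the tails you are simultaneously adding for the other $B_j$). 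Avoiding intersections, keeping the resulting surface exact, and making the new ends literally cylindrical over fixed Legendrians all require real work—``ample room'' is not an argument. You also do not address Definition~\ref{defn:cobord}(4): the primitive must take the \emph{same} constant on every component of the new negative end $\bigsqcup_k\leg_{\Ho}^{i_k}\cup\leg_-$, and your local surgeries give no control over the relative values of the primitive across the different pieces.

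The paper's proof confronts exactly this global placement problem and solves it by a different mechanism: it transports everything to $(\R^{2n},\omega_{std})$ via the contactomorphism $\R^{2n-1}\cong S^{2n-1}\setminus\{pt\}$, so that the negative end of the cobordism becomes a neighborhood of the origin. It then builds a ``multi-dumbbell'' Weinstein domain inside that neighborhood, Hamiltonian-isotopes the action-$0$ double points into the outer $0$-handles, and modifies the Liouville vector field so that its flow carries \emph{all} of the new Hopf links together with $\leg_-$ onto a single small sphere $\partial\B_0(\epsilon)$. Sard's theorem then supplies a ray missing the new Lagrangian, allowing a return to the symplectization with a genuinely cylindrical negative end, and a final Hamiltonian isotopy equalizes the primitive across components. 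This Liouville/Weinstein machinery is precisely what replaces your unjustified ``ample room'' step; without something of comparable strength, your argument does not close.
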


As a corollary, we see that if each of the $p$ double points of $L^{\times}$ has action $0$, then we can conclude the existence 
of an embedded, Maslov-$0$, exact Lagrangian cobordism $L$ of genus $g$
from $\sqcup_{k=1}^{p} \leg_{\Ho}^{ i_k} \cup \leg_{-}$ to $\leg_{+}$.

\begin{remark} \label{rem:contractible} 
The hypothesis that all the double points of the immersed exact Lagrangian cobordism 
have action $0$ is not generic.  Indeed, it corresponds to the assumption that all Reeb chords in the Legendrian lift $\widetilde \lag$
 of the Lagrangian cobordism have length $0$. One can instead generalize to consider 
a {\it contractible double point}, which is  
 a double point $X$   whose
corresponding Reeb chord $c_{X}$ is  contractible, i.e., its length can be shrunk to $0$ without the front projection of $\widetilde \lag$ needing to undergo any 
moves; see \cite[Definition 6.13]{EHK} for a precise description of a contractible Reeb chord.    
The notion of multiple action-$0$ double points can be generalized to multiple ``simultaneously contractible'' double points.  
 The Legendrian Hopf link from Figure \ref{fig:Hopf} illustrates that two individually contractible Reeb chords need not be simultaneously contractible: here, the two interstrand Reeb chords  $b_1$ and $b_2$ are not simultaneously contractible since they cobound a disk.  For any immersed, exact Lagrangian filling we can  apply a Legendrian isotopy so that all Reeb chords in the Legendrian lift have {\it nonzero} length without any births or deaths of pairs of Reeb chords; such a Legendrian isotopy on the Legendrian lift can be realized by a {\it safe Hamiltonian isotopy} of the Lagrangian filling, see \cite{CDRGG2}. However, in general there are obstructions
  in going from a set of contractible Reeb chords to a set of action-$0$ double points.
  \end{remark}

\begin{remark} Theorem~\ref{thm:maingen1} can be extended beyond transverse double points to more general singularities of exact Lagrangians. In particular, we can consider any Lagrangian singularity $f$ such that the boundary of a Darboux ball centered at the singularity, or a real morsification 
of the singularity, intersects the exact Lagrangian as a Legendrian and the primitive is constant on the Legendrian. See~\cite{Casals_sing} for some examples of such singularities.
 \end{remark}

\subsection{Obstructions to embedded exact Lagrangian cobordisms}
%%%%%%%%%%%%%%%%%%%%%%%%%%%%%%%%%%%%%%%%%%%%%%%%%%%%%%%%%%%%%%%%

For a Legendrian link $\Lambda$
 in the standard contact manifold $\R^3_{\std}$, the Chekanov-Eliashberg DGA \cite{Che, Eli} $(\alg(\Lambda), \dd)$ is a powerful invariant that arises from symplectic field theory \cite{EGH}.
An augmentation $\e$ of $\alg(\Lambda)$ to a unital, commutative ring $\F$ is a DGA map $\e: (\alg(\Lambda), \dd) \to (\F, 0)$, where $(\F, 0)$ is a DGA with $\F$ in degree $0$ and differential identically $0$. 
Let $Aug(\leg; \F)$ denote the set of augmentations of $\alg(\leg)$ to $\F$.
An embedded, Maslov-$0$, exact Lagrangian cobordism $\lag$ from $\Lambda_-$ to $\Lambda_+$ induces a DGA map from $\alg(\leg_+)$ to $\alg(\leg_-)$~\cite{EHK} that by composition with an augmentation of $\alg(\leg_-)$ induces a map  
\begin{equation} \label{eqn:F}
	\mathcal{F}_\lag:Aug(\Lambda_-;\F)\to Aug(\Lambda_+; \F).
\end{equation}
Let $Aug(\Lambda;\F)/\Sim$ denote the set of augmentations up to the equivalence relation $\Sim$ given by  the natural equivalence given in the {augmentation category}
$\aug_{+}(\leg)$, see Definition~\ref{defn:equal} , or equivalently with respect to split-DGA homotopy, see Definition~\ref{defn:split-DGA-homotopy} and Proposition~\ref{prop:equivalences}. 
We will use $|Aug(\Lambda;\F)/\Sim|$   
to denote the cardinality of the set $Aug(\Lambda;\F)/\Sim$.
 \begin{theorem}\label{thm:main} Let $\Lambda_{\pm}$ be Legendrian links in $\R^3_\std$ such that there exists an embedded, Maslov-$0$,  exact Lagrangian cobordism $\lag$  from $\Lambda_-$ to $\Lambda_+$.
Suppose $\F$ is a commutative ring; if $\F$ does not have characteristic $2$ we further assume that $\lag$ is spin. 	Given augmentations $\e_{1}, \e_{2} \in Aug(\leg_{-}, \F)$, if 
$\mathcal F_\lag(\e_{1}), \mathcal F_\lag(\e_{2})$ are equivalent with respect to $\Sim$, then $\e_{1}, \e_{2}$ are equivalent with respect to $\Sim$.
In particular, 
\begin{equation} 
		|Aug(\Lambda_-;\F)/\Sim| \leq |Aug(\Lambda_+;\F)/\Sim|. \label{eqn:aug-count}
\end{equation}
If $\leg_{\pm}$ are single component Legendrian knots or $\F = \Z_{2}$, the map
 \begin{equation}
 \mathcal{F}_\lag:Aug(\Lambda_-;\F)/\Sim\to Aug(\Lambda_+; \F)/\Sim \label{eqn:knot-map}
 \end{equation}
exists and is injective.
\end{theorem}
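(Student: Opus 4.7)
My plan is to reformulate $\Sim$-equivalence as invertibility of a cohomology class in a $\Hom$-complex of $\aug_+$ and transport invertibility through a cobordism-induced chain map. Let $\Phi_\lag : \alg(\leg_+) \to \alg(\leg_-)$ denote the Ekholm--Honda--K\'alm\'an DGA map underlying $\mathcal{F}_\lag$, so that $\mathcal{F}_\lag(\e) = \e \circ \Phi_\lag$. By Proposition~\ref{prop:equivalences}, $\Sim$-equivalence of two augmentations coincides with the existence of a split-DGA-homotopy, equivalently with the existence of an invertible class in $H^0$ of the $\Hom$-complex $\operatorname{Hom}^+_{\aug_+(\leg)}(\e, \e')$. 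The injectivity claim is therefore a statement about transferring invertible classes across the cobordism.

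The technical heart of the argument is to lift $\Phi_\lag$ to a chain map
$$
\Psi_\lag : \operatorname{Hom}^+_{\aug_+(\leg_-)}(\e_1, \e_2) \longrightarrow \operatorname{Hom}^+_{\aug_+(\leg_+)}\bigl(\mathcal{F}_\lag(\e_1), \mathcal{F}_\lag(\e_2)\bigr),
$$
defined by pullback along $\Phi_\lag$ on the bilinearized DGA, and to show that $\Psi_\lag$ is compatible with the unit and with $A_\infty$-composition in $\aug_+$. I would then argue that $\Psi_\lag$ is a quasi-isomorphism in the degrees relevant for invertibility, extending the concordance invariance of Chantraine--Dimitroglou Rizell--Ghiggini--Golovko from cylinders to embedded, Maslov-$0$, exact cobordisms with nontrivial topology. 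Given this, if $[\mu] \in H^0\bigl(\operatorname{Hom}^+_{\aug_+(\leg_+)}(\mathcal{F}_\lag(\e_1), \mathcal{F}_\lag(\e_2))\bigr)$ is invertible with inverse $[\mu^{-1}]$, then preimages $[\nu], [\nu']$ are cocycles whose images under $\Psi_\lag$ compose to the unit; unitality of $\Psi_\lag$ then forces $[\nu] \cdot [\nu']$ to be the unit on cohomology, so $[\nu]$ is invertible and $\e_1 \Sim \e_2$, giving \eqref{eqn:aug-count}.

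For the well-definedness of the map on quotient sets in the second part, a split-DGA-homotopy $K : \alg(\leg_-) \to \F$ between $\e_1$ and $\e_2$ pulls back to $K \circ \Phi_\lag : \alg(\leg_+) \to \F$, which is automatically a DGA-homotopy between $\mathcal{F}_\lag(\e_1)$ and $\mathcal{F}_\lag(\e_2)$. The subtlety is the splitting condition of Definition~\ref{defn:split-DGA-homotopy}, which for multi-component links over rings of characteristic other than $2$ requires a compatible component-wise decomposition that a naive pullback need not respect. For single-component knots the splitting is trivially satisfied, and over $\F = \Z_2$ the splitting condition effectively collapses; in either case $K \circ \Phi_\lag$ is split, giving a well-defined map on quotients, and injectivity then follows immediately from the first part. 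This also explains why only the cardinality inequality \eqref{eqn:aug-count}, rather than a well-defined map, is claimed in the general case.

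The main obstacle is verifying that $\Psi_\lag$ is a quasi-isomorphism in the relevant degrees. This is not formal: it requires SFT-compactness and gluing for moduli of pseudo-holomorphic disks on $\lag$ with one positive puncture at $\leg_+$, one mixed (augmentation-valued) puncture, and several negative punctures at $\leg_-$, together with the construction of a homotopy inverse from counts of disks on $\lag$ with the opposite orientation convention. Coherent orientations for these moduli require the spin hypothesis on $\lag$ whenever $\operatorname{char}(\F) \neq 2$, which matches the theorem's assumption and is the only place where that hypothesis is used.
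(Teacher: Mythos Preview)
Your proposal has the direction of the crucial map reversed, and this is not merely a cosmetic difference---it is precisely where the argument breaks for links. You propose a forward map $\Psi_\lag$ from $\Hom_+$-spaces over $\Lambda_-$ to those over $\Lambda_+$, and you want it to be a quasi-isomorphism in degree $0$ so that preimages of an invertible pair are again invertible. This is essentially Pan's approach for knots, but the paper explicitly notes that both ingredients fail for links: the functor $(\mathcal F_\lag)_+:\aug_+(\Lambda_-)\to\aug_+(\Lambda_+)$ is \emph{not even well-defined} for cobordisms between links, and the isomorphism $H^0\Hom_+(\e^1,\e^2)\cong H^0\Hom_+(\mathcal F_\lag(\e^1),\mathcal F_\lag(\e^2))$ is known to be false in that setting. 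So your ``main obstacle'' is not merely technically hard---the statement you need is actually false in the generality of the theorem.

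The paper circumvents this by building a \emph{wrong-way} map
\[
\iota: H^*\Hom_+(\mathcal F_\lag(\e^1),\mathcal F_\lag(\e^2))\longrightarrow H^*\Hom_+(\e^1,\e^2)
\]
using the Cthulhu complex of a $2$-copy of $\lag$: one has an isomorphism $\phi_*:H^*(C_{-\infty})\to H^*\Hom_+(\e_+^1,\e_+^2)$ and a projection $\pi_*:H^*(C_{-\infty})\to H^*\Hom_+(\e_-^1,\e_-^2)$, and sets $\iota=\pi_*\circ\phi_*^{-1}$. The point is that $\iota$ need not be an isomorphism at all; what is proved (Proposition~\ref{prop:unit}, using the product structure on $C_{-\infty}$ from \cite{Noe} and a Morse-theoretic argument showing a specific closed element represents the unit) is that $\iota$ is unital and preserves $m_2$. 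That alone pushes invertible classes forward along $\iota$, i.e., from $\Lambda_+$ down to $\Lambda_-$, which is exactly the implication required. Your argument, by contrast, would need to \emph{pull back} invertibility along a forward map, and that genuinely requires the isomorphism you cannot have. Your discussion of the second part (well-definedness of the quotient map for knots or over $\Z_2$) is correct and matches the paper.
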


Although the map $\mathcal{F}_\lag$ on the set of augmentations (see Equation~\ref{eqn:F}) always exists, the map $\mathcal{F}_{\lag}$ on the set of equivalence classes of augmentations (see Equation~\ref{eqn:knot-map}) does not exist for multi-component links or when $\F \neq \Z_{2}$. See Remark~\ref{rem:aug-equiv}. The fifth author \cite{Pan1} proved a result that implies Theorem~\ref{thm:main}  
when $\leg_\pm$ are Legendrian knots.   We will see that equation~(\ref{eqn:aug-count}) provides a practical way to obstruct the existence of embedded cobordisms when $\F=\Z_2$.
When $\F$ is not of characteristic $2$, then, as in \cite{CDRGG, EESorientation, karl, Seidel}, rigid holomorphic disks in the moduli spaces that arise in the proof of Theorem~\ref{thm:main} are counted with signs. 

Fillings induce augmentations, and so one of the many reasons to consider augmentations to a more general $\F$ is that they can give information on the number of fillings of a Legendrian link.   It is known that Hamiltonian isotopic, embedded, Maslov-$0$, exact Lagrangian fillings induce $\Sim$ equivalent augmentations  to $\mathbb Z$,~\cite{EHK,karl}.
 Examples of Legendrian links that have an infinite number of distinct fillings up to Hamiltonian isotopy were first given in~\cite{CG} and later also in~\cite{CZ, Gao1,Gao2, CN, orsola_thesis}.
 From the existence of a  Legendrian with an infinite number of distinct fillings distinguished by augmentations to $\mathbb Z$, we can apply Theorem~\ref{thm:main} to deduce the existence of more such Legendrians.

\begin{corollary} (c.f.~\cite[Proposition 7.5, Remark 7.6]{CN})  Let $N \in \mathbb N \cup \{\infty\}$.
Suppose  $\leg_{-}$ is a Legendrian link that  
has  
$N$ augmentations to $\mathbb Z$  
up to $\Sim$ equivalence that are induced by embedded, Maslov-$0$, exact Lagrangian fillings,
and  there exists an embedded, Maslov-$0$, exact Lagrangian cobordism from $\leg_{-}$ to $\leg_{+}$, 
then $\leg_{+}$ admits $N$
embedded, Maslov-$0$, exact Lagrangian fillings that are distinct up to Hamiltonian isotopy.
\end{corollary}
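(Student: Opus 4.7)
The plan is to concatenate the given cobordism $\lag$ with each of the $N$ fillings of $\leg_-$ to produce $N$ fillings of $\leg_+$, and then distinguish these concatenated fillings up to Hamiltonian isotopy by tracking their induced augmentations via Theorem~\ref{thm:main}.

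More precisely, let $L_1^-, L_2^-, \dots$ be embedded, Maslov-$0$, exact Lagrangian fillings of $\leg_-$ that induce a collection of $N$ augmentations $\e_1, \e_2, \dots \in Aug(\leg_-;\Z)$ which are pairwise inequivalent under $\Sim$. First I would form the concatenated fillings $\lag \odot L_i^-$ of $\leg_+$; by construction each $\lag \odot L_i^-$ is again an embedded, Maslov-$0$, exact Lagrangian filling of $\leg_+$. By the functoriality of the DGA maps associated to exact Lagrangian cobordisms (see~\cite{EHK, karl}), the augmentation of $\alg(\leg_+)$ induced by $\lag \odot L_i^-$ is precisely $\mathcal{F}_\lag(\e_i)$.

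Next I would apply Theorem~\ref{thm:main} (with $\F = \Z$, using that $\lag \odot L_i^-$ and hence $\lag$ can be assumed spin): if $\mathcal{F}_\lag(\e_i) \Sim \mathcal{F}_\lag(\e_j)$, then $\e_i \Sim \e_j$, so by contrapositive the images $\mathcal{F}_\lag(\e_1), \mathcal{F}_\lag(\e_2), \dots$ are pairwise inequivalent under $\Sim$. In particular $\leg_+$ possesses at least $N$ augmentations up to $\Sim$ equivalence, each of which is induced by a filling of the form $\lag \odot L_i^-$.

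Finally I would invoke the theorem of~\cite{EHK, karl} stating that Hamiltonian isotopic embedded, Maslov-$0$, exact Lagrangian fillings induce $\Sim$-equivalent augmentations to $\Z$. If two of the concatenated fillings $\lag \odot L_i^-$ and $\lag \odot L_j^-$ with $i \neq j$ were Hamiltonian isotopic, their induced augmentations $\mathcal{F}_\lag(\e_i)$ and $\mathcal{F}_\lag(\e_j)$ would be $\Sim$-equivalent, contradicting the previous paragraph. Thus the fillings $\lag \odot L_1^-, \lag \odot L_2^-, \dots$ give $N$ embedded, Maslov-$0$, exact Lagrangian fillings of $\leg_+$ that are pairwise distinct up to Hamiltonian isotopy.

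The main point to be careful about is that Theorem~\ref{thm:main} is being applied in its general form (the implication ``$\mathcal{F}_\lag(\e_1) \Sim \mathcal{F}_\lag(\e_2)$ implies $\e_1 \Sim \e_2$''), which holds without the single-component or $\F=\Z_2$ hypothesis needed to define $\mathcal{F}_\lag$ directly on equivalence classes; so the argument works equally well for multi-component Legendrian links. The verification that the concatenation $\lag \odot L_i^-$ genuinely induces the augmentation $\mathcal{F}_\lag(\e_i)$ is a standard functoriality statement and should not present a real obstacle.
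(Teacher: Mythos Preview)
Your proposal is correct and follows essentially the same approach as the paper's proof: concatenate the fillings of $\leg_-$ with $\lag$, use functoriality to identify the induced augmentations as $\mathcal{F}_\lag(\e_i)$, apply Theorem~\ref{thm:main} to conclude these are pairwise $\Sim$-inequivalent, and then invoke \cite{EHK,karl} to deduce the concatenated fillings are not Hamiltonian isotopic. Your write-up is in fact somewhat more careful than the paper's (you make explicit the spin hypothesis and the use of the general form of Theorem~\ref{thm:main} valid for links), but the argument is the same.
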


\begin{proof} 
Consider two embedded, Maslov-$0$, exact Lagrangian fillings of $\Lambda_-$ that  induce augmentations $\e_{1}, \e_{2} \in Aug(\leg_{-}, \mathbb Z)$ that are not equivalent with respect to $\Sim$.  
Concatenating these fillings with the cobordism $\lag$ from $\Lambda_-$ to $\Lambda_+$ produces two embedded, Maslov-$0$, exact Lagrangian fillings of $\Lambda_+$; 
the augmentations induced
by these fillings agree with $\mathcal F_{\lag}(\e_{1}), \mathcal F_{\lag}(\e_{2} ) \in Aug(\leg_{+},\mathbb Z)$. 
By Theorem \ref{thm:main},  
$\mathcal F_{\lag}(\e_{1}), \mathcal F_{\lag}(\e_{2} )$ are not equivalent with respect to $\Sim$, and thus the fillings of $\leg_{+}$ are not Hamiltonian isotopic. 
\end{proof}

In the case when $\leg_{\pm}$ are knots, Theorem~\ref{thm:main} was derived in \cite{Pan1} from studying the {\it augmentation category} $\aug_{+}(\leg)$, which is an $A_\infty$-category associated to a Legendrian $\Lambda$, see \cite{NRSSZ}. The objects of  $\aug_{+}(\leg)$ are augmentations $\e: \alg(\Lambda)\to \F$ and morphisms $Hom_{+}(\e^1, \e^2)$ are modules over Reeb chords between $\leg$ and its ``push-off''.  When $\leg_{\pm}$ are knots, the functoriality of the DGA under cobordisms naturally extends the map $\calF_{\lag}$ from Equation~(\ref{eqn:F})  to a functor 
$$
	(\calF_{\lag})_{+}: \aug_+(\Lambda_-)\to \aug_+(\Lambda_+) 
$$
between the augmentation categories.
 In \cite{Pan1} it is proved that $\mathcal F_{\lag}$ is injective on equivalence classes of objects when $\Lambda_\pm$ are knots by showing that the functor $(\calF_{\lag})_{+}$ induces an isomorphism on the degree $0$ cohomology of morphism spaces; i.e. 
 $H^0\Hom_+(\e^1,\e^2)\cong H^0\Hom_+(\mathcal F_{\lag}(\e^{1}),\mathcal F_{\lag}(\e^{2}))$. However, this latter statement fails for links.
Moreover, the functor $(\calF_{\lag})_{+}$ is not even well-defined for cobordisms between links. Instead,  we  employ the machinery of wrapped Floer theory for Lagrangian cobordisms developed in \cite{CDRGG} (see Section~\ref{sec:wrap}), to argue that if $\mathcal F_{\lag}(\e^{1}), \mathcal F_{\lag}(\e^{2})$ are equivalent, then $\e^{1}, \e^{2}$ are equivalent, where equivalence is with respect to $\Sim$. 
To do this, we construct   ``wrong-way'' maps, namely maps in direction opposite to those induced by $(\mathcal F_{\lag})_{+}$,
$$\iota: H^*\Hom_+(\mathcal F_{\lag} (\e^1), \mathcal F_{\lag}(\e^2)) \to H^*\Hom_+(\e^1, \e^2).$$
Combining the  work of the second author  \cite{Noe}  and wrapped Floer theory, we show that $\iota$ is unital and preserves the product structure on $H^{*}\Hom_+$

In Section~\ref{sec:obs}, we build two additional obstructions to the existence of embedded, Maslov-$0$, exact Lagrangian cobordisms in terms of linearized contact homology $LCH^\e_\ast(\Lambda)$ (see Section~\ref{sec:aug}) and the ruling polynomial $R_{\Lambda}(z)$ (see Equation~\eqref{eqn:ruling-poly}, which are Legendrian invariants that are associated to augmentations. These results are extensions of parallel results in \cite{Pan1}.

\begin{proposition}[see Proposition~\ref{prop:LCH}] \label{prop1}
	Assume $\F$ is a field, $\leg_{\pm}$ are Legendrian links in $\R^3_\std$, $\e$ is an augmentation of $\Lambda_-$, and  $\lag$ is an embedded, Maslov-$0$, exact Lagrangian cobordism from $\Lambda_-$ to $\Lambda_+$, which we further assume to be spin if $\F$ does not have characteristic $2$. 
	Then,
	\begin{equation}\label{eq:iso}
LCH^{\mathcal F_{\lag} (\e)}_{k}(\Lambda_+)\cong LCH^{\e}_{k}(\Lambda_-)
	\end{equation}
	for $k<0$ and $k>1$.
\end{proposition}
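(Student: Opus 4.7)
The plan is to extend the argument of \cite{Pan1} from knots to links by invoking the Cthulhu long exact sequence of \cite{CDRGG} in the link setting. Given the augmentations $\e$ of $\alg(\Lambda_-)$ and $\mathcal F_\lag(\e)$ of $\alg(\Lambda_+)$, the Cthulhu complex associated to $\lag$ is acyclic, and so it yields a long exact sequence fitting $LCH^{\e}_*(\Lambda_-)$ and $LCH^{\mathcal F_\lag(\e)}_*(\Lambda_+)$ together with a Floer-theoretic term built from $\lag$ itself:
\[
\cdots \longrightarrow HF^{k-1}(\lag,\lag) \longrightarrow LCH^{\mathcal F_{\lag}(\e)}_k(\Lambda_+) \longrightarrow LCH^{\e}_{k}(\Lambda_-) \longrightarrow HF^k(\lag,\lag) \longrightarrow \cdots,
\]
with grading conventions chosen to match those of \cite{CDRGG, Pan1}.

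Next I would identify the middle term $HF^*(\lag,\lag)$ with a shift of the singular cohomology $H^*(\lag;\F)$ via a PSS-type argument adapted to Lagrangian cobordisms with cylindrical Legendrian ends (using the spin structure on $\lag$ when $\F$ has characteristic different from $2$ to fix signs coherently). Because $\lag$ is a $2$-dimensional exact Lagrangian surface whose boundary contains the Legendrian ends $\Lambda_{\pm}$, the group $H^{2}(\lag;\F)$ vanishes, so $H^*(\lag;\F)$ is concentrated in degrees $0$ and $1$. Tracking the grading shift in the Cthulhu long exact sequence, this places $HF^{*}(\lag,\lag)$ exactly in the LCH-degrees $k = 0$ and $k = 1$; the connecting maps are therefore isomorphisms for $k < 0$ and $k > 1$, which is the desired conclusion.

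The main obstacle will be matching grading conventions among the Cthulhu complex, the self-Floer cohomology, and linearized contact homology, and in particular verifying the PSS-type identification of $HF^{*}(\lag,\lag)$ with $H^{*}(\lag;\F)$ for a non-compact Lagrangian cobordism with cylindrical ends rather than a closed Lagrangian. A secondary subtlety is ensuring that the Cthulhu machinery works uniformly for multi-component links and not just for knots; this is exactly what the wrapped Floer framework of \cite{CDRGG} invoked in the proof of Theorem~\ref{thm:main} provides. With these pieces in place, the proposition follows from a direct count of dimensions in the long exact sequence.
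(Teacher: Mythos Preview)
Your overall strategy---extract a long exact sequence from the acyclicity of the Cthulhu complex and show the ``middle'' term vanishes outside two degrees---is exactly what the paper does. There are, however, two genuine gaps.

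First, the Floer/intersection-point term in the relevant long exact sequence is not $H^*(\lag;\F)$ but the \emph{relative} cohomology $H^*(\lag,\Lambda_-;\F)$; see Equation~\eqref{eq:les}. The positive Morse function $F$ used to push off $\lag$ behaves as $e^tf_\pm$ on the cylindrical ends, so its Morse cohomology is relative to the negative end. In particular, the vanishing of the degree-$2$ part is not a consequence of ``$\lag$ is a surface with boundary'': $H^2(\lag,\Lambda_-)$ is dual to $H_0(\lag,\Lambda_+)$ and vanishes only because no component of $\lag$ can have empty positive end (a Lagrangian cap of an augmented Legendrian is obstructed, \cite{dimitroglou_rizell_2015}).

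Second---and this is the main gap---even once $H^*(\lag,\Lambda_-)$ is known to be concentrated in degrees $0$ and $1$, the long exact sequence yields $LCH^{\e_+}_k(\Lambda_+)\cong LCH^{\e_-}_k(\Lambda_-)$ only for $k<0$ and $k>2$, \emph{not} for $k=2$. Indeed, the map between the $Hom_+$ terms at cohomological degree $j$ in \eqref{eq:les} is an isomorphism only when both $H^j(\lag,\Lambda_-)$ and $H^{j+1}(\lag,\Lambda_-)$ vanish, which fails at $j=-1$; translating via $H^jHom_+\cong LCH_{1-j}$ leaves $LCH_2$ uncovered. The paper closes this gap with Sabloff duality \cite{EESduality}: for $|k|\ge2$ one has $\dim LCH^\e_{-k}(\Lambda)=\dim LCH^\e_k(\Lambda)$ over a field, so the already-established isomorphism at $k=-2$ forces the one at $k=2$. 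Your ``direct count of dimensions in the long exact sequence'' does not cover $k=2$ without this extra ingredient.
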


\begin{proposition}[see Corollary~\ref{cor:rul}]\label{prop2}
	Let $\lag$ be a spin, embedded, Maslov-$0$, exact Lagrangian cobordism from $\Lambda_-$ to $\Lambda_+$. Then,   $$R_{\Lambda_-}(q^{1/2}-q^{-1/2})\leq q^{-\chi(\lag)/2} R_{\Lambda_+}(q^{1/2}-q^{-1/2})$$ for any $q$ that is a power of a prime number.
\end{proposition}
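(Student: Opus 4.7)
The plan is to reduce the inequality on ruling polynomials to the augmentation-count inequality already established in Theorem~\ref{thm:main}. The bridge is the Henry--Rutherford theorem (and its upgrade to equivalence classes via the augmentation category of Ng--Rutherford--Sabloff--Sivek--Zaslow), which relates the number of $\Sim$-equivalence classes of augmentations of $\alg(\leg)$ to $\F_q$ to the ruling polynomial $R_\leg$ evaluated at $z = q^{1/2}-q^{-1/2}$.

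The first step is to apply Theorem~\ref{thm:main} with $\F = \F_q$. Since $q$ is a power of a prime, the spin hypothesis on $\lag$ ensures that the signed counts used in Theorem~\ref{thm:main} are valid whenever $q$ is odd; when $q$ is a power of $2$ there is nothing to check. This yields
$$|Aug(\leg_-;\F_q)/\Sim| \;\leq\; |Aug(\leg_+;\F_q)/\Sim|.$$

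The second step is to record a Henry--Rutherford style identity of the shape
$$|Aug(\leg;\F_q)/\Sim| \;=\; q^{n(\leg)} \, R_\leg(q^{1/2}-q^{-1/2}),$$
where the normalization exponent $n(\leg)$ is of the form $(\operatorname{tb}(\leg)+c(\leg))/2$, with $c(\leg)$ depending only on the number of components and a choice of base points. In the knot case this is essentially in NRSSZ; the link version, adapted appropriately and recorded as the intermediate step before Corollary~\ref{cor:rul} in Section~\ref{sec:obs}, follows from the same argument counting augmentations graded by ruling decompositions. The third step is to pin down the difference $n(\leg_+)-n(\leg_-)$ using Chantraine's formula $\operatorname{tb}(\leg_+)-\operatorname{tb}(\leg_-) = -\chi(\lag)$ for an embedded exact Lagrangian cobordism, together with the fact that the component-correction terms $c(\leg_\pm)$ enter in a way compatible with concatenation with $\lag$. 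The upshot is the clean identity $n(\leg_+)-n(\leg_-) = -\chi(\lag)/2$.

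Combining the three ingredients, the augmentation-count inequality becomes
$$q^{n(\leg_-)}\,R_{\leg_-}(q^{1/2}-q^{-1/2}) \;\leq\; q^{n(\leg_+)}\,R_{\leg_+}(q^{1/2}-q^{-1/2}),$$
and dividing by $q^{n(\leg_-)}$ gives the desired bound. The main obstacle is verifying the Henry--Rutherford-type formula at the level of $\Sim$-equivalence classes in the multi-component setting and confirming that the correction term $c(\leg)$ cancels in the difference $n(\leg_+)-n(\leg_-)$; once this bookkeeping is set up, the inequality is a purely formal consequence of Theorem~\ref{thm:main} and Chantraine's $\operatorname{tb}$ identity.
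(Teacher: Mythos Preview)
There is a genuine gap in your second step. The identity you posit,
\[
|Aug(\leg;\F_q)/\Sim| \;=\; q^{n(\leg)}\, R_\leg(q^{1/2}-q^{-1/2}),
\]
is not what is known. The Henry--Rutherford formula relates the \emph{raw} augmentation count $|Aug(\leg;\F_q)|$ to the ruling polynomial (and only under a hypothesis such as no negative-degree Reeb chords does this agree with $|Aug/\Sim|$; cf.~Lemma~\ref{lem:rel}). The formula that actually holds in general, from \cite{NRSS}, is
\[
\pi_{\ge 0}\aug_+(\leg;\F_q)^* \;=\; q^{tb(\leg)/2}\,R_\leg(q^{1/2}-q^{-1/2}),
\]
where the left side is the \emph{homotopy cardinality}: a sum over $[\e]\in Aug/\Sim$ in which each class is weighted by $|Aut(\e)|^{-1}\prod_{k<0}|H^kHom_+(\e,\e)|^{(-1)^k}$. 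This is not the count of equivalence classes, and your ``bookkeeping'' step cannot recover it.

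The paper's route in Section~\ref{sec:obs} therefore proves the inequality at the level of homotopy cardinality, and this requires more than Theorem~\ref{thm:main} alone. One needs Proposition~\ref{prop:LCH} to match the factors $|H^kHom_+(\e_\pm,\e_\pm)|$ for $k<0$ between $\e_-$ and its induced $\e_+$, and one needs the unital ring-map property of $\iota$ (Proposition~\ref{prop:unit}) to conclude $|Aut(\e_-)|\ge|Aut(\e_+)|$. Only after these term-by-term comparisons does the homotopy-cardinality inequality follow, and then the NRSS identity together with $tb(\leg_+)-tb(\leg_-)=-\chi(L)$ yields the ruling-polynomial inequality. Your Chantraine step is correct, but it is plugged into the wrong identity.
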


\subsection{Obstructions to  reversing Lagrangian surgery}
%%%%%%%%%%%%%%%%%%%%%%%%%%%%%%%%%%%%%%%%%%%%%%%%%%%%%%%%%%%%%%%%%%%%%%%%%%%%%%%%%%%%%%%%%%%%%%%%%
We apply Theorem~\ref{thm:maingen1} and Theorem~\ref{thm:main}  to find examples of Legendrian knots in $\R^3_{std}$ admitting immersed, Maslov-$0$, exact Lagrangian fillings that do not arise from Lagrangian surgery.  We say that an  immersed, Maslov-$0$, exact Lagrangian filling $F_{g}^{p}$ of a Legendrian $\leg$ with 
genus $g$ and $p$ double points {\it does not arise from Lagrangian surgery} if there does not exist an immersed, Maslov-$0$, exact Lagrangian filling $F_{g-1}^{p+1}$ with 
 genus $g-1$ and $p+1$ double points where the indices and actions of $p$ of the double points agree with those of $F_{g}^{p}$ and there is an additional double point of action and index $0$ that could be surgered to produce $F_{g}^{p}$; see Definition~\ref{defn:not-from-surgery}. 

As a simple illustration of our techniques, consider the Legendrian knot $\leg_{7_4}$ in Figure \ref{fig:7_4}(a), which is the maximal-tb representative of the knot $7_4$. 
Using known construction techniques, described in Section~\ref{sec:ex},  we know that  $\leg_{7_4}$
 admits an embedded, Maslov-$0$, exact Lagrangian filling of genus $1$;  we prove this filling cannot be obtained by applying Lagrangian surgery on an immersed, Maslov-$0$, exact Lagrangian disk filling with one double point. Indeed, if it was the case, $\leg_{7_{4}}$ would admit an immersed, Maslov-$0$, exact Lagrangian disk filling with a  double point of action  $0$ and index $0$. 
 By Theorem \ref{thm:maingen1}   
 the existence of such an immersed filling is equivalent to the existence of an embedded, Maslov-$0$, exact Lagrangian cobordism from the Hopf link $\leg_{\Ho}^0$ to $\leg_{7_{4}}$. However, since we can compute  $$|Aug(\leg_{\Ho}^0;\Z_2)/\Sim| = 3, \text{ and } |Aug(\leg_{7_4};\Z_2)/\Sim| = 1,$$  
 by  Theorem \ref{thm:main} such an embedded cobordism does not exist.
In fact, for this specific example, there is an underlying smooth reason that such an immersed Lagrangian disk filling does not exist for $\leg_{7_{4}}$:  it has been shown in \cite{OS} using Heegaard Floer homology that the smooth knot $7_{4}$ does not have any smooth, immersed disk filling with $1$ double point.  The following theorem gives examples of Legendrian knots with obstructed immersed Lagrangian fillings, where there is no smooth obstruction. 
 The Legendrian knot shown in Figure~\ref{fig:7_4}(b) is an example of a Legendrian in Theorem \ref{thm:obstruct} (1), and the Legendrian shown in Figure~\ref{fig:7_4}(c) is an example of a Legendrian in Theorem \ref{thm:obstruct} (2).

\begin{figure}[!ht]
	\labellist
	\pinlabel $(a)$ at 100 -30
	\pinlabel $(b)$ at 320 -30
	\pinlabel $(c)$ at 680 -30
	\endlabellist

	\includegraphics[height=1in,width=5in]{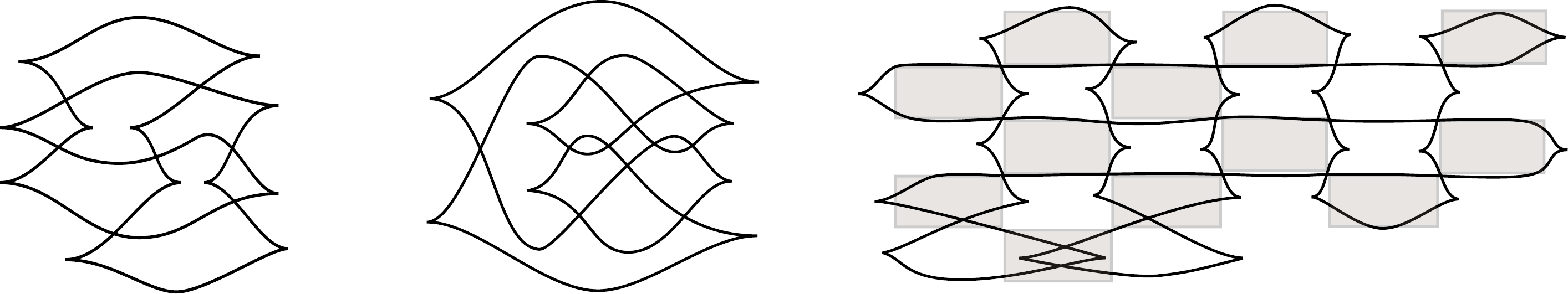}
	\vspace{0.2in}
	\caption{Legendrian knots admitting fillings that do not arise from Lagrangian surgery. (a) $\leg_{7_4}$; (b)  $\leg_{k}|_{k=1}= \leg_{9_{48}}$; (c) the clasped checkerboard $\leg_{2}^{1}$. }
	\label{fig:7_4}
\end{figure}

\begin{theorem}
	\begin{enumerate}
 \item For all $k \geq 1$, there exists a Legendrian knot $\leg_{k}$, with $\leg_{1}$ being a Legendrian $9_{48}$ knot, that admits an  immersed, Maslov-$0$, exact Lagrangian filling  $F_{k}^{k}$, which has genus $k$ and $k$ double points, that does not arise from Lagrangian surgery, even though $\leg_{k}$ admits a smooth filling of genus $(k-1)$ with $(k+1)$ double points.  		
\item Given $g \in \mathbb Z^{+}$, and $p \in \mathbb Z^{\geq 0}$, there is a Legendrian knot $\leg_{g}^{p}$ 
 that has an immersed, Maslov-$0$, exact Lagrangian filling $F_{g}^{p}$, which has genus $g$ and $p$ double points, that does not arise from Lagrangian surgery.
 	\end{enumerate}\label{thm:obstruct}  
\end{theorem}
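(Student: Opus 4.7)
The plan is to build explicit families of Legendrian knots $\leg_k$ for part~(1) and $\leg_g^p$ for part~(2) together with their immersed fillings, and then to use Theorem~\ref{thm:maingen1} and Theorem~\ref{thm:main} to obstruct the existence of any parent immersed filling $F_{g-1}^{p+1}$. Following the hints in the introduction that $\leg_2^1$ is a ``clasped checkerboard'' and $\leg_1 = 9_{48}$, I would construct $\leg_g^p$ by starting with a Legendrian admitting an embedded, Maslov-$0$, exact Lagrangian filling of genus $g$ (e.g.\ a plat closure built by standard pinch/saddle moves) and then performing $p$ local clasp modifications, each of which inserts a transverse double point whose Reeb chord lift has action zero and specified index. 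The family $\leg_k$ in part~(1) is produced iteratively from $9_{48}$ by an operation that increases both $g$ and $p$ by one, while the ``smooth'' filling of genus $k-1$ with $k+1$ double points that witnesses the non-topological nature of the obstruction is produced by an ambient smooth saddle move that cannot be realized as exact Lagrangian surgery.

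Once the fillings $F_g^p$ are built so that \emph{every} one of their $p$ double points has action $0$ (with known indices), the obstruction proceeds by contradiction. Suppose $F_g^p$ arises from Lagrangian surgery on an immersed, Maslov-$0$, exact Lagrangian filling $F_{g-1}^{p+1}$ as in Definition~\ref{defn:not-from-surgery}. By the matching condition, the $p$ non-surgered double points of $F_{g-1}^{p+1}$ inherit action $0$ from $F_g^p$, and the extra surgered double point is action-$0$ and index-$0$ by definition. Hence all $p+1$ double points of $F_{g-1}^{p+1}$ have action zero, so the corollary of Theorem~\ref{thm:maingen1} produces an embedded, Maslov-$0$, exact Lagrangian cobordism $L$ of genus $g-1$ from $\bigsqcup_{j=1}^{p+1}\leg_{\Ho}^{i_j}$ to $\leg_g^p$, for appropriate indices $i_j$.

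Applying Theorem~\ref{thm:main} to $L$ with $\F=\Z_2$, and using that $\Z_2$-augmentations of a disjoint union split as a product of augmentations of the components, we obtain
\[ |Aug(\leg_g^p;\Z_2)/\Sim|\;\geq\;\prod_{j=1}^{p+1}|Aug(\leg_{\Ho}^{i_j};\Z_2)/\Sim|\;=\;3^{p+1}, \]
since each Hopf link carries exactly $3$ equivalence classes of $\Z_2$-augmentations, a direct computation from its Chekanov--Eliashberg DGA. The whole purpose of the construction of $\leg_g^p$ is then to arrange that $|Aug(\leg_g^p;\Z_2)/\Sim|<3^{p+1}$; this will be checked either diagrammatically through the normal ruling polynomial $R_{\leg_g^p}(z)$ (which determines the augmentation count up to a well-known exponent), or by enumerating augmentations directly on a chosen Lagrangian projection. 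Once the strict inequality is in hand, it contradicts the bound above, so no such $F_{g-1}^{p+1}$ can exist, proving that $F_g^p$ does not arise from Lagrangian surgery.

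The principal obstacle is in the construction step: one must engineer the Legendrian diagrams so that simultaneously (i) the immersed filling $F_g^p$ with all action-$0$ double points genuinely exists and can be exhibited by concrete Lagrangian handle/clasp moves, (ii) the augmentation count $|Aug(\leg_g^p;\Z_2)/\Sim|$ stays strictly below $3^{p+1}$ despite the clasps that were added to create the action-$0$ double points --- which tend to enlarge the DGA and therefore, naively, to increase the number of augmentations --- and (iii) in part~(1), a smooth immersed filling of genus $k-1$ with $k+1$ double points actually exists, so that the obstruction is purely symplectic rather than topological. Balancing these three constraints as $g$ and $p$ vary, and executing the ruling-polynomial or augmentation-variety count in the resulting family, is where the combinatorial work of the proof concentrates.
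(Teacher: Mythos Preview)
Your overall strategy is exactly the paper's: construct $F_g^p$ with all double points of action $0$, assume a parent $F_{g-1}^{p+1}$, apply Theorem~\ref{thm:maingen1} to get an embedded cobordism from a disjoint union of Hopf links, and derive a contradiction from Theorem~\ref{thm:main} by comparing augmentation counts (directly for $\leg_k$, via the ruling polynomial for $\leg_g^p$).

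There is, however, one concrete error that would derail part~(1). It is \emph{not} true that every Legendrian Hopf link has three $\Z_2$-augmentation classes: only $\leg_{\Ho}^0$ does, while $\leg_{\Ho}^k$ for $k\neq 0$ has exactly one (the zero augmentation). Thus the correct lower bound from Theorem~\ref{thm:main} is $3^Z$, where $Z$ is the number of index-$0$ double points among the $p+1$. In the paper's construction of $\leg_k$ (a tangle-sum of $k$ copies of a fixed tangle $T$, with $\leg_1=9_{48}$), the $k$ double points of $F_k^k$ all have index~$1$, not index~$0$; the hypothetical $F_{k-1}^{k+1}$ therefore yields a cobordism from $\bigl(\bigsqcup_k\leg_{\Ho}^1\bigr)\sqcup\leg_{\Ho}^0$, whose augmentation count is $1^k\cdot 3=3$, not $3^{k+1}$. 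The obstruction thus requires $|Aug(\leg_k;\Z_2)/\Sim|<3$, a much sharper inequality than your $3^{k+1}$ target; the paper verifies $|Aug(\leg_k;\Z_2)/\Sim|=1$ by an explicit DGA computation exhibiting a DGA homotopy among the $2^k$ augmentations. For part~(2) your displayed inequality happens to be correct, since there all $p+1$ double points have index~$0$; the paper's clasped-checkerboard knots satisfy $|Aug(\leg_g^p;\Z_2)/\Sim|=3^p$, computed via the ruling polynomial as you anticipated.
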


The family $\leg_{g}^{p}$ in Theorem~\ref{thm:obstruct}(2)
generalizes $\leg_{7_{4}}$: $\leg_{1}^{0} = \leg_{7_{4}}$. Other than $\leg_{1}^{0}$, the knots in this family have crossing numbers that are at least $11$ and can be arbitrarily large:  a SnapPy calculation shows that $\leg_{1}^{1}$ is the smooth knot $11_{495}$, and, to the best of our knowledge, this and the others in the family do not have smooth obstructions.

\begin{remark}  The Poincar\'e polynomial for the Legendrian contact homology of $\leg_{9_{48}}$ is $t^{-1}+2+2t$, \cite{ChNg}. Using the techniques of generating families, this  implies that any immersed, gf-compatible (and thus Maslov-$0$ and exact) Lagrangian disk filling of $\leg_{9_{48}}$ must have at least two double points, of indices $0$ and $1$ \cite{Samthesis, sam_lisa}.   With the techniques of this paper, we obstruct the case where both the double points must satisfy the additional action-$0$ hypothesis, or the equivalent
``contractible'' formulation described in Remark~\ref{rem:contractible}. 
   \end{remark}

We end this introduction with the following observation. The fact that the immersed Lagrangian fillings in Theorem \ref{thm:obstruct} are not obtained from Lagrangian surgery on other fillings tells us about the non-existence of particular Lagrangian disks.  
As explained in Section~\ref{sec:surgery}, after a change of coordinates and the removal of a cylindrical end,
an exact Lagrangian filling $L$ of a Legendrian $\Lambda$ in the symplectization of $\R^3_{std}$ becomes a compact, exact Lagrangian filling $\overline L$ in $(\B^4, \omega_{std})$ of $\leg \subset S^3$.

We will call an essential, embedded  curve $\gamma \subset L$ is a \textbf{pre-singularity loop} if it is  obtained by the transversal intersection of a Lagrangian disk $D\subset (\B^4, \omega_{std})$ with the interior of $\overline L$. As shown in \cite{Yau}, given a pre-singularity loop, it is always possible to reverse Lagrangian surgery.  
Thus, we obtain the following corollary to Theorem~\ref{thm:obstruct}. 

\begin{corollary} \label{cor:loop}  Let $\leg$ be one of the Legendrian knots from Theorem~\ref{thm:obstruct} that admits an immersed, Maslov-$0$, exact Lagrangian filling $F_{g}^{p}$ with genus $g$ and $p$ double points that cannot be obtained by Lagrangian
surgery.  
Then the filling  $F_{g}^{p}$ does not admit a pre-singularity loop.
\end{corollary}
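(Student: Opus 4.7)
The plan is to argue by contrapositive: assuming that one of the fillings $F_g^p$ produced in Theorem~\ref{thm:obstruct} does admit a pre-singularity loop, I will reverse Lagrangian surgery along that loop to produce an immersed, Maslov-$0$, exact Lagrangian filling $F_{g-1}^{p+1}$ of genus $g-1$ with $p+1$ double points, with the extra double point having action $0$ and index $0$. This would directly contradict the statement in Theorem~\ref{thm:obstruct} that $F_g^p$ does not arise from Lagrangian surgery in the sense of Definition~\ref{defn:not-from-surgery}.

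More concretely, suppose $\gamma \subset L$ is a pre-singularity loop, so $\gamma = D \cap \overline L$ for some Lagrangian disk $D \subset (\B^4, \omega_{std})$ intersecting $\overline L$ transversally along its boundary. The first step is to invoke the result of Yau~\cite{Yau}, which says that from this data one can reverse Lagrangian surgery: concretely, one performs a local Lagrangian surgery in a neighborhood of $\gamma$ that compresses the loop $\gamma$ and replaces it by a transverse double point, producing a new immersed Lagrangian filling $\overline{F}_{g-1}^{p+1}$ of $\leg$ in $(\B^4, \omega_{std})$. After reattaching the cylindrical end via the change of coordinates reviewed in Section~\ref{sec:surgery}, this gives back an immersed Lagrangian filling $F_{g-1}^{p+1}$ of $\leg$ in the symplectization.

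The second step is to verify that $F_{g-1}^{p+1}$ satisfies all the hypotheses of Definition~\ref{defn:not-from-surgery}. Exactness is preserved because the surgery is local and the primitive extends across the disk $D$ (whose boundary $\gamma$ was essential, hence the primitive on $\overline L$ was zero on $\gamma$, and $D$ being Lagrangian has a well-defined primitive matching on the boundary); this is precisely what ensures that the new double point has action $0$. Maslov-$0$ is preserved because the Lagrangian disk $D$ is itself Maslov-$0$ (having trivial first Chern class obstruction in $\B^4$), and the local model of the reverse surgery shows that the new double point has Maslov index $0$. The remaining $p$ double points of $F_{g-1}^{p+1}$ are untouched by the surgery, so their actions and indices agree with those of $F_g^p$. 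Finally, performing the forward Lagrangian surgery on the new action-$0$, index-$0$ double point of $F_{g-1}^{p+1}$ recovers $F_g^p$ by construction, since Yau's procedure is an inverse to the standard Polterovich surgery.

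The main obstacle in carrying this out is checking the compatibility of gradings and primitives on the Lagrangian disk $D$ so that the new double point genuinely has action $0$ and index $0$, rather than merely producing some new immersed filling. Once that local analysis is in place, the contradiction with Theorem~\ref{thm:obstruct} is immediate and the corollary follows.
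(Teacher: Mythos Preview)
Your approach is exactly the paper's: the corollary is stated immediately after the sentence ``As shown in \cite{Yau}, given a pre-singularity loop, it is always possible to reverse Lagrangian surgery,'' and no further proof is given. Your contrapositive via Yau's result is precisely this.

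One correction to your informal justifications: the claim that ``$\gamma$ was essential, hence the primitive on $\overline L$ was zero on $\gamma$'' is a non-sequitur. Essentialness of $\gamma$ has nothing to do with the value of the primitive $f$ along $\gamma$; what is true is only that $\oint_\gamma \lambda = 0$ by exactness. The reason the new double point has action $0$ is rather that in the local model the reverse surgery replaces a neighborhood of $\gamma$ by the standard cone $\R^n \cup i\R^n$, on which $\lambda_{std}$ vanishes identically, so the primitive is the same constant on both sheets. Similarly, the index-$0$ claim is cleanest to see via the converse of Lemma~\ref{lem:index-0-surgery}: since forward surgery at the new double point returns the Maslov-$0$ filling $F_g^p$, the proof of that lemma (``$\mu$ extends after surgery if and only if $\mu(u)-\mu(\ell)=1$'') forces the index to be $0$. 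These are the details you flagged as ``the main obstacle''; the paper simply defers them to \cite{Yau}.
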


\begin{remark}	Given an embedded, orientable,  exact Lagrangian filling $L$ with a pre-singularity loop $\gamma \subset L$ that bounds a Lagrangian disk with interior disjoint from $L$,
 one can shrink the Lagrangian disk to a point and perform Lagrangian surgery in one of the two ways, as explained in Section~\ref{sec:surgery},  to obtain two embedded exact Lagrangian fillings $L_1$ and $L_2$. Note that $L_1$ and $L_2$ are smoothly isotopic but not Hamiltonian isotopic.
This has been employed to great effect in the construction of infinitely many orientable embedded exact Lagrangian fillings for certain Legendrian links up to Hamiltonian isotopy by~\cite[Theorem 4.21]{CZ}. Obstructing the existence of pre-singularity loops allows one to understand when such constructions are not possible. The obstruction tools that we construct however do not determine which curves in $L$ are pre-singularity loops. They also only provide an upper bound on the number of pre-singularity loops $\gamma$ in $L$. \end{remark}

\vspace{0.1in}
	\noindent
	{\bf Outline:}  In Section~\ref{sec:bg}, we define immersed, Maslov-$0$, exact Lagrangian cobordisms and the action and index of double points.  In
	Section~\ref{sec:surgery}, we review Lagrangian surgery and prove Theorem~\ref{thm:maingen1} by employing the theory of
	Liouville and Weinstein structures.  We then review concepts that are used in proving Theorem~\ref{thm:main} including the Chekanov-Eliashberg DGA, the augmentation category, and the wrapped Floer theory for cobordisms, in Sections~\ref{sec:dga}, \ref{sec:augcat},  and \ref{sec:wrap}, respectively.   In Section~\ref{sec:augcat}, the equivalence relation $\Sim$ is reviewed and the new definition of split-DGA homotopy is introduced.  
	In Section~\ref{sec:main}, we integrate everything together and prove Theorem~\ref{thm:main} as well as the other obstructions provided by Propositions \ref{prop1} and \ref{prop2}.
	Finally, in Section~\ref{sec:ex}, we apply Theorem~\ref{thm:maingen1} and Theorem~\ref{thm:main} to prove Theorem~\ref{thm:obstruct}: for one of the families we count augmentations directly while for the other family we apply the theory of rulings to count augmentations. 
	\vspace{0.1in}

	\noindent
	{\bf Acknowledgement:} 
	We would like to thank the Banff International Research Station-Casa Mathmatica Oaxaca (BIRS-CMO) for its support during the Women in Geometry Workshop (19w5115), where the work on this paper was begun in June 2019. 
	The first author was partially supported by NSF Graduate
Research Fellowship under grant no. DGE-1644868 while completing their graduate degree at Duke University, and NSF grant DMS-2103188 as a postdoc. The second author was supported by the grant KAW 2016.0198 from the Knut and Alice Wallenberg Foundation and the grant 2016-03338 from the Swedish Research Council. The first author wishes to express appreciation to ICERM where where portions of this work were completed.
	We also thank Roger Casals, Georgios Dimitroglou-Rizell, Lenny Ng, Brendan Owens, and Laura Starkston for helpful conversations.

\section{Actions and Indices of Double Points}\label{sec:bg}

In the first subsection, we define immersed, exact Lagrangian cobordisms between Legendrian links and the {\it action} of a double point.   In the second subsection, we define the {\it index} of a double point.

\subsection{Immersed Lagrangian cobordisms and the action of a double point}\label{subsec:leg} 
Let $\Lambda$ be a Legendrian knot or link in the standard contact manifold 
$\R^{2n+1}_{\text{std}}=(\R^{2n+1},\ker\alpha)$, where $\alpha=dz-\displaystyle{\sum_{i=1}^n y_i\,dx_i}$  and  $(x_1, \dots, x_n, y_1, \dots, y_n, z)$ are the coordinates of $\R^{2n+1}$.  
There are two useful projections of $\Lambda$: the \textbf {Lagrangian projection}  $\pi_{xy} (\Lambda)$ where $\pi_{xy}:\R^{2n+1}\rightarrow \R^{2n}, ({\bf x,y},z)\rightarrow ({\bf x,y})$, and the \textbf{front projection}  $\pi_{xz}(\Lambda)$ where $\pi_{xz}:\R^{2n+1}\rightarrow \R^{n+1}, ({\bf x,y},z)\rightarrow ({\bf x},z)$, where ${\bf x}$ and ${\bf y}$ are $(x_1, \dots, x_n)$ and $(y_1, \dots, y_n)$.  {We will always assume that $\leg$ is {\bf chord generic}, meaning that the self-intersection points of $\pi_{xy}(\leg)$ consists of a finite number of transverse double points.}

Now we define immersed, exact Lagrangian cobordisms between Legendrian links, which are immersed manifolds with ``cylindrical ends'' over Legendrian links; see Figure~\ref{fig:lagcob}.   This extends the definition of embedded, exact Lagrangian cobordisms of ~\cite[Definition 1.1]{EHK}.  
\begin{definition} \label{defn:cobord} 
	Let $\Lambda_\pm$  be Legendrian links in $\R^{2n-1}_{\text{std}}$.  
	An  {\bf immersed, exact Lagrangian cobordism $\lag$ from $\Lambda_{-}$ to $\Lambda_{+}$} is an immersed, Lagrangian submanifold in the symplectization, $L = i(\Sigma)$ for a Lagrangian immersion $i:\Sigma\rightarrow (\R_t\times\R^{2n-1}, d(e^t\alpha))$, such that 
for some $N>0$, 
	\begin{enumerate}
		\item  $\lag \cap ([-N,N]\times\R^{2n-1})$ is compact,
		\item  $\lag \cap ([N,\infty)\times\R^{2n-1})=[N,\infty)\times \Lambda_{+}$, 
		\item  $\lag \cap ((-\infty,-N]\times\R^{2n-1})=(-\infty,-N] \times \Lambda_{-}$, and 
		\item  there exists  a function $f: \Sigma \to \R$  and constants $\mathfrak c_\pm$ such that 
		$i^*\left(e^t\alpha \right)= df$, where $f|_{i^{-1}((-\infty, -N] \times \Lambda_{-})} = \mathfrak c_{-}$, and $f|_{i^{-1}([N, \infty) \times \Lambda_{+})} = \mathfrak c_{+}$.
	\end{enumerate}
\end{definition}

\begin{figure}[!ht]
\labellist
\pinlabel $\Lambda_+$ at 340 300
\pinlabel $\Lambda_-$ at 340 110
\pinlabel $L$ at  350 200
\pinlabel $N$ at -20 260
\pinlabel $-N$ at -30 60
\pinlabel $t$ at -10 370
\endlabellist
\includegraphics[width=2in]{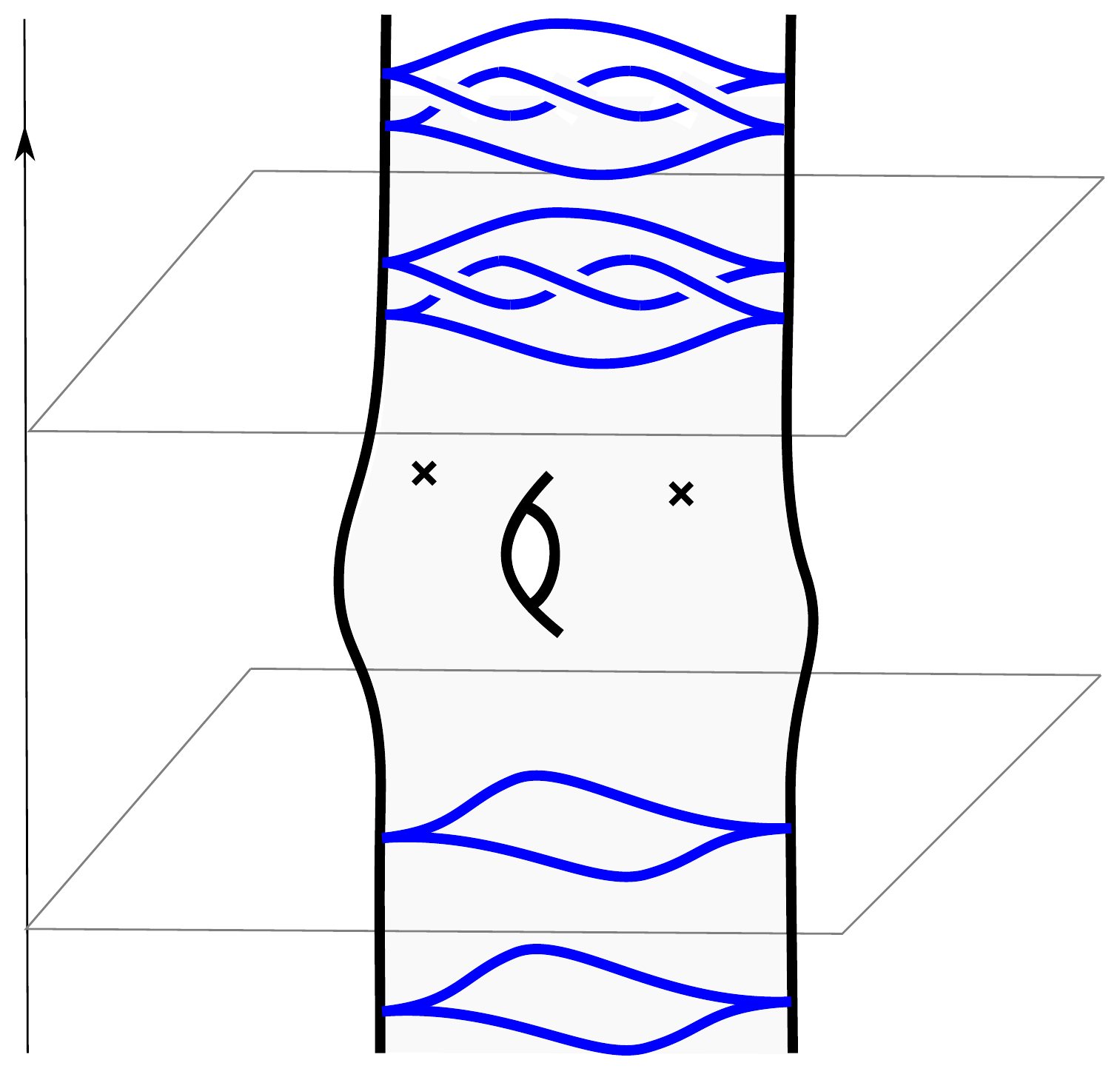}
\caption{A schematic picture of an immersed, exact Lagrangian cobordism $L$ from $\Lambda_-$ to $\Lambda_+$ with genus $1$ and two double points.}
\label{fig:lagcob}
\end{figure}

\begin{remark} \label{rem:cobord}   $\mbox{}$
\begin{enumerate}
\item  The function $f$ in  condition (4) in Definition~\ref{defn:cobord} is a {\bf primitive} of $\lag$.
Since $\Lambda_{\pm}$ are Legendrian, it follows that on the ends of $\lag$, the primitive $f$ is locally constant.
	The condition (4) enforces that when $\Lambda_{-}$ (or $\Lambda_{+}$) is not connected, the constant $\mathfrak{c}_{-}$ (or $\mathfrak{c}_{+}$) does not vary from component to component. By the
	addition of a constant, we can always assume that $\mathfrak{c}_{-} = 0$; this will be the convention that we use in Section \ref{sec:wrap}. 
\item  
 Generically all immersion points of $\lag$ are isolated, transverse double points.  In this paper, when we write $L$ for an immersed exact Lagrangian cobordism we implicitly assume that it comes as the image of an immersion $i:\Sigma\to\R\times\R^{2n-1}$ satisfying the conditions in Definition~\ref{defn:cobord} and that all the immersion points are isolated and transverse double points.
\end{enumerate}
\end{remark}

Given an immersed, exact Lagrangian cobordism $\lag\subset\R\times\R^{2n-1}$ from $\Lambda_-$ to $\Lambda_+$, the primitive $f$ guaranteed by Definition~\ref{defn:cobord}(4) allows one to
construct the {\bf Legendrian lift of} $\lag$, defined as $\widetilde{\lag}=\{(i(q),-f(q))|q\in\Sigma\}$ in the contactization of $\big(\R_t\times\R^{2n-1},d(e^t\alpha)\big)$, which is the contact manifold $\big((\R_t\times\R^{2n-1})\times\R_u,du+e^t\alpha\big)$. 
Double points of $\lag$ are in one-to-one correspondence with {\bf Reeb chords} of $\widetilde{\lag}$, which are trajectories of the Reeb vector field $\frac{\partial}{\partial u}$ that begin and end on $\widetilde \lag$.

The {\bf action of a double point} $X$ of $\lag$ is defined to  be
the {\bf length of the corresponding Reeb chord} $c_X$ of $\widetilde \lag$ starting at  $c^-\in\widetilde \lag$ and ending at $c^+\in\widetilde \lag$, which  is given by $u(c^+)-u(c^-) \geq 0$. 
From our construction of $\widetilde \lag$,  if $X$ is the image of $p_1, p_2 \in \Sigma$  the  action of a double point $X$ is the absolute value of the difference of the primitives at $p_1,$ and $p_2$: $|f(p_1) - f(p_2)|$.

\begin{remark} For an immersed, exact Lagrangian cobordism $L = i(\Sigma)$, the primitive, as defined in Remark~\ref{rem:cobord}, is defined on $\Sigma$, $f: \Sigma \to \mathbb R$.
When all the double points of $L$ have action $0$,  the primitive is a well-defined function $f: L \to \mathbb R$.
\end{remark}

\subsection{Maslov class and index of a double point}\label{subsec:maslov}

We now clarify what we mean by the {\it index} of a double point in an immersed, Maslov-$0$, exact Lagrangian cobordism.
Briefly, the index of a non-zero action double point will be defined in a standard way using the Conley-Zehnder index of the corresponding Reeb chord (of strictly positive length) in the Legendrian lift. We then define the index of an action-$0$ double point of an immersed Lagrangian.

\subsubsection{Maslov index of a loop of Lagrangians and Maslov class of a Lagrangian} \label{sssec:maslov}

First, notice that our Lagrangian cobordisms live in $\big(\R\times\R^{2n-1},d(e^t\alpha)\big)$ which is equivalent via an exact symplectic diffeomorphism to $\big(\R^{2n},\sum dq_i\wedge dp_i\big)$.
Then, there is a standard way of associating an integer, known as the {\bf Maslov index}, to a smooth loop  on an immersed, Lagrangian submanifold in $\R^{2n}$; see, for example, \cite[Section 2.2]{EESu}.  
All examples of Lagrangian cobordisms that we consider in this paper have {\it Maslov class $0$} (denoted {\bf Maslov-$\mathbf 0$}), meaning that all loops have Maslov index $0$. In particular, this implies that the Lagrangians are orientable since the Maslov class modulo $2$ is the first Stiefel--Whitney class. In general, Maslov-$n$ ensures a well-defined $\Z_n$-grading for generators of the Chekanov-Eliashberg DGA (Section \ref{sec:chekanov}) and generators of the Cthulhu complex (Section \ref{sec:cth}); all augmentations and chain maps are also $\Z_n$-graded.

\subsubsection{Index of a double point} \label{ssec:CZ}

Consider an embedded, connected Legendrian $\leg \subset \R^{2n+1}$ and its Lagrangian projection $\pi_{xy}(\Lambda)  \subset\R^{2n}$. Given a Reeb chord $c$ of $\leg$,
a \textit{capping path $\gamma$ along} $\leg$ from the point corresponding to the end of the Reeb chord $c^+$ to the start of the Reeb chord $c^-$ together with a standard closure, as defined in \cite{EESu}, 
gives rise to a smooth loop of Lagrangian subspaces.  The Maslov index of this loop defines the {\bf Conley-Zehnder index} of the Reeb chord $c$, denoted $CZ_\gamma(c)$.  When the Maslov class of the Lagrangian $\pi_{xy}(\leg)$ is $0$,  the Conley-Zehnder index does not
depend on the choice of the capping path along $\leg$, and so we denote it $CZ(c)$. Given this, if $\lag$ is an immersed, Maslov-$0$, exact  Lagrangian with {\it embedded}, Maslov-$0$, Legendrian lift $\widetilde \lag$, a double point $X$ of $ \lag$ lifts to a Reeb chord $c_X$, and we define the {\bf index of $X$} as 
\begin{equation} \ind(X) = CZ(c_X)-1. \label{eqn:dp-index}
\end{equation}

For low-dimensional Legendrians, there is a combinatorial way to compute the Conley-Zehnder index of a  Reeb chord of $\leg$  using a {\it Maslov potential} on the front projection, $\pi_{xz}(\leg)$. Let $\leg$ denote an embedded Legendrian knot  in $\R^{3}_{std}$ (resp. $\R^5_{std}$) with generic front projection, and let $\Lambda_{sing}$ be the subset of $\Lambda$ where the front projection is not an immersion, i.e. the preimage by $\pi_{xz}$ of the set of cusp points (resp. cusp edges and swallow tails). If the Lagrangian $\pi_{xy}(\Lambda)$  has Maslov class $0$, a {\bf Maslov potential} is a locally constant map
$$
\mu: \Lambda/\Lambda_{sing}\to \Z, 
$$ 
such that near a cusp point, or cusp edge, the Maslov potential of the upper sheet is $1$ more than that of the lower sheet.
The Maslov potential is well defined up to a global shift by an integer.
Now let $c$ be a Reeb chord of $\leg$ from $c^{-}$ to $c^{+}$.
In a neighborhood of $c^+$ (resp. $c^-$), $\Lambda$ is the $1$-jet of a Morse function $f_u$ (resp. $f_l$) defined on a neighborhood of $\pi_x(c)$, and $\pi_x(c)$ is a critical point of the function $f_{ul}:=f_u-f_l$.  
Given a Maslov potential $\mu$ on $\Lambda$, we have 
\begin{equation} \label{eqn:CZ}
CZ(c)=\mu(u)-\mu(l)+\ind_{f_{ul}}(\pi_{x}(c)),
\end{equation}
where $u$ and $l$ are the sheets of $\Lambda$ containing $c^+$ and $c^-$ respectively, see \cite[Lemma 3.4]{EESu}.  

In the case when $\Lambda$ is not connected, there is no capping path for Reeb chords between two different components, so we need to make additional choices, as explained in, for example, \cite[Section 3.1]{EHK}.  In particular, 
the capping paths involve the choice of  points in each component of $\leg$ as well as paths between the corresponding Lagrangian tangent spaces at these points.  The Conley-Zehnder index of a particular Reeb chord between components depends on these choices, but for two such Reeb chords, the difference is independent of the choices.  One can again compute the index of a Reeb chord combinatorially using Equation~\ref{eqn:CZ}; the paths determine ``the jump'' of Maslov potential between the two components $\Lambda_i$ and $\Lambda_j$.

The above definition of the index of a Reeb chord applies to the case where the Legendrian $\leg$ is embedded, and so $c^{\pm}$ are distinct points of $\leg$ for each Reeb chord $c$. 
In other words, the double points of the Lagrangian projection $\pi_{xy}(\leg)$ are all of strictly positive action. 
When $\leg$ is immersed and $c$ is a Reeb chord of length $0$, meaning
$c^{+} = c^{-}$ (by assumption this Reeb chord still corresponds to a transverse double point in the Lagrangian projection), the Conley-Zehnder index may depend on the choice of capping path even if $\pi_{xy}(\leg)$ has Maslov class $0$. 
Indeed, for any non-trivial path $\gamma:[0,1]\to\Lambda$ from $c=c^\pm$ to itself starting on one sheet of $\Lambda$ and coming back to $c$ along the other sheet, both $\gamma$ and its reverse $-\gamma$ are capping paths for the  Reeb chord $c$. 
Since in a neighborhood of $c$, $\Lambda$ consists of two sheets meeting tangentially at $c$, using Equation~\ref{eqn:CZ}, we find that 
$$CZ_{-\gamma}(c)=n-CZ_\gamma(c),$$
where $n$ is the dimension of the Legendrian.
Thus if $X$ is an action-$0$ double point of an $n$-dimensional, exact Lagrangian $L$, and $c_X$ denotes the associated length $0$ Reeb chord in the Legendrian lift, then comparing a capping path $\gamma$ and its reverse,
 we have that 
$$\ind_{\gamma}(X)= CZ_\gamma(c_X) - 1 =  (n - CZ_{-\gamma}(c_X) ) - 1 =  n-1 - CZ_{-\gamma}(c_X)  = n-2-\ind_{-\gamma}(X).$$
In particular, when $n=2$,
the index of $X$ using a capping path $\gamma$ or its reverse differs by a sign:
$$\ind_{\gamma}(X)= -\ind_{-\gamma}(X).$$

\begin{definition}\label{defn:ind}
Suppose $X$ is an action-$0$ double point in an $n$-dimensional,  immersed,  Maslov-$0$,  exact Lagrangian. 
The index of $X$ is defined to be the greater of $\ind_{\gamma}(X)$ and $\ind_{-\gamma}(X)$,
 for any capping path $\gamma$ for $c_X$. 
When $n  =2$, we have that $\ind(X)=|\ind_{\gamma}(X)|$.
\end{definition}

The index of a double point arises when considering 
Legendrian Hopf links.
 \begin{definition} \label{defn:Hopf} 
The $(n-1)$-dimensional {\bf Legendrian Hopf link} $\Lambda_{\Ho}^k$  is given by the intersection of the standard local model of an index-$k$ double point of an $n$-dimensional Lagrangian submanifold (namely, $\R^n\cup i \R^n\subset \C^n$) 
and the unit sphere $S^{2n-1}$ with its standard contact structure.
\end{definition}
For $n=2$, we can give a more specific description of the $1$-dimensional Legendrian Hopf link $\Lambda_{\Ho}^k$.

\begin{example}[Hopf links]\label{ex:Hopf}  

	When $n=2$, consider the Hopf link $\leg_{\Ho}^k$ given by the intersection of the local model for an index-$k$ double point of a Lagrangian surface ($\R^2\cup i\R^2\subset \C^2$) and $S^3$. We claim that, potentially after a Legendrian isotopy, there is a front projection of $\leg_{\Ho}^k$ as shown in the leftmost  diagram in Figure~\ref{fig:Hopf}, 
	where the Maslov potential, near the right cusps, from bottom to top, on the four strands is given by $0,1,k+1$ and $k+2$ (up to a global addition of an integer). 
To see this correspondence for $\leg_{\Ho}^0$, we will observe in Lemma \ref{lem:index-0-surgery} that in order to get a {\it Maslov-0} exact Lagrangian cobordism from another {\it Maslov-0}, immersed, exact Lagrangian cobordism on which we perform Lagrangian surgery, the index of the double point we surgered must be $0$. The Hopf link corresponding to this double point (link of the singularity) will thus admit an embedded, Malsov-0, exact Lagrangian filling. From consideration on augmentations and using the Seidel's isomorphism, see Example \ref{ex:aug-hopf}, one can check that $\leg_{\Ho}^0$ is the only Hopf link that bounds an embedded, Maslov-$0$, exact Lagrangian filling. Then, if the double point is of index $k$, the difference in Maslov potential of the two components of $\R^2\cup i\R^2$ must be $k$. Therefore, the boundary $\Lambda_{\Ho}^k$ inherits the required Maslov potential from that of the surface $\R^2\cup i\R^2$.
An explicit Legendrian isotopy via Legendrian Reidemeister moves shows that $\Lambda_{\Ho}^k$ and $\Lambda_{\Ho}^{-k}$ are Legendrian isotopic.
\end{example}

\section{Lagrangian surgery}\label{sec:surgery}

We start this section by reviewing the \emph{Lagrangian surgery} operation on immersed Lagrangian submanifolds, which was first defined for Lagrangian surfaces by Lalonde and Sikorav in \cite{LS} and then generalized to higher dimensions by Polterovich \cite{Pol91}. 
We then prove Theorem~\ref{thm:maingen1}, which translates the existence of immersed fillings into the existence of  embedded cobordisms with the double points
of action 
$0$ being replaced by Hopf links.

\subsection{Lagrangian surgery construction} 

In this subsection, our goal is to prove the following:

\begin{proposition}\label{prop:dp-g} If a Legendrian link $\Lambda\subset \R^3_{std}$ admits an immersed, Maslov-$0$,  exact Lagrangian filling $L$ of genus $g$ with $p$ double points such that one of the double points has index $0$ and action $0$, then $\leg$ also admits an
 immersed,  Maslov-$0$,  exact Lagrangian filling $L'$ of genus $g+1$ with $p-1$ double points.
\end{proposition}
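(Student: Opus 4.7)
The strategy is to perform the classical Lagrangian surgery of Lalonde--Sikorav and Polterovich at the distinguished double point $X$, and to verify that the resulting Lagrangian $L'$ is still Maslov-$0$ and exact with the claimed genus and double-point count. First I would choose a Darboux chart for the symplectization $(\R_t\times\R^3,d(e^t\alpha))$ centered at $X$ and identify a neighborhood of $X$ with a neighborhood of $0\in(\C^2,\omega_{\std})$. Since the two tangent planes of $\lag$ at $X$ are transverse Lagrangian planes in $\C^2$, a linear symplectomorphism followed by a compactly-supported Hamiltonian isotopy of $\lag$ brings the two local sheets near $X$ into coincidence with $\R^2\subset\C^2$ and $i\R^2\subset\C^2$ inside a small ball $B_\epsilon$.

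Second, I would replace $(\R^2\cup i\R^2)\cap B_\epsilon$ by the Polterovich--Lalonde--Sikorav Lagrangian handle $H_\epsilon\subset B_\epsilon$: an embedded Lagrangian annulus that smooths the transverse double point and agrees with $\R^2\cup i\R^2$ near $\dd B_\epsilon$. Call the resulting immersed Lagrangian surface $L'$. By construction $L'$ equals $\lag$ outside $B_\epsilon$, so the other $p-1$ double points are preserved. The abstract source surface of $L'$ is obtained from that of $\lag$ by attaching a $1$-handle joining the two preimages of $X$; since a filling of a connected Legendrian knot has connected source, this produces a connected surface of genus $g+1$ with one boundary component mapping to $\leg$.

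Third, the action-$0$ hypothesis is precisely the statement that the primitive $f:\Sigma\to\R$ of $\iota^*(e^t\alpha)$ on $L$ satisfies $f(p_1)=f(p_2)$ at the two preimages of $X$. The handle $H_\epsilon$ is itself an exact Lagrangian in $(\C^2,\omega_{\std})$ with a primitive that can be shifted by a constant, so we can glue this primitive to $f$ along the two annular boundary circles of $H_\epsilon$ to obtain a well-defined primitive $f'$ on the new source surface $\Sigma'$; hence $L'$ is exact. For the Maslov class, the Maslov-$0$ condition on $\lag$ produces a global Maslov potential, well-defined up to a constant, and the index-$0$ hypothesis at $X$ says precisely that the Maslov potentials of the two local sheets at $X$ agree. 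The handle $H_\epsilon$ can be given a Maslov potential matching these two values exactly when the ambient index is $0$, so the Maslov potential extends across $H_\epsilon$ and $L'$ is Maslov-$0$.

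The main obstacle I expect is the verification in the previous paragraph: showing that the primitive and the Maslov potential of the local Polterovich handle can be calibrated consistently with the global data on $\lag$. This is precisely where the action-$0$ and index-$0$ hypotheses enter; without them, the two sheets at $X$ would carry incompatible primitive values or Maslov gradings, and the surgery would fail to produce an exact, Maslov-$0$ immersed Lagrangian. The topological bookkeeping of step two is then immediate and yields the claimed change in genus and number of double points.
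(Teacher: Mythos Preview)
Your proposal is correct and follows essentially the same route as the paper: perform Polterovich--Lalonde--Sikorav surgery at the action-$0$, index-$0$ double point and verify that exactness and the Maslov-$0$ condition survive. The paper packages the exactness step slightly differently by working on the Legendrian lift $\widetilde{L}$ (following Casals--Murphy--Presas), where the surgery is a ``cusp-sum'' or ``cone-sum'' and the result is automatically the lift of an exact Lagrangian; the Maslov-$0$ check is then isolated as a separate lemma showing that the Maslov potential on $\widetilde{L}$ extends across the new cusp edge precisely when $\ind(X)=0$.

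One small inaccuracy worth correcting: the index-$0$ condition does \emph{not} say that the Maslov potentials of the two local sheets agree. In the front projection of the Legendrian lift, the condition $\ind(X)=0$ is equivalent to $\mu(u)-\mu(\ell)=1$, and it is this difference of $1$ that allows the potential to extend over the cusp edge (or cone) created by the handle. Your conclusion---that index $0$ is exactly the condition under which the grading extends---is correct; only the intermediate characterization is misstated. Also note that the proposition is stated for Legendrian \emph{links}, so your parenthetical appeal to connectedness of the source (``a filling of a connected Legendrian knot has connected source'') needs a word of care: if the two preimages of $X$ lie on different components of the source, the $1$-handle merges them rather than raising the genus, but the Euler-characteristic bookkeeping still gives the claimed change in $(g,p)$ under the usual convention for genus of possibly disconnected fillings.
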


To resolve a double point $X$ of a Lagrangian, we remove a small neighborhood of $X$ and glue back in a {Lagrangian handle}.  
In the setting where the Lagrangian $\lag$ is exact, 
we can 
understand Lagrangian surgery in terms of the Legendrian lift $\widetilde \lag$ of  $\lag$.  This is the approach taken in \cite[Section 6.2]{CMP} where
Casals-Murphy-Presas  give explicit parametrizations of two Lagrangian handles that can be constructed to replace an action-$0$ double point.  The Legendrian lift of one of these
handles can be seen as a ``cusp-sum'', and the Legendrian lift of the other can be seen as a ``cone-sum''; see Figure~\ref{surgery1}. 
These two Lagrangian surgeries are smoothly the same \cite[Proposition 2]{Pol91}.  Observe that $\lag'$ obtained
from either of these surgeries is necessarily exact since it is constructed through its Legendrian lift. The proof of Proposition~\ref{prop:dp-g} then follows immediately from the next lemma that tells us that if the double point has index $0$, the Maslov-$0$ condition is preserved under surgery.
\begin{figure}[!ht]
	\includegraphics[width=4in]{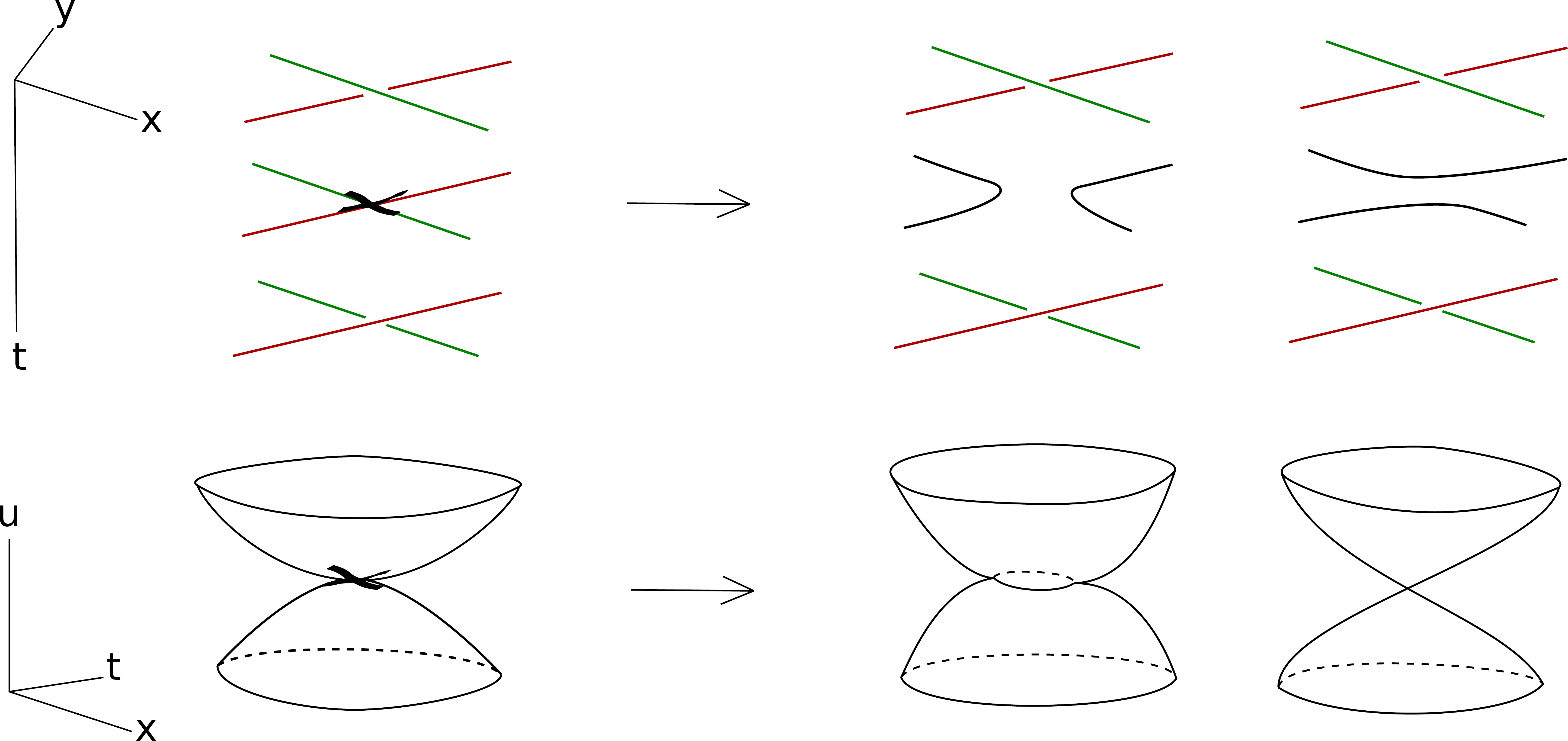}
	\caption{On the top row, left side, are schematized $\pi_{xy}$ slices of the Lagrangian $\lag\subset\R_t\times\R^3$ in a neighborhood of a double point, 
	and on the  top right are slices of the Lagrangian obtained after the two possible handle attachments. The bottom row, left side, schematizes the Legendrian lift $\widetilde{\lag}\subset\R_t\times\R^3\times\R_u$ of $\lag$ and on the right the Legendrian lifts  
	of each handle attachment.}
	\label{surgery1}
\end{figure}

\begin{lemma} (cf. \cite{Pol91, CMP}) \label{lem:index-0-surgery}
Suppose $\lag$ is an immersed,  Maslov-$0$, exact Lagrangian surface that contains an action-$0$  double point  $X$; let  $\lag'$ denote an exact Lagrangian obtained from one of the two  Lagrangian surgeries that correspond to the Legendrian
 ``cusp-sum'' or ``cone-sum'' resolutions of the lift
described above. 
If the  index of $X$ is $0$, then $\lag'$  has Maslov class $0$.
\end{lemma}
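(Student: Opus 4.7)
\emph{Proof plan.} Because $L$ is exact and Maslov-$0$, its Legendrian lift $\widetilde L$ in the contactization is embedded and admits a globally defined Maslov potential $\mu:\widetilde L/\widetilde L_{\mathrm{sing}}\to\Z$. The Lagrangian $L'$ is automatically exact since it is constructed through its Legendrian lift $\widetilde{L'}$, so to show that $L'$ is Maslov-$0$ it suffices to extend $\mu$ to a Maslov potential on all of $\widetilde{L'}$.

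The surgery is supported in an arbitrarily small neighborhood of the length-$0$ Reeb chord $c_X$ of $\widetilde L$: outside this neighborhood, $\widetilde{L'}$ agrees with $\widetilde L$, so $\mu$ pulls back unchanged. Hence the only question is whether $\mu$ extends consistently across the new ``handle'' region shown in the bottom right of Figure \ref{surgery1}. Using the explicit parametrizations of the two possible handles from \cite{CMP}, one reads off the number and orientation of the new cusp edges introduced; combined with the fact that a Maslov potential jumps by $+1$ across each cusp edge, this imposes a matching condition of the form
\[
\mu(\text{upper sheet of }\widetilde L\text{ at }c_X)-\mu(\text{lower sheet of }\widetilde L\text{ at }c_X)=\pm 1,
\]
where the sign depends on which of the two possible surgeries is performed.

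It remains to identify this matching condition with the hypothesis $\mathrm{ind}(X)=0$. Formula (\ref{eqn:CZ}), applied to the length-$0$ Reeb chord $c_X$, reads $CZ_\gamma(c_X)=\mu_u-\mu_l+\mathrm{ind}_{f_{ul}}(\pi_x(c_X))$, where the Morse index of the height-difference function is $0$ or $2$ for a transverse action-$0$ double point in dimension $2$, with the two values corresponding to the two choices of capping path $\gamma,-\gamma$. Combined with $\mathrm{ind}_\gamma(X)=CZ_\gamma(c_X)-1$ and the identity $\mathrm{ind}_{-\gamma}(X)=-\mathrm{ind}_\gamma(X)$ from the discussion preceding Definition \ref{defn:ind}, the hypothesis $\mathrm{ind}(X)=|\mathrm{ind}_\gamma(X)|=0$ is equivalent to $\mu_u-\mu_l=\pm 1$, precisely the matching condition above. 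Therefore $\mu$ extends to a Maslov potential on $\widetilde{L'}$, and $L'$ is Maslov-$0$.

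The technical heart of the argument is the cusp-edge bookkeeping in the local handle model: one must unfold the parametrization carefully enough to count how many new cusp edges appear and in which direction each one contributes $+1$ to the Maslov potential, since only then does the $\pm 1$ in the matching condition match the index-$0$ hypothesis exactly. This local computation is closely parallel to the Maslov-potential description of the Hopf link $\Lambda_{\Ho}^k$ given in Example \ref{ex:Hopf}, which can serve as a useful local reference.
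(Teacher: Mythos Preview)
Your approach is the same as the paper's: reduce the Maslov-$0$ question to whether the Maslov potential $\mu$ on $\widetilde L$ extends across the local handle in $\widetilde{L'}$, then identify the resulting condition on $\mu(u)-\mu(\ell)$ with the hypothesis $\ind(X)=0$ via Equations~(\ref{eqn:CZ}) and~(\ref{eqn:dp-index}).

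There is one inaccuracy in your local analysis. You assert that the matching condition is $\mu(\text{upper})-\mu(\text{lower})=\pm 1$ with the sign determined by which of the two surgeries (cusp-sum or cone-sum) is performed. In fact, once one fixes the geometric labeling of upper and lower sheets in the model of Figure~\ref{surgery1}, the extension condition is $\mu(u)-\mu(\ell)=1$ for \emph{both} surgeries; see also \cite[Figure~3]{DR:knotted} for the cusp edges arising after perturbing the cone. The $\pm 1$ you obtain on the index side comes from the two choices of capping path (equivalently, swapping which sheet is called $u$), not from the two surgery types. With the geometric labeling, the Morse index of $f_{u\ell}$ at the tangency is $0$, so $\ind_\gamma(X)=\mu(u)-\mu(\ell)-1$, and $\ind(X)=|\mu(u)-\mu(\ell)-1|=0$ is equivalent to $\mu(u)-\mu(\ell)=1$, matching the single extension condition. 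Your conclusion is unaffected, but the attribution of the sign ambiguity to the surgery type is incorrect and should be revised.
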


\begin{proof} 
The Maslov class of $\lag'$ is $0$ if and only if its Legendrian lift $\widetilde{\lag'}$ admits a ($\mathbb Z$-valued) Maslov potential.
Before surgery, $\lag$ has Maslov class $0$ so its lift $\widetilde{\lag}$ admits a Maslov potential $\mu$.
In the lower left model shown in Figure~\ref{surgery1}, denote the upper and lower sheets  of $\widetilde{\lag}$ by $u$ and $\ell$ respectively.
For both the cusp edge and the cone singularity cases, the Maslov potential $\mu$ can be ``extended'' after surgery to $\widetilde{\lag'}$ if and only if $\mu(u)-\mu(\ell)=1$, 
(see also \cite[Figure 3]{DR:knotted} for the cusp edges arising after perturbing the cone).
The condition $\mu(u)-\mu(\ell)=1$ is equivalent to the condition $\ind(X)=0$ according to Definition~\ref{defn:ind} and Formulas \eqref{eqn:CZ} and \eqref{eqn:dp-index}.
\end{proof}

 \begin{definition}\label{defn:not-from-surgery}  Let $F_{g}^{p}$ denote an immersed, Maslov-$0$, exact Lagrangian filling $F_{g}^{p}$ of a Legendrian $\leg$ with 
genus $g$ and $p$ double points of indices $i_{1}, \dots, i_{p}$ and actions $a_{1}, \dots, a_{p}$. We say that $F_{g}^{p}$  
{\bf arises from Lagrangian surgery} if there exists an immersed, Maslov-$0$, exact Lagrangian filling $F_{g-1}^{p+1}$ of $\leg$ with 
 genus $g-1$ and $p+1$ double points such that 
 \begin{enumerate}
 \item $p$ of the double points have indices $i_{1}, \dots, i_{p}$ and actions $a_{1}, \dots, a_{p}$,
 \item there exists a double point $x_{0}$ of index $0$ and action $0$, and
 \item the Lagrangian surgery corresponding to the Legendrian cusp-sum or cone-sum at $x_{0}$ produces $F_{g}^{p}$. 
\end{enumerate}
If there is no such Lagrangian filling $F_{g-1}^{p+1}$, then we say that $F_{g}^{p}$  
{\bf does not arise from Lagrangian surgery}.
  \end{definition}

\subsection{Proof of Theorem~\ref{thm:maingen1}}

In \cite[Theorem 1.3]{Cha15} Chantraine showed that the existence of an immersed, exact Lagrangian filling of $\leg$ with a single action-$0$ double point  implies the existence of an embedded exact Lagrangian cobordism from a Hopf link  to $\leg$.  In this section, we prove
Theorem~\ref{thm:maingen1}, which generalizes this result to more general cobordisms, more double points, and higher dimensions.

The proof of Theorem~\ref{thm:maingen1} will use the theory of Liouville  structures. Below we briefly describe some of the key terms.  See, for example, \cite[Chapters 11 and 12]{CE} for more details.
 A $1$-form $\lambda$ on a manifold $M$ such that $\omega = d\lambda$ is symplectic is called a 
{\bf Liouville form}; the associated $\omega$-dual vector field $V$, defined by $i_{V}\omega = \lambda$, is the {\bf Liouville vector field} of $\lambda$.  
A {\bf Liouville domain}, $(W, \omega, V)$, is a compact manifold with boundary, $W$, equipped with an exact symplectic structure $\omega=d\lambda$ such that the associated Liouville vector field $V$ points outward along $\partial W$. The boundary $\partial W$ is a contact manifold with contact form $\alpha:=\lambda_{|\partial W}$.
A {\bf Liouville manifold} is a manifold $M$ together with 
a Liouville form $\lambda$, equivalently a triple $(M, \omega=d\lambda, V)$, such that $V$ is complete and $M$ admits an exhaustion $M = \cup_{k}W^{k}$ where $(W^{k}, \omega, V)$ are Liouville domains.
The \textbf{skeleton} of a Liouville manifold $(M, \omega=d\lambda, V)$ is the isotropic set of points that do not escape to infinity under the Liouville flow. More concretely, 
$Skel(M, \omega, V)=\cup_{k=1}^{\infty} \cap_{t>0} \phi^{-1}(W^k)$, where $\cup_k W^k$ is an exhaustion of $M$, and $\phi^t:M\rightarrow M$ is the flow along $V$ for time $t$. A Liouville manifold is obtained from a Liouville domain $W$ by attaching the semi-infinite cylinder $([0, \infty) \times \partial W)$ to $W$ and extend the Liouville form by $e^{t}\alpha$.
For example, 
\begin{equation}\label{eq:LW}
\left(\R^{2n}, \omega_{std}=\sum dq_i\wedge dp_i, V_{rad} = \frac12\sum_{i=1}^{n} \left(q_{i} \frac{\partial}{\partial q_{i}} + p_{i}\frac{\partial}{\partial p_{i}} \right)\right)
\end{equation}
 is a Liouville manifold. 
In a Liouville manifold $(M, \omega, V)$, any hypersurface $\Sigma\stackrel{i}\hookrightarrow M$ transverse to $V$ is a contact manifold, with contact form given by $\alpha = i^{*}\lambda$.
 For any Legendrian  $\leg \subset \Sigma$, flowing $\leg$ along $V$ defines a {\bf Lagrangian} that is cylindrical over $\leg$.
Weinstein domains are Liouville domains with a compatible Morse handlebody decomposition. For $k\leq n$, a $2n$-dimensional {\bf Weinstein handle of index $k$}  has underlying Liouville domain given as  
$\left(\B^{k} \times \B^{2n-k}, \omega_{std}, V_{k} \right),$
where  
$$ 
\begin{aligned}
\omega_{std}=\sum_{i=1}^{n} dq_i\wedge dp_i,  \quad V_{k} &= \sum_{i=1}^{k} \left( -q_{i} \frac{\partial}{\partial q_{i}} + 2p_{i}\frac{\partial}{\partial p_{i}}\right) + \frac12 \sum_{i=k+1}^{n} \left( q_{i} \frac{\partial}{\partial q_{i}} + p_{i}\frac{\partial}{\partial p_{i}} \right). \\
 \end{aligned}
$$
The {\bf core} (respectively, {\bf cocore)} of the $k$-handle is $\B^{k} \times \{0\}$ (respectively, $\{0\} \times \B^{2n-k}$)  and the handle has {\bf attaching sphere} given by the boundary of the core, $S^{k-1} \times \{0\}$.
It is possible to build Weinstein cobordisms via attaching handles by gluing the isotropic attaching sphere to isotropic spheres in the contact level sets, \cite[Proposition 11.13]{CE}.
 
\begin{proof}[Proof of Theorem~\ref{thm:maingen1}] Let $L^{\times}$ be an immersed,  Maslov-$0$,
exact Lagrangian cobordism from $\Lambda_-$ to $\Lambda_+$ with $p$ double points,  $m$ of which, $x_{1}, \dots, x_{m}$, have action $0$. By Definition~\ref{defn:cobord}, we know that the value of the primitive is constant along all components of $\leg_-$. For the reader's convenience, we outline the argument.

\begin{enumerate}
\item Map $(\R_t\times \R^{2n-1}, d(e^t \alpha))$ to $(\R^{2n}- \{\text{ray}\}, \omega_{std}=\sum dq_i\wedge dp_i)\subset (\R^{2n}, \omega_{std})$ with an exact symplectomorphism
so that $L^{\times}$ is sent to an exact Lagrangian $\widetilde L^{\times}$ that is cylindrical outside of $\B_{0}(\rho_{+})$ and inside $\B_{0}(\rho_{-})$, where $\B_{0}(\rho)$ is the standard Euclidean ball centered at $0$ of radius $\rho$. 
\item Change the Liouville structure on  $\R^{2n}$ from $(\omega_{std}, V_{rad})$ to a Liouville structure $(\omega_{std}, V^{rad}_{\#})$ so that a ``\emph{multi-dumbbell region}'' $\D_{\#} \subset \B_{0}(\rho_{-})$  has a
 Liouville structure obtained from attaching $m$ ``exterior'' Weinstein $0$-handles to a ``center'' Weinstein $0$-handle via $m$ Weinstein $1$-handles.  
\item  Apply a Hamiltonian isotopy to drag the double points of $\widetilde L^{\times}$ to the center of the exterior $0$-handles of $\D_{\#}$ 
and move $\widetilde L^{\times}$
to agree with standard intersecting Lagrangian disks near each double point. 
Now $\widetilde L^{\times} \cap \partial  \D_{\#} = \widetilde \leg_{-}$ consists of the disjoint union of $m$ {\it Legendrian} Hopf links and the Legendrian link corresponding to $\leg_{-}$.  Since, by hypothesis, the $m$ double points all have action $0$,
we can guarantee that the primitive evaluates to the same constant  on the two components of each Hopf link.  Thus  on the components of $\widetilde \leg_{-}$, the primitive agrees with the constants 
$c_{0}, c_{1}, \dots, c_m$. 
\item By modifying $V_{\#}^{rad}$ inside $\D_{\#}$, we change the Liouville structure from $(\omega_{std}, V_{\#}^{rad})$ to $(\omega_{std}, V_{0})$ so that $V_0$ only vanishes at the origin. Furthermore, we also ensure that on a small ball $\B_{0}(\epsilon)\subset  \operatorname{Int}(\D_{\#})$, $V_{0}$ agrees with the radial Liouville structure. 
The flow of the Liouville vector field $V_{0}$ defines an exact Lagrangian cylinder $L_{V_{0}}$ over the Legendrian $\widetilde \leg_{-}  \subset \partial \D_{\#}$.  We construct a new, immersed, Maslov-$0$, exact Lagrangian cobordism $\widehat L$ with only $(p-m)$ double points by replacing 
$\widetilde L^{\times} \cap \operatorname{Int} \D_{\#}$ with the Lagrangian cylindrical end, $L_{V_{0}} \cap (\D_{\#} \setminus \operatorname{Int}(\B_0(\epsilon))$.  
Since $\partial(\B_0(\epsilon))$ is transverse to $V_{0}$,  
$\widehat \leg_{-} = \widehat L \cap \partial (\psi_{1}(B_0(\epsilon))$ is Legendrian, and by construction the primitive continues to agree with the constants $c_{0}, c_{1}, \dots, c_{m}$ on the components of $\widetilde \leg_{-}$.
 \item  Sard's Theorem guarantees the existence
  of a ray that avoids $\widehat L$.  This allows us to map the Lagrangian cobordism $\widehat L$ back to an immersed, Maslov-$0$, exact Lagrangian cobordism  $L \subset (\R_t\times \R^{2n-1}, d(e^t \alpha))$ with only $(p-m)$ double points.  By applying another
  Hamiltonian isotopy, we can guarantee that the primitive agrees with the same constant on all components of the negative end.
 \end{enumerate}

We now give more details for these steps.

\textit{Step 1:} As shown in, for example, \cite[Proposition 2.1.8]{Geiges} there is a contactomorphism $$\kappa: (\R^{2n-1}, \ker\alpha) \to \left(S^{2n-1}-\{pt\}, \ker \left(\frac{1}{2}\left(\sum q_i dp_i- p_i dq_i\right)\right) \right).$$ 
This contactomorphism lifts to an exact symplectomorphism between the symplectizations:
$$
\begin{aligned}
\widetilde \kappa: (\R_t\times \R^{2n-1}, d(e^t \alpha))&\to \left(\R^{2n}- \{ray\}, \omega_{std}=\sum dq_i\wedge dp_i\right) \\
\widetilde \kappa(t, p)&\mapsto  t\kappa(p).
\end{aligned}
$$
We can view the image of $\widetilde \kappa$ as a subset of the Liouville manifold $(\R^{2n}, \omega_{std}, V_{rad})$, as defined in Equation~\eqref{eq:LW}. Then, $\widetilde L^{\times} := \widetilde \kappa(L^{\times})$ is  an immersed, Maslov-$0$, exact Lagrangian surface that is cylindrical over the Legendrians $\kappa(\leg_{\pm})$ with respect to the radial Liouville vector field $V_{rad}$.
 In particular, if $L^{\times}$ is cylindrical outside $t_{\pm}$, there exist $\rho_{\pm}$ such that   $\widetilde L^{\times}$ is cylindrical outside $\B_0\left(\rho_{\pm}\right)$, which are balls with respect to the standard Euclidean metric of radius $\rho_{\pm}$ centered at the origin.

In the next steps, we will work in $(\R^{2n}, \omega_{std})$.  In Step 5, we will guarantee the existence of a ``ray'' that will allow us to transfer our Lagrangian back to 
$(\R_t\times \R^{2n-1}, d(e^t \alpha))$.

\textit{Step 2:}  
Choose $y_{1}, \dots, y_m \in \B_{0}(\rho_{-})$, and consider  balls $\B_{y_1}, \dots,  \B_{y_m} \subset \B_{0}(\rho_{-})$ centered at $y_{1}, \dots, y_{m}$, and  attach each of  these balls via radial paths $\delta_{1}, \dots, \delta_m$ to a disjoint center ball 
$\B_{0} \subset  \B_{0}(\rho_{-})$ centered at the origin.  
 View the balls $\B_{0}$ and $\B_{y_{k}}$, $k=1, \dots, m$, as Weinstein $0$-handles and construct $m$ Weinstein $1$-handles with core  $\delta_k$.  Thus, it is possible to glue these Weinstein structures together to obtain a Weinstein structure on a  neighborhood of a dumbbell region  $\D_{\#}$, \cite[Proposition 11.13]{CE};
  see Figure~\ref{fig:dumbbell} for a schematic. 
 Let  $(\D_{\#}, \omega_{std}, V_{\#})$ denote the resulting Liouville domain.
 Now we define a new Liouville structure  $(\R^{2n}, \omega_{std}, V_{\#}^{rad})$ that agrees with $(\omega_{std}, V_{rad})$ outside a neighborhood  of $\D_{\#}$ and with the Liouville structure $(\omega_{std}, V_{\#})$ on $\D_{\#}$. Let $N(\D_{\#})$ denote a contractible neighborhood of $\D_{\#}$ where $V_{\#}$ is defined.
 Let $\lambda_{rad}$ and $\lambda_{\#}$ denote the  Liouville $1$-forms for $V_{rad}$  and $V_{\#}$ in $(\R^{2n}, \omega_{std})$. Since  $d(\lambda_{\#}-\lambda_{rad})=\omega_{std}-\omega_{std}=0$, and all closed $1$-forms on $N( \D_{\#})$ are exact,  we know $\lambda_{\#}-\lambda_{rad}=dH$ for some function $H: N(\D_{\#}) \rightarrow \R$.
 Let $\sigma$ be a smooth bump function for $\D_{\#}$ supported on $N(\D_{\#})$:   $\sigma(p)=1$ for all $p \in \D_{\#}$,
  and $\operatorname{supp}\sigma \subset N(\D_{\#})$. 
Then consider $\lambda_{\#}^{rad}=\lambda_{rad}+d(\sigma H)$. On $\D_{\#}$, $\lambda_{\#}^{rad}=\lambda_{\#}$, while on the complement of $N(\D_{\#})$, $\lambda_{\#}^{rad}=\lambda_{rad}$. By construction, 
$\lambda_{\#}^{rad}$ is a Liouville $1$-form of $(\R^{2n},\omega_{std})$, so it provides a uniquely defined Liouville vector field $V_{\#}^{rad}$ on $(\R^{2n}, \omega_{std})$.
By construction of $\lambda_{\#}^{rad}$,  $\widetilde L$ is still exact in the new Liouville manifold $(\R^{2n}, \omega_{std}, V_{\#}^{rad})$.

\textit{Step 3:}  
By the $n$-transitivity of Hamiltonian isotopies, see for example \cite[Theorem A]{Boothby}, we can assume that after applying a compactly supported Hamiltonian isotopy
     the double points $x_{k}$ are at the point $y_k$ for $k=1, \dots, m$.  
By Moser arguments (as in, for example, \cite[Section 3.3]{MS95}), we can further assume that, after applying a Hamiltonian isotopy,
the immersed $\widetilde L^{\times} $ agrees with standard intersecting Lagrangian disks passing through $y_{k}$ parallel to $\mathbb R^{n}$ and $i\mathbb R^{n}$.
  Then   $\widetilde \leg_{-} := \widetilde L^{\times} \cap \partial \D_{\#}$ consists of $m$ Legendrian Hopf link and the Legendrian $\kappa(\leg_{-})$, 
 and the immersed $\widetilde L^{\times}$ is cylindrical over the Legendrians $\kappa(\Lambda_\pm)$.  
 By exactness of $\widetilde L^{\times}$,
  $\lambda_{\#}^{std}|_{\widetilde L^{\times}} = d \widetilde f$, for $\widetilde f: \Sigma  \to \mathbb R$, where
 $\widetilde L^{\times}$ is the immersed image of $\Sigma$. 
 Observe that on the intersecting Lagrangian disks at $y_k$, $\lambda_{\#}^{std} = 0$.  Thus $\widetilde f$ is constant
 on each of these disks, and this constant must agree with $\widetilde f(y_k)$.   Letting $\widetilde f(y_{k}) = c_{k}$, $k=1,\dots, m$, we then know that the primitive restricts to the constant $c_{k}$ on the $k$-th Hopf link in $\widetilde \leg_{-}$.  By hypothesis, $\widetilde f$ is constant on the Legendrian $\kappa(\leg_{-}) \subset \partial \B_{0} \cap \partial \D_{\#}$; we denote this constant by $c_{0}$. 

 \begin{figure}[!ht]
\labellist
\small
\pinlabel $\B_0(\rho_{-})$ at  18 90
\pinlabel ${\color{red} L}$ at  78 385
\pinlabel $\kappa(\Lambda_{-})$ at  334 431
\pinlabel $N(\D_{\#}^2)$ at  370 150
\pinlabel $\D_{\#}^2$ at  200 180
\pinlabel $y_1$ at  100 275
\pinlabel $y_2$ at  380 295
\endlabellist
\includegraphics[width=3in]{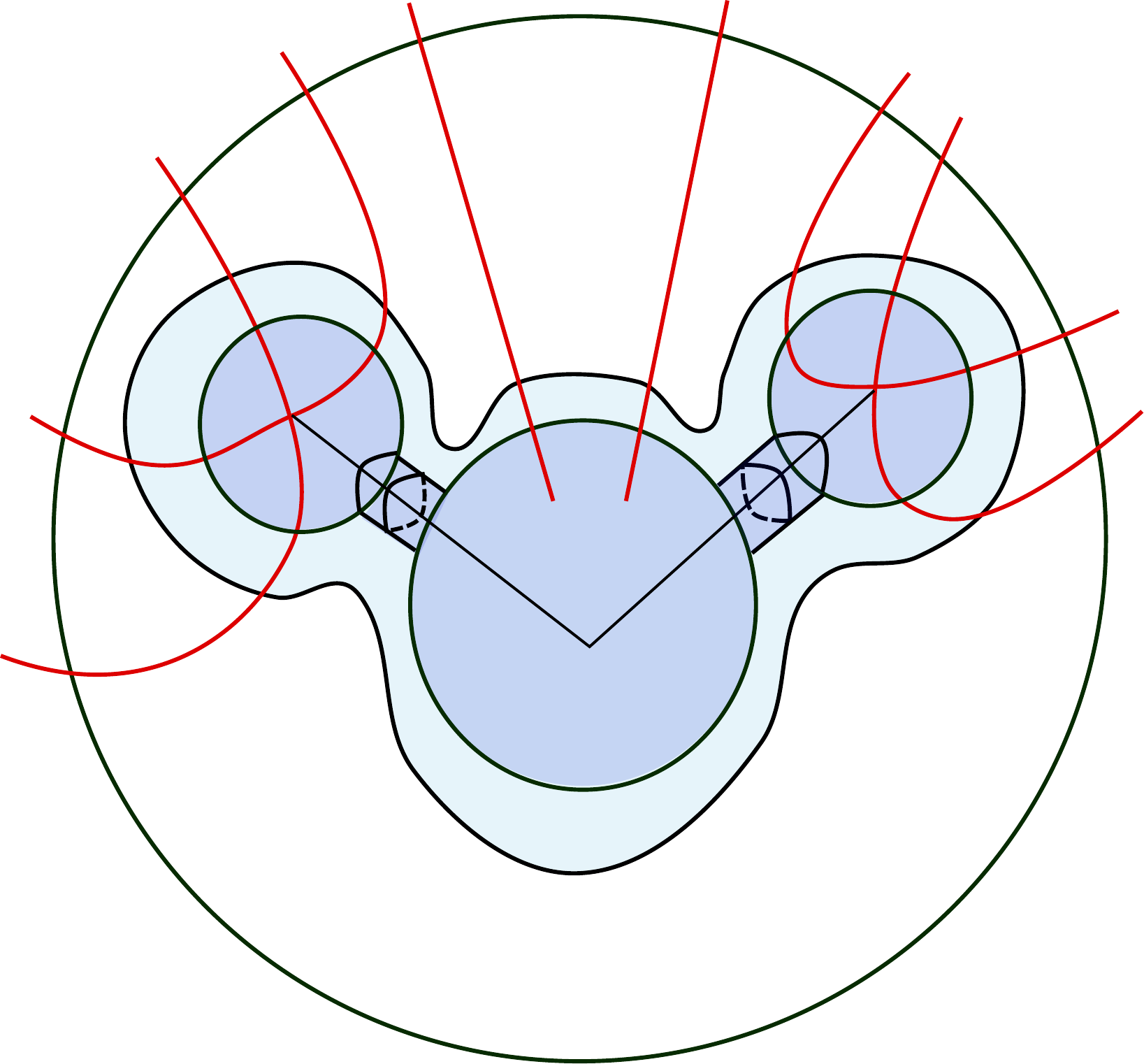}
\caption{A schematic of the dumbbell region $\D_{\#} \subset \operatorname{Int} \B_0(\rho_{-})$.}
\label{fig:dumbbell}
\end{figure}

\textit{Step 4:}   By construction, the skeleton of $V_{\#}^{rad}$, $\skel(V_{\#}^{rad})$, consists of the origin, the points $y_k$, and the paths $\delta_k$ between the origin and $y_k$ for $k=1, \dots, m$.  
Choose $\epsilon>0$ such that $\B_0(\epsilon) \subset \operatorname{Int}\D_{\#}$, and fix a neighborhood $N_{0} \subset  \operatorname{Int}\D_{\#}$ containing $\skel(V_{\#}^{rad}) \cup \B_{0}(\epsilon)$.
We will change the Liouville structure from  $(\R^{2n}, \omega_{std} = d\lambda_{std}, V_{\#}^{rad})$ to $(\R^{2n}, \omega_{std}=d\lambda_0, V_0)$ where  $V_0$ agrees with the radial Liouville vector field $V_{rad}$ on $\B_{0}(\epsilon) \cup \skel(V_{\#}^{rad})$, and  $V_0$ agrees with  $V_{\#}^{rad}$ on $\R^{2n}\setminus \operatorname{Int} \D_{\#}$.
Since both $\lambda_{rad}$ and $\lambda^{rad}_{\#}$ are Liouville $1$-forms for $(\R^{2n}, \omega_{std})$, then, as argued in Step 3, $\lambda_{rad}-\lambda^{rad}_{\#}=dH_{0}$ for some function $H_{0}:  
 \R^{2n} \to \R$. 
 Let $\sigma_{0}: \R^{2n}\rightarrow [0,1]$ be a smooth bump function for $\B_{0}(\epsilon) \cup \skel(V_{\#}^{rad})$ supported in $N_{0}$: $\sigma_0(p) \equiv 1$ for all $p \in \B_{0}(\epsilon) \cup \skel(V_{\#}^{rad})$,   $\operatorname{supp} \sigma_{0} \subset N_{0}$. We will also choose $\sigma_{0}$ such that $\nabla \sigma_0$ is parallel to $-V_{\#}^{rad}$.

 Now consider $\lambda_0=\lambda_{\#}^{rad}+d(\sigma_{0}H_{0})$. On $\B_0(\epsilon)\cup \skel(V_{\#}^{rad})$, $\lambda_0=\lambda_{rad}$, while on the complement of $N_{0}$, we have that $\lambda_0=\lambda_{\#}^{rad}$. By construction $\lambda_0$ is a Liouville $1$-form of $(\R^{2n}, \omega_{std})$ so it provides a uniquely defined Liouville vector field $V_0$ on $(\R^{2n}, \omega_{std})$.

We now show that on $\D_{\#}$, $V_{0}$ vanishes only at  the origin. 
First observe that on $\B_{0}(\epsilon) \cup \skel(V_{\#}^{rad})$, $V_{0} = V_{rad}$ and thus $V_0$ only vanishes at the origin within this subset.
Let $F_0=\sigma_0 H_0$.   Observe that, with respect to the standard almost complex structure $J$,  $\iota_{-J\nabla F_0}\omega_{std}=dF_0$. On $\D_{\#}$, $V_{\#}^{rad}=V_{\#}$ and $\lambda_{\#}^{rad}=\lambda_{\#}$,  
 and thus, on $\D_{\#}$, $V_{0} = V_{\#} - J\nabla F_{0}$.
  To ensure that  
  $V_0\neq 0$ on 
   $\D_{\#} \backslash (\B_{0}(\epsilon) \cup \skel(V_{\#}^{rad}))$,  
 we need to show that  
 $$ -J\nabla F_0\neq -V_{\#}, \qquad \forall p \in \D_{\#}\backslash  (\B_{0}(\epsilon) \cup \skel(V_{\#}^{rad})).
 $$
   Since $\iota_{-J\nabla H_0}\omega_{std}=dH_0 = \lambda_{rad} - \lambda_{\#}^{rad}$, we see that on $\D_{\#}$, $-J\nabla H_{0} = V_{rad} - V_{\#}^{rad} = V_{rad} - V_{\#}$. Thus
$$-J\nabla F_0= -J\nabla (\sigma_{0}H_{0})= -\sigma_0 J \nabla H_0- H_0J\nabla \sigma_0= \sigma_0(V_{rad}-V_{\#})+H_0(-J\nabla \sigma_0), \text{ on } \D_{\#}.$$
By construction, $\sigma_0$ is a smooth bump function such that $\nabla \sigma_0$ is parallel to $-V_{\#}^{rad} = -V_{\#}$, and thus $-J\nabla \sigma_0$ is perpendicular to $-V_{\#}$. 
Thus we see that for any  
$p \in \D_{\#} \backslash  (\B_{0}(\epsilon) \cup \skel(V_{\#}^{rad}))$ such that 
$-J\nabla F_0 = \sigma_0(V_{rad}-V_{\#})+H_0(-J\nabla\sigma_0)=-V_{\#}$, we know that the following properties hold:
\begin{enumerate}
\item  
 $V_{rad} - V_{\#}$ is in the $2$-plane spanned by $V_{\#}$ and $JV_{\#}$, and thus
$V_{rad}$ is contained in the $2$-plane spanned by $V_{\#}$ and $JV_{\#}$,  
 \item  $\langle \sigma_0(V_{rad}-V_{\#}), V_{\#} \rangle = -\| V_{\#} \|^{2}$,  and  
\item $\langle \sigma_0(V_{rad}-V_{\#}), JV_{\#} \rangle JV_{\#}+H_0(-J\nabla \sigma_0)=0.$
\end{enumerate}
The solution space $S$ for conditions (1) and (2) is a closed subset of $\D_{\#}$ and is thus a bounded set. By globally shifting $H_0$ by some constant, we can ensure that the third property is not satisfied for any $p\in S$. Although $\lambda_0$ is determined by $\sigma_0$ and $H_0$, the shift of $H_0$  does not affect our previous computations.

Let $L_{V_{0}}$ denote the Lagrangian cylinder formed by the trajectories of $V_{0}$ through
 $\widetilde \leg_{-} \subset \partial \D_{\#}$. 
Let us justify that $L_{V_0}$ is an exact Lagrangian. Let $S_{0}(\epsilon) = \partial (\B_{0}(\epsilon))$.
By construction, $V_0$ vanishes only at the origin in $ \D_{\#}$ and  is outwardly transverse to $S_{0}(\epsilon)$ and $\partial \D_{\#}$. Thus there is a canonical map
$$\begin{aligned}
G:\big([0,+\infty)\times S_0(\epsilon), e^t\alpha_0\big)&\rightarrow\big(\R^{2n}\setminus\operatorname{Int}(\B_{0}(\epsilon)),\lambda_{0}\big) \\
(t,x) &\mapsto \psi^{V_0}_t(x),
\end{aligned}$$
 where $\psi^{V_{0}}_{t}$ is the time-$t$ flow of $V_{0}$ and  $\alpha_0$ is the pullback of $\lambda_0$ to $S_0(\epsilon)$.
By the nonvanishing of $V_{0}$ on $\D_{\#}\backslash \{0\}$, we see that $G$ 
is surjective onto $\D_{\#} \setminus\operatorname{Int} \B_{0}(\epsilon)$, and under this map $G$, the boundary of the dumbbell, $\partial\D_\#$, is graphical over $S_0(\epsilon)$, i.e.  there exists a smooth function $F:S_0(\epsilon)\to\R^{+}$  such that $$\partial\D_\#=G(\Gamma_{F}), \quad \text{ where } \Gamma_{F}=\{(t,x)~|~t=F(x)\}\subset[0,+\infty)\times S_0(\epsilon).$$ Note that $\Gamma_{F}$ is a contact type hypersurface with induced contact form $e^F\alpha_0$.
Suppose that $\widetilde\leg_{-} \subset \partial \D_{\#}$  flows backwards along $V_{0}$ to $\leg_{S_0(\epsilon)} \subset L_{V_{0}} \cap S_{0}(\epsilon)$.  Equivalently, we can write
$$\widetilde\leg_-=\left\{\psi^{V_0}_{F(x)}(x)~|~x\in\leg_{S_0(\epsilon)}\right\} = L_{V_{0}} \cap \partial \D_{\#}.$$
Letting
$$\leg_{\Gamma_{F}}=\{(t,x)~|~x\in\leg_{S_0(\epsilon)},t=F(x)\} \subset \Gamma_{F},$$ 
we see that $\widetilde\leg_-=G(\leg_{\Gamma_{F}}) \subset \partial \D_{\#}$.
If we denote the inclusion $\partial\D_\#\stackrel{j}\hookrightarrow\R^{2n}$ then we have
$G^*j^*\lambda_0=e^F{\alpha_0}|_{\Gamma_{F}},$
and thus
$e^F{\alpha_0}|_{\leg_{\Gamma_{F}}}=0$ as $\widetilde \leg_{-} \subset \partial \D_{\#}$ is Legendrian. So we get $\leg_{\Gamma_{F}}\subset (\Gamma_{F},e^F\alpha_0)$ is Legendrian, which implies by construction of $\leg_{\Gamma_{F}}$ that $\alpha_0$ vanishes on $\leg_{S_0(\epsilon)}$, and thus $\leg_{S_0(\epsilon)}\subset(S_0(\epsilon),\alpha_0)$ is Legendrian.
The cylinder $L_{V_0}$ is the image by $G$ of 
$$\mathcal{L}_0=\{(sF(y),y)~|~y\in\leg_{S_0(\epsilon)},s\in[0,1]\}\subset[0,+\infty)\times S_0(\epsilon)$$
which is an exact Lagrangian (codimension 0 submanifold of $[0,+\infty)\times\leg_{S_0(\epsilon)}$) on which the primitive is constant. Thus $L_{V_0}=\{\psi^{V_0}_{sF(x)}(x)~|~x\in\leg_{S_0(\epsilon)},s\in[0,1]\}$ is also an exact Lagrangian with constant primitive.
 
We can now construct a new, immersed, Maslov-$0$, Lagrangian cobordism $\widehat L$ with only $(p-m)$ double points by replacing 
$\widetilde L^{\times} \cap \operatorname{Int} \D_{\#}$ with  $L_{V_{0}} \cap (\D_{\#} \setminus \operatorname{Int}( \B_0(\epsilon))$.  
Since $\partial(\B_{0}(\epsilon))$ is transverse to $V_{0}$,  
$\widehat \leg_{-} = \widehat L \cap \partial( \B_{0}(\epsilon))$  
is Legendrian.  A direct calculation shows that $\lambda_{0}|_{L_{V_{0}}}= 0$, and thus the primitive $\widehat f$ of the exact Lagrangian $\widehat L$ evaluates
to the same constants $c_0, c_1, \dots, c_m$ on the components of $\widehat \leg_{-}$ (as was the case for the evaluation of the primitive $\widetilde f$ on all components of the Legendrian $\widetilde \leg_{-}$ of $\widetilde L^{\times}$).  Moreover, since the Maslov potential of the $k$-th Hopf link $\Lambda_{\Ho}^{i_k}$ is  inherited from the Maslov potential of $\widetilde L^{\times}$, replacing part of the surface does not affect the Maslov-$0$ condition.

\textit{Step 5:}  
To send $\widehat L$ back to $(\R_t\x \R^{2n-1}, d(e^t\alpha))$, all we need to do is 
find a ray that does not intersect $\widehat L$ and then apply the exact symplectomorphism $\widetilde{\kappa}^{-1}: (\R^{2n}- \{ray\}, \omega_{std})\rightarrow (\R_t\x \R^{2n-1}, d(e^t\alpha))$.
We can ensure the existence of the ray for the following reason.
Note that $\widehat L$ is an Lagrangian immersion $i(\Sigma)$ for  $i: \Sigma\to \R^{2n}-\{0\}$ to $\widehat L$, where $\Sigma$ is an $n$-dimensional embedded surface. 
We can project $\widehat L$ to the unit sphere $S^{2n-1}$ and get a smooth map from $\Sigma$ to $S^{2n-1}$.
By Sard's Theorem, this map cannot be surjective for  $n\ge 1$, and therefore   we can always find a ray that does not intersect $\widehat L$. Once back in $(\R_t\x \R^{2n-1}, d(e^t\alpha))$,
by a Hamiltonian isotopy we can adjust the primitives to be the same constant on all components at the negative end (see, for example, \cite[Section $10.1$]{CDRGG}).
\end{proof}

\section{Legendrian Contact Homology}\label{sec:dga}
 
In this section we recall the definition of Legendrian contact homology, which was originally formulated by  Chekanov \cite{Che} and Eliashberg \cite{Eli}. We recall also the definition of augmentations and of linearized and bilinearized Legendrian contact homology. Throughout this section, we follow notations and conventions of \cite{CDRGG} and refer to this paper for more details.  More details about the situation when coefficients are taken in a field can be found, for example, in \cite{EESorientation} or \cite{NE}.

\subsection{Chekanov-Eliashberg DGA}\label{sec:chekanov}  
{Here we give the key definitions and set the notation that we will use.  A careful description of the Chekanov-Eliashberg DGA can be found, for example, in~\cite{NE, CDRGG}.  

The \textbf{Chekanov-Eliashberg differential graded algebra (DGA)} of $\Lambda$, $(\mathcal{A}(\Lambda), \partial)$ is the unital, graded algebra over a commutative ring $\F$ generated by Reeb chords of $\Lambda$. Let $R(\Lambda)$ denote the set of Reeb chords of $\Lambda$.
The grading on $\mathcal{A}(\Lambda)$ is defined on the Reeb chord generators by
\begin{equation} \label{eqn:CZ-grade}
|c|=CZ(c)-1,
\end{equation}
where $CZ(c)$ is as described in Section~\ref{ssec:CZ}.  
The differential $\partial$ on $\alg(\Lambda)$ is defined by a count of rigid pseudo-holomorphic disks in the symplectization $(\R_t\times\R^3,d(e^t\alpha))$, with boundary on $\R\times\Lambda$.  
For any Reeb chords $a, b_1, \ldots, b_m \in R(\Lambda)$, 
and   any almost complex structure $J$ which is a cylindrical lift  of an admissible almost complex structure on $\R^2$ (see \cite[Section 2.2]{CDRGG}),
define the {\bf LCH moduli space} $\widetilde{\mathcal{M}}_J^{\R\times\Lambda}(a;b_1,\ldots,b_m)$ to be the space of $J$-holomorphic maps
$u:(D^2_{m+1}, \partial D^2_{m+1})\rightarrow (\R\times\R^3, \R\times\Lambda),$
with a \textit{positive} asymptotic to the Reeb chord $a$
and \textit{negative} asymptotics to the Reeb chords $b_1,\dots,b_m$, up to conformal reparametrization of the domain; see \cite[\S3.2.3]{CDRGG}. This moduli space admits an $\R$-action by translation along the symplectization direction; we let $$\mathcal{M}_J^{\R\times\Lambda}(a;b_1,\ldots,b_m)$$ denote the quotient of $\widetilde{\mathcal{M}}_J^{\R\times\Lambda}(a;b_1,\ldots,b_m)$ by $\R$. 
 A disk $u\in\mathcal{M}_J^{\R\times\Lambda}(a;b_1,\ldots, b_m)$ is called {\it rigid} if $\dim \mathcal{M}_J^{\R\times\Lambda}(a;b_1,\ldots, b_m)=0$. 
  Compactness results ensure that there are finitely many rigid holomorphic disks, which are used to define the differential $\partial$:
$$\partial(a)=\displaystyle{\sum_{\dim\left(\mathcal{M}_J^{\R\times\Lambda}(a;b_1,\ldots, b_m)\right)=0}} 
|\mathcal{M}_J^{\R\times\Lambda}(a;b_1,\ldots, b_m)|b_1\dots b_m.$$
The \textbf{Legendrian contact homology of $\Lambda$}, denoted $LCH_*(\Lambda)$, is the homology of $(\alg(\Lambda),\partial)$.

\begin{example}[DGA of Hopf links] \label{ex:DGA-hopf}
	\begin{figure}[!ht]
	\labellist
	\tiny
	\pinlabel $0$ at 130 20
	\pinlabel $1$ at  130 45
	\pinlabel $k+1$ at 140 60
	\pinlabel $k+2$ at 140 80
	\endlabellist
		\includegraphics[width=3in]{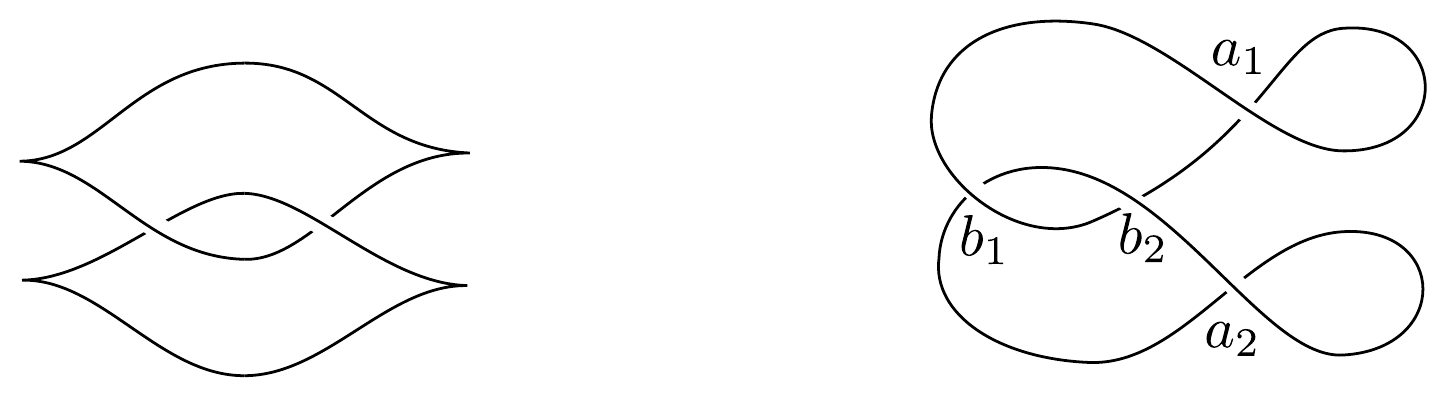}
		\caption{The front and Lagrangian projections of $\Lambda_{\Ho}^k$.}
		\label{fig:Hopf}
	\end{figure}
 Consider the Hopf link $\Lambda_{\Ho}^k$ whose front and Lagrangian projections as well as Maslov potential are depicted in Figure~\ref{fig:Hopf}.
The algebra $\mathcal A(\Lambda_{\Ho}^k)$ is generated by four Reeb chords $a_1$, $a_2$, $b_1$ and $b_2$ with $|a_i|=1$ and $|b_1|=-|b_2|=k$. Using results of \cite{DR}, the differential as described above can be computed in a combinatorial way (see for example in \cite{Che, NE}), and the non-trivial part of the differential is given by $\partial a_1=b_1b_2$ and $\partial a_2=b_2b_1$.
\end{example}

\subsection{Augmentations} \label{sec:aug}

In this section, we review how augmentations, first used in \cite{Che}, can be used to construct a variety of ``linearizations'' of Legendrian contact homology.   

First observe that a commutative ring $\F$ can be considered as a DGA, where all elements of $\F$ have degree $0$ and the differential is identically $0$.  Then 
an \textbf{augmentation} of $\mathcal{A}(\leg)$ to $\F$ is a DGA-morphism, which is a graded algebra homomorphism that preserves the differential.  In particular, 
  $\epsilon: (\mathcal{A}(\Lambda),\partial)\rightarrow (\F,0)$ is a chain map such that $\epsilon(1)=1$, and 
for any element $a$ of nonzero degree, $\epsilon(a)=0$.

  \begin{definition}\label{defn:aug} $Aug(\leg; \mathbb F)$ will denote the set of augmentations of $\mathcal A(\leg)$ to $\mathbb F$.   As shown in \cite{EHK}, an embedded, Maslov-$0$, exact Lagrangian cobordism $\lag$ from $\Lambda_-$ to $\Lambda_+$ induces a DGA map $\Phi_\lag:\alg(\Lambda_+)\to\alg(\Lambda_-)$ and thus a map:
$$
  \begin{aligned} 
\mathcal{F}_{\lag}: Aug(\Lambda_-; \F) &\to Aug(\Lambda_+;\F), \\
\e_{-} &\mapsto  \e_{-} \circ \Phi_{\lag}.
\end{aligned}
$$
  \end{definition}

As above, let $R(\leg)$ denote the set of Reeb chords of $\leg$, and then let 
 $C(\leg)$ denote the graded $\F$-module generated by elements in $R(\leg)$, where the grading is as in Equation~(\ref{eqn:CZ-grade}).
Given an augmentation $\e$ of $\alg(\Lambda)$, the \textbf{linearized Legendrian contact homology} of $\Lambda$, denoted $LCH^{\epsilon}_{*}(\Lambda)$,
 is the homology of the chain complex $(C(\Lambda), \partial^{\epsilon})$  with
$$ \partial^\e(a)=\sum_{\dim\left(\mathcal{M}_J^{\R\times\Lambda}(a;{\bf p}b{\bf q})\right)=0}|\mathcal{M}_J^{\R\times\Lambda}(a;{\bf p}b{\bf q})|\e({\bf p})\e({\bf q})b,$$
where $a, b \in R(\leg)$, and ${\bf p},{\bf q}$ are words of Reeb chords.

\begin{example}[Augmentations of Hopf Links] \label{ex:aug-hopf}
Continuing with Example~\ref{ex:DGA-hopf}, one computes that the Hopf link $\Lambda_{\Ho}^0$ admits three augmentations to $\Z_2$ defined by sending the pair of chords $(b_1,b_2)$ to $(0,0)$, $(1,0)$ and $(0,1)$, while the Hopf links $\Lambda_{\Ho}^k$ for $k\neq0$ admit only the augmentation sending all chords to $0$.
One can now complete the explanation of the claim in Example \ref{ex:Hopf}, namely that $\Lambda_{\Ho}^0$ is the only Hopf link admitting an embedded, Maslov-0, exact Lagrangian filling. 
		If a Hopf link $\Lambda_{\Ho}$ bounds a connected, embedded, Maslov-$0$, exact Lagrangian filling $L$, then by Seidel's isomorphism \cite{EkhSFT,DR} the Poincaré polynomial of the Legendrian contact homology linearized by the augmentation induced by $L$ must be of the form $t+2g(L)+1$, where $g(L)$ is the genus of $L$. The LCH polynomial of $\leg_{\Ho}^k$ is $2t+t^k+t^{-k}$ when $k\neq 0$ and is $2t+2$ or $t+1$ when $k=0$ (depending on the choice of augmentation). Thus, when $k\neq 0$, Seidel's isomorphism obstructs the existence of a connected, embedded, exact, Maslov-$0$ Lagrangian filling of $\leg_{\Ho}^k$.
\end{example}

In fact, one can use two augmentations to linearize: given augmentations $\e^1,\e^2$ of $\Lambda$, the \textbf{bilinearized Legendrian contact homology} $LCH^{\e^1,\e^2}_*(\Lambda)$, defined first in \cite{BCh}, is the homology of $(C(\Lambda), \partial^{\e^1,\e^2})$, where
$$\partial^{\e^1,\e^2}(a)=\sum_{\dim\left(\mathcal{M}_J^{\R\times\Lambda}(a;{\bf p}b{\bf q})\right)=0}|\mathcal{M}_J^{\R\times\Lambda}(a;{\bf p}b{\bf q})|\e^1({\bf p})\e^2({\bf q})b.$$

In Section~\ref{sec:augcat} and \ref{sec:wrap}, we will be using moduli spaces that are defined using a partition of a Legendrian link into components. 
In the case $\Lambda=\Lambda^1\cup\Lambda^2$, where $\leg^1, \leg^2$ are Legendrian links, denote $ {R}(\Lambda^i,\Lambda^j)$ the set of Reeb chords from $\Lambda^j$ to $\Lambda^i$. If $c \in R(\Lambda^i,\Lambda^j)$ with $i\neq j$, we call $c$ a \textbf{mixed Reeb chord}, otherwise we call $c$ a \textbf{pure Reeb chord}.
Denote $C(\Lambda^1,\Lambda^2)$ the graded $\F$-module generated by elements in $R(\Lambda^1,\Lambda^2)$. Augmentations $\e^1$ of $\Lambda^1$ and $\e^2$ of $\Lambda^2$ induce an augmentation $\e=(\e^1,\e^2)$ of $\Lambda^1\cup\Lambda^2$ that agrees with $\e^i$ on pure chords of $\Lambda^i$, for $i=1,2$, and vanishes on mixed Reeb chords. Then, the differential of the Legendrian contact homology of $\Lambda$ linearized by $\e$, and restricted to mixed Reeb chords in $R(\Lambda^1,\Lambda^2)$ is defined via a count of $J$-holomorphic disks in {\bf mixed LCH moduli spaces}:
$$\partial^{\e}(a^{12})=\sum_{\dim\left(\mathcal{M}_J^{\R\times(\Lambda^1\cup\Lambda^2)}(a^{12};{\bf p}^{11}b^{12}{\bf q}^{22})\right)=0}|\mathcal{M}_J^{\R\times(\Lambda^1\cup\Lambda^2)}(a^{12};{\bf p}^{11}b^{12}{\bf q}^{22})|\e^1({\bf p}^{11})\e^2({\bf q}^{22})b^{12},$$
where $a^{12}, b^{12} \in R(\Lambda^1,\Lambda^2)$, $\mathbf{p}^{11}$ is a word of Reeb chords of $\Lambda^1$, and $\mathbf{q}^{22}$ is a word of Reeb chords of $\Lambda^2$.
Note that $\big(C(\Lambda^1,\Lambda^2),\partial^{\e}_{|C(\Lambda^1,\Lambda^2)}\big)$ is a subcomplex of $(C(\Lambda),\partial^{\e})$.

\section{The augmentation category}\label{sec:augcat}

In this section, we give a brief summary of the augmentation category mainly following \cite{NRSSZ}. We then define a  new notion of \emph{split-DGA homotopy} for augmentations of multicomponent links. This gives rise to a simple criterion, Corollary~\ref{cor:distinct-not-equiv}, to determine when two augmentations are not equivalent in $\aug_{+}$, which will be used frequently in Section~\ref{sec:ex}  when applying
Theorem~\ref{thm:main}.

\subsection{Definitions} \label{ssec:aug-defns}
Let $\Lambda$ be a Legendrian knot or link in $\R^3_{std}$. Assume that the Lagrangian projection $\pi_{xy}(\leg)$ has Maslov class $0$ 
and that  each connected component of $\Lambda$ is decorated with a base point.

The augmentation category $\aug_+(\Lambda)$  is an $A_{\infty}$-category whose {\it objects} are elements of $Aug(\leg; \F)$, namely augmentations of the Chekanov-Eliashberg DGA $\alg(\Lambda)$ to $\F$.
 In order to define the morphisms in $\aug_+(\Lambda)$, we use the DGA of a $2$-copy of $\Lambda$, denoted by $2\Lambda=\Lambda^1 \cup \Lambda^2$.
The copy $\Lambda^1$ is a perturbed push-off of $\Lambda^2$ in the $z$-direction, perturbed via a positive Morse function $f: \Lambda \to \R^+$ having one maximum and one minimum on each component of $\Lambda$, located near its base point as in Figure \ref{base}. Both $\Lambda^1$ and $\Lambda^2$ have the same Maslov potential. The Lagrangian projection of $2\Lambda$ for the max $tb$ right-handed (positive) trefoil is shown in Figure \ref{trefoil}.

For any two objects $\e^1, \e^2 \in Aug(\leg; \F)$ of $\aug_{+}(\leg)$, the {\it morphism space from $\e^1$ to $\e^2$}, denoted $Hom_+(\e^1,\e^2)$, is the graded $\F$-module generated by the Reeb chords in $R(\Lambda^1,\Lambda^2)$ 
of $2\Lambda$,   
with the grading of generators shifted up by $1$,  
commonly denoted as
$$Hom_+(\e^1,\e^2):=C(\Lambda^1,\Lambda^2)[1].$$ {We use $| \cdot |$ to denote the gradings in the $\F$-module $C(\leg)$, as given in Equation~(\ref{eqn:CZ-grade}),
and $| \cdot |_{+}$ to denote the shifted gradings in 
$Hom_{+}(\e^1,\e^2)$.  
}

\begin{figure}[!ht]
\begin{minipage}{2.9in}
\captionsetup{width=2.8in}
\labellist
\pinlabel $\ast$ at 50 8
\pinlabel $x$ at 80 17
\pinlabel $y$ at 105 25
\endlabellist
\begin{center}
\includegraphics[width=2in]{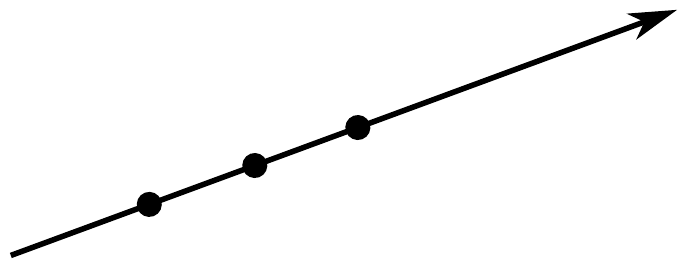}
\end{center}
\vspace{0.1in}
\caption{The local model near the base point $\ast$ of each component of $\Lambda$, with the arrow representing the orientation of $\Lambda$,   and $x$ ($y$) denoting the  maximum (minimum) of the function $f$.
\label{base}
}

\end{minipage}
\begin{minipage}{3in}
\begin{center}
\labellist
{\color{blue}
\pinlabel $\Lambda^2$ at 50 140
}
{\color{red}
\pinlabel $\Lambda^1$ at 50 0
}
\endlabellist
\includegraphics[width=2in]{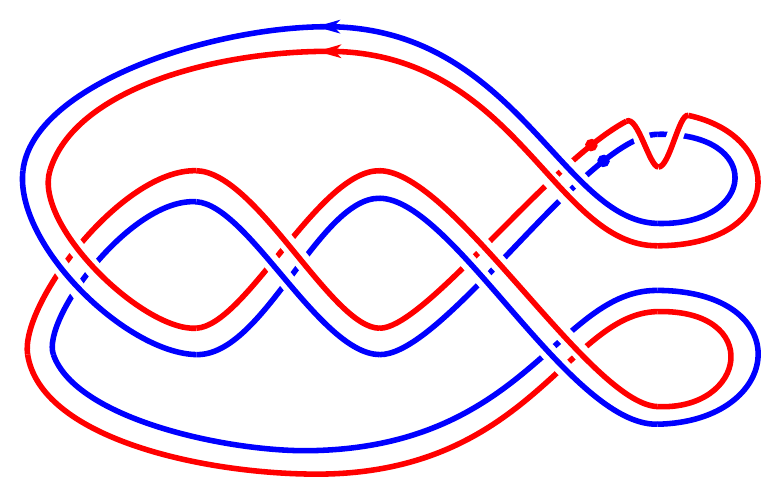}
\captionsetup{width=2.6in}
\caption{The Lagrangian projection of the $2$-copy $2\Lambda$, where $\Lambda$ is a max $tb$ positive trefoil.}
\label{trefoil}
\end{center}
\end{minipage}
\end{figure}

Taking a closer look at the generator set of $Hom_+(\e^1,\e^2)$, we
note that for each Reeb chord $a$ of $\Lambda$, there is a corresponding mixed Reeb chord 
$a^{12} \in R(\Lambda^1,\Lambda^2)$ 
of $2\Lambda$ with grading given by 
 $$|{a^{12}}|_{+}=|a|+1. $$
The other generators of $Hom_+(\e^1,\e^2)$ are the Morse Reeb chords, corresponding to the critical points of the Morse function $f$. Assume $\leg$ has $m$ components, and denote the Morse Reeb chords corresponding to the maxima of $f$ by $x_{i}^{12}$   and the ones corresponding to the minima of $f$ by 
 ${y}^{12}_i$, for $i=1,\dots, m$.  
By Equations \eqref{eqn:CZ-grade} and \eqref{eqn:CZ}, we find
\begin{equation} \label{eqn:max-min-generators}
|{x}^{12}_i|_{+}=1,~\text{and}~ \qquad  |{y}^{12}_i|_{+} =0.
\end{equation}

As a graded module, the morphism space $Hom_+(\e^1,\e^2)$  does not depend on $\e^1$ or $\e^2$, but the $A_{\infty}$ operators, called  {\it compositions},
$$m_n: Hom_+(\e^n,\e^{n+1})\otimes Hom_+(\e^{n-1}, \e^n) \otimes \cdots \otimes Hom_+(\e^1, \e^2)\to Hom_+(\e^1, \e^{n+1})$$
do depend on the choice of augmentations $\e^1,\dots,\e^{n+1}$. 
These  $A_{\infty}$ operators $m_n$ can be defined using the DGA of an $(n+1)$-copy of $\Lambda$.
{The $(n+1)$-copy is perturbed in such a way that 
every Reeb chord generator of $Hom_{+}(\e^{1}, \e^{2})$ has corresponding versions on consecutive pairs of the $(n+1)$-copy; 
see  \cite[Figure 6]{NRSSZ}.}  
We recall below the definitions of the operators $m_1$ and $m_2$ that will be used in this paper;  see \cite[Section $4$]{NRSSZ} for  details of this construction.

\begin{itemize}
\item
The operator $ m_1: Hom_+(\e^1,\e^2)\to Hom_+(\e^1,\e^2)$ is defined by a count of rigid holomorphic disks with boundary on $\R\times2\Lambda=\R\times(\Lambda^{1}\cup\Lambda^{2})$ with one positive asymptotic and one negative asymptotic to Reeb chords in  $R(\Lambda^1,\Lambda^2)$ and possibly some other negative asymptotics to {\it pure} Reeb chords.
Indeed,
$$\displaystyle{
m_1( b^{12})= \sum_{\dim \left(\M(a^{12}; {\bf p}^{11} b^{12} {\bf q}^{22})\right)=0} |\M^{\R\times(\Lambda^{1}\cup\Lambda^{2})}(a^{12}; {\bf p}^{11} b^{12} {\bf q}^{22})|\e^1({\bf p}^{11}) \e^2({\bf q}^{22})  a^{12}},$$ 
where $b^{12}, a^{12} \in R(\leg^{1}, \leg^{2})$, and ${\bf p}^{11}$, ${\bf q}^{22}$ are words of pure Reeb chords in $R(\Lambda^1)$ and $R(\Lambda^2)$, respectively.
The operator $m_1$ is a degree $1$ map that satisfies $m_1^2=0$, and we denote $H^*Hom_+(\e^1,\e^2)$ the cohomology of the complex $(Hom_+(\e^1,\e^2), m_1)$.
In addition, one has the following isomorphism from \cite[Corollary 5.6]{NRSSZ}:
$$
H^*Hom_+(\e^1,\e^2) \cong LCH^{\e^1,\e^2}_{1-*} (\Lambda).
$$
 
\item To define the operator $ m_2: Hom_+(\e^2,\e^3) \otimes Hom_+(\e^1,\e^2)\to Hom_+(\e^1,\e^3)$, we first consider the $3$-copy $3\Lambda=\Lambda^1\cup \Lambda^2\cup \Lambda^3$, where
$3\Lambda$ is constructed  such  that for any $i < j$, the DGA of $\Lambda^i\cup \Lambda^j$ is canonically identified with the DGA of $2\Lambda$; see \cite[Figure 6]{NRSSZ}.
The operator $m_2$ counts rigid holomorphic disks with boundary on $\R\times3\Lambda$, with a positive asymptotic to a Reeb chord in $R(\Lambda^1,\Lambda^3)$, two negative 
asymptotics to Reeb chords in $R(\Lambda^1,\Lambda^2)$ and $R(\Lambda^2,\Lambda^3)$ respectively, and possibly additional negative asymptotics to pure Reeb chords.
More precisely, 
 $$\displaystyle{m_2(c^{23}, b^{12})=\sum_{\dim( \M(a^{13}; {\bf p}^{11} b^{12} {\bf q}^{22} c^{23} {\bf r}^{33}))=0} |\M^{\R\times3\Lambda}(a^{13}; {\bf p}^{11} b^{12} {\bf q}^{22} c^{23} {\bf r}^{33})| \e^1({\bf p}^{11}) \e^2 ({\bf q}^{22})  \e^3( {\bf r}^{33})\  {a}^{13}},
$$ 
where $a^{13}\in R(\Lambda^1,\Lambda^3), b^{12}\in R(\Lambda^1,\Lambda^2), c^{23}\in R(\Lambda^2,\Lambda^3)$, and ${\bf p}^{11},{\bf q}^{22},{\bf r}^{33}$ are words of pure chords.   The operator $m_2$ is of degree $0$ and induces a product structure on the cohomology $H^*Hom_+$:
$$ m_2: H^i Hom_+(\e^2,\e^3) \otimes H^j Hom_+(\e^1,\e^2)\to H^{i+j}Hom_+(\e^1,\e^3).$$
\end{itemize}

\subsection{Unital $A_{\infty}$ category}\label{sec:unital}
  A key property of $\aug_+(\Lambda)$ is that it is a {\bf strictly unital $A_{\infty}$ category}:
for any $\e$, there is an element   { $e_{\e}\in Hom_+(\e,\e)$ with $| e_{\e}|_{+} = 0$} such that
 \begin{itemize}
\item $m_1(e_{\e})=0$;
\item for all $a\in Hom_+(\e,\e')$ and $b\in Hom_+(\e',\e)$,
$$m_2(e_{\e},b)=b, \quad m_2(a,e_{\e})=a; \qquad \text{and}$$
\item any higher order composition, {$m_n$ for $n \geq 3$} vanishes when  $e_{\e}$ is one of the inputs.
\end{itemize}
In fact, if $\leg$ has $m$ components, the unit is given by  
$$e_\e=-\sum_{i=1}^m {y}^{12}_i\in Hom_+(\e,\e),$$
for ${y}^{12}_i$ as defined in Equation~\eqref{eqn:max-min-generators}.
It follows that the induced cohomology category $H^*\aug_+(\Lambda)$ is a unital category.
This allows us to define a notion of equivalence of two objects.
 
\begin{definition}\label{defn:equal}
Two augmentations $\e^1$ and $\e^2$ of $\alg(\Lambda)$ are {\bf equivalent} in the augmentation category $\aug_+(\Lambda)$, denoted by $\e^1\Sim\e^2$, if they are isomorphic in $H^*\aug_+(\Lambda)$, that is, if there exist $[\alpha]\in H^0Hom_+(\e^1, \e^2)$ and $[\beta]\in H^0Hom_+(\e^2, \e^1)$ such that 
$$m_2([\alpha],[\beta])= [e_{\e^2}]\in H^0Hom_+(\e^2, \e^2), \mbox{ and } m_2([\beta],[\alpha])= [e_{\e^1}]\in H^0Hom_+(\e^1, \e^1),$$
where $[e_{\e^i}]$ is the unit in $H^0Hom_+(\e^i, \e^i)$ for $i=1,2$.
\end{definition}

It can be difficult to show that two augmentations of $\leg$ are not equivalent using Definition~\ref{defn:equal}.  However, by relating this definition of equivalence to the notion of DGA-homotopic augmentations, there is an
easier criterion for distinguishing non-equivalent augmentations; see Corollary~\ref{cor:distinct-not-equiv}.

An augmentation is a DGA morphism, and there is an established notion of a homotopy between  DGA morphisms; see, for example, \cite[Section 2.3]{kalman:one-parameter} and \cite[Definition 5.15]{NRSSZ}.
It is proved in \cite[Proposition 5.19]{NRSSZ} that if $\Lambda$ is a Legendrian {\it knot}, then two augmentations are DGA-homotopic if and only if they are equivalent in $\aug_+(\Lambda)$. In order to obtain a similar result in the case where $\Lambda$ is a Legendrian \textit{link}, we use the fact that the DGA of a Legendrian link has a  ``homotopy splitting,''  which was first defined by Mishachev \cite{Mishachev03}.  Before we explain this splitting of the DGA for a Legendrian link,  we give the general definition of a split DGA and morphisms of split DGAs.  
\begin{definition} A (unitary) {\bf  split DGA} $(\mathcal A_{**}, \partial_{**})$ over $\F$ is  an algebra $\mathcal A_{**}$ over $\F$ such that  $\mathcal A_{**} = \oplus_{j_{1}, j_{2}} \mathcal A_{j_{1}j_{2}}$, where
\begin{enumerate}
\item  each $\mathcal A_{j_{1}j_{2}}$ is a module over $\F$, 
\item there are bilinear multiplication maps
$\mathcal A_{j_{1}j_{2}} \times \mathcal A_{j_{3}j_{4}} \to \mathcal A_{j_{1}j_{4}}$
that are $0$ unless $j_{2}=j_{3}$, 
\item for all $j$, $\mathcal A_{jj}$ contains an element $e_{j}$ that acts as the identity under multiplication, and 
\item $\partial_{**}$ respects the splitting, namely $\partial_{**}: \mathcal A_{j_{1}j_{2}} \to \mathcal A_{j_{1}j_{2}}$, for all $j_{1}, j_{2}$.
\end{enumerate}
Given two split DGAs, 
$\left( \oplus_{i,j=1}^n \mathcal{A}_{ij}, \partial \right) $ and 
$\left( \oplus_{i,j=1}^m \mathcal{A}'_{ij}, \partial' \right) $, a {\bf split-DGA morphism} $f: (\oplus_{i,j=1}^n \mathcal{A}_{ij}, \partial)\rightarrow (\oplus_{i,j=1}^m \mathcal{A}'_{ij}, \partial')$ is a DGA morphism such that for all $i, j$, there exist $i', j'$ such that 
 $f(\mathcal{A}_{ij}) \subset \mathcal{A}'_{i'j'}$. Observe that $(\F, 0)$ can be viewed as a split DGA with no splitting.
 \end{definition}
 
 The following  is a new definition, which extends the definition of DGA homotopy given, for example, in \cite[Definition 5.15]{NRSSZ}.
 \begin{definition}\label{defn:split-DGA-homotopy} 
 Given a unital, commutative ring $\mathbb F$, let $\mathbb F^*$ denote the set of units.
Two split-DGA morphisms $f_1, f_2: (\oplus_{i,j=1}^n \mathcal{A}_{ij}, \partial)\rightarrow (\oplus_{i,j=1}^m \mathcal{A}'_{ij}, \partial')$ are \textbf{split-DGA homotopic} 
if there exists 
$K:\oplus_{i,j=1}^n \mathcal{A}_{ij},\rightarrow \oplus_{i,j=1}^m \mathcal{A}'_{ij}$ such that:
\begin{enumerate}
\item $K$ is split, $\F$-linear, and degree $1$, 
\item for all $i,j$ there exists $\alpha_i,\alpha_j\in \F^*$
 such that for all $a\in \mathcal A_{ij}$, $$\alpha_i f_1(a)-\alpha_j f_2(a)=\partial' K(a)+K\partial(a), \quad \text{and}$$ 
\item $K(x\cdot y)=K(x)\cdot f_2(y)+(-1)^{|x|}f_1(x)\cdot K(y)$, for all $x,y \in \oplus_{i,j=1}^n \mathcal{A}_{ij}$. 

\end{enumerate}
\end{definition}
 
 \begin{remark}
 \begin{enumerate}
 \item If $\alpha_i=1$ for all $i$ or $\F = \Z_2$, then Definition~\ref{defn:split-DGA-homotopy} agrees the usual definition of DGA homotopy. 
 \item If   $\leg$ has a single component, and $\e^{1}, \e^{2}$ are two augmentations of $\mathcal{A}(\leg)$,  then the existence of a DGA homotopy between  $\e^{1}, \e^{2}$
 is equivalent to the existence of a split-DGA homotopy  between $\e^{1}, \e^{2}$.  It is immediate to see that a DGA homotopy implies the existence of a split DGA homotopy.
 In the other direction, a split DGA homotopy implies the existence of $K$ and $\alpha \in \F^*$ satisfying
 $$\alpha f_1(a)-\alpha f_2(a)=\partial' K+K\partial, \quad \text{for all}~ a\in \mathcal A.$$
 Then $K' = \alpha^{-1} K$ is the desired DGA homotopy.
 \end{enumerate} \label{rem:equiv-DGA-homotopy}
\end{remark}

 Given a Legendrian link $\leg = (\leg^{1}, \dots, \leg^{m})$, we can split what is essentially a submodule of the Chekanov-Eliashberg DGA into $m^2$ pieces that are invariant under Legendrian isotopy as has been shown in, for example, \cite[Definition 2.18]{Ng:computable} and \cite[Section 2.4]{NgTraynor04}).  Let $\mathcal{A}_{ij}$ be the module generated by words of Reeb chords that begin on  $\leg^{i}$ and end on  $\leg^{j}$, i.e. Reeb chords in $R(\Lambda^j, \Lambda^i)$. If $i=j$ we also add in an indeterminate $e_j$. The differential $\partial_{**}$ is defined on the generators $a$  as follows:
if the Reeb chord $a$ begins and ends on distinct components of $\leg$, then $\partial_{**}(a) = \partial(a)$; if $a$ is a Reeb chord that begins and ends on the same component $\leg^{j}$ of $\leg$,
 then replace any occurrence of $1$ in $\partial(a)$ by $e_{j}$, that is, every holomorphic disk with boundary on $\Lambda_j$ with positive asymptotic to $a$ and no negative asymptotics contributes $e_j$ to $\partial_{**}(a)$.  Then $\partial_{**}$ extends to $\mathcal A_{**}$ by applying the Leibniz rule and setting $\partial_{**}(e_{j}) = 0$, for all $j$. Augmentations $\epsilon: (\mathcal A, \partial) \to (\F, 0)$ are in bijective correspondence with {\bf split augmentations} $\epsilon_{**}: (\mathcal A_{**}, \partial) \to (\F, 0)$: on any Reeb chord generator $a$, $\epsilon(a) = \epsilon_{**}(a)$ and 
 $\epsilon(1) = 1 = \epsilon_{**}(e_{j})$, for all $j$.

Using Definition~\ref{defn:split-DGA-homotopy}, a slight modification of the proof of \cite[Proposition 5.19]{NRSSZ} gives the following proposition, whose proof is given in Appendix~\ref{appendix:equivalences}.

\begin{proposition} \label{prop:equivalences} Given a Legendrian link $\Lambda \subset \R^3_{std}$, two augmentations $\e^1,\e^2:\alg(\Lambda)\to\F$ are equivalent in $\aug_+(\Lambda)$ if and only if the corresponding 
split augmentations $\e_{**}^{1}$ and $\e_{**}^{2}$ are split-DGA homotopic.
\end{proposition}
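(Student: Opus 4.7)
The plan is to adapt Proposition 5.19 of \cite{NRSSZ}, which treats the Legendrian knot case, to the multi-component setting, keeping careful track of how the block splitting $\alg_{**}(\Lambda) = \bigoplus_{i,j} \alg_{ij}$ interacts with the augmentation category. In the knot case ($m=1$) equivalence in $\aug_+$ corresponds to ordinary DGA homotopy, while for a link the additional data is a unit $\alpha_i \in \F^*$ per component, as in Definition \ref{defn:split-DGA-homotopy}, and this unit will appear as the coefficient of $y^{12}_i$ in the degree-zero cocycle implementing the equivalence.

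For the forward direction, suppose $\e^1 \Sim \e^2$ and choose degree $0$ cocycle representatives $\alpha \in \Hom_+(\e^1,\e^2)$ and $\beta \in \Hom_+(\e^2,\e^1)$. The degree zero part of $\Hom_+(\e^1,\e^2)$ decomposes as the span of the Morse generators $y^{12}_i$, $i=1,\dots,m$, together with the shifted Reeb chord generators of DGA degree $-1$; write
\[
\alpha = -\sum_i \alpha_i\, y^{12}_i + \sum_a K_\alpha(a)\, a^{12}, \qquad \beta = -\sum_i \beta_i\, y^{21}_i + \sum_a K_\beta(a)\, a^{21}.
\]
The identities $m_2([\alpha],[\beta]) = [e_{\e^2}]$ and $m_2([\beta],[\alpha]) = [e_{\e^1}]$, projected onto the coefficients of the $y^{jj}_i$, force $\alpha_i \beta_i = 1$ in $\F$, so that each $\alpha_i \in \F^*$. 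Following the disk-counting construction in \cite{NRSSZ}, the coefficients $K_\alpha(a)$ assemble into an $\F$-linear, degree $1$, split map $K:\alg_{**}(\Lambda)\to\F$, and the cocycle condition $m_1(\alpha)=0$ then translates block-by-block into the relation $\alpha_i\e^1(a) - \alpha_j\e^2(a) = K(\partial a)$ for $a\in\alg_{ij}$, which is exactly Definition \ref{defn:split-DGA-homotopy}(2). The twisted Leibniz rule (3) follows from the standard degeneration analysis of $m_2$ with $\alpha$ placed at one boundary marked point, since breaking either above or below $\alpha$ splits the count into $K(x)\cdot f_2(y)$ and $(-1)^{|x|}f_1(x)\cdot K(y)$.

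For the backward direction, given a split-DGA homotopy $K$ with scalings $\{\alpha_i\}$, define
\[
\alpha := -\sum_i \alpha_i\, y^{12}_i + \sum_a K(a)\, a^{12} \in \Hom_+(\e^1,\e^2)^0,
\]
and run the reverse construction with $\{\alpha_i^{-1}\}$ to obtain $\beta \in \Hom_+(\e^2,\e^1)^0$. Properties (1)--(3) of Definition \ref{defn:split-DGA-homotopy} translate directly into $m_1(\alpha)=m_1(\beta)=0$, and, after an $A_\infty$ computation checking that mixed disks on the $3$-copy produce the expected products of $\alpha_i$ and $\alpha_i^{-1}$, into $m_2([\alpha],[\beta])=[e_{\e^2}]$ and $m_2([\beta],[\alpha])=[e_{\e^1}]$, certifying the equivalence in $\aug_+(\Lambda)$.

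The main technical obstacle is matching the block structure of $\alg_{**}(\Lambda)$ with the permissible boundary configurations of holomorphic disks contributing to $m_1$ and $m_2$: a rigid disk with positive puncture a mixed chord in $R(\Lambda^{2,j},\Lambda^{1,i})$ has its boundary arcs confined to $\Lambda^{1,i}$ and $\Lambda^{2,j}$, with pure negative punctures on the corresponding components, which is precisely what sorts the disk counts into the block $\alg_{ij}$ and makes the scalings $\alpha_i$ independent across components. In the knot case $m=1$ this distinction is invisible and the single scaling can be absorbed into $K$ as in Remark \ref{rem:equiv-DGA-homotopy}(1); for $m\geq 2$ the per-component freedom is genuine and is exactly the phenomenon that forced the introduction of Definition \ref{defn:split-DGA-homotopy}.
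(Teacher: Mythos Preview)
Your forward direction follows the paper's route: write a degree-$0$ cocycle $\alpha$ as a combination of the Morse generators $y^{12}_i$ and the shifted Reeb generators, read off $\alpha_i\beta_i=1$ from the $y$-coefficients of $m_2(\alpha,\beta)$, and translate $m_1(\alpha)=0$ into condition~(2) of Definition~\ref{defn:split-DGA-homotopy}. One correction: the twisted Leibniz rule~(3) is not a consequence of any $m_2$ degeneration. It is simply how one \emph{extends} the $\F$-linear map $K$ from Reeb-chord generators to words---you declare $K(a)$ to be the coefficient of $a^{12}$ in $\alpha$, extend by~(3), and then~(3) holds by fiat. This is a conceptual slip rather than a gap.

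The backward direction, however, has a genuine hole. You correctly build a cocycle $\alpha$ from a given split homotopy, but ``run the reverse construction with $\{\alpha_i^{-1}\}$'' does not produce a specific $\beta$: you have not exhibited a split homotopy from $\e^2$ to $\e^1$, and even if you produced some cocycle $\beta$ this way, two independently constructed cocycles have no reason to satisfy $m_2([\alpha],[\beta])=[e]$. The paper does something substantively different. Having fixed $\alpha=-\sum_k\alpha_k\check{y}_k+A$, it \emph{solves} for $\beta=-\sum_k\alpha_k^{-1}\check{y}_k+B$ by constructing $B$ inductively along the height filtration of Reeb chords so that $m_2(\beta,\alpha)=-\sum_k\check{y}_k$ holds on the chain level; the recursion terminates because, by the explicit $m_2$ formulas in Lemma~\ref{lem:products}, products of Reeb-chord generators land in strictly higher action. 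The closedness of $\beta$ is then proved \emph{a posteriori}: the $A_\infty$ relation gives $0=m_1m_2(\beta,\alpha)=m_2(m_1\beta,\alpha)$, and one shows $m_2(-,\alpha)$ is injective on the relevant span, again via the height filtration, forcing $m_1\beta=0$. A second element $\gamma$ is built analogously for the other side. This inductive construction and the injectivity argument are the missing technical core of your backward implication.

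A smaller remark: your description of the ``main technical obstacle'' asserts that a rigid disk with mixed positive puncture in $R(\Lambda^{2,j},\Lambda^{1,i})$ has boundary confined to those two link components. This is not true in general---the boundary can pass through other components via pure negative punctures---and the block structure enters instead through the explicit computation of $m_1(\check{y}_k)$, which involves only chords beginning or ending on component $k$.
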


Proposition~\ref{prop:equivalences} gives us a simple way to determine if two augmentations are not equivalent in $\aug_{+}(\leg)$. 

\begin{corollary} \label{cor:distinct-not-equiv} Suppose that the Legendrian link $\Lambda = (\leg^{1}, \dots, \leg^{n})$ does not have any degree $-1$ Reeb chords.  Then any two augmentations  
 $\e^1$ and $\e^2$ of $\leg$ are equivalent in $\aug_+(\Lambda)$ if and only if for all $i, j \in \{1, \dots, n\}$  there exist $\alpha_{i}, \alpha_{j} \in \F^*$ such that $\alpha_{i}\e^1(a) = \alpha_{j} \e^2(a)$, for all degree $0$ Reeb chords $a \in R(\Lambda^j, \Lambda^i)$. If $\F=\Z_2$, then  two augmentations are equivalent in $\aug_+(\Lambda)$ if and only if they are identically the same.
\end{corollary}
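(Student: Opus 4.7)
The plan is to invoke Proposition~\ref{prop:equivalences}, reducing the equivalence $\e^1\Sim\e^2$ in $\aug_+(\Lambda)$ to the existence of a split-DGA homotopy $K:(\mathcal A_{**},\partial_{**})\to(\F,0)$ between the split augmentations $\e^1_{**}$ and $\e^2_{**}$. Unpacking Definition~\ref{defn:split-DGA-homotopy} with target $(\F,0)$, we need an $\F$-linear, split, degree-$1$ map $K$ and units $\alpha_1,\dots,\alpha_n\in\F^*$ such that $\alpha_i\,\e^1_{**}(a)-\alpha_j\,\e^2_{**}(a)=K(\partial_{**}a)$ for every $a\in\mathcal A_{ij}$, and such that $K$ satisfies the signed Leibniz rule $K(xy)=K(x)\e^2_{**}(y)+(-1)^{|x|}\e^1_{**}(x)K(y)$. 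Since $\F$ is concentrated in degree $0$, the degree-$1$ map $K$ can take a nonzero value only on elements of degree $-1$.

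The central observation is that, under the hypothesis of no degree-$(-1)$ Reeb chords, the map $K$ must vanish identically. The generators of $\mathcal A_{**}$ are the identities $e_j$ (all of degree $0$) and Reeb chords (none of degree $-1$ by assumption), so $K$ vanishes on every generator. Applying the Leibniz rule inductively to a monomial $c_1c_2\cdots c_m$ expresses $K(c_1\cdots c_m)$ as a signed sum of terms of the form $\e^1_{**}(c_1)\cdots\e^1_{**}(c_{\ell-1})\,K(c_\ell)\,\e^2_{**}(c_{\ell+1})\cdots\e^2_{**}(c_m)$, each of which vanishes because $K(c_\ell)=0$. By $\F$-linearity, $K\equiv 0$ on all of $\mathcal A_{**}$. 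This is the step I expect to be the key technical point: one has to rule out that $K$ is forced to be nontrivial on degree-$(-1)$ words assembled from higher- and lower-degree generators, and the Leibniz calculation is exactly what does this.

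With $K\equiv 0$, the homotopy condition collapses to $\alpha_i\,\e^1_{**}(a)=\alpha_j\,\e^2_{**}(a)$ for every $a\in\mathcal A_{ij}$. Since both augmentations vanish on elements of nonzero degree, the content of this equation is exactly the stated generator-level condition on degree-$0$ Reeb chords in $R(\Lambda^j,\Lambda^i)$ (the relation on $e_j$ being tautological). For the converse direction, given units $\alpha_1,\dots,\alpha_n\in\F^*$ satisfying the generator-level condition, I propose to take $K\equiv 0$ and verify directly that this is a split-DGA homotopy; the only thing to check is that the equation extends from generators to all of $\mathcal A_{**}$. For a composable monomial $c_1\cdots c_m$ with $c_\ell\in\mathcal A_{k_{\ell-1}k_\ell}$ starting at $k_0=i$ and ending at $k_m=j$, a telescoping argument using commutativity of $\F$ and the generator-level equalities $\alpha_{k_{\ell-1}}\e^1_{**}(c_\ell)=\alpha_{k_\ell}\e^2_{**}(c_\ell)$ applied one factor at a time yields $\alpha_i\prod_\ell\e^1_{**}(c_\ell)=\alpha_j\prod_\ell\e^2_{**}(c_\ell)$, and $\F$-linearity extends this to all of $\mathcal A_{ij}$.

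Finally, when $\F=\Z_2$ one has $\F^*=\{1\}$, so necessarily $\alpha_i=1$ for every $i$; the criterion thus degenerates to $\e^1(a)=\e^2(a)$ on every degree-$0$ Reeb chord generator, which since augmentations vanish in nonzero degrees and are determined by their values on generators, is equivalent to $\e^1$ and $\e^2$ being identically equal.
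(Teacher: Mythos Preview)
Your argument is correct and follows essentially the same route as the paper: reduce via Proposition~\ref{prop:equivalences} to split-DGA homotopy, use the degree constraint together with the Leibniz rule (condition~(3) of Definition~\ref{defn:split-DGA-homotopy}) to force $K\equiv 0$, and then read off the generator-level condition, taking $K=0$ for the converse. Your telescoping verification that the generator-level equality extends to all of $\mathcal A_{ij}$ is a detail the paper leaves implicit.
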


\begin{proof}  Recall that the support of an augmentation is contained in the degree $0$ portion of $\alg(\Lambda)$.
By Proposition~\ref{prop:equivalences}, it suffices to show that when a Legendrian $\leg$ does not have any degree $-1$ Reeb chords,  
$\e^{1}, \e^{2}:  \left( \mathcal A(\leg), \partial\right) \to (\mathbb F, 0)$ are split-DGA homotopic if and only if for all $i, j$ there exist $\alpha_{i}, \alpha_{j} \in \F^*$ such that 
$\alpha_{i}\e^1(a) - \alpha_{j} \e^2(a) = 0$, for all degree $0$ Reeb chords $a \in R(\Lambda^j, \Lambda^i)$.
 Suppose $\e^{1}, \e^{2}$ are split-DGA homotopic via
 $K: \left( \mathcal A(\leg), \partial\right) \to (\mathbb F, 0)$. 
Since $K$ is degree $1$ and $\mathbb F$ is in degree $0$, $K$ is supported in the degree $-1$ portion of $\mathcal A(\leg)$, which since
 there are no $-1$ degree Reeb chords is spanned by monomials of words length at least $2$.  Then an induction argument using the condition (3) of Definition~\ref{defn:split-DGA-homotopy} tells us that $K = 0$. 
 It follows that for an arbitrary degree $0$ Reeb chord $a \in R(\Lambda^j, \Lambda^i)$, $\alpha_{i}\e^1(a) -\alpha_{j} \e^2(a)=0$. 
For the other direction, if for all $i, j \in \{1, \dots, n\}$  there exist $\alpha_{i}, \alpha_{j} \in \F^*$ such that $\alpha_{i}\e^1(a) - \alpha_{j} \e^2(a)=0$, for all degree $0$ Reeb chords $a \in R(\Lambda^j, \Lambda^i)$, by setting $K=0$, we get the desired split-DGA homotopy.
 \end{proof}

\begin{remark}\label{rem:aug-equiv}    
Recall the map
$	\mathcal{F}_\lag:Aug(\Lambda_-;\F) \to Aug(\Lambda_+; \F) $
in Definition \ref{defn:aug}, induced by an embedded, Maslov-$0$, exact Lagrangian cobordism $\lag$ from $\Lambda_-$ to $\Lambda_+$.
\begin{enumerate}
\item It is known   
that this map descends to 
$$	\mathcal{F}_\lag:Aug(\Lambda_-;\F)/\sim_{{DGA\  hom}} \to Aug(\Lambda_+; \F)/\sim_{DGA\ hom},$$
where $\sim_{DGA\ hom}$ denotes the equivalence relation defined by DGA homotopy: if $K_-$ is a DGA-homotopy between two augmentations $\e^1$ and $\e^2$ of $\Lambda_-$, then $K_-\circ\Phi_{L}$ is a DGA-homotopy between $\mathcal F_{\lag}(\e^1)$ and $\mathcal F_{\lag}(\e^2)$. Thus, as observed in Remark~\ref{rem:equiv-DGA-homotopy}
if $\leg_{\pm}$ are Legendrian knots or if $\leg_{\pm}$ are Legendrian links and  $\F = \Z_{2}$, $\Sim$ is the same as $\sim_{DGA\ hom}$, and the map
 $$
 \mathcal{F}_\lag:Aug(\Lambda_-;\F)/\Sim\to Aug(\Lambda_+; \F)/\Sim
$$
exists.  
\item In general the map $\mathcal F_{\lag}$ does not descend to augmentations defined up to equivalence by split-DGA homotopy. For example, there exists an embedded, Maslov-$0$, exact Lagrangian cobordism $L$ from a Hopf link to the trefoil such that the Hopf link has two augmentations over $\Z$ that are split-DGA homotopic, while their images under $\mathcal F_{\lag}$ are not (split-)DGA homotopic augmentations of the trefoil.
\end{enumerate}
\end{remark}

\section{Wrapped Floer Theory}\label{sec:wrap}

In this section we review the setup and some properties of Floer theory for Lagrangian cobordisms as developed in \cite{CDRGG} for our setting of interest. Namely, we consider the Cthulhu complex $Cth(\lag^1,\lag^2)$ over a unital, commutative ring $\F$ associated to a pair of transverse, embedded, Maslov-$0$, exact Lagrangian cobordisms $\lag^1,\lag^2$.  If $\F$ is not characteristic $2$, we further assume the cobordisms $\lag^1$ and $L^2$ are spin. Without loss of generality, we assume  that the constant value of the primitive of any cobordism we consider vanishes on the negative end, i.e. $\mathfrak{c}_-=0$; see Remark~\ref{rem:cobord}. 
We review the result established in \cite{Pan1} that we can construct an isomorphism $\phi_{*}$ between the cohomology of a quotient complex of the
 Cthulhu complex, $H^* (C_{-\infty}, d_{-\infty})$,
and $H^*Hom_+(\e_+^1,\e_+^2)$, see Equation~\eqref{eqn:C-Hom}. In fact, the cohomology groups on both sides of this isomorphism possess a product structure, $m_{2}^{-\infty}, m_{2}^{+}$,  and we review the fact, from \cite{Noe}, that $\phi_{*}$ preserves the product structure, see Proposition~\ref{prop:ring}.  Understanding the definition of $m_{2}^{-\infty}$ will be important in Section~\ref{sec:main} where we will establish in Proposition~\ref{prop:unit}, the key result needed to prove Theorem~\ref{thm:main}.

\subsection{A special pair}\label{sec:pair}

Let $\lag$ be an embedded, Maslov-$0$, exact Lagrangian cobordism in the symplectization of $\R^3_{std}$ from $\Lambda_-$ to $\Lambda_+$.
Consider a perturbed $2$-copy of $\lag$, $2\lag=\lag^1\cup \lag^2$, where $\lag^1$ is a push-off of $\lag^2:=\lag$ in the positive $z$-direction via a Morse function $F:\lag \to \R^{+}$ such that $\lag^1\cup \lag^2$ on the two cylindrical ends agrees with a cylinder over the $2$-copies $\Lambda_{\pm}^1\cup \Lambda_{\pm}^2$ in the corresponding $\aug_+$ categories; for details see \cite{Pan1}.
In particular, the Morse function $F$ on $[N, \infty)\times \Lambda_+$ and $(-\infty, -N]\times \Lambda_-$ agrees with $e^t f_{\pm}$, where $f_{\pm}$ are Morse functions on $\Lambda_{\pm}$ that have the same critical points as the ones used in the construction of $2\Lambda_{\pm}$ in $\aug_+(\Lambda_{\pm})$; see Section \ref{ssec:aug-defns}.
Moreover, we assume that on $\left([-N,N]\times \R^3\right)\cap \lag$ the value of the Morse function $F$ on each point is less than the \textit{cobordism action} of any pure Reeb chord $\gamma$ of $\Lambda_-$, given by $e^{-N}\int_\gamma\alpha$. Such an assumption is necessary in order to get the identifications of complexes in Proposition \ref{prop:des} below.
\begin{remark} 
	We refer to \cite[Section 3.4.2]{CDRGG} 
	 for more details on 
	 the relation between the energy of pseudo-holomorphic disks with boundary on $\lag^1\cup\lag^2$ and the action of intersection points and Reeb chords.
	 In our special pair case, intersection points in $\lag^1\cap\lag^2$ are in one-to-one correspondence with critical points of the Morse function $F$, and
	the action of $p\in\lag^1\cap\lag^2$ is given by the value of $F$ at $p\in\lag$.
\end{remark}

The particular type of perturbation used on the cylindrical ends implies that the algebras $\alg(\Lambda^1_\pm)$ and $\alg(\Lambda^2_\pm)$ are canonically isomorphic: there are canonical identifications of Reeb chords and the differentials agree under this identification.   
An augmentation $\e_-$ of $\alg(\Lambda^2_-)$ gives under this identification an augmentation of $\alg(\Lambda^1_-)$. Moreover, if the cobordisms $\lag^1$ and $\lag^2$ are sufficiently $C^1$-close, then they induce the same augmentation of $\alg(\Lambda_+^1)$ and $\alg(\Lambda_+^2)$, i.e. $\e_-\circ\Phi_{\lag^1}=\e_-\circ\Phi_{\lag^2}$, under the canonical identification of generators, see \cite[Theorem 2.15]{CDRGG1}.

\subsection{The Cthulhu complex $Cth(\lag^1,\lag^2)$}\label{sec:cth}

Given the special pair of cobordisms $L^1,L^2$ as above, for $i=1,2$  suppose that $\e_-^i$ is an augmentation for $\alg(\Lambda_-^i)$ and $\e_+^i = \mathcal F_{\lag^{i}}(\e_{-}^{i})$ is the augmentation of $\alg(\Lambda_+^i)$ induced by $\e_-^i$ through $L^i$.
The \textbf{Cthulhu complex} $Cth(\lag^1, \lag^2)$ can be described as follows.
It is a graded $\F$-module generated by three types of generators:
$$Cth(\lag^1, \lag^2)=C_+(\lag^1, \lag^2)\oplus C_0 (\lag^1, \lag^2)\oplus C_-(\lag^1, \lag^2),$$ 
where
\begin{itemize}
	\item $C_+(\lag^1, \lag^2)= C(\Lambda_+^1,\Lambda_+^2)[2]$ is the $\F$-module generated by Reeb chords from $\Lambda_+^2$ to $\Lambda_+^1$ with a grading shift, i.e. a Reeb chord 
	$a\in C_+(\lag^1, \lag^2)$ has grading $| a|_{Cth} = |a|+2$, for $|a|$ as in Equation (\ref{eqn:CZ-grade}). 
	\item $C_-(\lag^1, \lag^2)= C(\Lambda_-^1,\Lambda_-^2)[1]$.
	\item $C_0 (\lag^1,\lag^2)$ is the $\F$-module generated by intersection points in $\lag^1\cap\lag^2$. The grading of intersection points is given by the Conley-Zehnder index of the corresponding Reeb chords in the Legendrian lift, which is the same as the grading in Lagrangian intersection homology.
\end{itemize}
We use the shortened notation $Cth(\lag^1, \lag^2)= C_+\oplus C_0\oplus C_-$.
The fact that the Morse function $F$ is positive implies, by energy restrictions, that
the differential $d$ on $Cth(\lag^1,\lag^2)$ is upper triangular \cite[Lemma 7.2]{CDRGG}:
$$d=\left(\begin{array}{ccc}
d_{++} & d_{+0}& d_{+-}\\
0& d_{00} & d_{0-}\\
0& 0& d_{--}\\
\end{array}\right),$$
where each component is defined by a count of rigid pseudo-holomorphic disks with boundary on $\lag^1\cup\lag^2$ that we now describe.
 Let $\mathcal{J}^{cyl}$,  
 $\mathcal{J}^{adm}$ be respectively the sets of cylindrical and admissible almost complex structures on $(\R\times\R^3,d(e^t\alpha))$, defined as in  \cite[Section 2.2]{CDRGG}. 
\begin{enumerate}
	\item The maps $d_{\pm\pm}$ are the bilinearized codifferentials with respect to $(\e_{\pm}^1, \e^2_{\pm})$, as reviewed in Section \ref{sec:aug}, and therefore count rigid holomorphic disks with boundary on $\R\times(\Lambda^1_{\pm}\cup \Lambda^2_{\pm})$ with one puncture positively asymptotic and one puncture negatively asymptotic  to
	mixed Reeb chords in $R(\Lambda^1_{\pm}, \Lambda^2_{\pm})$.   
	More explicitly,
	$$\displaystyle{d_{\pm\pm}(b^{12}_{\pm})=\sum_{\dim\left(\M_{J^\pm}^{\R\times(\Lambda^1_{\pm}\cup \Lambda^2_{\pm})} (a^{12}_{\pm}; {\bf p}_{\pm}^{11} b^{12}_{\pm}{\bf q}_{\pm}^{22})\right)=0} |\M_{J^\pm}^{\R\times(\Lambda^1_{\pm}\cup \Lambda^2_{\pm})} (a^{12}_{\pm}; {\bf p}_{\pm}^{11} b^{12}_{\pm}{\bf q}_{\pm}^{22})| \e^1_{\pm}({\bf p}_{\pm}^{11}) \e^2_{\pm}({\bf q}_{\pm}^{22})\  a^{12}_{\pm}},$$
	where $J^{\pm} \in \mathcal{J}^{cyl}$, 
$b^{12}_{\pm}$, $a^{12}_{\pm}$ are generators in $C_{\pm}$, and ${\bf p}_{\pm}^{11}$, and ${\bf q}_{\pm}^{22}$ are words of pure Reeb chords of $\Lambda^1_{\pm}$ and $\Lambda^2_{\pm}$, respectively.
	
	\item The maps $d_{ij}$ from $C_j$ to $C_i$, for $(i,j)=(+-), (+,0), (0,0)$ or $(0,-)$, are defined by a count of rigid holomorphic disks with boundary on $\lag^1\cup\lag^2$ 
	with a puncture positively asymptotic to a generator $c_+$ of $C_i$, a puncture negatively asymptotic to a generator $c_-$ of $C_j$, and possibly other punctures 
	negatively asymptotic to  pure Reeb chords of $\Lambda^1_-\cup \Lambda^2_-$, as shown in Figure \ref{fig:diff}. 
	We use $(\e_-^1,\e_-^2)$ to augment the Reeb chords at the negative pure punctures. The definition of such moduli spaces is similar to the definition of mixed LCH moduli spaces in Section \ref{sec:aug} except that the Lagrangian boundary condition is not cylindrical anymore.  This means that there is no $\R$-action, and so we need a path $\mathbf{J}_s$ of almost complex structures in $\mathcal{J}^{adm}$ to ensure transversality (see \cite[Section 3]{CDRGG} for more details):
	$$\displaystyle{d_{i,j}(c_-)=\sum_{\dim\left(\M_{\mathbf{J}_s}^{\lag^1\cup \lag^2} (c_+; {\bf p}_-^{11} c_- {\bf q}_{-}^{22})\right)=0} |\M_{\mathbf{J}_s}^{\lag^1\cup \lag^2} (c_+; {\bf p}_-^{11} c_- {\bf q}_{-}^{22})| \e^1_{-}({\bf p}_{-}^{11}) \e^2_{-}({\bf q}_{-}^{22})\ c_+},$$
\end{enumerate}

\begin{figure}[!ht]
	\labellist
	{\color{Red}
		\pinlabel $\lag^1$ at  30 280
		\pinlabel $\lag^1$ at  30 100
		\pinlabel $\lag^1$ at  260 280
		\pinlabel $\lag^1$ at  270 100
	}
	{\color{blue}
		\pinlabel $\lag^2$ at  160 280
		\pinlabel $\lag^2$ at  160 100
		\pinlabel $\lag^2$ at  400 280
		\pinlabel $\lag^2$ at  400 100
	}
	\pinlabel $c_+$ at  90 350
	\pinlabel $c_-$ at 85 200
	\pinlabel $(a)$ at  100 175
	
	\pinlabel $c_+$ at  90 150
	\pinlabel $c_-$ at 90 40
	\pinlabel $(c)$ at  100 0
	
	\pinlabel $c_+$ at  320 350
	\pinlabel $c_-$ at 320 220
	\pinlabel $(b)$ at  350 175
	
	\pinlabel $c_+$ at  315 150
	\pinlabel $c_-$ at 320 15
	\pinlabel $(d)$ at  350 0
	\endlabellist
	\includegraphics[width=4in]{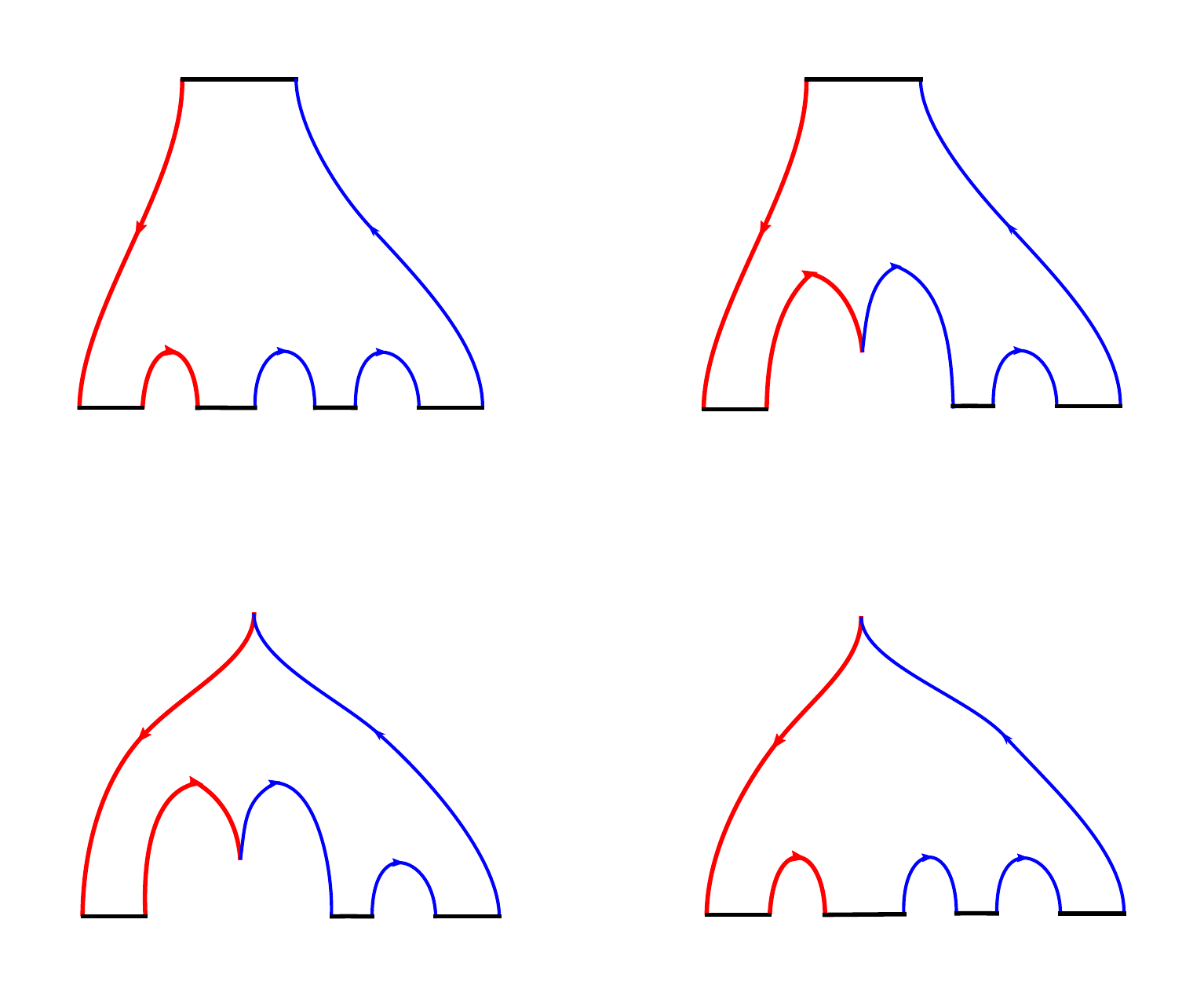}
	\caption{$(a)-(d)$ are the types of holomorphic disks counted by $d_{i,j}$ for $(i,j)=(+-), (+,0), (0, 0)$ and $(0,-)$, respectively, where $c_+$ is a generator of $C_i$ and $c_-$ a generator of $C_j$.}
	\label{fig:diff}
\end{figure}

We can identify some subcomplex and quotient complex of $(Cth(\lag^1, \lag^2), d)$ with cochain complexes defined in Section~\ref{sec:augcat}.
\begin{proposition}[{\cite[Theorem 5.1]{Pan1}}] \label{prop:des}
	The top and bottom cochain complexes admit the following identifications
	$$(C_+, d_{++})= (Hom_+(\e_+^1, \e_+^2)[1], m_1^+),\qquad (C_-, d_{--})= (Hom_+(\e_-^1,\e_-^2), m_1^-),$$
	and $(C_0, d_{00})$ is the cochain complex of the Morse cohomology for $F$ with differential counting Morse flow lines of $F$.
\end{proposition}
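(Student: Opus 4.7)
The plan is to verify each of the three identifications in turn by matching generators, gradings, and the corresponding moduli spaces, leaving only one genuinely analytic step (in the middle complex).

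For the top and bottom complexes, the argument is essentially bookkeeping. The 2-copy $\lag^1 \cup \lag^2$ is constructed so that, on the cylindrical ends, the pair $(\Lambda^1_\pm, \Lambda^2_\pm)$ coincides with the 2-copy used to define the augmentation category $\aug_+(\Lambda_\pm)$ in Section \ref{sec:augcat}. Thus the generators of $C_\pm$ are literally the generators of $\Hom_+(\e^1_\pm, \e^2_\pm)$ (including the Morse Reeb chords near the base points, which appear as short mixed Reeb chords of the $2$-copy). The grading conventions match after a single further shift, giving $C_+ = \Hom_+(\e^1_+,\e^2_+)[1]$ and $C_- = \Hom_+(\e^1_-,\e^2_-)$. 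For the differentials, $d_{\pm\pm}$ counts rigid $J^\pm$-holomorphic disks in the symplectization with boundary on $\R \times (\Lambda^1_\pm \cup \Lambda^2_\pm)$, one positive and one negative mixed puncture, and negative pure punctures augmented by $\e^1_\pm$ and $\e^2_\pm$. This is precisely the definition of $m_1^\pm$ given in Section \ref{sec:augcat}, so the two complexes coincide on the nose.

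For the middle complex, the first step is the identification of generators: since $\lag^1$ is a push-off of $\lag^2$ via the positive Morse function $F$, the set $\lag^1 \cap \lag^2$ is in bijection with $\op{Crit}(F)$, and the Conley--Zehnder grading of an intersection point (inherited from the Legendrian lift) agrees with the Morse index. The crucial second step is to show that $d_{00}$ reduces to the Morse differential of $F$. A priori, $d_{00}$ counts rigid holomorphic disks with boundary on $\lag^1 \cup \lag^2$ that are asymptotic to two intersection points plus possibly additional negative pure Reeb chord punctures of $\Lambda^1_- \cup \Lambda^2_-$ augmented by $(\e^1_-, \e^2_-)$. The hypothesis imposed on the perturbation $F$ in Section \ref{sec:pair}, namely that $F(p)$ on $([-N,N]\times \R^3) \cap \lag$ is smaller than the cobordism action $e^{-N}\int_\gamma \alpha$ of any pure Reeb chord $\gamma$ of $\Lambda_-$, forces the energy balance to fail for disks with at least one pure Reeb chord puncture: the positive energy contributed by such a puncture cannot be offset by the difference of Morse actions between the two intersection-point asymptotics. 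Thus only genuine $\mathbf{J}_s$-holomorphic strips between intersection points contribute.

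The last and only analytically non-trivial step will be to identify these rigid holomorphic strips with gradient flow lines of $F$. This is the standard Floer-to-Morse comparison, carried out in the admissible almost complex structure setting of \cite{CDRGG}: when the $C^1$-norm of $F$ is sufficiently small, a compactness-plus-gluing argument shows that every rigid strip is $C^1$-close to a negative gradient trajectory of $F$, and conversely every such trajectory is regularly approximated by a unique rigid strip. I expect this identification to be the main obstacle, since it must be executed in the non-cylindrical Lagrangian cobordism setting rather than the closed setting of the original Floer theorem; however the required compactness and transversality results are already contained in the general framework of \cite{CDRGG}, so after invoking them the three identifications of the proposition are all essentially forced.
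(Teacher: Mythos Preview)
The paper does not give its own proof of this proposition; it is stated as a citation of \cite[Theorem 5.1]{Pan1}. Your proposal is essentially the standard argument and matches what one expects from that reference: the identifications of $(C_\pm,d_{\pm\pm})$ with $(\Hom_+(\e^1_\pm,\e^2_\pm),m_1^\pm)$ are indeed pure bookkeeping once the $2$-copy on the cylindrical ends is chosen to coincide with the $2$-copy used in $\aug_+$, and for $(C_0,d_{00})$ your two-step reduction is the right one. The energy argument you give is exactly the reason the paper imposes the smallness hypothesis on $F$ in Section~\ref{sec:pair} (as the paper itself remarks just before stating the proposition), and the remaining Floer-to-Morse identification of rigid strips with gradient trajectories, for a $C^1$-small graphical push-off, is the standard result you cite from the \cite{CDRGG} framework. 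There is no gap.
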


Observe that the Cthulhu complex $(Cth(\lag^1, \lag^2), d)$ is the cone of $\phi:=d_{+0}+d_{+-}$. 
The long exact sequence induced by the cone together with the fact that the complex is acyclic \cite[Theorem 6.6]{CDRGG} implies that $\phi$ induces an isomorphism 
\begin{equation} \label{eqn:lower-upper}
\phi_*: H^{*}(C_{-\infty}, d_{-\infty}) \to H^{*+1}(C_+, d_{++}),
\end{equation}
 where
$$C^{*}_{-\infty}= C^{*}_{0}\oplus C_-^{*}, \quad \mbox{ and } \quad  d_{-\infty}=\left(\begin{array}{cc} d_{00}& d_{0-}\\0& d_{--}\\\end{array}\right).$$
Note that $\phi_*$ may depend on the perturbation $F$.
 We have also that $C_{-\infty}$ is the cone of $d_{0-}$. The long exact sequence induced by a cone together with the isomorphism $\phi_*$ and the identifications in Proposition \ref{prop:des}, give the following long exact sequence:
\begin{equation}\label{eq:les}
\cdots \to H^k(\lag, \Lambda_-)\to H^kHom_+(\e^1_+, \e^2_+) \to H^kHom_+(\e^1_-, \e^2_-)\to H^{k+1}(\lag, \Lambda_-)\to \cdots.
\end{equation}

\subsection{Product structure}\label{sec:prod}

The isomorphism  $\phi_*$ from Equation~\eqref{eqn:lower-upper} together with the identification of $H^{*+1} (C_+, d_{++}) = H^*Hom_+(\e_+^1,\e_+^2)$ given by Proposition \ref{prop:des}
  gives an isomorphism
\begin{equation} \label{eqn:C-Hom}
\phi_*: H^* (C_{-\infty}, d_{-\infty}) \to H^*Hom_+(\e_+^1,\e_+^2).
\end{equation}
As recalled in Section~\ref{ssec:aug-defns}, there is a product structure on $H^*Hom_+$ given by the map $m_2^+$ in the category $\aug_+(\Lambda_+)$.
There is also a product structure $m_2^{-\infty}$ on $H^*(C_{-\infty}, d_{-\infty})$ defined by the second author of this paper \cite{Noe}.   In fact, the isomorphism $\phi_*$ preserves the product structures; see Proposition~\ref{prop:ring}.

Let us give a more detailed overview of the construction of $m_2^{-\infty}$ in the case of a $3$-copy $3\lag=\lag^1\cup\lag^2\cup \lag^3$ such that any pair $(\lag^i,\lag^j)$ for $i<j$ has the same Cthulhu complex as $2\lag$; that is, for $i<j$, the cobordism $\lag^i$
is a push-off of $\lag^j$ using a positive Morse function $F^{ij}$ satisfying the same conditions as the Morse function $F$.
In particular, the Morse functions $F^{ij}$ are chosen so that
the action of any intersection point in the Cthulhu complex $Cth(\lag^i,\lag^j)$ is less than the action of any pure Reeb chord of $\Lambda_-$ and that the top and bottom cylinders of $3\lag$ agree with the cylinder over $3\Lambda_{\pm}$ in $\aug_+(\Lambda_{\pm})$. 

For $i=1,2,3$, given   augmentations $\e_-^{i}$ of $\alg(\Lambda_-)$, the induced augmentations $\e_+^i$  of $\alg(\Lambda_+)$, and a \textit{domain dependent almost complex structure} $\mathbf{J}_z$ with values in $\mathcal{J}^{adm}$,
the operator 
$$m_2^{-\infty}: C_{-\infty}(\lag^2, \lag^3)\otimes C_{-\infty}(\lag^1, \lag^2)\to C_{-\infty}(\lag^1, \lag^3)$$
counts rigid holomorphic disks with a puncture positively asymptotic to  a generator of $C_{-\infty}(\lag^1,\lag^3)$, two punctures negatively asymptotic to a generator of  $C_{-\infty}(\lag^1,\lag^2)$ and 
a generator of $C_{-\infty}(\lag^2,\lag^3)$, and punctures negatively asymptotic to pure Reeb chords of $3\Lambda_-$ that are augmented by $\e_-^i$.
The operator $m_2^{-\infty}$ can be decomposed as the sum of the maps
$$\mu_{i,j}^k: C_{i}(\lag^2, \lag^3)\otimes C_{j}(\lag^1, \lag^2)\to C_{k}(\lag^1, \lag^3)$$
for $i,j,k\in\{0,-\}$, satisfying the following:
\begin{enumerate}
	\item
	In the special setting we consider here, namely the $3$-copy $3\lag$, energy restrictions guarantee that if one of the inputs of $m_2^{-\infty}$ is in $C_0$, then the output is in $C_0$, i.e. $\mu_{0,j}^{-}=\mu_{i, 0}^{-}=0$, for $i,j=0,-$.
	\item
	The map $\mu_{-,-}^{-}$ agrees with the usual $m_2^-$ of $\aug_+(\Lambda_-)$.
	\item
	For the rest of the cases, $(i,j,k)=(-,0,0), (0,-,0),(0,0,0), (-,-,0)$,
	the map $\mu_{i,j}^k$ 
	counts rigid holomorphic disks with boundary on $3\lag$:
	\small
	$$\displaystyle{\mu_{i,j}^k(c^{23}, b^{12}) = \sum_{\dim \M_{\mathbf{J}_z}(a^{13}; {\bf p}_-^{11} b^{12} {\bf q}_-^{22} c^{23} {\bf r}_-^{33})=0} |\M_{\mathbf{J}_z}^{\lag^1\cup\lag^2\cup\lag^3}(a^{13}; {\bf p}_-^{11} b^{12} {\bf q}_-^{22} c^{23} {\bf r}_-^{33})| \e_-^1({\bf p}_-^{11}) \e_-^2 ({\bf q}_-^{22}) \e_-^3({\bf r}_-^{33}) \ a^{13} },$$
	\normalsize
	where $a^{13}$ is a generator of  $C_k(\lag^1, \lag^3)$, $b^{12}$ is a generator of $C_j(\lag^1,\lag^2)$, $c^{23}$ is a generator of  $C_i(\lag^2, \lag^3)$, and ${\bf p}_-^{11} , {\bf q}_-^{22}, {\bf r}_-^{33}$  are words of pure Reeb chords of $\Lambda_-^1$, $\Lambda_-^2$, $\Lambda_-^3$, respectively.
\end{enumerate}

In \cite[Section 5.2]{Noe},  it is shown that $m_2^{-\infty}$ commutes with the differentials $d_{-\infty}$ and thus induces a product map on cohomology
$$
m_2^{-\infty}: H^{m} C_{-\infty}(\lag^2, \lag^3) \otimes H^{n}C_{-\infty}(\lag^1, \lag^2)\to H^{m+n} C_{-\infty}(\lag^1, \lag^3).
$$
Moreover, we have the following proposition:

\begin{proposition}[{\cite[Theorem 2]{Noe}}]\label{prop:ring}
	The map $\phi_*$  from Equation~(\ref{eqn:C-Hom}) preserves the product structures, i.e.
	$$\phi_* \circ m_2^{-\infty}([a], [b])=m_2^+(\phi_*[a], \phi_*[b])$$
	for $[a]\in H^{*} C_{-\infty}(\lag^2, \lag^3)$ and $[b]\in H^{*} C_{-\infty}(\lag^1, \lag^2)$.
\end{proposition}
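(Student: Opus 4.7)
The plan is to prove the identity at cohomology level by producing a chain-level homotopy. The central object is a mixed product operation at the chain level, counting rigid pseudo-holomorphic disks on the three-copy $3\lag=\lag^1\cup\lag^2\cup\lag^3$ with one positive asymptotic ``at the top'' to a generator of $C_+(\lag^1,\lag^3)$, two negative asymptotics to generators of $C_{-\infty}(\lag^1,\lag^2)$ and $C_{-\infty}(\lag^2,\lag^3)$, and further negative pure punctures augmented by $\e_-^i$. As with $m_2^{-\infty}$ and the $d_{ij}$, a domain-dependent almost complex structure $\mathbf{J}_z$ valued in $\mathcal{J}^{adm}$ achieves transversality, so I obtain a well-defined chain map
\begin{equation*}
M: C_{-\infty}(\lag^2,\lag^3)\otimes C_{-\infty}(\lag^1,\lag^2)\to C_+(\lag^1,\lag^3).
\end{equation*}

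Next, I would examine the SFT compactification of the one-dimensional version of the moduli space defining $M$ and enumerate its codimension-one boundary strata. The upper-triangular structure of the Cthulhu complex, enforced by the positivity of the perturbation Morse functions $F^{ij}$, together with the action inequality that all intersection points of $\lag^i\cap\lag^j$ have action less than any pure Reeb chord of $\Lambda_-$, controls which strata can appear. The surviving strata are of four types: breakings at the top producing $d_{++}\circ M$; breakings on one of the two inputs producing $M\circ(d_{-\infty}\otimes 1)$ and $M\circ(1\otimes d_{-\infty})$; a ``low breaking'' in which an $m_2^{-\infty}$-disk outputs an intermediate generator of $C_{-\infty}(\lag^1,\lag^3)$ which is then sent upward by $\phi=d_{+0}+d_{+-}$, producing $\phi\circ m_2^{-\infty}$; and a ``high breaking'' in which two separate $\phi$-disks on $\lag^1\cup\lag^2$ and $\lag^2\cup\lag^3$ feed two Reeb chord generators of $C_+$ into an $m_2^+$-disk supported on the cylinder over $3\Lambda_+$, producing $m_2^+\circ(\phi\otimes\phi)$.

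The main obstacle is to justify that no other configurations survive in the boundary. This requires standard SFT compactness combined with careful action and energy bookkeeping: configurations which would break off an intermediate generator of $C_+$ in the ``wrong'' direction are ruled out by the positivity of $F^{ij}$, and buildings whose only mixed asymptotics are pure Reeb chords collapse after applying the $\e_-^i$ (these are precisely the relations expressing that each $\e_-^i$ is an augmentation). A neck-stretching argument along the interface separating the cobordism region from the cylindrical end at $+\infty$ pins down the high breaking as an $m_2^+$ configuration on $3\Lambda_+$ glued to two $\phi$-disks, and similarly isolates the low breaking. When $\F$ is not of characteristic two, the spin hypothesis on the $\lag^i$ provides coherent orientations on all moduli spaces in the style of \cite{CDRGG, EESorientation}, which fixes the signs appearing in the identity.

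Summing the signed contributions of these boundary strata gives the chain-level identity
\begin{equation*}
d_{++}\circ M\ \pm\ M\circ(d_{-\infty}\otimes 1)\ \pm\ M\circ(1\otimes d_{-\infty})\ =\ \phi\circ m_2^{-\infty}\ \pm\ m_2^+\circ(\phi\otimes\phi),
\end{equation*}
exhibiting $M$ as a chain homotopy between $\phi\circ m_2^{-\infty}$ and $m_2^+\circ(\phi\otimes\phi)$ (after identifying $C_+(\lag^1,\lag^3)$ with $Hom_+(\e_+^1,\e_+^3)[1]$ via Proposition~\ref{prop:des}). Passing to cohomology kills the left-hand side and, applied to classes $[a]\in H^*C_{-\infty}(\lag^2,\lag^3)$ and $[b]\in H^*C_{-\infty}(\lag^1,\lag^2)$, yields exactly $\phi_*\circ m_2^{-\infty}([a],[b])=m_2^+(\phi_*[a],\phi_*[b])$, which is the claim.
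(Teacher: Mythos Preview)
The paper does not prove Proposition~\ref{prop:ring} itself; it is quoted from \cite[Theorem~2]{Noe} without argument. Your sketch is the standard approach and is essentially what is carried out there: define the homotopy $M$ by counting rigid disks on $3\lag$ with one positive asymptotic in $C_+(\lag^1,\lag^3)$ and two mixed inputs in $C_{-\infty}$, then study the boundary of the associated $1$-dimensional moduli spaces via SFT compactness. Your enumeration of the four types of boundary strata is correct; the reason no ``mixed'' term such as $m_2^+\circ(\phi\otimes\mathrm{id})$ appears is that a disk with boundary on the non-cylindrical cobordisms cannot have a \emph{negative} asymptotic to a Reeb chord of $\Lambda_+$, which you are implicitly using when you say the positivity of $F^{ij}$ rules out breakings in the wrong direction.

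One expository quibble: the phrase ``neck-stretching argument'' is slightly out of place. Neck-stretching is what makes the cobordism look like a building so that SFT compactness applies in the first place; once that is done, the boundary analysis you describe is just SFT compactness (in the sense of \cite{CDRGG}), not an additional stretching. This does not affect the correctness of your argument.
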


\begin{remark}\label{rmk:abuse}
	Note that we abuse the notation of $\phi_*$ here, for simplicity of notation. 
	This isomorphism $\phi_*$ is defined on each pair of cobordisms $2\lag$ in $3\lag$ and could be different for the different $2$-copies.
	 A more rigorous way of writing the identity in the proposition above would be
	$$\phi^{(\e^1_-,\e^3_-)}_* \circ m_2^{-\infty}\left([a], [b]\right)=
	m_2^+\left(\phi^{(\e^2_-,\e^3_-)}_*[a], \phi^{(\e^1_-,\e^2_-)}_*[b]\right).$$
\end{remark}

\section{Obstructions to exact Lagrangian cobordisms between links}\label{sec:main}

In this section, we give an obstruction to the existence of embedded, Maslov-$0$, exact Lagrangian cobordisms  through a count of augmentations 
of the bottom and top Legendrian links.   
We will count augmentations up to $\Sim$, the equivalence in  $\aug_+(\leg_\pm)$ (Definition~\ref{defn:equal}}), which   
by Proposition~\ref{prop:equivalences} 
 is the same as the split-DGA homotopy equivalence (Definition~\ref{defn:split-DGA-homotopy}).
The obstruction through a count of augmentations is proven in Section \ref{sec:proof}, with the proofs of key propositions provided in \ref {sec:indep} and \ref{sec:unitalring}.
Section \ref{sec:obs} provides other obstructions in terms of linearized contact homology and ruling polynomials.

\subsection{Proof of Theorem \ref{thm:main}}\label{sec:proof}   
Throughout this subsection,  we suppose that $\e_-$ is an augmentation for $\alg(\Lambda_-)$ and $\e_+ = \mathcal F_{\lag}(\e_{-})$ is the augmentation of $\alg(\Lambda_+)$ induced by $\e_-$ through $L$.
Since we will be counting augmentations up to equivalence in  $\aug_+(\leg_\pm)$,
 we first define maps  $\iota: H^0Hom_+(\e_+^i, \e_+^j) 
\to H^0Hom_+(\e_-^i, \e_-^j)$, for $i,j\in\{1,2\}$.
Consider the special pair of cobordisms $2\lag$ as described in Section \ref{sec:pair} and the isomorphism $\phi_*: H^*(C_{-\infty}, d_{-\infty})\to H^*Hom_+(\e^i_+, \e^j_+)$ in Equation~(\ref{eqn:C-Hom}).  Note that $(C_0, d_{00})$ 
is a subchain complex of $(C_{-\infty}, d_{-\infty})$.
Combining this fact with  Proposition \ref{prop:des}, it follows that the quotient map $\pi: C_{-\infty}\to C_-$ induces a map on cohomology:
$$\pi_*: H^*(C_{-\infty}, d_{-\infty})\to H^*Hom_+(\e^i_-, \e^j_-).$$
Precomposing with $\phi_*^{-1}$ gives a map 
$$  \iota= \pi_*\circ \phi_*^{-1}: H^*Hom_+(\e_+^i, \e_+^j) \to H^*Hom_+(\e_-^i, \e_-^j).
$$
 
The next proposition shows
that $\iota$ is ``natural'':  although $\phi_\ast$ may depend on the Morse perturbation function $F$ used to construct $2\lag$, $\iota$ does not.  The proof of this proposition can be found in Section~\ref {sec:indep}.

\begin{proposition}\label{prop:indep}
The maps $\iota:H^*Hom_+(\e_+^i, \e_+^j) \to H^*Hom_+(\e_-^i, \e_-^j)$  
are independent of the choice of the Morse perturbation function $F$, up to compactly supported homotopy.
\end{proposition}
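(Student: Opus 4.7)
The plan is to show that two choices of Morse perturbation function give chain maps of the $C_{-\infty}$ complexes that intertwine both the quotient map $\pi$ and the connecting map $\phi$ up to compactly supported chain homotopy. Fix two Morse functions $F_0, F_1$ satisfying all the conditions imposed in Section~\ref{sec:pair}, giving two perturbed $2$-copies $2\lag_0 = \lag_0^1 \cup \lag^2$ and $2\lag_1 = \lag_1^1 \cup \lag^2$. Since by assumption $F_s = e^t f_\pm$ on the cylindrical ends for both values of $s$, the two push-offs $\lag_0^1$ and $\lag_1^1$ agree with $\lag_1$ on $([N,\infty) \cup (-\infty,-N]) \times \R^3$. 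Hence the induced augmentations $\e_\pm^1, \e_\pm^2$ are the same in both cases, and the upper and lower parts $(C_+, d_{++})$ and $(C_-, d_{--})$ of the Cthulhu complex are identical.

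Next, I would interpolate via a generic path $F_s$, $s\in[0,1]$, of positive Morse functions, all satisfying the conditions of Section~\ref{sec:pair} and all agreeing with $e^t f_\pm$ on the cylindrical ends (equivalently, a compactly supported Hamiltonian isotopy sending $\lag_0^1$ to $\lag_1^1$). Standard Floer-theoretic continuation arguments, as in \cite{CDRGG}, produce a continuation chain map
$$\Phi_{01} : Cth(\lag_0^1, \lag^2) \to Cth(\lag_1^1, \lag^2)$$
defined by counts of rigid pseudo-holomorphic strips with boundary on a homotopy of Lagrangians interpolating $\lag_0^1$ and $\lag_1^1$, and with possible negative asymptotics to pure Reeb chords of $\Lambda_-$ augmented by $\e_-^i$. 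Because the perturbation is supported in the compact region and the ends are untouched, energy arguments (as in \cite[Lemma 7.2]{CDRGG}) show that $\Phi_{01}$ is upper triangular with respect to the splitting $C_+ \oplus C_0 \oplus C_-$, restricts to the identity on $C_+$ and on $C_-$ (no nontrivial strips can be asymptotic only to Reeb chords on the fixed cylindrical ends without a further $\R$-translation), and is a chain homotopy equivalence by the usual argument composing with a reverse continuation and using a path-of-paths homotopy.

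Because $\Phi_{01}$ preserves the upper-triangular splitting and is the identity on $C_+$, it descends to a chain map $\Phi_{01}^{-\infty} : C_{-\infty,0} \to C_{-\infty,1}$ which restricts to the identity on $C_-$, so that $\pi_1 \circ \Phi_{01}^{-\infty} = \pi_0$ at the chain level. The upper-triangular chain-map condition $d_1 \Phi_{01} = \Phi_{01} d_0$, read off in the component from $C_{-\infty,0}$ to $C_+$, is precisely the statement that $\phi_1 \circ \Phi_{01}^{-\infty} - \phi_0$ equals $d_{++}$ composed with the $C_+$-to-$C_+$ block of $\Phi_{01}$ minus the corresponding term, which up to a boundary is a chain homotopy between $\phi_1 \circ \Phi_{01}^{-\infty}$ and $\phi_0$. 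This homotopy is constructed from the same moduli spaces defining $\Phi_{01}$, which live over a compact parameter interval with compactly supported perturbation, so it is compactly supported in the desired sense.

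Passing to cohomology we obtain $\pi_{0,*} = \pi_{1,*} \circ \Phi_{01,*}^{-\infty}$ and $\phi_{0,*} = \phi_{1,*} \circ \Phi_{01,*}^{-\infty}$. Since $\phi_{0,*}, \phi_{1,*}$ are isomorphisms, $\Phi_{01,*}^{-\infty} = \phi_{1,*}^{-1} \circ \phi_{0,*}$, and therefore
$$\iota_0 = \pi_{0,*} \circ \phi_{0,*}^{-1} = \pi_{1,*} \circ \Phi_{01,*}^{-\infty} \circ \phi_{0,*}^{-1} = \pi_{1,*} \circ \phi_{1,*}^{-1} = \iota_1,$$
proving independence. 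The main technical obstacle is verifying transversality and compactness for the moduli spaces underlying $\Phi_{01}$ in the presence of negative pure punctures augmented by $\e_-^i$ and the moving boundary condition; these are by now standard but delicate, and follow from the framework of \cite{CDRGG} applied to our parametric setting.
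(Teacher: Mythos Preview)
Your approach is the same high-level strategy as the paper's: build a chain map between the two Cthulhu complexes coming from the compactly supported isotopy, show it is upper-triangular with identity diagonal blocks on $C_+$ and on $C_-$, and then use the chain-map equation to conclude that both $\phi_*$ and $\pi_*$ are intertwined, hence $\iota_0=\iota_1$. The algebraic deduction you give (extracting the $(C_{-\infty}\to C_+)$ component of $d_1\Phi_{01}=\Phi_{01}d_0$ to get $\phi_1\Phi_{01}^{-\infty}-\phi_0=-d_{++}\Phi_+ + \Phi_+ d_{-\infty,0}$) is exactly what the paper does.

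The gap is in how you justify that the diagonal blocks of $\Phi_{01}$ on $C_+$ and $C_-$ are the identity. Your argument --- ``no nontrivial strips can be asymptotic only to Reeb chords on the fixed cylindrical ends without a further $\R$-translation'' --- does not work as stated: the parametric continuation problem has no $\R$-symmetry, and a strip with both asymptotics at Reeb chords of $\Lambda_\pm$ can still have boundary dipping into the compact moving part of the cobordism, so there is no immediate translation-rigidity argument. The paper instead invokes the concrete bifurcation description of the chain map from \cite[Proposition 6.4]{CDRGG} (following \cite{EkhSFT}): along a generic isotopy one has finitely many isolated events, each either a birth/death of a pair of intersection points in $C_0$ or the appearance of a $(-1)$-disk $u\in\M(c_1;\mathbf{p}c_2\mathbf{q})$ with $c_1\in C_+\cup C_0$ and $c_2\in C_0\cup C_-$. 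For each such elementary map one checks directly that it is the identity on $C_+$ and that its $C_-\to C_-$ component is the identity (equivalently, that $\varphi_{-\infty}(c)-c\in C_0$). This is the substance you are missing; once you replace your energy sketch with this bifurcation analysis, your proof coincides with the paper's.
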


The following properties of the  $\iota$ map are used in the proof of Theorem~\ref{thm:main}, and are proved in Section \ref{sec:unitalring}.
\begin{proposition}\label{prop:unit}
 The map $\iota:H^kHom_+(\e_+^i, \e_+^j) \to H^kHom_+(\e_-^i, \e_-^j)$ satisfies the following properties:
	\begin{enumerate}
		\item $\iota$ preserves the product structures, i.e.
		\begin{alignat*}{1}
		m^-_2(\iota[a], \iota[b])=\iota(m^+_2([a],[b]))
		\end{alignat*}
		for $[a]\in H^*Hom_+(\e_+^2,\e_+^3)$ and $[b]\in H^*Hom_+(\e_+^1,\e_+^2)$, where $m^{\pm}_2$ are the products in the augmentation categories $\aug_+(\Lambda_{\pm})$,
		\item $\iota$ is unital, meaning that when $\e_\pm^1=\e_\pm^2=\e_\pm$, we have 
		$\iota([e_{\e_+}])=[e_{\e_-}]$. 
	\end{enumerate}
\end{proposition}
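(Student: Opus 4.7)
The plan is to deduce both statements from the structural description $\iota = \pi_{*} \circ \phi_{*}^{-1}$, combined with Proposition~\ref{prop:ring} and the explicit decomposition of $m_{2}^{-\infty}$ reviewed in Section~\ref{sec:prod}.

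For part~(1), I first verify that the quotient projection $\pi: C_{-\infty} \to C_{-}$ intertwines $m_{2}^{-\infty}$ with $m_{2}^{-}$ already at the chain level. Recall from Section~\ref{sec:prod} that $m_{2}^{-\infty} = \sum \mu_{i,j}^{k}$ with $i,j,k \in \{0,-\}$, and that in the special-pair setup the action/energy restrictions force $\mu_{i,j}^{-} = 0$ whenever $i = 0$ or $j = 0$, while $\mu_{-,-}^{-} = m_{2}^{-}$. Hence at the chain level
$$\pi\bigl(m_{2}^{-\infty}(a,b)\bigr) = \mu_{-,-}^{-}\bigl(\pi(a), \pi(b)\bigr) = m_{2}^{-}\bigl(\pi(a), \pi(b)\bigr),$$
which passes to cohomology. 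Combining with Proposition~\ref{prop:ring} and tracking the correct triples of augmentations as in Remark~\ref{rmk:abuse} yields
$$\iota \circ m_{2}^{+} = \pi_{*} \circ \phi_{*}^{-1} \circ m_{2}^{+} = \pi_{*} \circ m_{2}^{-\infty} \circ (\phi_{*}^{-1} \otimes \phi_{*}^{-1}) = m_{2}^{-} \circ (\iota \otimes \iota),$$
which is the desired identity.

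For part~(2), I specialize to $\e^{1}_{\pm} = \e^{2}_{\pm} = \e_{\pm}$, so the units are $e_{\e_{\pm}} = -\sum_{i} y_{i,\pm}^{12}$. The goal is to produce a degree-zero cocycle $u \in C_{-\infty}$ with $\pi(u) = e_{\e_{-}}$ and $[\phi(u)] = [e_{\e_{+}}]$ in $H^{*}(C_{+}, d_{++})$; then $[u]$ is a preimage of $[e_{\e_{+}}]$ under $\phi_{*}$, and $\iota[e_{\e_{+}}] = \pi_{*}[u] = [e_{\e_{-}}]$ by construction. Writing $u = (x, e_{\e_{-}}) \in C_{0} \oplus C_{-}$, the cocycle condition reduces to $d_{00}(x) = -d_{0-}(e_{\e_{-}})$; the right-hand side is automatically a $d_{00}$-cocycle since $d_{-\infty}^{2} = 0$ and $m_{1}^{-}(e_{\e_{-}}) = 0$. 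The critical geometric input is that this cocycle is in fact a Morse coboundary, which I would establish using that the special-pair Morse function $F$ restricts to $e^{t} f_{-}$ on the negative cylindrical end: the generators $y_{i,-}^{12}$ are the $-\infty$ traces of a canonical chain of interior index-zero critical points of $F$, and a neck-stretching/PSS-type argument identifies the rigid strips contributing to $d_{0-}(e_{\e_{-}})$ with Morse flow lines from interior minima of $F$ to the negative end. The symmetric analysis at the positive end then shows that $\phi(u) = d_{+0}(x) + d_{+-}(e_{\e_{-}})$ represents $[e_{\e_{+}}]$.

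The main obstacle lies in part~(2), specifically the chain-level identifications: producing the Morse primitive $x$ and verifying $[\phi(u)] = [e_{\e_{+}}]$. Both rest on the Morse-SFT comparison sketched above, which requires careful control of moduli spaces of rigid strips whose asymptotics lie on the Morse minima of $f_{\pm}$ in the special-pair geometry; the long exact sequence~(\ref{eq:les}) provides a useful sanity check, as it identifies the obstruction to lifting $[e_{\e_{-}}]$ from $H^{0}Hom_{+}(\e_{-},\e_{-})$ to $H^{0}(C_{-\infty})$ with a class in $H^{1}(L,\Lambda_{-})$ that must vanish by this Morse-theoretic identification. Part~(1), by contrast, is a purely formal consequence of Proposition~\ref{prop:ring} and the vanishing of $\mu_{0,\bullet}^{-}$ and $\mu_{\bullet,0}^{-}$.
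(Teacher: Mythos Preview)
Your argument for part~(1) is correct and matches the paper exactly: the chain-level identity $\pi\circ m_2^{-\infty}=m_2^{-}\circ(\pi\otimes\pi)$ follows from $\mu_{0,\bullet}^{-}=\mu_{\bullet,0}^{-}=0$ and $\mu_{-,-}^{-}=m_2^{-}$, and then one composes with Proposition~\ref{prop:ring}.

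For part~(2), you have correctly isolated the key geometric input---the existence of a cocycle $u=e_0+e_{\e_-}\in C_{-\infty}$ with $\pi(u)=e_{\e_-}$ (this is the paper's Lemma~\ref{lem:closed})---but you then take a harder route than the paper by attempting to verify $[\phi(u)]=[e_{\e_+}]$ directly via a ``symmetric analysis at the positive end.'' The paper sidesteps this entirely with an algebraic trick that uses part~(1): since $\phi_*$ is a ring isomorphism, $[e_{-\infty}]:=\phi_*^{-1}[e_{\e_+}]$ is automatically the unit of $H^*(C_{-\infty},d_{-\infty})$ with respect to $m_2^{-\infty}$. Then, using only that $\pi_*$ preserves products, that $[e_{\e_-}]$ is a unit, and that $\pi_*[u]=[e_{\e_-}]$, one computes
\[
\pi_*[e_{-\infty}]=m_2^-\bigl([e_{\e_-}],\pi_*[e_{-\infty}]\bigr)=m_2^-\bigl(\pi_*[u],\pi_*[e_{-\infty}]\bigr)=\pi_*\bigl(m_2^{-\infty}([u],[e_{-\infty}])\bigr)=\pi_*[u]=[e_{\e_-}].
\]
So one never needs to know what $\phi_*[u]$ is, only that $\phi_*^{-1}[e_{\e_+}]$ is a unit. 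Your direct approach is not wrong in principle, but it requires a second Morse--SFT comparison at the positive end that you only sketch, and whose details are at least as delicate as those at the negative end.

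On the geometric lemma itself (your construction of the primitive $x=e_0$), your sketch is in the right spirit but vaguer than the paper's actual mechanism. The paper does not run a neck-stretching/PSS argument; instead it concatenates with an explicit wrapped cobordism $W^1\cup W^2$ at the negative end (pushing $\Lambda_-^1$ down by a constant $A$), uses the transfer map $\Psi^W$ of \cite{CDRGG} to identify $d_{0-}^{L^1,L^2}(e_{\e_-})$ with the $C_0(L^1,L^2)$-component of $d_{00}^{W\odot L}$ applied to the corresponding intersection points, and then reads off $d_{00}(e_0)+d_{0-}(e_{\e_-})=0$ from the Morse theory of an extended function $\widetilde F$ on the concatenation. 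The role you assign to the long exact sequence~(\ref{eq:les}) as a ``sanity check'' is reasonable, but it is not used in the proof.
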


\begin{proof}[Proof of Theorem \ref{thm:main}]
Let $\lag$ be an embedded,  Maslov-$0$, exact Lagrangian cobordism from $\Lambda_-$ to $\Lambda_+$, and $\e_{-}^1, \e_-^2$ be two augmentations of $\alg(\leg_{-})$. 
To show that 
$$  |Aug(\Lambda_-;\F)/\Sim| \leq |Aug(\Lambda_+;\F)/\Sim|$$
we show that if 
the induced augmentations $\e_+^1 = \mathcal{F}(\e_-^1)$ and $\e_+^2 = \mathcal{F}(\e_-^2)$ are equivalent  then
 $\e_-^1$ and $\e_-^2$ are also equivalent.
 Since $\e_+^1, \e_+^2$ are equivalent,  there exist $[\alpha]\in H^0Hom_+(\e_+^1, \e_+^2)$ and $[\beta]\in H^0Hom_+(\e_+^2, \e_+^1)$ such that 
$$m^+_2([\alpha],[\beta])= [e_{\e_+^2}]\in H^0Hom_+(\e_+^2, \e_+^2), \mbox{ and } m^+_2([\beta],[\alpha])= [e_{\e_+^1}]\in H^0Hom_+(\e_+^1, \e_+^1),$$
where $[e_{\e_+^i}]$ is the unit in $H^0Hom_+(\e_+^i, \e_+^i)$, for $i=1,2$.
By Proposition~\ref{prop:unit}, 
  $$m^-_2(\iota[\alpha], \iota[\beta])=\iota(m^+_2([\alpha],[\beta]))= \iota([e_{\e_+^2}])= [e_{\e_-^2}].$$
Analogously, one can prove that $m^-_2(\iota[\beta], \iota[\alpha] )=[e_{\e_-^1}]$.  It follows that  $\e_{-}^{1}$ and $\e_{-}^{2}$ are equivalent, as desired.
\end{proof}
If $\leg_{\pm}$ are Legendrian knots or if $\leg_{\pm}$ are Legendrian links and  $\F = \Z_{2}$, as mentioned in Remark~\ref{rem:aug-equiv}(1), the map
$$
 \mathcal{F}_\lag:Aug(\Lambda_-;\F)/\Sim\to Aug(\Lambda_+; \F)/\Sim
$$
exists; the above argument shows that $ \mathcal{F}_\lag$ is injective.

\subsection{Proof of Proposition~\ref{prop:indep}}\label{sec:indep}

\begin{proof}[Proof of Proposition~\ref{prop:indep}]
Following the construction in Section~\ref{sec:pair}, 
suppose that $F$ and $F'$ are two Morse functions on $\lag$, homotopic through a homotopy  with compact support, 
and let 
$2\lag=\lag^1\cup \lag^2$ and $2\lag'=\lag^{1'}\cup \lag^2$ denote the corresponding $2$-copies.
The  homotopy between $F$ and $F'$ induces a compactly supported Lagrangian isotopy between $2\lag$ and $2\lag'$;
note that the isotopy keeps the two cylindrical ends fixed. 
According to \cite[Proposition 6.4]{CDRGG},  the isotopy induces a chain map 
$$\varphi:\  Cth(\lag^1, \lag^2)\to Cth(\lag^{1'}, \lag^2).$$

Following \cite{EkhSFT},  we will show that the map $\varphi$ is the identity map on $C_+(\lag^1, \lag^2)\to C_+(\lag^{1'}, \lag^2)$.  
Along a generic isotopy $\{\lag^1_s\}_{s\in[0,1]}$
from $\lag^1_{0}:=\lag^1$ to $\lag^1_1:=\lag^{1'}$, one can assume that except for a finite number of distinct points $0<s_0<s_1<\dots<s_r<1$, the cobordisms $\lag_s^1$ and $\lag^2$ are transverse and the moduli spaces contributing to the differential of $Cth(\lag_s^1,\lag^2)$ are transversely cut out. At the points $s_j$, two different situations can occur:
	\begin{enumerate}
		\item The birth/death of a pair of intersection points, $c_1,c_2\in C_0$ with $|c_1|=|c_2|+1$;
		\item The appearance of a $(-1)$-disk  $u\in \M(c_1;{\bf p}c_2{\bf q})$ with boundary on the non-cylindrical parts of the cobordisms.
	\end{enumerate}
Moreover, one can assume that these two cases do not occur simultaneously. Hence, from now on, let us assume that $s_0\in(0,1)$ is the only point in the isotopy when situations $(1)$ or $(2)$ can occur.
 Suppose first that case $(1)$ occurs, and denote the Cthulhu chain complex with (resp. without) the pair of intersection points by $(C[+],d[+])$ (resp. $(C[-],d[-])$). We have $d[+](c_2)=c_1+v$ where $v$ does not contain $c_1$.
 The induced chain map $C[+]\to C[-]$ corresponding to the death of the pair of intersection points $c_1, c_2$ maps $c_2\to 0$, $c_1\to -v$ and other elements to themselves.
 The induced chain map $C[-]\to C[+]$ corresponding to the birth of $c_1, c_2$ sends an element $c$ to $c-c_1^*(d[+] c)c_2$, where $c_1^*$ is the dual element for $c_1$.
 Note that both $c_1$ and $c_2$ are intersection points, thus the induced chain maps are identity maps on $C_+$.
In the second case, a $(-1)$-disk $u\in \M(c_1;{\bf p}c_2{\bf q})$ appears. The induced map $\varphi$ sends $c_2$ to $c_2+ \lambda c_1$, for some number $\lambda$, and all other elements to themselves.
Since the negative puncture $c_2$ is not in $C_+$, the induced chain map is the identity on $C_+$.

Denote $\varphi_{-\infty}$ the component
$$\varphi_{-\infty}:  \ (C_{-\infty}(\lag^1, \lag^2), d_{-\infty}) \to (C_{-\infty}(\lag^{1'}, \lag^2), d'_{-\infty})$$
The fact that $\varphi$ is a chain map and fixes $C_+$ implies that $\varphi_{-\infty}$ is a chain map, i.e. $\varphi_{-\infty}\circ d_{-\infty}=d_{-\infty}'\circ\varphi_{-\infty}$.
Let us then denote $\varphi_+$ the component 
$$\varphi_+:Cth(\lag^1,\lag^2)\to C_+(\lag^{1'},\lag^2)$$
The fact that $\varphi$ is a chain map implies that for any cycle $c\in C_{-\infty}(\lag^1, \lag^2)$ one has
$$\varphi_+\circ(\phi+d_{-\infty})(c)=d_{++}'\circ\varphi_+(c)+\phi'\circ\varphi_{-\infty}(c)$$
Using the fact that $\varphi$ is the identity map on $C_+$ and $d_{-\infty}(c)=0$, this equation becomes,
$$\phi (c)= d'_{++}\circ\varphi_+(c)+\phi' \circ \varphi_{-\infty}(c).$$

It follows that
$$[ \phi (c)]= [\phi' \circ \varphi_{-\infty}(c)] \in H^{*}(C_{+}(\lag^{1'},\lag^2))= H^*Hom_+(\e^i_+, \e^j_+).$$
In order to show that $\iota:=\pi_*\circ \phi_*^{-1}= \pi'_*\circ (\phi'_*)^{-1}=:\iota'$, where $\pi$ and $\pi'$ are the projection maps from $C_{-\infty}\to C_-$ for the two cases, respectively,
 we will prove  that 
\begin{equation}\label{eq:toprove}
\mbox{if } c\in C_{-\infty}(\lag^1, \lag^2),
\mbox{ then } \pi (c)= \pi' \circ \varphi_{-\infty} (c).
\end{equation} 
Again, it suffices to understand how $\varphi_{-\infty}$ behaves when either case $(1)$ or $(2)$ occurs in the isotopy.
If a $(-1)$-disk $u\in \M(c_1;{\bf p}c_2{\bf q})$ occurs, the positive puncture $c_1$ can be an element in $C_+$ or $C_0$, and by definition of $\varphi_{-\infty}$ we only need to consider disks $u$ with $c_1\in C_0.$
Then, the induced chain map $\varphi_{-\infty}$ sends any element $c$ to $c+m$ for $m=0$ or $m \in C_0$.
If case $(1)$ occurs, and we have a birth/death of intersection points $c_1,c_2$ in $C_0$, denote the chain complex with (resp. without) the pair of intersection points by $(C_{-\infty}[+],d_{-\infty}[+])$ (resp. $(C_{-\infty}[-],d_{-\infty}[-])$). Suppose that $d_{-\infty}[+](c_2)=c_1+v$.
Since the differential of $C_{-\infty}$ is upper triangular, we know that $v$ is in $C_0$.
Thus, the map from $C_{-\infty}[+]$ to $C_{-\infty}[-]$ maps $c_1$ to $C_0$ and $c_2$ to $0$. If we have a birth of intersection points, the map from $C_{-\infty}[-]$ to $C_{-\infty}[+]$ sends an element $c$ to  $c-c_1^*(d_{-\infty}[+] c)c_2$, which is also in $C_0$.
In both cases we have shown \eqref{eq:toprove} is true and can conclude that the map $\iota$
does not depend on the choice of Morse function $F$.
\end{proof}

\subsection{Proof of Proposition \ref{prop:unit}}\label{sec:unitalring}
To prove the first statement of  Proposition \ref{prop:unit}, first 
recall that $\iota= \pi_*\circ \phi_*^{-1}$ and that $\phi^{-1}_*$  preserves the product structures; see Proposition \ref{prop:ring}.
Thus  Proposition \ref{prop:unit} (1) follows immediately from

\begin{lemma}
The map $\pi_*: H^*(C_{-\infty}, d_{-\infty})\to H^* (C_-, d_{--})$ preserves the products.
\end{lemma}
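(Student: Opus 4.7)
The plan is to verify the identity $\pi \circ m_2^{-\infty} = m_2^- \circ (\pi \otimes \pi)$ at the chain level, from which the analogous identity on cohomology follows immediately by passing to homology classes.

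First I would write any chain $a \in C_{-\infty}(\lag^i,\lag^j)$ in the decomposition $C_{-\infty} = C_0 \oplus C_-$ as $a = a_0 + a_-$ with $a_0 \in C_0$ and $a_- \in C_-$. Since the differential $d_{-\infty}$ is upper triangular in this splitting, the projection $\pi : C_{-\infty} \to C_-$ defined by $a_0 + a_- \mapsto a_-$ is a genuine chain map, inducing the map $\pi_*$ on cohomology. So the only content of the lemma is the compatibility of the chain-level projection with the products.

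Next I would unpack $m_2^{-\infty}$ using its decomposition into the components $\mu_{i,j}^k : C_i \otimes C_j \to C_k$ with $i,j,k \in \{0,-\}$ recalled in Section~\ref{sec:prod}. For chains $a = a_0 + a_- \in C_{-\infty}(\lag^2,\lag^3)$ and $b = b_0 + b_- \in C_{-\infty}(\lag^1,\lag^2)$, the $C_-$-component of $m_2^{-\infty}(a,b)$ is the sum
\[
\pi\bigl(m_2^{-\infty}(a,b)\bigr) \;=\; \mu_{-,-}^{-}(a_-,b_-) \;+\; \mu_{-,0}^{-}(a_-,b_0) \;+\; \mu_{0,-}^{-}(a_0,b_-) \;+\; \mu_{0,0}^{-}(a_0,b_0).
\]
By item (1) of the list in Section~\ref{sec:prod}, the energy restrictions built into the special three-copy $3\lag$ force $\mu_{i,j}^{-} = 0$ whenever $i=0$ or $j=0$, so the last three terms vanish identically. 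Combining this with item (2) of the same list, which identifies $\mu_{-,-}^{-}$ with the $A_\infty$ product $m_2^-$ in $\aug_+(\Lambda_-)$, yields
\[
\pi\bigl(m_2^{-\infty}(a,b)\bigr) \;=\; m_2^{-}(a_-,b_-) \;=\; m_2^{-}\bigl(\pi(a),\pi(b)\bigr).
\]

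Finally, passing to cohomology, this chain-level equality gives $\pi_* \circ m_2^{-\infty} = m_2^- \circ (\pi_* \otimes \pi_*)$, which is exactly the statement that $\pi_*$ preserves products. The proof has no real obstacle: everything reduces to the already-established vanishing of the mixed $\mu_{i,j}^{-}$ with a $C_0$ input, which in turn rests on the standing assumption that the Morse perturbation $F$ takes values smaller than the action of any pure Reeb chord of $\Lambda_-$. The only thing that would require minor care is confirming that the sign conventions used to define $m_2^-$ on the bottom and $\mu_{-,-}^{-}$ inside $m_2^{-\infty}$ agree, but this is immediate from the construction in \cite{Noe}.
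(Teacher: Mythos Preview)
Your proof is correct and follows essentially the same approach as the paper: both argue at the chain level that the $C_-$-component of $m_2^{-\infty}(a,b)$ can only arise from $\mu_{-,-}^-(a_-,b_-)=m_2^-(\pi(a),\pi(b))$, using the vanishing $\mu_{0,j}^-=\mu_{i,0}^-=0$ from the energy restrictions in the special three-copy. Your version is simply more explicit in writing out the decomposition and the individual components.
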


\begin{proof}
Recall  that $m_2^{-\infty}(a, b)\in C_0$ if  $a$ or $b$ is in $C_0$.
Thus the component of $m_2^{-\infty}(a, b)$ with values in $C_-$ only comes from $m_2^{-}(\pi(a), \pi(b))$, i.e.
$$
\pi \circ m_2^{-\infty}(a, b)= m_2^{-}(\pi(a), \pi(b)).
$$
\end{proof}

In order to prove  Proposition \ref{prop:unit} (2),
we need  that for any augmentation $\e_-$ of $\alg(\Lambda_-)$ and its induced augmentation $\e_+$ of $\alg(\Lambda_+)$, the map 
$$\iota: H^0Hom_+(\e_+,\e_+)\to H^0Hom_+(\e_-,\e_-)$$ preserves the unit.
Note that $\phi_*^{-1}$ is an isomorphism that preserves the product structures and thus sends the unit $[e_+]$ of $H^0Hom_+(\e_+, \e_+)$ to a unit $[e_{-\infty}]$ of $H^0(C_{-\infty})$.
In order to show $\pi_*([e_{-\infty}])\in H^0Hom_+(\e_-,\e_-)$ is the unit $[e_-]$ of $H^0Hom_{+}(\e_{-},\e_{-})$, 
we only need to prove the following lemma.
\begin{lemma}\label{lem:closed}
There is an element $e=e_-+e_0\in C_{-\infty}$, where $e_0$ is an element in $C_0$, such that $d_{-\infty} (e)=0$.
\end{lemma}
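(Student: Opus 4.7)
The goal is to produce $e_0 \in C_0$ such that $d_{-\infty}(e_- + e_0) = 0$. The block upper triangular form of $d_{-\infty}$ together with the strict unitality $d_{--}(e_-) = m_1^-(e_-) = 0$ recalled in Section~\ref{sec:unital} reduces the lemma to finding $e_0 \in C_0$ with $d_{00}(e_0) = -d_{0-}(e_-)$. That $d_{0-}(e_-)$ is automatically $d_{00}$-closed follows immediately: expanding $d_{-\infty}^2(e_-) = 0$ in block form and reading off the $C_0$-component yields $d_{00}(d_{0-}(e_-)) + d_{0-}(d_{--}(e_-)) = 0$, and the second term vanishes. So the whole content of the lemma is that $d_{0-}(e_-)$ is in addition a $d_{00}$-coboundary.

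My plan for this substantive step is to identify the induced cohomology-level map with a topological connecting map, and then exploit the fact that the unit lifts topologically. By Proposition~\ref{prop:des}, $(C_0, d_{00})$ is the Morse cochain complex of $F$ on $L$, and its cohomology computes $H^*(L, \Lambda_-)$; the connecting homomorphism in the long exact sequence associated to $0 \to C_0 \to C_{-\infty} \to C_- \to 0$ is induced at the chain level by $d_{0-}$. Tracing the identifications of Proposition~\ref{prop:des} and the long exact sequence \eqref{eq:les}, it thus suffices to show that the image of $[e_-] \in H^0 Hom_+(\e_-, \e_-)$ under this induced map into $H^1(L, \Lambda_-)$ vanishes.

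To establish this, I would neck-stretch the cobordism along its negative cylindrical end --- equivalently, shrink the Morse perturbation producing the 2-copy $2\lag$ along $(-\infty, -N] \times \Lambda_-$ --- and invoke Ekholm's Morse-flow-tree correspondence adapted to the Cthulhu complex. This reduces the rigid-disk count defining $d_{0-}(y_i^{12})$ to a Morse-theoretic count of flow lines of $F$ on $L$ emanating from critical points and terminating at the minimum $y_i$ of $f_-$ on the $i$-th component of $\Lambda_-$. Summing over $i$ and using that the signed sum $-\sum_i [y_i]$ represents the topological unit $[1] \in H^0(\Lambda_-)$, the Cthulhu connecting map applied to $[e_-]$ is identified with the ordinary topological connecting map $\delta \colon H^0(\Lambda_-) \to H^1(L, \Lambda_-)$ of the pair $(L, \Lambda_-)$ applied to $[1]$. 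Since $[1]$ lifts to $H^0(L)$ via the restriction map, $\delta([1]) = 0$ by exactness, so $[d_{0-}(e_-)] = 0$ and the required $e_0$ exists.

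The main obstacle is the geometric identification in the previous paragraph: rigorously carrying out the neck-stretching/Morse-flow-tree correspondence in the Cthulhu framework, managing the compactifications of the relevant $1$-parameter moduli spaces, and, when $\F$ is not of characteristic~$2$, matching signs using the spin hypothesis on $\lag$. Once this identification is established, the final topological conclusion is immediate.
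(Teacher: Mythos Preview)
Your overall strategy is sound and aligns with the paper's, but the paper executes it more concretely and at the chain level rather than the cohomology level. Where you argue abstractly that $[d_{0-}(e_-)]=0$ in $H^1(C_0,d_{00})$ (and hence some $e_0$ exists), the paper simply \emph{names} $e_0$: it takes $e_0$ to be the negative of the sum of the intersection points corresponding to the minima of the Morse function $F$, and then proves directly that $d_{00}(e_0)+d_{0-}(e_-)=0$ as chains. The way the paper converts the disk count $d_{0-}(y_i^{12})$ into Morse data is not a general neck-stretch/flow-tree invocation but the concrete device of concatenating a wrapping cobordism $(W^1,W^2)$ below $(\lag^1,\lag^2)$ and using the transfer map $\Psi^W$ from \cite{CDRGG}: this turns each Morse Reeb chord $y_i$ of $\Lambda_-$ into a genuine intersection point $\tilde y_i$, so that $d_{0-}^{\lag^1,\lag^2}(y_i)$ becomes part of $d_{00}^{W\odot\lag}(\tilde y_i)$. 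The vanishing of $d_{00}^{W\odot\lag}(\tilde e_-+e_0)$ is then the bare Morse-theoretic fact that from every index-$1$ critical point of $\tilde F$ there are two descending trajectories to minima, cancelling in pairs.

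In other words, the ``main obstacle'' you identify --- rigorously carrying out the Morse identification --- is exactly what the paper supplies, via the CDRGG transfer map rather than a neck-stretch/flow-tree argument. Your cohomological repackaging through the connecting map $\delta\colon H^0(\Lambda_-)\to H^1(\lag,\Lambda_-)$ is correct and elegant, but note that it is not literally the connecting map of the Cthulhu long exact sequence \eqref{eq:les} (whose third term is $H^*\Hom_+(\e_-,\e_-)$, not $H^*(\Lambda_-)$); you are really asserting that on the specific element $e_-$, after the Morse identification, the two connecting maps agree. That assertion is exactly what needs the geometric input, and the paper's explicit chain-level argument sidesteps having to compare the two long exact sequences at all.
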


\begin{proof}[Proof of Proposition \ref{prop:unit}]
With Lemma \ref{lem:closed} in hand, the fact that $\pi_*$ preserves the product structure, and the fact that $[e_{-\infty}]$ and $[e_-]$ are the units of $H^*(C_{-\infty})$ and $H^0Hom_+(\e_-, \e_-)$ respectively, we have that 
\begin{alignat*}{1}
	\pi_*([e_{-\infty}])= m_2^-([e_-], \pi_*([e_{-\infty}]))&= m_2^-(\pi_*[e], \pi_*([e_{-\infty}]))\\
	&=\pi_*\circ m_2^{-\infty}([e], [e_{-\infty}])=\pi_*[e]=[e_-].
\end{alignat*}
Thus, $\iota([e_+])= \pi_*\circ \phi^{-1}_*([e_+])= \pi_*([e_{-\infty}])=[e_-].$
\end{proof}

\begin{proof}[Proof of Lemma \ref{lem:closed}]
Recall that the unit $e_{-}$ of $Hom^0_+(\e_{-},\e_{-})$ is given by 
$e_-=-\sum y^{12}_i$, where $y^{12}_i$ are the Reeb chord of $2\Lambda_-$ corresponding to the Morse minima of the Morse function $f_-$ used to define $2\Lambda_-$.
Let $e_0$ be negative of the sum of all the intersections that corresponds to the minima of the Morse function $F$, and then let $e=e_0+e_-$.
We have that $$d_{-\infty}(e)=d_{00}(e_0)+ d_{0-}(e_-)+ d_{--}(e_-).$$
The fact that $e_-$ is closed in $Hom_+(\e_-,\e_-)$ implies $d_{--}(e_-)=0$.
It follows from Proposition \ref{prop:des}  that $d_{00}$ counts negative Morse flow lines of the Morse function $F$.
We need to interpret the holomorphic disks counted by $d_{0-}(e_-)$ in terms of Morse flow lines of a Morse function $\wt{F}$ that agrees with $F$ in the main part but also encodes the Morse function $f_-$ on the bottom cylinder.
This can be done by concatenating a cobordism from the bottom and comparing the Cthulhu complexes of the two pairs of cobordisms using a \textit{transfer map} defined in \cite{CDRGG}. The remainder of the proof is dedicated to describing $d_{00}(e_0)$ and $d_{0-}(e-)$ in detail. 

Recall that $\Lambda_-^1$ is a push off of $\Lambda_-^2$ using a  very small positive Morse function $f_-$. 
Let $A \in \mathbb R^+$ be twice the maximum value of $f_-$. Consider the cylinder $\R\times \Lambda^1_-$ and push the negative end of the cylinder in the $-z$ direction by $A$.
Denote this new Legendrian in the negative end by 
$\Lambda^1_--A$.
Thus, we get a cobordism $W^1$ from $\Lambda^1_--A$ to $\Lambda^1_-$ as shown in Figure \ref{fig:wrapbot}.
Concretely, consider a non-increasing Morse function $\delta(t):\R\to \R$ which is $0$ when $t>-N-1$ and is equal to the constant $A$ 
when $t<N'$, for some $N'<-N-1$. Note that $X_H=-\delta(t)\dd/\dd z$ is a Hamiltonian vector field, and denote its time $1$ flow by $\Phi_H$.
It follows that $W^1:=\Phi_H(\R\times\Lambda^1_-)$ is an exact Lagrangian cobordism.
Denote by $W^2$ the cylinder $\R\times \Lambda^2_-$.

\begin{figure}[!ht]
\labellist
\small
{\color{Red}
\pinlabel $\R\times\Lambda_-^1$ at -20 85
\pinlabel $W^1$ at  350 85
}
{\color{blue}
\pinlabel $\R\times\Lambda_-^2$ at -20 55
\pinlabel $W^2$ at 350 55
}
\pinlabel $t$ at 110 10
\pinlabel $t$ at 295 10
\pinlabel $c$ at  285 70
\pinlabel $\wt{c}$ at 240 45
\endlabellist
\includegraphics[width=4in]{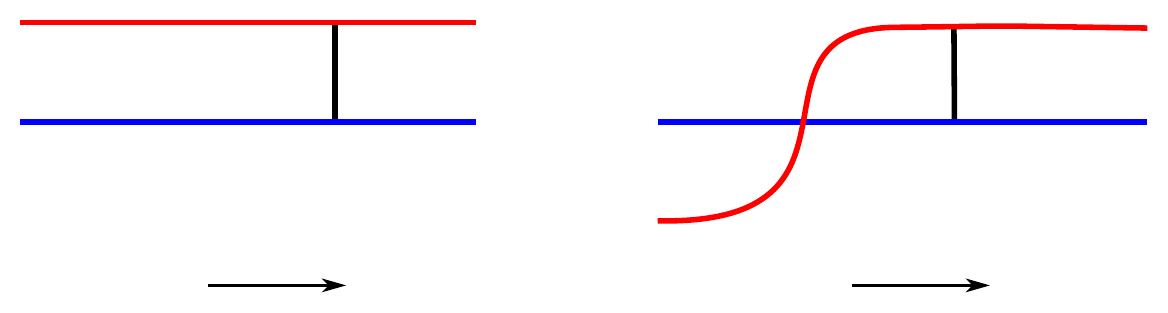}
\caption{A schematic picture of wrapping the negative end of a cobordism.}
\label{fig:wrapbot}
\end{figure}

Observe that there is a natural bijection between $C_0(W^1, W^2)$ and the Morse Reeb chords in $C_+(W^1, W^2)$ with degree shifted up by $1$.
Moreover, we can show that $d_{+0}^{W^1, W^2}$ sends an intersection point to the corresponding Morse Reeb chord, as follows. First, the projection map $\pi_{xy}: \R\times \R^3\to \R_{xy}$ sends $W^1\cup W^2$ to $\pi_{xy}(\Lambda^1_-\cup \Lambda^2_-)$. Then, according to \cite[Proposition 5.11]{DR}, the projection map also sends holomorphic disks with boundary on $W^1\cup W^2$ to holomorphic disks with boundary on $\pi_{xy}(\Lambda^1_-\cup \Lambda^2_-)$.
Suppose that a disk $u\in \M^{W^1\cup W^2}(a, {\bf p}_-^{11}b {\bf q}_-^{22})$ contributes to $d^{W^1,W^2}_{+0}$, i.e. $a$ is a mixed Reeb chord of $\Lambda^1_-\cup \Lambda^2_-$, $b$ is an intersection point in $W^1\cap W^2$ and ${\bf p}_-^{11},{\bf q}_-^{22}$ are words of pure degree $0$ Reeb chords of $\Lambda^1_--A$ and $\Lambda^2_-$, respectively.
The rigidity of $u$ implies that $|a|-|b|=1$ using the grading in the Cthulhu complex.
Projecting down to the $xy$-plane, we have that $\pi_{xy}(u)\in \M(\pi_{xy}(a); {\bf p}_-^{11} \pi_{xy}(b) {\bf q}_-^{22})$ is a holomorphic disk with boundary on $\pi_{xy}(\Lambda^1_-\cup \Lambda^2_-)$. 
Comparing the grading in the Cthulhu complex and the grading in $\alg(\Lambda^1_-\cup \Lambda^2_-)$, we have 
$$|a|= |\pi_{xy}(a)|_{LCH}+2 \mbox{ and } |b|=|\pi_{xy}(b)|_{LCH}+1.$$
It follows that $|\pi_{xy}(a)|_{LCH}-|\pi_{xy}(b)|_{LCH}=0$, or in other words, the expected dimension of $\M(\pi_{xy}(a); {\bf p}_-^{11} \pi_{xy}(b) {\bf q}_-^{22})$ is $-1$, which implies that $\pi_{xy}(u)$ is constant and thus $|\pi_{xy}(a)|=|\pi_{xy}(b)|$.
Therefore, we have proved that $d_{+0}^{W^1,W^2}$ sends an intersection point in $W^1\cap W^2$ to the corresponding Morse Reeb chord of $\Lambda_-^1\cup \Lambda_-^2$.

Consider now the Cthulhu complex of the pair of concatenated cobordisms $(W^1\odot \lag^1, W^2\odot \lag^2)$. Its generators can be decomposed into four types.
$$Cth(W^1\odot \lag^1, W^2\odot \lag^2)=
C_-(W^1, W^2)\oplus C_0(W^1, W^2)\oplus C_0(\lag^1, \lag^2) \oplus C_+(\lag^1, \lag^2)$$
According to \cite{CDRGG}, there is a chain map 
$$\Psi^W: Cth(W^1\odot \lag^1, W^2\odot \lag^2)\to Cth(\lag^1, \lag^2)$$
which is $d^{W^1,W^2}_{+0}$ on $C_0(W^1, W^2)$, is $d^{W^1,W^2}_{+-}$ on $C_-(W^1, W^2)$ and is the identity on $C_0(\lag^1, \lag^2) \oplus C_+(\lag^1, \lag^2)$ (in the case of the special pair of cobordisms we are considering in this paper).
Due to action restrictions, Morse Reeb chords do not show up in the image of 
$d^{W^1,W^2}_{+-}$ but only in the image of $d^{W^1,W^2}_{+0}$.

Denote the intersection point in $W^1\cap W^2$ corresponding to a Morse Reeb chord $c$ of $\Lambda^1_- \cup \Lambda^2_-$ by $\wt{c}$, as shown in Figure \ref{fig:wrapbot}.
Due to the description of $d^{W^1,W^2}_{+0}$,
the chain map $\Psi^W$ identifies the holomorphic disks counted by $d^{W^1\odot \lag^1,W^2\odot \lag^2}_{00}(\wt{c})$ such that the positive puncture is in $C_0(\lag^1,\lag^2)$, with the holomorphic disks counted by $d^{\lag^1,\lag^2}_{0-} (c)$.
Thus, we can describe $d^{\lag^1,\lag^2}_{0-} (e_-)$ through $d^{W^1\odot \lag^1,W^2\odot \lag^2}_{00}(\wt{e}_-)$, where 
$\wt{e}_-=-\sum \wt{y}_i$ and $\wt{y}_i$ are the intersection points corresponding to the Morse Reeb chords $y_i$ of $\Lambda_-^1\cup\Lambda^2_-$.

Observe that $W^1\odot \lag^1$ happens on a small neighborhood of $W^2\odot\lag^2=\lag^2$ and thus can be described as a push-off of $\lag^2$ along a Morse function $\wt{F}$.
Note that the Morse function $\wt{F}$ agrees with $F$ on $([-N, N]\times\R^3)\cap \lag^2$ but has also minima at $\wt{y}_i$ and saddle points at $\wt{x}_i$.
Since $W^1\odot \lag^1$ and $W^2\odot\lag^2$ are close enough, the differential 
$d^{W^1\odot \lag^1,W^2\odot\lag^2}_{00}$ counts the negative Morse flow lines of $\wt{F}$. 
Let $\wt{e}= \wt{e}_-+e_0$ be the negative sum of all the minima of $\wt{F}$.
Observe that $d^{W^1\odot \lag^1,W^2\odot\lag^2}_{00} (\wt{e})=0$ since each saddle point of $\wt{F}$ has two Morse trajectories  flowing down with the opposite sign and they have to approach some minima. 
It follows from $\Psi^W$ being a chain map that
$$d_{00}(e_0)+ d_{0-}(e_-)=\pi_0\circ d^{\lag^1,\lag^2}\circ\Psi^W(\wt{e})=
\pi_0\circ \Psi^W\circ d^{W^1\odot \lag^1,W^2\odot\lag^2} (\wt{e})=0,$$
where $\pi_0$ is the projection map: $Cth(\lag^1,\lag^2)\to C_0(\lag^1,\lag^2)$.
\end{proof}

\subsection{Other obstructions.}\label{sec:obs}

In this section, we give two additional obstructions to the existence of exact Lagrangian cobordisms in terms of linearized contact homology and ruling polynomials, which generalize the results in \cite{Pan1}.

\begin{proposition}\label{prop:LCH}
Assume $\F$ is a field and let $\lag$ be an exact Lagrangian cobordism from $\Lambda_-$ to $\Lambda_+$ with Maslov-$0$.
 Suppose that $\e_-$ is an augmentation of $\Lambda_-$ and $\e_+$ is the induced augmentation of $\Lambda_+$.
Then we have that 
\begin{equation}\label{eq:iso}
LCH^{\e_+}_{k}(\Lambda_+)\cong LCH^{\e_-}_{k}(\Lambda_-) 
\end{equation}
    for $k<0$ and $k>1$.
\end{proposition}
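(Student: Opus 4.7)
The plan is to apply the long exact sequence in~(\ref{eq:les}) with $\e^{1}=\e^{2}=\e_{-}$ (so that $\e^{1}_{+}=\e^{2}_{+}=\e_{+}$), which yields
\begin{equation*}
\cdots\to H^{m}(\lag,\leg_{-})\to H^{m}\Hom_{+}(\e_{+},\e_{+})\xrightarrow{\iota_{m}} H^{m}\Hom_{+}(\e_{-},\e_{-})\to H^{m+1}(\lag,\leg_{-})\to\cdots
\end{equation*}
Combining this with the identification $H^{m}\Hom_{+}(\e,\e)\cong LCH^{\e}_{1-m}(\leg)$ recorded in Section~\ref{ssec:aug-defns} (i.e., \cite[Corollary 5.6]{NRSSZ}), the content of the proposition translates into showing that $\iota_{m}$ is an isomorphism for every $m\leq -1$ (which corresponds to $k=1-m\geq 2$, i.e. $k>1$) and for every $m\geq 2$ (which corresponds to $k\leq -1$, i.e. $k<0$).

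The key observation is that $\lag$ is a two-dimensional oriented surface, so the singular cohomology $H^{m}(\lag,\leg_{-})$ vanishes whenever $m<0$ or $m>2$. When both $H^{m}(\lag,\leg_{-})$ and $H^{m+1}(\lag,\leg_{-})$ vanish, exactness of the sequence forces $\iota_{m}$ to be an isomorphism; this immediately delivers the statement for $m\geq 3$ and $m\leq -2$, i.e.\ the portion of the claim with $k\geq 3$ and $k\leq -2$.

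It then remains to treat the two boundary indices $m=-1$ and $m=2$. Exactness in these degrees requires the separate vanishings $H^{0}(\lag,\leg_{-})=0$ and $H^{2}(\lag,\leg_{-})=0$. I would obtain both of these topologically: by Poincar\'e--Lefschetz duality $H^{m}(\lag,\leg_{-})\cong H_{2-m}(\lag,\leg_{+})$; since $\lag$ is an oriented 2-manifold with nonempty boundary there is no top-dimensional cycle, giving $H_{2}(\lag,\leg_{+})=0$, and the connectedness statement $H_{0}(\lag,\leg_{+})=0$ follows by checking, via the primitive and the exactness of the cobordism, that every component of $\lag$ meets $\leg_{+}$.

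The main obstacle is precisely the treatment of the two boundary degrees $m=-1,2$: the interior of the range is handled purely by dimension counting in the long exact sequence, whereas ensuring the isomorphism at the endpoints requires a topological input about the components of an exact Lagrangian cobordism relative to its positive and negative ends. Once those vanishings are established, the proof is a clean application of~(\ref{eq:les}) together with the augmentation-category/LCH dictionary.
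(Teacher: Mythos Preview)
Your overall strategy---using the long exact sequence~\eqref{eq:les} with $\e^1=\e^2=\e_-$ together with the identification $H^mHom_+(\e,\e)\cong LCH^\e_{1-m}(\Lambda)$---is exactly the paper's. The difference lies entirely in how you treat the two boundary indices, and there the proposal has a genuine gap.

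Your argument for $H^0(L,\Lambda_-)=0$ (needed for $m=-1$, i.e.\ $k=2$) is incorrect. You invoke Poincar\'e--Lefschetz duality $H^0(L,\Lambda_-)\cong H_2(L,\Lambda_+)$ and then assert that the latter vanishes ``since $L$ is an oriented 2-manifold with nonempty boundary''. This conflates $H_2(L)$ with $H_2(L,\Lambda_+)$: the absolute group vanishes for a surface with boundary, but the relative group $H_2(L,\Lambda_+)$ is nonzero precisely when some connected component of $L$ has all of its boundary in $\Lambda_+$---that is, when a component of $L$ is an exact Lagrangian \emph{filling} of a sublink of $\Lambda_+$. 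Nothing obstructs this, so $H^0(L,\Lambda_-)$ need not vanish, and your route to the $k=2$ case fails. The paper instead obtains $k=2$ by first establishing the isomorphism for $k<0$ and $k>2$ from the long exact sequence, and then invoking Sabloff duality \cite{EESduality} together with a dimension count over the field $\F$ to transfer the $k=-2$ isomorphism to $k=2$.

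Your argument for $H^2(L,\Lambda_-)=0$ (needed for $m=2$) is also incomplete. You correctly identify this, via duality, with the statement that every component of $L$ meets $\Lambda_+$, i.e.\ that no component is an exact Lagrangian \emph{cap} of a sublink of $\Lambda_-$. But ``via the primitive and the exactness of the cobordism'' is not a proof: the nonexistence of exact Lagrangian caps is a Floer-theoretic fact, and the paper appeals to \cite[Corollary~1.9]{dimitroglou_rizell_2015}, using crucially that the relevant sublink of $\Lambda_-$ inherits an augmentation from $\e_-$. An elementary argument from the primitive alone does not suffice here.
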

 
\begin{proof}
From Equation~\eqref{eq:les}, we have a long exact sequence
$$\cdots \to H^k(\lag, \Lambda_-)\to H^kHom_+(\e_+, \e_+) \to H^kHom_+(\e_-, \e_-)\to H^{k+1}(\lag, \Lambda_-)\to \cdots.
$$
Note that $H^k(\lag, \Lambda_-)=0$ when $k<0$ and $k>2$. For $k=2$, we know that $H^2(\lag,\Lambda_-)$ is $0$ because any two components of $\Lambda_-$ cannot bound a closed surface in $\lag$, i.e. a Lagrangian cap of two components of $\Lambda_-$. 
Otherwise we get a cobordism from a subset of $\Lambda_-$ (that admits an augmentation restricted from $\e_-$) to the  empty set, which is a contradiction by \cite[Corollary 1.9]{dimitroglou_rizell_2015}. 

The long exact sequence implies that 
$$H^kHom_+(\e_-,\e_-)\cong H^kHom_+(\e_+,\e_+),$$
for $k<-1$ and $k>1$.
Recall that $H^kHom_+(\e,\e)\cong LCH^{\e}_{1-k}(\Lambda)$, so we get $$LCH^{\e_+}_k(\Lambda_+)\cong LCH^{\e_-}_k(\Lambda_-),$$ for $k>2$ and $k<0$.

The isomorphism for $k=2$ comes from the Sabloff duality \cite{EESduality}, which gives a long exact sequence:
$$\cdots \to H^k(\Lambda) \to LCH_{\e}^k(\Lambda)\to LCH_{-k}^{\e}(\Lambda)\to H^{k+1}(\Lambda)\to \cdots.$$
The fact that $H^k(\Lambda)$ vanishes unless $k=0$ or $1$
implies that $LCH^{\e}_{-k}(\Lambda)\cong LCH_{\e}^k(\Lambda)$ for $k>1$ and $k<-1$.
Note that $LCH^{\e}_k(\Lambda)$ are vector spaces over a field $\F$. 
It follows from the universal coefficient theorem that $\dim LCH^k_{\e}(\Lambda)=\dim LCH_k^{\e}(\Lambda)$.
Thus, we have that $\dim LCH^{\e}_{-k}(\Lambda)\cong \dim LCH^{\e}_k(\Lambda)$ for $k>1$.
Since the isomorphism \eqref{eq:iso} holds for $k=-2$, the dimension of the LCH homologies are the same for $k=2$,
which implies the isomorphism for $k=2$ as they are vector spaces over $\F$.
\end{proof}

We do not get the relation between the LCH's on degree $0$ and $1$ as Pan did for cobordisms between knots in \cite[Corollary 1.4]{Pan1}. 

\begin{example}
Take $\F=\Z_2$ and consider two exact Lagrangian cobordisms $\lag^1, \lag^2$ from the Hopf link $\Lambda_{\Ho}^0$ to the trefoil  obtained by pinching the chords $b_1$ and $b_2$ of the trefoil, respectively, as shown in Figure \ref{fig:23}.
Let $\e_-^1$, resp. $\e_-^2$, be the augmentation of $\Lambda_{\Ho}^0$ which sends the two Reeb chords $(c_1, c_2)$ to $(0,0)$, resp. $(0,1)$. Both augmentations $\e_-^i,$ induce through $\lag^i$ for $i=1,2$ the augmentation of the trefoil $\e_+$, which sends the three Reeb chords $(b_1, b_2, b_3)$ to $(1,1,0)$.
However, the Legendrian contact homology of $\Lambda_{\Ho}^0$ linearized by $\e_-^1$ has rank one in degrees $0$ and $1$, while linearized by  $\e_-^2$ it has rank $2$ in degrees $0$ and $1$.
Thus, the data $(\lag, \Lambda_+,\Lambda_-, \e_+)$ cannot determine $LCH^{\e_-}(\Lambda_-)$.

\begin{figure}[!ht]
\labellist
\pinlabel $(a)$ at 130 -5
\pinlabel $(b)$ at  300 -5
\pinlabel $t$ at 0 170
\pinlabel $b_1$ at  45 200
\pinlabel $b_2$ at  103 185
\pinlabel $b_3$ at  151 185
\pinlabel $b_1$ at  245 200
\pinlabel $b_2$ at  303 185
\pinlabel $b_3$ at  351 185
\pinlabel $c_1$ at 90 40
\pinlabel $c_2$ at 135 40
\pinlabel $c_1$ at 245 60
\pinlabel $c_2$ at 345 40
\pinlabel  $\lag^1$ at  100 135
\pinlabel  $\lag^2$ at  320 135
\endlabellist
\includegraphics[width=4in]{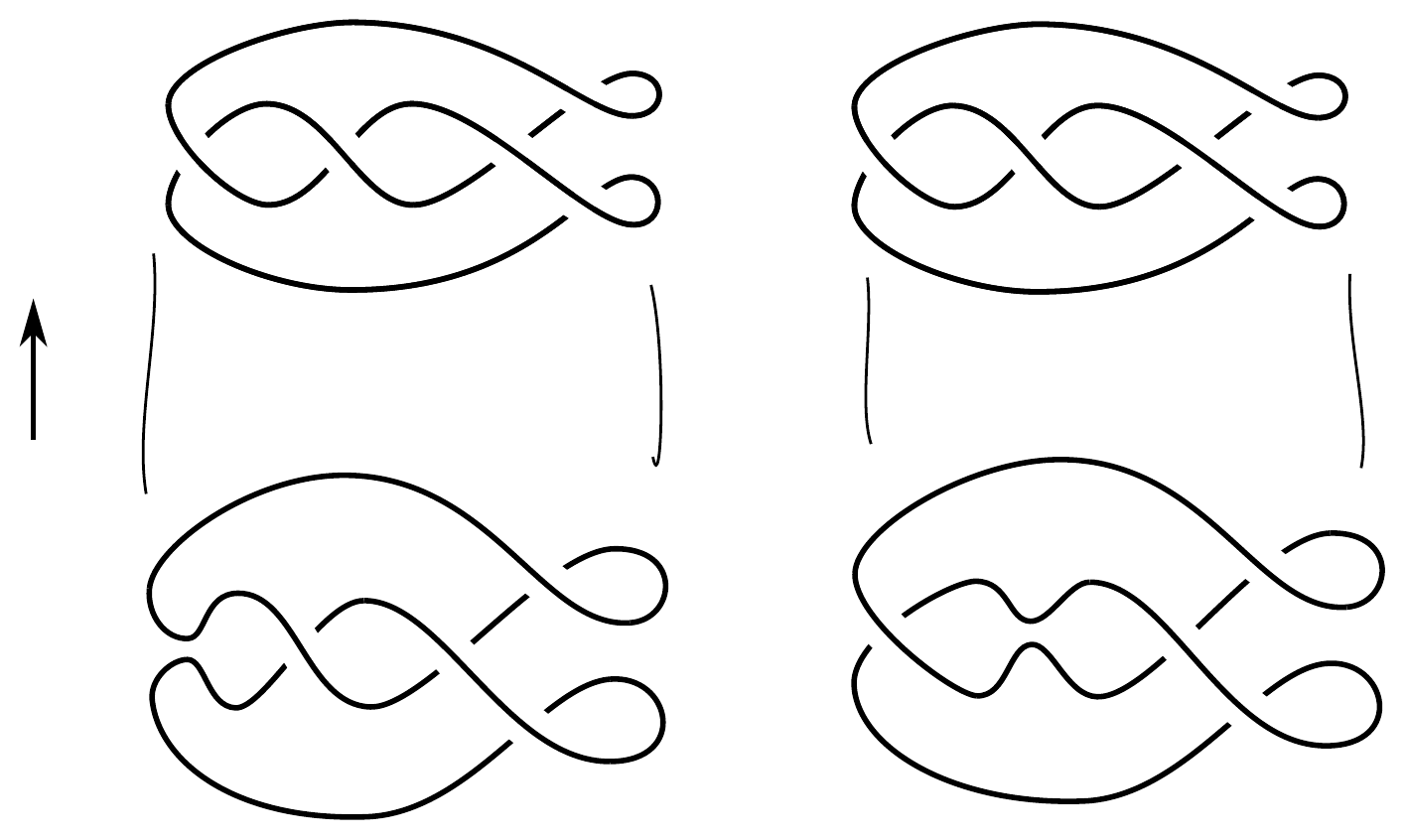}
\caption{Part $(a)$ and $(b)$ shows two cobordisms obtained by doing pinch move on $b_1$ and $b_2$, respectively.}
\label{fig:23}
\end{figure}
\end{example}

Another way to count the number of augmentations in the augmentation category is the \textbf{homotopy cardinality} \cite{NRSS}, which is defined by 
$$\pi_{\ge 0} \aug_+(\Lambda; \F_q)^*= \displaystyle{\sum_{[\e]\in \aug_+(\Lambda;\F_q)/\sim } \frac{1}{|Aut(\e)|}\cdot \frac{|H^{-1}Hom_+(\e,\e)| \cdot |H^{-3}Hom_+(\e,\e)| \cdots }{|H^{-2}Hom_+(\e,\e)|\cdot |H^{-4}Hom_+(\e,\e)| \cdots}},$$
where $[\e]$ is the equivalence class of $\e$ in the augmentation category and $|Aut(\e)|$ is the number of invertible elements in $H^0Hom_+(\e,\e)$.
\begin{proposition}
Let $\lag$ be a spin exact Lagrangian cobordism from $\Lambda_-$ to $\Lambda_+$  with Maslov number $0$. Then for any finite field $\F_q$, we have that 
$$\pi_{\ge 0} \aug_+(\Lambda_+; \F_q)^*\ge \pi_{\ge 0} \aug_+(\Lambda_-; \F_q)^*.$$
\end{proposition}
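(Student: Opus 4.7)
The plan is to bound $\pi_{\geq 0}\aug_+(\Lambda_+;\F_q)^*$ from below by partitioning its defining sum according to $\Sim$-classes of augmentations of $\Lambda_-$. The essential ingredients are Theorem~\ref{thm:main}, which provides an ``injectivity'' statement for $\mathcal{F}_\lag$ on $\Sim$-classes, and Proposition~\ref{prop:LCH}, which controls the negative-degree $Hom_+$ groups. By Theorem~\ref{thm:main}, $\mathcal{F}_\lag$ does not identify inequivalent augmentations, so assigning to each class $[\e_-]$ in $Aug(\Lambda_-;\F_q)/\Sim$ the subset
$$S([\e_-])=\{[\mathcal{F}_\lag(\e)]:\e\in[\e_-]\}\subseteq Aug(\Lambda_+;\F_q)/\Sim$$
yields a disjoint family of nonempty subsets of $Aug(\Lambda_+;\F_q)/\Sim$.

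For any $\e_-$ and $\e_+:=\mathcal{F}_\lag(\e_-)$, Proposition~\ref{prop:LCH} gives $|H^{-k}Hom_+(\e_-,\e_-)|=|H^{-k}Hom_+(\e_+,\e_+)|$ for every $k\geq 1$ (since $1-(-k)=k+1>1$). As $\Sim$-equivalent augmentations have quasi-isomorphic endomorphism complexes, the factor
$$P([\e]):=\frac{|H^{-1}Hom_+(\e,\e)|\cdot |H^{-3}Hom_+(\e,\e)|\cdots}{|H^{-2}Hom_+(\e,\e)|\cdot |H^{-4}Hom_+(\e,\e)|\cdots}$$
descends to $\Sim$-classes and satisfies $P([\e_+])=P([\e_-])$ for every $[\e_+]\in S([\e_-])$.

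The core combinatorial step will be the inequality
$$\sum_{[\e_+]\in S([\e_-])}\frac{1}{|Aut(\e_+)|}\;\geq\;\frac{1}{|Aut(\e_-)|}$$
for each fixed $[\e_-]$. To establish this, I would double-count over representatives $\e\in[\e_-]$, partitioning them by the $\Sim$-class of $\mathcal{F}_\lag(\e)$, and then compare fiber sizes to automorphism orders using the wrong-way unital ring homomorphism $\iota:H^0Hom_+(\e_+,\e_+)\to H^0Hom_+(\e_-,\e_-)$ from Proposition~\ref{prop:unit}. By the long exact sequence \eqref{eq:les}, the kernel and cokernel of $\iota$ are controlled by $H^0(\lag,\Lambda_-)$ and a subquotient of $H^1(\lag,\Lambda_-)$, and combining this with the split-DGA homotopy criterion of Proposition~\ref{prop:equivalences} links the splitting of $[\e_-]$ into classes in $S([\e_-])$ to the relative sizes of the automorphism groups. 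The hard part is making this bookkeeping precise in the multi-component link setting, where the knot-case argument of \cite{Pan1} does not apply verbatim.

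Granting the three steps,
\begin{align*}
\pi_{\geq 0}\aug_+(\Lambda_+;\F_q)^*
&\geq \sum_{[\e_-]}\sum_{[\e_+]\in S([\e_-])}\frac{P([\e_+])}{|Aut(\e_+)|}\\
&= \sum_{[\e_-]}P([\e_-])\sum_{[\e_+]\in S([\e_-])}\frac{1}{|Aut(\e_+)|}\\
&\geq \sum_{[\e_-]}\frac{P([\e_-])}{|Aut(\e_-)|}\;=\;\pi_{\geq 0}\aug_+(\Lambda_-;\F_q)^*,
\end{align*}
completing the proof.
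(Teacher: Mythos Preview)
Your overall architecture is sound, but you have made the argument harder than it needs to be, and the place where you admit a gap (``the hard part is making this bookkeeping precise'') is exactly the place where the paper's proof is immediate.

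The paper does \emph{not} sum over all of $S([\e_-])$. Instead, for each equivalence class $[\e_-]$ it simply \emph{chooses one representative} $\e_-$, sets $\e_+=\mathcal F_\lag(\e_-)$, and compares the single summand for $[\e_+]$ to the single summand for $[\e_-]$. Since all terms in the defining sum are positive, your inequality
\[
\sum_{[\e_+]\in S([\e_-])}\frac{1}{|Aut(\e_+)|}\ \geq\ \frac{1}{|Aut(\e_-)|}
\]
is immediately implied by the much simpler single-term inequality
\[
\frac{1}{|Aut(\e_+)|}\ \geq\ \frac{1}{|Aut(\e_-)|},
\]
i.e.\ $|Aut(\e_+)|\leq |Aut(\e_-)|$. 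The paper obtains this directly from Proposition~\ref{prop:unit}: since $\iota:H^0Hom_+(\e_+,\e_+)\to H^0Hom_+(\e_-,\e_-)$ is a unital ring homomorphism, it takes invertible elements to invertible elements, giving a map $Aut(\e_+)\to Aut(\e_-)$. No double-counting over representatives, no appeal to the long exact sequence~\eqref{eq:les}, and no analysis of how $[\e_-]$ splits into classes in $S([\e_-])$ is needed. Your steps (disjointness of the $S([\e_-])$ via Theorem~\ref{thm:main}, and equality of the $P$-factors via Proposition~\ref{prop:LCH}) are exactly what the paper uses; only the automorphism step is different, and there you should replace your proposed bookkeeping by the one-line observation above.
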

\begin{proof}
 For each  equivalence class in $Aug_+(\Lambda_-;\F_q)$, we take a representative $\e_-$ and compare the term of $[\e_-]$ in the sum with the term of the induced augmentation $\e_+$ for $Aug_+(\Lambda_+;\F_q)$.
It follows from Proposition \ref{prop:LCH} that the $H^kHom_+$ spaces are isomorphic between  $\e_-$ and $\e_+$ for $k<0$.
Moreover, it follows from Theorem \ref{thm:main} that if an element $[\alpha_+]\in H^0Hom_+(\e_+,\e_+)$ is invertible, then $\iota[\alpha_+]\in H^0Hom_+(\e_-,\e_-)$ is invertible.
Thus $H^0Hom_+(\e_-,\e_-)$ may have more invertible elements than $H^0Hom_+(\e_+,\e_+)$.
It follows that for each equivalent class represented by $\e_-$, the term in the summand for $\e_+$ is bigger than or equal to the term for $\e_-$.
Moreover, there may be more equivalence classes in $\aug_+(\Lambda_+)$ than in $\aug_+(\Lambda_-)$. 
Thus the proposition follows.
\end{proof}
  
The homotopy cardinality is related to the \textit{ruling polynomial} $R_{\Lambda}(z)$, a combinatorial invariant of Legendrian knots that is easily computed, in the following way:
  $$\pi_{\ge 0} \aug_+(\Lambda; \F_q)^*= q^{tb(\Lambda)/2} R_{\Lambda}(q^{1/2}-q^{-1/2}).$$
See Section~\ref{ssec:g-p-augs} for more details on the ruling polynomial. 
Thus we have the following corollary.
\begin{corollary}\label{cor:rul}
Let $\lag$ is a spin exact Lagrangian cobordism from $\Lambda_-$ to $\Lambda_+$ with Maslov number $0$.
Then, we have that $$R_{\Lambda_-}(q^{1/2}-q^{-1/2})\leq q^{-\chi(\lag)/2} R_{\Lambda_+}(q^{1/2}-q^{-1/2})$$
for any $q$ that is a power of a prime number.
\end{corollary}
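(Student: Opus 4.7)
The plan is to derive Corollary~\ref{cor:rul} as an immediate consequence of the preceding proposition (the inequality of homotopy cardinalities) by rewriting both sides via the identity
$$\pi_{\ge 0}\,\aug_+(\Lambda;\F_q)^* \;=\; q^{tb(\Lambda)/2}\, R_{\Lambda}(q^{1/2}-q^{-1/2}),$$
which is stated just before the corollary. So the only genuine content to supply is the well-known Thurston--Bennequin change under an exact Lagrangian cobordism.

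First I would record the classical computation that for any exact, orientable Lagrangian cobordism $\lag$ from $\Lambda_-$ to $\Lambda_+$ in the symplectization of $\R^3_\std$ one has
$$tb(\Lambda_+) - tb(\Lambda_-) \;=\; -\chi(\lag).$$
This is standard (see for instance Chantraine's original cobordism paper): it follows by comparing the rotation number contributions on the ends and the fact that $\lag$ is a Lagrangian surface whose normal bundle pairs Thurston--Bennequin data with Euler characteristic. Once this is in hand, the proof is a one-line manipulation.

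Second, apply the homotopy cardinality inequality (the proposition immediately above the corollary):
$$q^{tb(\Lambda_+)/2}\, R_{\Lambda_+}(q^{1/2}-q^{-1/2}) \;\ge\; q^{tb(\Lambda_-)/2}\, R_{\Lambda_-}(q^{1/2}-q^{-1/2}).$$
Dividing both sides by $q^{tb(\Lambda_-)/2}$ and substituting $tb(\Lambda_+) - tb(\Lambda_-) = -\chi(\lag)$ yields exactly
$$R_{\Lambda_-}(q^{1/2}-q^{-1/2}) \;\le\; q^{-\chi(\lag)/2}\, R_{\Lambda_+}(q^{1/2}-q^{-1/2}),$$
which is the claim. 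There is essentially no obstacle: all the hard work is packaged into the two inputs (the preceding proposition, which relies on Theorem~\ref{thm:main} and Proposition~\ref{prop:LCH}, and the identification of homotopy cardinality with the ruling polynomial from \cite{NRSS}). The only point requiring a small verification is the sign and parity of $\chi(\lag)$: since $\lag$ is orientable and Maslov-$0$ with non-empty positive end, $tb(\Lambda_\pm)$ and $\chi(\lag)$ have the same parity, so $q^{-\chi(\lag)/2}$ makes sense once a choice of $q^{1/2}$ is fixed (consistent with the conventions already used in the statement of the homotopy cardinality formula).
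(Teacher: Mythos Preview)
Your proposal is correct and follows precisely the approach the paper has in mind: the paper simply writes ``Thus we have the following corollary'' after recording the identity $\pi_{\ge 0}\aug_+(\Lambda;\F_q)^*=q^{tb(\Lambda)/2}R_\Lambda(q^{1/2}-q^{-1/2})$, leaving implicit exactly the two steps you spell out (the homotopy-cardinality inequality and Chantraine's relation $tb(\Lambda_+)-tb(\Lambda_-)=-\chi(\lag)$). There is nothing to add.
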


\section{Examples of Obstructed Fillings}\label{sec:ex}
In this section, we will prove Theorem~\ref{thm:obstruct}.   To prove that certain immersed Lagrangian fillings of a Legendrian knot $\leg$ do exist, we will use the ``decomposable'' moves described below to prove the existence of embedded Lagrangian cobordisms from a disjoint union of Legendrian Hopf links to $\leg$; recall that, by definition, the Legendrian Hopf link $\leg_{\Ho}^{k}$ admits an immersed, Maslov-$0$, exact Lagrangian filling with one action-$0$ double point of index $k$.
We will  prove that certain types of Lagrangian fillings of $\leg$ cannot exist by applying Theorems~\ref{thm:maingen1} and~\ref{thm:main}. 
Throughout this section, we consider DGAs over $\Z_2$ and augmentations to $\Z_2$.  For the family $\leg_{k}$ in Theorem~\ref{thm:obstruct}(1), we will count augmentations
directly, while for the family $\leg_{g}^{p}$ 
 in Theorem~\ref{thm:obstruct}(2), we will employ the theory of rulings to count augmentations.

 All of the embedded, Maslov-$0$, exact Lagrangian fillings and cobordisms that we construct in this section are \emph{decomposable} in the following sense. It is known that there exists an embedded, Maslov-$0$, exact Lagrangian cobordisms between two Legendrian links $\leg_{\pm}$ if $\Lambda_+$ differs from $\leg_-$ by Legendrian isotopy, pinch moves, and the death of a max $tb$ unknotted component. Figure~\ref{fig:decomp} illustrates  the local front projections of an orientable downward in time pinch move and the downward in time death of a max $tb$ unknot.  In order to produce an orientable surface, the pinch move can only be performed on strands
with opposite orientations, and in order for the Lagrangian to be Maslov-$0$, pinch moves can only be performed on strands whose upper branch has a  Maslov potential $1$ greater than that of the lower branch, as shown in Figure~\ref{fig:decomp}.
A Lagrangian cobordism $L$ from $\leg_{-}$ to $\leg_{+}$ is called {\bf elementary} if it arises from isotopy, a single pinch move, or a single disk filling. A Lagrangian
cobordism is {\bf decomposable} it is obtained by stacking elementary cobordisms.

\begin{figure}[!ht]
	\labellist
	\small
	\pinlabel $i+1$ at 58 182
	\pinlabel $i$ at 56 121
	\pinlabel $\leg_{+}$ at -40 150
	\pinlabel $\emptyset$ at 305 25
	\pinlabel $\leg_{-}$ at -40 25
	\endlabellist
	\includegraphics[width=2in]{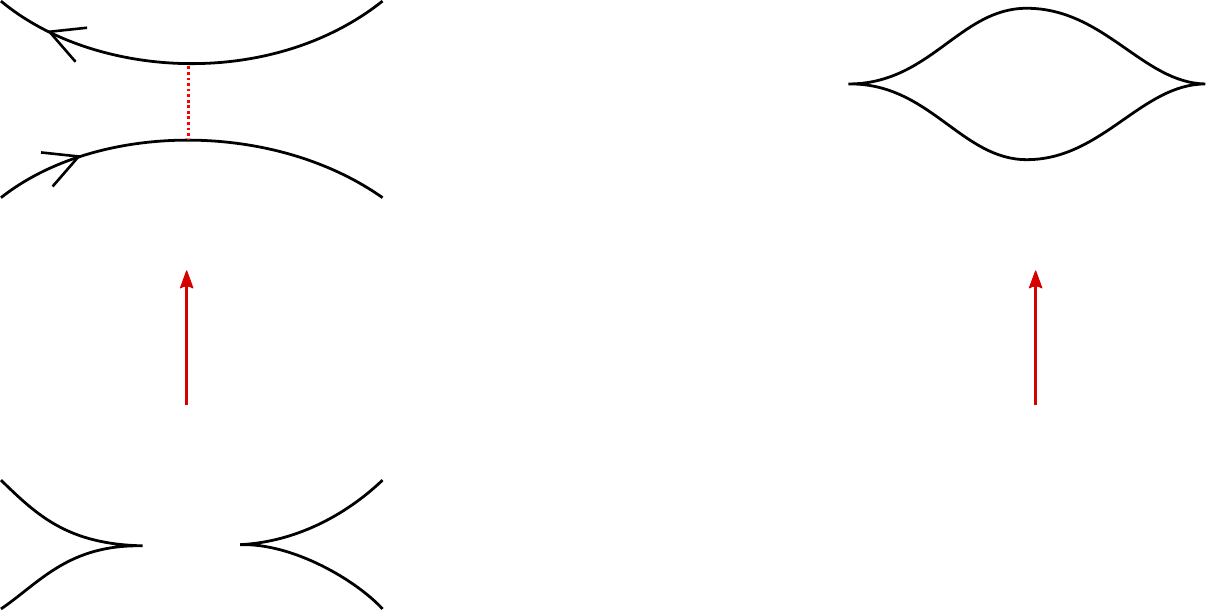}
	\caption{Local front projections of a merge/pinch move (topologically a saddle cobordism/$1$-handle)  and the birth/death of a max $tb$ unknot (topologically a disk/$0$-handle). The red arrows represent the positive $t$ direction and the labels on the strands indicate the Maslov potential. 
	}
	\label{fig:decomp}
\end{figure}

As we apply Theorem~\ref{thm:main}, it will be useful to have the following augmentation count. 
\begin{lemma} \label{lem:hopf-aug-counts}
$$|Aug( \Lambda^k_{\Ho}; \Z_2)/\Sim|=
\begin{cases}
3, & k = 0\\
0, & k \neq 0. 
\end{cases}
$$
Moreover, 
$$|Aug( \bigsqcup_{i = 1, \dots, m} \leg_{\Ho}^{k_{i}}; \Z_2)/\Sim|=3^{Z},$$
where $Z = |\{i : k_{i} = 0\}|.$
\end{lemma}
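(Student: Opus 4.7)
First I would compute $|Aug(\Lambda_{\Ho}^k;\Z_2)/\Sim|$ directly from the explicit DGA and augmentation description in Examples~\ref{ex:DGA-hopf} and~\ref{ex:aug-hopf}. When $k=0$, all four generators $a_1,a_2,b_1,b_2$ have degree $0$ or $1$, so there are no degree $-1$ Reeb chords; by Corollary~\ref{cor:distinct-not-equiv} two $\Z_2$-augmentations are equivalent in $\aug_+$ if and only if they agree on every degree $0$ generator. The three augmentations sending $(b_1,b_2)$ to $(0,0),(1,0),(0,1)$ are then visibly pairwise inequivalent, giving exactly three classes. When $k\neq 0$ both $b_1$ and $b_2$ have nonzero degree, forcing any augmentation to vanish on all generators; this leaves only the trivial augmentation, yielding one equivalence class. (I note that this is compatible with the product formula stated below, so the ``$0$'' in the second branch should evidently be a ``$1$''.)

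Next, I would arrange the Hopf links $\Lambda_{\Ho}^{k_i}$ to lie in pairwise disjoint slabs $\{x\in[A_i,B_i]\}$ of $\R^3_{std}$. Because each Hopf link is contained in a bounded region, such a splitting can always be achieved by a Legendrian isotopy of $\bigsqcup_i \Lambda_{\Ho}^{k_i}$, and this does not affect the augmentation counts since both sides of the claimed equality are Legendrian isotopy invariants. In this configuration there are no mixed Reeb chords between different factors, so the Chekanov--Eliashberg DGA of $\Lambda=\bigsqcup_i \Lambda_{\Ho}^{k_i}$ is the free product of the individual DGAs with differential acting factorwise, and hence augmentations $\e:\alg(\Lambda)\to\Z_2$ correspond bijectively to tuples $(\e_i)_i$ with $\e_i\in Aug(\Lambda_{\Ho}^{k_i};\Z_2)$.

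Finally, I would verify that this factorization descends to $\Sim$-equivalence classes. Taking the $2$-copy componentwise, every Reeb chord from $\Lambda^2$ to $\Lambda^1$ is supported inside a single factor $\Lambda_{\Ho}^{k_i}$, and the same is true for the pure Reeb chords appearing in the defining counts of $m_1$ and $m_2$. Hence the morphism module $Hom_+(\e,\e')$ decomposes as a direct sum indexed by $i$, and the operations $m_1,m_2$ respect this decomposition; likewise a split-DGA homotopy in the sense of Definition~\ref{defn:split-DGA-homotopy} between two augmentations of $\Lambda$ is equivalent to the data of split-DGA homotopies in each factor. Applying Proposition~\ref{prop:equivalences} then yields
\[
|Aug(\Lambda;\Z_2)/\Sim| \;=\; \prod_i |Aug(\Lambda_{\Ho}^{k_i};\Z_2)/\Sim| \;=\; 3^{Z},
\]
where $Z=|\{i:k_i=0\}|$. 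The only mildly delicate point is the last factorization of $\Sim$-classes, but over $\Z_2$ with no degree $-1$ chords (in the $k_i=0$ factors) Corollary~\ref{cor:distinct-not-equiv} reduces this to the evident factorization of $\Z_2$-valued functions on degree $0$ generators, while the $k_i\neq 0$ factors contribute only the trivial equivalence class and so pose no issue.
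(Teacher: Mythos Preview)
Your argument is correct and follows the same approach as the paper for the single Hopf link: cite Example~\ref{ex:aug-hopf} for the raw augmentation counts and then invoke Corollary~\ref{cor:distinct-not-equiv} (no degree $-1$ chords when $k=0$) to conclude that distinct augmentations are inequivalent. You are also right that the ``$0$'' in the $k\neq 0$ branch is a slip and should be ``$1$'': Example~\ref{ex:aug-hopf} itself records that $\Lambda_{\Ho}^k$ for $k\neq 0$ admits the trivial augmentation, and the product formula $3^Z$ requires those factors to contribute $1$, not $0$; the paper's proof repeats this inconsistency.

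Where you go beyond the paper is in actually justifying the product formula for the disjoint union, which the paper's proof does not address. Your method---separating the Hopf links into disjoint $x$-slabs so that there are no mixed Reeb chords, and then observing that both augmentations and split-DGA homotopies (equivalently, the complexes $Hom_+$ and the maps $m_1,m_2$) factor over the components---is the natural way to fill this gap. The only point requiring a moment of care is that when some $k_i\neq 0$ the disjoint union \emph{can} have degree $-1$ chords (e.g.\ $b_2$ when $k_i=1$), so Corollary~\ref{cor:distinct-not-equiv} does not apply globally; but as you note, over $\Z_2$ the degree~$0$ chords $b_1,b_2$ in each $k_j=0$ factor have $\partial b_i=0$, so any DGA homotopy $K$ satisfies $K\partial(b_i)=0$ there, and the three augmentations in each such factor remain pairwise inequivalent regardless of what $K$ does on the other factors.
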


\begin{proof} As explained in Example~\ref{ex:aug-hopf}, the Hopf link $\Lambda^k_{\Ho}$ has 3 augmentations when $k = 0$ and no augmentations otherwise.  
 When $k = 0$, there are no degree $-1$ chords, and thus, by Corollary~\ref{cor:distinct-not-equiv}, the count of augmentations up to the equivalence relation $\Sim$ is the
same as the count of augmentations.
\end{proof}

\subsection{Proof of Theorem~\ref{thm:obstruct}(1)}
\label{sec:9_48}

We construct the family of Legendrian knots $\leg_{k}$ such that $\leg_{1} = \leg_{9_{48}}$ as follows.
Consider the tangle $T$ in Figure~\ref{fig:fam_k_front}. Arrange $k$ copies $T_1,\dots,T_k$ of $T$ in a row and connect them by a tangle sum; then perform the standard rainbow tangle closure after introducing 1 more crossing, as shown in Figure~\ref{fig:fam_k_front}. The resulting Legendrian $\Lambda_k$ admits a Maslov potential whose values on each strand is also indicated in the figure. When $k=1$, the Legendrian knot obtained this way is a $9_{48}$ knot; its front projection is shown in Figure~\ref{fig:9_48} and its Lagrangian projection in Figure~\ref{fig:9_48lag}.
\begin{figure}[!ht]
	\labellist
	\small
	\pinlabel $T_1$ at  260 58
	\pinlabel $T_2$ at  320 58
	\pinlabel $T_k$ at  430 58
	\pinlabel $\scriptstyle{0}$ at  480 20
	\pinlabel $\scriptstyle{1}$ at  480 35
	\pinlabel $\scriptstyle{1}$ at  480 83
	\pinlabel $\scriptstyle{2}$ at  480 98
	\pinlabel $\scriptstyle{0}$ at  130 31
	\pinlabel $\scriptstyle{1}$ at  130 46
	\pinlabel $\scriptstyle{1}$ at  130 61
	\pinlabel $\scriptstyle{2}$ at  130 76
	\pinlabel $\scriptstyle{1}$ at  142 19
	\pinlabel $\scriptstyle{1}$ at  142 95
	\endlabellist
	\includegraphics[width=4.5in]{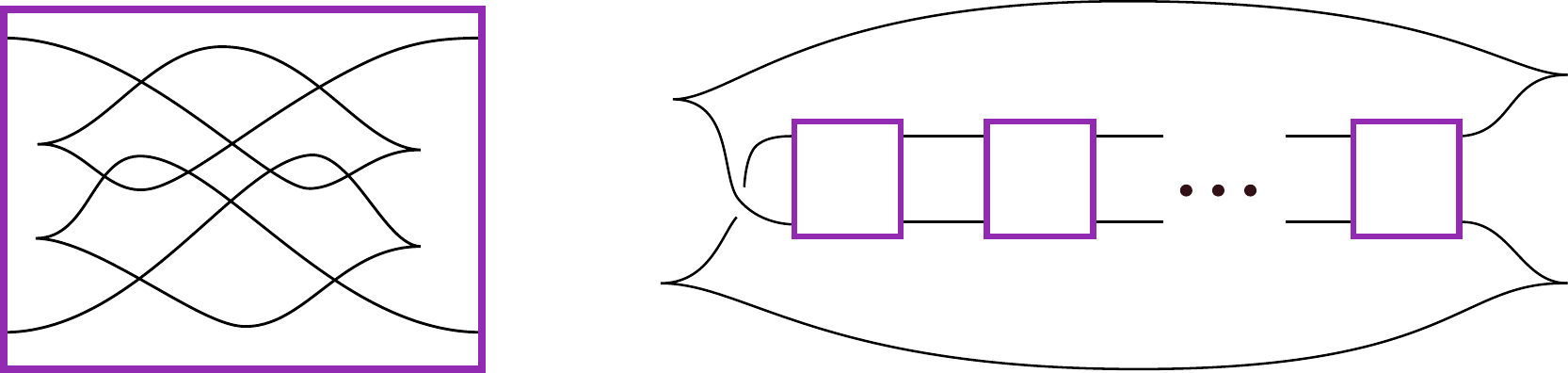}
	\caption{Left: front projection of the tangle $T$; right: front projection of $\Lambda_k$. The numbers indicate the Maslov potential.}
	\label{fig:fam_k_front}
\end{figure}

\begin{proposition} \label{prop:948-gen1-fill}   $\leg_{k}$ admits an immersed, Maslov-$0$, exact Lagrangian filling $F_{k}^{k}$ of genus $k$ with k double points, each of which has action $0$ and index $1$.
\end{proposition}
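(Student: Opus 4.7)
The plan is to reduce the construction of $F_k^k$ to an embedded-cobordism construction by using the correspondence in Theorem~\ref{thm:maingen1} in reverse. Concretely, it suffices to build an embedded, Maslov-$0$, exact Lagrangian cobordism $C$ from $\bigsqcup_{i=1}^{k}\leg_{\Ho}^{1}$ to $\leg_k$: concatenating at each bottom component with the standard immersed disk filling of $\leg_{\Ho}^{1}$ by the two Lagrangian planes $\R^2\cup i\R^2\subset\C^2$ (intersected with a Darboux ball) produces an immersed, Maslov-$0$, exact Lagrangian filling of $\leg_k$. By construction, each such local disk filling contributes exactly one double point; its action is $0$ because the two planes meet tangentially, and its index is $1$ by Definition~\ref{defn:Hopf} and Definition~\ref{defn:ind} since the Maslov potential across the singularity jumps by~$1$.

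To construct $C$, I would exhibit it as a decomposable cobordism as in Figure~\ref{fig:decomp}: starting from $\bigsqcup_{i=1}^{k}\leg_{\Ho}^{1}$ at the bottom, perform a sequence of Legendrian isotopies, Maslov-$0$ pinch moves, and max-$tb$ unknot disk cap-offs terminating at $\leg_k$. The tangle $T$ in Figure~\ref{fig:fam_k_front} is engineered precisely so that, read from bottom to top, one copy of the Hopf link $\leg_{\Ho}^{1}$ can be absorbed into the diagram within each $T_i$ by a short, fixed sequence of elementary moves (a few pinches merging the two strands of the Hopf link into the running diagram, together with an unknot cap-off). Replicating this local procedure independently within each of $T_1,\dots,T_k$ and then performing the rainbow closure assembles $\leg_k$, yielding the desired embedded cobordism~$C$.

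Two verifications then complete the argument. First, the Maslov potential labeling displayed in Figure~\ref{fig:fam_k_front} is chosen so that each pinch move occurs between strands whose Maslov potentials differ by exactly $1$; hence every move is Maslov-$0$, and the Hopf link produced in reverse inside each $T_i$ inherits a labeling matching that of $\leg_{\Ho}^{1}$ up to global shift. Second, the Euler characteristic of the filling is $\chi(F_k^k)=\chi(C)+2k$, where $2k$ is the contribution from the $k$ immersed Hopf disks; a count of pinches and cap-offs in the decomposable $C$ gives $\chi(C)=1-4k$, so $\chi(F_k^k)=1-2k$, which, together with connectedness and the fact that $F_k^k$ has one boundary component, forces the genus to equal~$k$.

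The principal difficulty is the explicit identification of the local elementary-move sequence inside a single copy of $T$ that detaches the Hopf link $\leg_{\Ho}^{1}$ with the correct Maslov grading. Once this local decomposition is verified in one $T_i$, the $k$-fold replication is immediate, and the Maslov-potential and Euler-characteristic bookkeeping is routine.
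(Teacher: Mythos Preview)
Your approach matches the paper's: build a decomposable, embedded, Maslov-$0$ cobordism from $\sqcup_k \Lambda_{\Ho}^1$ to $\Lambda_k$ and stack with the standard immersed disk fillings of the Hopf links. The paper's explicit local sequence (Figure~\ref{fig:9_48}) uses three pinch moves and Reidemeister moves per tangle $T$---no unknot cap-offs are involved---and one minor correction to your reasoning: the planes $\R^2$ and $i\R^2$ meet \emph{transversely}, not tangentially; action $0$ holds because the Liouville primitive vanishes identically on both planes.
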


\begin{proof} When $k = 1$, by
performing a sequence of pinch moves as indicated by the red lines in the Figure~\ref{fig:9_48} and Reidemeister moves, we obtain an embedded,
Maslov-$0$, exact Lagrangian cobordism from the Hopf link $\Lambda^{1}_{\Ho}$ to $\leg_{9_{48}}$.  
For $k \geq 2$, by performing pinch moves on each copy of the tangle $T$ as in the case of $\Lambda_{9_{48}}$, we obtain an embedded, Maslov-$0$, exact Lagrangian cobordism of genus $k$ from $\sqcup_k \Lambda_{\Ho}^1$ to $\Lambda_k$.  
 Each
$\Lambda^{1}_{\Ho}$ has an immersed, Maslov-$0$, exact Lagrangian filling with a double point of action $0$ and index $1$.  Stacking these Lagrangian cobordisms
produces the desired filling $F_{k}^{k}$ of $\leg_{k}$.
\end{proof} 

\begin{figure}[!ht]
	\labellist
	\pinlabel $\Lambda_{9_{48}}$ at 0 30
	\pinlabel $\Lambda_{\Ho}^1$ at 360 30
	\endlabellist
	\includegraphics[width=4.5in]{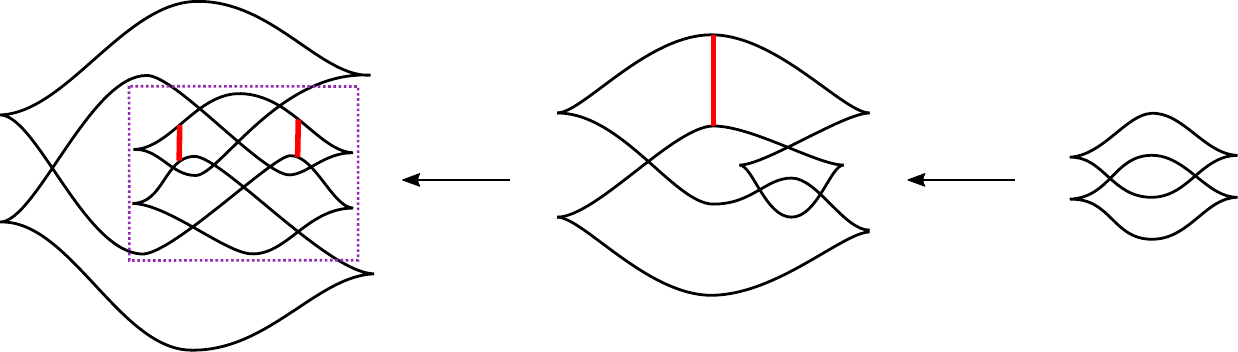}
	\caption{Sequence of three pinch moves that prove the existence of an embedded, Maslov-$0$, exact Lagrangian cobordism of genus $1$ from the Hopf link $\Lambda_{\Ho}^{1}$ to $\leg_{ 9_{48}}$.}
	\label{fig:9_48}
\end{figure}

\begin{proposition}\label{prop:948-disk-obstruct}  $\leg_{k}$ does not admit an immersed, Maslov-$0$, exact Lagrangian disk filling $F_{k-1}^{k+1}$ with $k+1$ double points, all of action $0$ and $k$
of index $1$ and one of index $0$.
\end{proposition}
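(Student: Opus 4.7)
The plan is to argue by contradiction. Suppose such an immersed filling $F_{k-1}^{k+1}$ exists. Since all $k+1$ of its double points have action $0$, Theorem~\ref{thm:maingen1} applied with $m=p=k+1$ converts $F_{k-1}^{k+1}$ into an embedded, Maslov-$0$, exact Lagrangian cobordism $L$ of genus $k-1$ with no double points from $\bigsqcup_{j=1}^{k}\leg_{\Ho}^{1}\sqcup\leg_{\Ho}^{0}$ to $\leg_{k}$, where the superscripts on the Hopf links record the indices of the original $k+1$ double points.

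Working over $\F=\Z_{2}$, no spin hypothesis is required and Theorem~\ref{thm:main} produces an injection
\[
\mathcal F_{L}:\; \aug\!\bigl(\textstyle\bigsqcup_{j=1}^{k}\leg_{\Ho}^{1}\sqcup\leg_{\Ho}^{0};\Z_{2}\bigr)/\Sim \;\hookrightarrow\; \aug(\leg_{k};\Z_{2})/\Sim.
\]
Placing the Hopf link components far apart in $\R^{3}$ ensures that there are no mixed Reeb chords between distinct components, so $\alg(\bigsqcup\cdots)$ is a free product of the individual DGAs; augmentations, together with the relation $\Sim$, therefore factor componentwise. Invoking the ``moreover'' clause of Lemma~\ref{lem:hopf-aug-counts} with $Z=1$ shows that the source of $\mathcal F_{L}$ has exactly $3$ equivalence classes. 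A contradiction will follow once I establish $|\aug(\leg_{k};\Z_{2})/\Sim|<3$.

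The main obstacle is the direct computation of $|\aug(\leg_{k};\Z_{2})/\Sim|$. I would proceed by reading off the Reeb chord generators and the Chekanov--Eliashberg differential on $\alg(\leg_{k})$ from the Lagrangian projection of $\leg_{k}$ (the base case $k=1$ is shown in Figure~\ref{fig:9_48lag}; for general $k$ the projection is built by stacking $k$ copies of the tangle $T$ from Figure~\ref{fig:fam_k_front}), solving $\e\circ\partial=0$ on the degree-$0$ generators over $\Z_{2}$, and enumerating the solutions. After checking via Equation~\eqref{eqn:CZ} and the Maslov potential values in Figure~\ref{fig:fam_k_front} that $\leg_{k}$ has no Reeb chord of degree $-1$, Corollary~\ref{cor:distinct-not-equiv} (with $\Z_{2}^{*}=\{1\}$) guarantees that two augmentations are $\Sim$-equivalent if and only if they agree as maps, so enumeration directly counts equivalence classes. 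For $k=1$ the count is handled by a direct inspection of the Lagrangian projection of $\leg_{9_{48}}$; for $k\geq 2$ I expect to exploit the tangle-sum structure of $\leg_{k}$ to propagate the bound inductively, the delicate step being to verify that concatenation of an additional copy of $T$ does not create new augmentation classes. Once $|\aug(\leg_{k};\Z_{2})/\Sim|\leq 2$ is established in this way, the injection $\mathcal F_{L}$ is impossible and the hypothetical filling $F_{k-1}^{k+1}$ is ruled out.
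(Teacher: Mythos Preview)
Your overall strategy matches the paper's exactly: convert the hypothetical filling to an embedded cobordism from $\bigsqcup_{k}\leg_{\Ho}^{1}\sqcup\leg_{\Ho}^{0}$ to $\leg_{k}$ via Theorem~\ref{thm:maingen1}, and then obstruct that cobordism using Theorem~\ref{thm:main} and an augmentation count over $\Z_{2}$.

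However, there is a genuine gap in your augmentation count for $\leg_{k}$. You assert that, using the Maslov potential in Figure~\ref{fig:fam_k_front}, one checks that $\leg_{k}$ has no Reeb chord of degree $-1$, and then invoke Corollary~\ref{cor:distinct-not-equiv} to conclude that distinct augmentations are inequivalent. This check fails: the Lagrangian projection of $\leg_{9_{48}}$ (Figure~\ref{fig:9_48lag}) has two Reeb chords $c_{1},c_{2}$ of degree $-1$, and each copy of the tangle $T$ in $\leg_{k}$ contributes such a pair $c_{1_{j}},c_{2_{j}}$. Consequently Corollary~\ref{cor:distinct-not-equiv} is unavailable, and simply enumerating augmentations as maps yields $2^{k}$, not $\leq 2$. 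For $k\geq 2$ this count exceeds $3$ and your contradiction evaporates.

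The paper's Lemma~\ref{lem:948-augs} resolves this by exhibiting an explicit DGA homotopy between any two of the $2^{k}$ augmentations: the operator $K$ sending the degree $-1$ chords $c_{1_{j}}$ (for $j$ in a suitable subset) to $1$ witnesses $\e_{1}-\e_{2}=K\circ\partial$, so all augmentations collapse to a single class and $|Aug(\leg_{k};\Z_{2})/\Sim|=1$. The presence of the degree $-1$ chords is thus not an obstacle but the very mechanism that makes the count small enough. Your proposal needs to incorporate this homotopy argument rather than attempting to sidestep it.
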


The proof of this proposition will follow easily once we prove the following count of augmentations.

\begin{lemma}\label{lem:948-augs}  For all $k \geq 1$, $|Aug(\Lambda_{k}; \Z_2)/\Sim|=1.$
\end{lemma}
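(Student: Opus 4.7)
The argument proceeds by directly counting $Aug(\Lambda_k;\Z_2)$.

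First, I would write down the Chekanov--Eliashberg DGA $(\alg(\Lambda_k), \dd)$ from the Lagrangian projection of $\Lambda_k$: label the Reeb chord generators with their degrees computed from the Maslov potentials displayed in Figure~\ref{fig:fam_k_front} via formula~\eqref{eqn:CZ}, then read off $\dd$ by counting rigid immersed polygons. A grading inspection on the indicated potentials should confirm that $\alg(\Lambda_k)$ has no generators of degree $-1$.

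Second, I would apply Corollary~\ref{cor:distinct-not-equiv} with $\F = \Z_2$: since $\Lambda_k$ is connected and $\alg(\Lambda_k)$ has no degree $-1$ chords, two augmentations to $\Z_2$ are $\Sim$-equivalent if and only if they are identically equal. Thus it suffices to show $|Aug(\Lambda_k;\Z_2)| = 1$.

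Third, I would enumerate augmentations by solving the $\Z_2$-polynomial system $\{\e(\dd a) = 0 : |a| = 1\}$, where $\e$ is also required to vanish on all chords of nonzero degree. Since $\Lambda_k$ is assembled by tangle-summing $k$ copies of the same tangle $T$ and taking a rainbow closure, the Reeb chords of $\alg(\Lambda_k)$ partition naturally into chords internal to each $T_i$, interface chords between $T_i$ and $T_{i\pm 1}$, and closure chords; the differential correspondingly has a block-triangular form with respect to this decomposition. For the base case $k = 1$ (the $9_{48}$ knot), I would verify by direct combinatorial computation that the resulting system has exactly one $\Z_2$-solution. For the inductive step from $k$ to $k+1$, I would show that the equations contributed by the additional tangle $T_{k+1}$ uniquely propagate augmentation values across $T_{k+1}$, so that the unique solution for $k$ lifts to a unique solution for $k+1$. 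Existence comes essentially for free from the cobordism produced in the proof of Proposition~\ref{prop:948-gen1-fill}, which pushes the zero augmentation on each Hopf factor to an augmentation of $\Lambda_k$ via $\calF_\lag$; the real content is uniqueness.

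The main obstacle will be the combinatorial bookkeeping for the DGA of $\Lambda_k$ for general $k$: writing out all degree $+1$ generators and their boundaries, and then verifying that the local transfer across a single tangle $T$ has a unique admissible behavior over $\Z_2$. An alternative strategy, in case the direct induction becomes unwieldy, would be to use the decomposable structure of $\Lambda_k$ to compute the ruling polynomial $R_{\Lambda_k}(z)$ and invoke the correspondence between $\Z_2$-augmentations and graded normal rulings, as in the treatment of the family $\leg_g^p$ in the second part of Theorem~\ref{thm:obstruct}; however the authors' stated preference for this family is the direct enumeration.
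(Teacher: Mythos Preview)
Your plan has a genuine gap at the very first step: the claim that $\alg(\Lambda_k)$ has no generators of degree $-1$ is false. From the Maslov potentials indicated in Figure~\ref{fig:fam_k_front}, each copy of the tangle $T$ contributes two Reeb chords $c_{1_j}, c_{2_j}$ of degree $-1$ (in the $k=1$ case these are the chords labeled $c_1,c_2$ in Figure~\ref{fig:9_48lag}). Consequently Corollary~\ref{cor:distinct-not-equiv} does not apply, and in fact $|Aug(\Lambda_k;\Z_2)| = 2^k$, not $1$: for each $j$ there is a free $\Z_2$ choice $\e(b_{2_j})\in\{0,1\}$, with all other degree~$0$ chords forced to $1$. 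So the system you propose to solve does not have a unique solution, and the inductive ``unique propagation across a tangle'' cannot hold.

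What the paper does instead is exactly to exploit the degree $-1$ chords. Given two augmentations differing on some subset $J\subset\{1,\dots,k\}$ of the $b_{2_j}$'s, one builds an explicit DGA homotopy $K$ by setting $K(c_{1_j})=1$ for $j\in J$ and $K=0$ elsewhere; since $\dd b_{2_j}$ contains $c_{1_j}$ (with the remaining factors augmenting to $1$), this gives $\e^1-\e^2 = K\circ\dd$. Over $\Z_2$ and for a knot, DGA homotopy coincides with $\Sim$ by Proposition~\ref{prop:equivalences}, so all $2^k$ augmentations collapse to a single class. Your alternative via the ruling polynomial would run into the same obstruction: Lemma~\ref{lem:rel} also requires the absence of negative-degree Reeb chords, so that route is unavailable for this family.
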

  
\begin{proof}  When $k=1$, the DGA $\alg(\Lambda_{9_{48}})$ is generated by $a_i, i=1,\cdots 6$, $b_i, i=1, \cdots, 7$, $c_i, i=1,2$ with grading $|a_i|=1, |b_i|=0, |c_i|=-1$ as shown in Figure~\ref{fig:9_48lag}.
\begin{figure}[!ht]
\labellist
\small
\pinlabel $a_1$ at 330 290
\pinlabel $a_2$ at 350 220
\pinlabel $a_3$ at 350 110
\pinlabel $a_4$ at 330 50
\pinlabel $a_5$ at  300 230
\pinlabel $a_6$ at  300 70
\pinlabel $c_1$ at 160 230
\pinlabel $c_2$ at 170 70
\pinlabel $b_1$ at 318 150
\pinlabel $b_2$ at 265 155
\pinlabel $b_3$ at 238 220
\pinlabel $b_4$ at 240 125
\pinlabel $b_5$ at 200 150
\pinlabel $b_6$ at 140 150
\pinlabel $b_7$ at 30 130
\endlabellist
\includegraphics[width=2.5in]{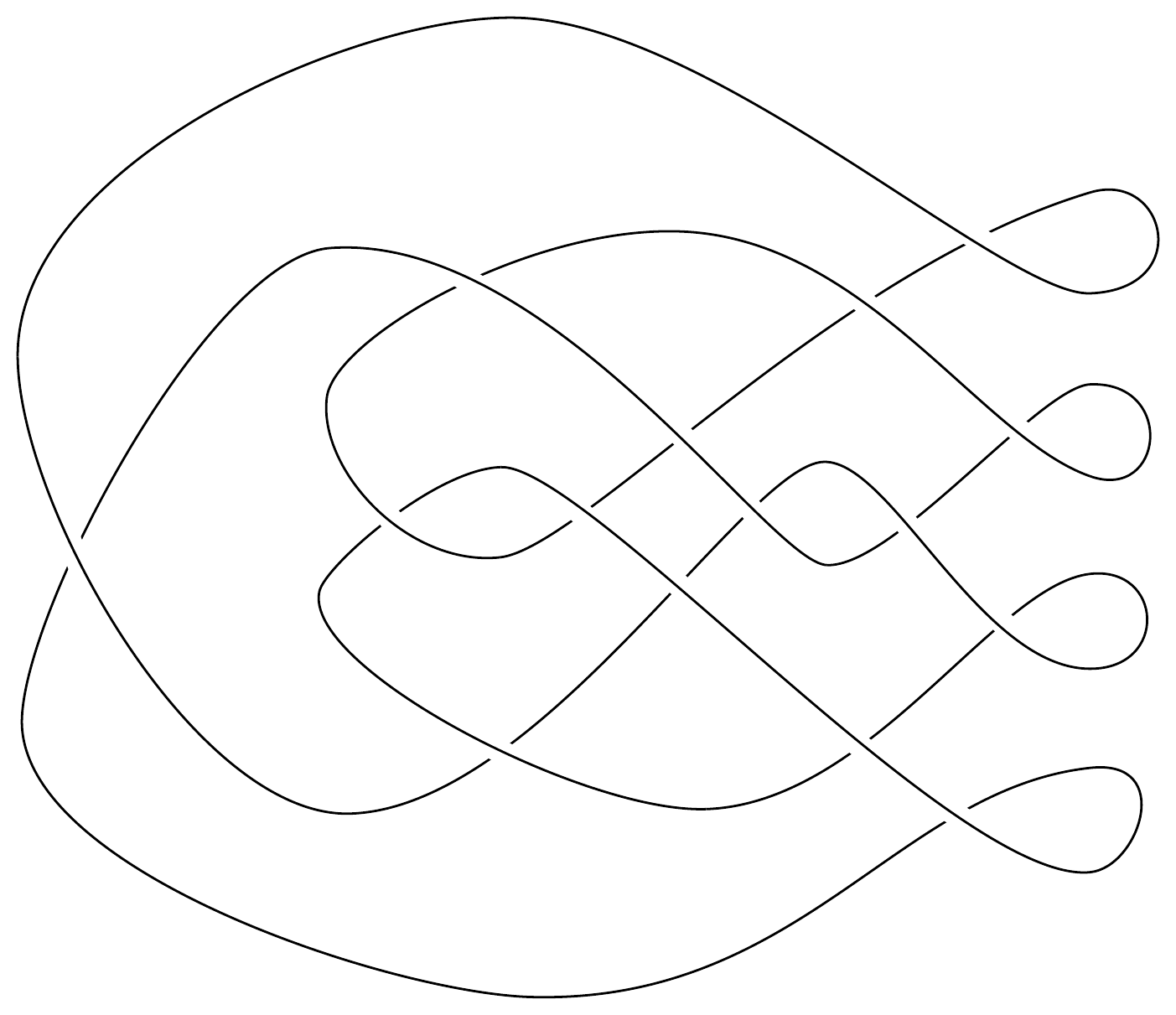}
\caption{A Lagrangian projection for $\Lambda_{9_{48}}$.}
\label{fig:9_48lag}
\end{figure}
The differential is given by
 $$
 \begin{array}{lcl}
\dd a_1=1+b_7(b_3+c_1 a_5)&&\dd b_2= c_1b_6b_4+b_3b_6c_2\\
\dd a_2= 1+ a_5b_6c_2b_1+b_6 b_4 b_1&&\dd b_3=c_1(1+b_6b_5)\\
\dd a_3=1+ b_1c_1b_6a_6+ b_1b_3b_6&&\dd b_4=(1+b_5b_6)c_2\\
\dd a_4=1+ (b_4+a_6c_2)b_7&&\dd b_i=0, \mbox{ for } i\neq 2,3,4\\
\dd a_5= 1+b_6b_5&&\dd c_i=0,\mbox{ for } i=1,2.\\
\dd a_6= 1+ b_5b_6&&
\end{array}
$$
There are two augmentations $\e_0$ and $\e_1$ of $\alg(\Lambda_{9_{48}})$ to $\Z_2$ with $\e_i(b_2)=i$, and  $\e_i(b_j)=1$ for $j\neq 2$, $i=0,1$.
These two augmentations are DGA homotopic since $\e_0-\e_1=K\circ \dd$, where $K$ sends $c_1$ to $1$ and the other Reeb chords to $0$. Since $\F= \Z_2$, 
By Proposition~\ref{prop:equivalences} and Remark~\ref{rem:equiv-DGA-homotopy}, equivalence with respect to DGA homotopy is the same
as equivalent with respect to $\Sim$, and thus we have that $|Aug(\Lambda_{9_{48}}; \Z_2)/\Sim|=1$.

\begin{figure}[!ht]
	\labellist
	\small
	\pinlabel $T_1$ at  540 125
	\pinlabel $T_2$ at  630 125
	\pinlabel $T_k$ at  800 125
	\pinlabel $a_{5_j}$ at 290 180 
	\pinlabel $a_{2_j}$ at 345 190 
	\pinlabel $a_{3_j}$ at 340 120
	\pinlabel $a_{6_j}$ at 300 35 
	\pinlabel $c_{2_j}$ at 140 25 
	\pinlabel $b_{1_j}$ at 300 100
	\pinlabel $b_{2_j}$ at 250 110
	\pinlabel $b_{3_j}$ at 220 185
	\pinlabel $b_{4_j}$ at 220 80
	\pinlabel $b_{5_j}$ at 185 155
	\pinlabel $b_{6_j}$ at 130 100
	\pinlabel $c_{1_j}$ at 150 185
	\pinlabel $\tilde{b}_{7}$ at 460 125
	\pinlabel $\tilde{a}_{1}$ at 860 180
	\pinlabel $\tilde{a}_{4}$ at 880 100
	\endlabellist
	\includegraphics[width=5in]{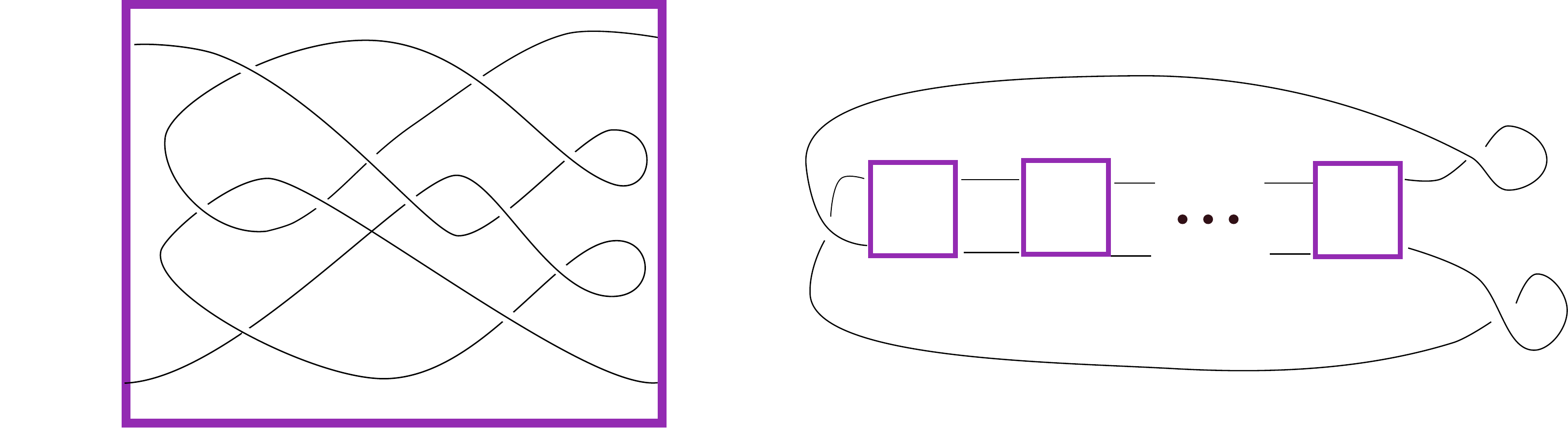}
	\caption{Lagrangian projections of the tangle $T$ and the Legendrian $\leg_{k}$.}
	\label{fig:9_48fam}
\end{figure}
 
The calculation for $k \geq 2$ is similar.
Label the Reeb chords in the $j$th tangle of $\Lambda_k$ by $b_{i_j}, a_{i_j}$ and $c_{i_j}$ following a similar labeling scheme as for $\Lambda_{9_{48}}$, see Figure~\ref{fig:9_48fam}. Let $\tilde{b}_7$, $\tilde{a}_1$ and $\tilde{a}_4$ denote the Reeb chords of $\Lambda_k$ not contained in any of the $k$ tangles, and such that $|\tilde{b}_7|=0$ and $|\tilde{a}_1|=|\tilde{a}_4|=1$. Then, one can find that any augmentation $\e$ of $\Lambda_k$ to $\Z_2$ takes the following values: $\e(\tilde{b}_7)=1,~\e(b_{i_j})=1$ for any $i_j\neq 2_j$, and $\e(b_{2_j})\in \{0,1\}$. Therefore, for any $\Lambda_k$ we have  $2^k$ augmentations to $\Z_2$. Suppose that $\e_1$ and $\e_2$ are two augmentations of  $\Lambda_k$ such that $\e_1(b_{2_j})-\e_2(b_{2_j})=1$ for $j$ contained in some subset $J \subset \{1,\ldots, k\}$. Then, there exists a DGA homotopy $K$ from $\e_1$ to $\e_2$ where $K(c_{1_j})=1$ for $j\in J$, and which maps all other Reeb chords to $0$. Therefore, the Legendrians $\Lambda_k$ have a unique augmentation to $\Z_2$ up to DGA homotopy and thus up to $\Sim$.
\end{proof}

\begin{proof}[Proof of Proposition~\ref{prop:948-disk-obstruct}] By Theorem~\ref{thm:maingen1}, the existence of the filling $F_{k-1}^{k+1}$ is equivalent to
the existence of an embedded, Maslov-$0$, exact Lagrangian cobordism from $\sqcup_{k} \leg_{\Ho}^{1} \sqcup \leg_{\Ho}^{0}$ to $\leg_{k}$.  
By Lemma~\ref{lem:hopf-aug-counts} and Lemma~\ref{lem:948-augs},
$$ |Aug( \sqcup_{k} \leg_{\Ho}^{1} \sqcup \leg_{\Ho}^{0}  ; \Z_2)/\Sim|=3,\quad\text{and} \quad  |Aug(\leg_{k}; \Z_2)/\Sim|=1,$$
and thus by Theorem~\ref{thm:main} such an embedded cobordism from $\sqcup_{k} \leg_{\Ho}^{1} \sqcup \leg_{\Ho}^{0}$ to $\leg_{k}$ does not exist.
 \end{proof}

 We now have all the ingredients to prove our first part of Theorem~\ref{thm:obstruct}.
 
 \begin{proof}[Proof of Theorem~\ref{thm:obstruct}(1)]  Fix $\leg_{k}$. Proposition~\ref{prop:948-gen1-fill} shows the existence of the immersed, Maslov-$0$, exact Lagrangian filling $F_{k}^{k}$ with genus $k$ that has $k$ double points, each with action $0$ and index $1$.
 Proposition~\ref{prop:948-disk-obstruct} shows there does not exist  an immersed, Maslov-$0$, exact Lagrangian disk filling $F_{k-1}^{k+1}$ with $(k+1)$ double points, all of action $0$, $k$ of index $1$, and one
 of index $0$. Thus, 
 by Definition~\ref{defn:not-from-surgery}, $F_{k}^{k}$  does not arise from Lagrangian surgery.   
 
 For the smooth comparison,  $\leg_{1} = \leg_{9_{48}}$ admits a smooth disk filling with one immersed point~\cite[Section 4.6]{OS}, and thus it also admits a disk filling with $p$ immersed points, for any $p\geq1$.  One can more easily see that by two ``unclasping'' moves, ${9_{48}}$ has an unknotting number of $2$: it follows that there exists a smooth disk filling of $\leg_{9_{48}}$ with 2 double points.  Similarly, when $k \geq 2$, by performing unclasping moves in each of the $k$ tangles,  we see that $\leg_{k}$ admits a smooth disk filling with $2k$ double points, and thus by smooth surgery a smooth genus $j$ filling with $2k-j$ double points for all $0 \leq j \leq k$.
 \end{proof}

\subsection{Proof of Theorem~\ref{thm:obstruct}(2)} For all $g \geq 1$ and $p \geq 0$, we will  show the existence of a Legendrian knot $\leg_{g}^{p}$ that has an immersed, Maslov-$0$, exact Lagrangian filling $F_{g}^{p}$, which has genus $g$
 and $p$ double points of action and index $0$, that does not arise from Lagrangian surgery. 
The construction of  $\leg_{g}^{p}$ is an example of the Mondrian diagrams of \cite{Ng}.

 To construct the Legendrian {\bf checkerboard knot} $\leg_{g}^{0}$, $g \geq 1$, begin with a $(2g+2) \times 4$ shaded checkerboard,  
 with the lower left square shaded.  For every shaded square,
 replace the right (resp. left) edge with a right (resp. left) cusp. If two shaded squares  
 share a vertex, replace the vertex with a crossing, and otherwise replace the vertex with a smoothing of the vertex.  An example is given in  Figure~\ref{fig:Checkerboard_construction_3}. We can directly check that $\leg_{g}^{0}$ has a single component, for all $g \geq 1$.
 
\begin{figure}[!ht]
	\labellist
	\small
	\pinlabel $4$ at 0 112
	\pinlabel $3$ at 0 85
	\pinlabel $2$ at 0 58
	\pinlabel $1$ at 0 28
	\pinlabel $0$ at 0 0
	\endlabellist
	\includegraphics[width=4.5in]{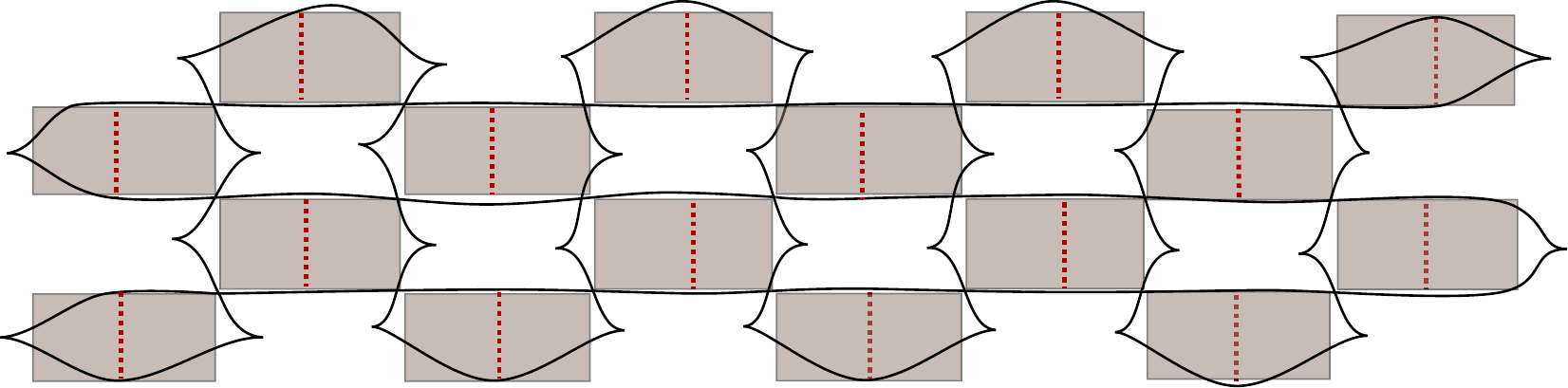}
	\caption{ The Legendrian $\leg_{\bf 3}^{0}$ constructed by starting with a $(2({\bf 3}) + 2) \times 4$ shaded checkerboard; the red lines denote the pinches used in the construction of $F_{3}^{0}$.}
	\label{fig:Checkerboard_construction_3}
\end{figure}

 For $p \geq 1$, the Legendrian knot $\leg_{g}^{p}$ will be constructed by  applying  Legendrian Reidemeister I moves and  adding  $p$ \textit{clasps} to $\leg_{g}^{0}$, as shown in Figure~\ref{fig:claspbis}.
To form $\leg_{g}^{1}$, for $g \geq 1$, start with  the two shaded regions corresponding to the bottom row, first and third columns in the shaded $(2g+2) \times 4$-checkerboard used to construct  $\leg_{g}^{0}$.
Perform one downward Reidemeister I move on each portion of $\leg_{g}^{0}$ corresponding to these two shaded regions, and clasp the pair of cusps facing each other as schematized on Figure \ref{fig:claspbis}.
We form $\leg_{g}^{2}$ by again starting with the two bottom left shaded regions of $\leg_{g}^{0}$, performing {$6$} 
Reidemeister I moves, and then forming 2 clasps in the  shaded tiles of the plane.
Similarly, for all $p \geq 1$, we can form the {\bf clasped checkerboard} Legendrian $\leg_{g}^{p}$, by starting with $\leg_{g}^{0}$, performing {$4p-2$}  
 Reidemeister moves, and adding $p$ clasps, as shown in Figure~\ref{fig:claspbis}.  Observe that $\leg_{g}^{p}$ has a single component.

\begin{figure}[!ht]
\labellist
\small
\pinlabel $\cdots$ at 40 40
\pinlabel $\cdots$ at 300 40
\pinlabel $2$ at -2 28
\pinlabel $1$ at -2 13
\pinlabel $0$ at -2 0
\pinlabel $2p-1$ at 100 70
\pinlabel $\text{clasp}$ at 180 65
\pinlabel $4$ at 205 130
\pinlabel $2p-1$ at 360 70
\pinlabel $\text{pinch}$ at 475 65
\pinlabel $2g+2$ at 110 170
\pinlabel $\cdots$ at 110 130
\pinlabel $2g+2$ at 380 170
\pinlabel $\cdots$ at 370 130
\pinlabel $4$ at 462 130
\pinlabel $2$ at 253 28
\pinlabel $1$ at 253 13
\pinlabel $0$ at 253 0
\endlabellist
\includegraphics[width=6in]{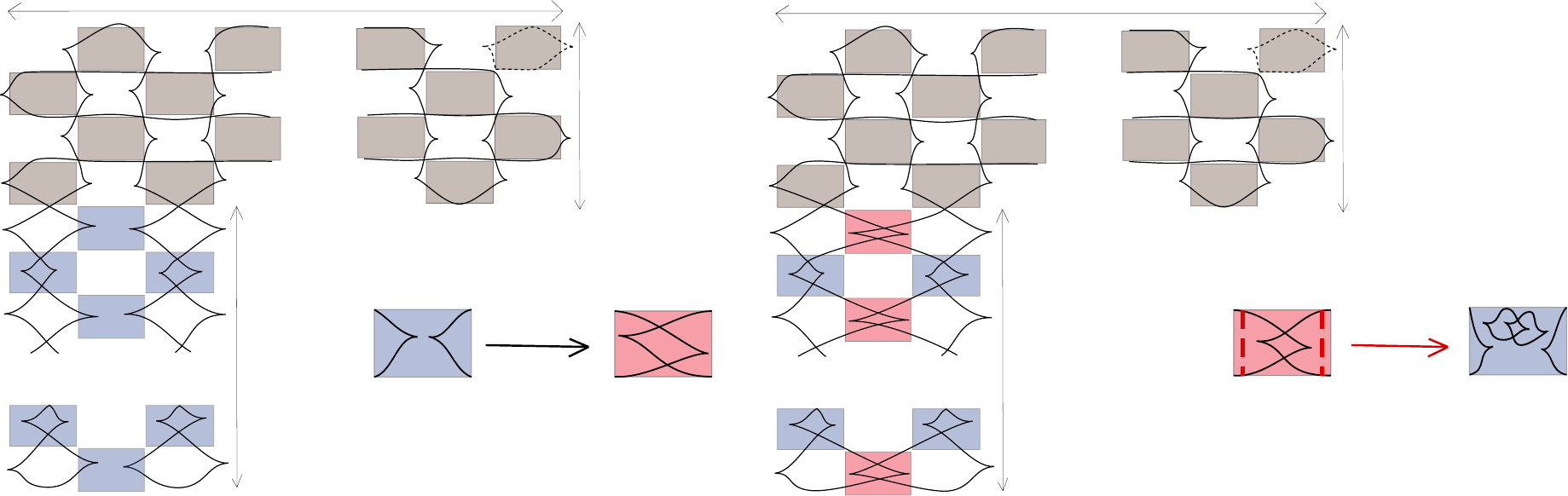}
\caption{Construction of the clasped checkerboard Legendrian $\leg_{g}^{p}$, $p \geq 1$, and 
a schematization of pinch moves around each clasp that shows the existence of a cobordism from $\sqcup_p \Lambda_{\Ho}^0 \cup \leg_{g}^{0}$ to $\leg_{g}^{p}$.}
\label{fig:claspbis}
\end{figure}

\begin{proposition} \label{prop:leg-g-p-fill}   For all $g\geq 1$ and $p \geq 0$, the Legendrian knot $\leg_{g}^{p}$ admits an immersed, Maslov-$0$, exact Lagrangian filling $F_{g}^{p}$ of genus $g$ with $p$ double points, each of which has action $0$ and index $0$.
\end{proposition}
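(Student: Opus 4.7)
The strategy is to construct $F_g^p$ by concatenating three pieces: (a) an immersed disk filling of each of $p$ copies of $\Lambda_{\Ho}^0$, each carrying a single action-$0$, index-$0$ double point (such a disk exists by Definition~\ref{defn:Hopf} and Example~\ref{ex:Hopf}); (b) an embedded, Maslov-$0$, exact Lagrangian filling of the unclasped checkerboard $\Lambda_g^0$ of genus $g$; and (c) an embedded, Maslov-$0$, exact Lagrangian cobordism $W^p$ from $\bigsqcup_p \Lambda_{\Ho}^0 \sqcup \Lambda_g^0$ to $\Lambda_g^p$ that produces the $p$ clasps. The existence of pieces (b) and (c) will be established by decomposability.

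First I would construct piece (b) by performing the sequence of pinch moves indicated in red in Figure~\ref{fig:Checkerboard_construction_3} (and the analogous pattern for general $g$), followed by capping off the resulting max-$tb$ unknots with Lagrangian disks. Two things must be checked: each pinch is applied to a pair of oppositely oriented strands whose Maslov potentials (read from the checkerboard labels) differ by exactly $1$, ensuring orientability and Maslov-$0$; and the Euler characteristic, $n_d - n_s$ where $n_d$ counts the final disks and $n_s$ counts the pinches, equals $1-2g$, yielding genus exactly $g$. Because $\Lambda_g^0$ is a single component and the pinching pattern is determined combinatorially by the checkerboard, the count of $n_s$ and $n_d$ follows directly from the construction.

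Next I would construct $W^p$ by performing one pinch move at each of the $p$ clasps, as schematized on the right half of Figure~\ref{fig:claspbis}. Each such pinch splits off a max-$tb$ Hopf link $\Lambda_{\Ho}^0$ from the negative end while leaving $\Lambda_g^0$ unchanged in the complement; the two strands at the clasp have opposite orientations (inherited from the oppositely-oriented cusps of the clasp) and Maslov potentials differing by $1$ matching the $0,1,1,2$ pattern of $\Lambda_{\Ho}^0$ from Figure~\ref{fig:Hopf}, so $W^p$ is orientable, Maslov-$0$, and exact. Stacking (a), (b), and (c) yields the desired $F_g^p$. The total Euler characteristic is $p \cdot 1 + (1-2g) - p = 1-2g$, so since $\Lambda_g^p$ is a single component, the genus is exactly $g$. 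The only double points are the $p$ inherited from the Hopf-link disks, each of action $0$ and index $0$.

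The main technical obstacle is the local bookkeeping: verifying that at each pinch in both (b) and (c), the Maslov potentials inherited from the checkerboard assignment match the required $+1$ jump between the pinched strands, and simultaneously that the orientations are opposite. Once the Maslov potential on $\Lambda_g^0$ is pinned down from Figure~\ref{fig:Checkerboard_construction_3}, this is a finite combinatorial check at each clasp and each red arc, but it is the only nontrivial verification; the exactness and the Euler characteristic computation are then automatic consequences of decomposability and stacking.
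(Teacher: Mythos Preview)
Your strategy is exactly the paper's: build (a) immersed fillings of $p$ Hopf links, (b) a genus-$g$ embedded filling of $\Lambda_g^0$ via the checkerboard pinches, and (c) a genus-$0$ decomposable cobordism from $\bigsqcup_p\Lambda_{\Ho}^0\sqcup\Lambda_g^0$ to $\Lambda_g^p$, then stack.

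Two small inaccuracies in your bookkeeping happen to cancel. First, the standard filling of $\Lambda_{\Ho}^0$ is not a single immersed disk: $\Lambda_{\Ho}^0$ has two components, and its filling is two embedded Lagrangian disks ($\R^2$ and $i\R^2$ inside the ball) meeting transversely at one point, so its domain has Euler characteristic $2$, not $1$. Second, splitting off a Hopf link at a clasp requires \emph{two} pinch moves, one on each side of the clasp, since the clasp region is attached to $\Lambda_g^0$ through two separate Reidemeister-I fingers; a single pinch cannot produce a two-component link from a one-component piece while also leaving $\Lambda_g^0$ intact. Thus $\chi(W^p)=-2p$, not $-p$. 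With both corrections the total is $2p+(1-2g)-2p=1-2g$, and your genus claim stands. The Maslov and orientation checks you outline are otherwise the right verifications.
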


\begin{proof} First fix $\leg_{g}^{0}$, for some $g \geq 1$. By performing  pinch moves on each pair of strands that correspond to the top and bottom edges of each shaded square in the $(2g+2) \times 4$ shaded checkerboard that was used to constuct $\leg_{g}^{0}$,   we obtain an embedded, exact, Lagrangian cobordism from a disjoint union of max $tb$ Legendrian unknots  to $\leg_{g}^{0}$;  see an illustration in Figure~\ref{fig:Checkerboard_construction_3}.    The Maslov potential on the strands on which we perform the pinch moves ensures that this cobordism has Maslov class $0$. Each Legendrian unknot can be filled with a disk to obtain $F_{g}^{0}$, an embedded, Maslov-$0$, exact Lagrangian filling of $\leg_{g}^{0}$. As we perform $\frac12(2g + 2)4$ pinch moves and obtain $4+(2g+1)$ unknots, we see that this filling does indeed have genus $g$, as desired.

Now fix $\leg_{g}^{p}$, for $p \geq 1$. By performing pinch moves along the red dash lines besides the clasps as schematized in Figure~\ref{fig:claspbis}, we build
a genus $0$ embedded, Maslov-$0$, exact Lagrangian cobordism from $\sqcup_p \Lambda_{\Ho}^0 \cup \leg_{g}^{0}$ to $\leg_{g}^{p}$.
The $\leg_{g}^{0}$ has an embedded, Maslov-$0$, exact Lagrangian filling of genus $g$, while  
each Hopf link $\Lambda_{\Ho}^0$ can be filled by an immersed, Maslov-$0$, 
exact Lagrangian filling with one double point of action $0$ and index $0$.  By stacking this Lagrangian cobordism and these fillings, we obtain the desired $F_{g}^{p}$.
\end{proof}

\begin{proposition}\label{prop:leg-g-p-obstruct} The Legendrian knot $\leg_{g}^{p}$ does not admit an immersed, Maslov-$0$, exact Lagrangian disk filling $F_{g-1}^{p+1}$ with $p+1$ double points, all of action $0$ and index $0$.   
\end{proposition}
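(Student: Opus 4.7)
The plan mimics the structure of the proof of Proposition~\ref{prop:948-disk-obstruct}. First I would invoke Theorem~\ref{thm:maingen1}: if such a filling $F_{g-1}^{p+1}$ existed, then, because all $p+1$ of its double points have action $0$ and index $0$, we would obtain an embedded, Maslov-$0$, exact Lagrangian cobordism of genus $g-1$ from $\bigsqcup_{p+1}\leg_{\Ho}^0$ to $\leg_g^p$. Combining Theorem~\ref{thm:main} with Lemma~\ref{lem:hopf-aug-counts}, this would force
\[
 3^{p+1}=\bigl|Aug\bigl(\bigsqcup_{p+1}\leg_{\Ho}^0;\Z_2\bigr)/\Sim\bigr|\leq |Aug(\leg_g^p;\Z_2)/\Sim|,
\]
so the whole problem reduces to establishing the strict upper bound $|Aug(\leg_g^p;\Z_2)/\Sim|<3^{p+1}$.

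To extract this bound without computing the full Chekanov--Eliashberg DGA (which becomes unwieldy as $g$ and $p$ grow), I would pass to the theory of graded normal rulings. For a single-component Legendrian knot in $\R^3_{\std}$, the Fuchs--Ishkhanov/Leverson--Rutherford correspondence together with the homotopy cardinality formula quoted just above Corollary~\ref{cor:rul} expresses $|Aug(\leg;\Z_2)/\Sim|$ in terms of the ruling polynomial evaluated at $z=0$. Thus, modulo standard bookkeeping, the task becomes the purely combinatorial estimate $R_{\leg_g^p}(0)\le 3^p$.

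The clasped structure of $\leg_g^p$ is engineered precisely to mimic, at the level of rulings, $p$ copies of $\leg_{\Ho}^0$ glued to the base Legendrian $\leg_g^0$. Indeed, the cobordism from $\bigsqcup_p\leg_{\Ho}^0\cup\leg_g^0$ to $\leg_g^p$ constructed in the proof of Proposition~\ref{prop:leg-g-p-fill} by pinching along the red arcs of Figure~\ref{fig:claspbis} induces, by a local inspection of each clasp, a bijection between graded normal rulings of $\leg_g^p$ and pairs consisting of a graded normal ruling of $\leg_g^0$ together with one of three local ruling patterns at each clasp (matching the three augmentations of $\leg_{\Ho}^0$ counted in Example~\ref{ex:aug-hopf}). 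I would verify this by checking the three ways to pair off the two strands that run through a clasp while respecting the normality condition and Maslov potential, yielding the ``clasp multiplicativity''
\[
 R_{\leg_g^p}(0)=3^p\cdot R_{\leg_g^0}(0).
\]

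The main obstacle, and the technical heart of the proof, is therefore to show that the Mondrian checkerboard $\leg_g^0$ admits a \emph{unique} graded normal ruling, i.e.\ $R_{\leg_g^0}(0)=1$. I would argue this by propagating the companion-assignment of the ruling row by row through the $(2g+2)\times 4$ checkerboard: at each vertex that is a crossing (two diagonally adjacent shaded squares), the Maslov potential indicated in Figure~\ref{fig:Checkerboard_construction_3} admits exactly one graded switch/return pattern compatible with the strands entering from below, and at each smoothed vertex the companions are propagated tautologically. Consequently, a graded ruling is determined by its restriction to the bottom row, which in turn is fixed by the left-most cusps and the alternating shading pattern. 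Plugging $R_{\leg_g^0}(0)=1$ into the clasp multiplicativity gives $R_{\leg_g^p}(0)=3^p<3^{p+1}$, contradicting the inequality forced by Theorem~\ref{thm:main} and completing the proof.
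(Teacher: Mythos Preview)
Your overall strategy matches the paper's exactly: invoke Theorem~\ref{thm:maingen1}, apply Theorem~\ref{thm:main} together with Lemma~\ref{lem:hopf-aug-counts}, and reduce the problem to the augmentation count for $\leg_g^p$, which is then computed via rulings by first showing the checkerboard $\leg_g^0$ has a unique graded normal ruling and then analyzing the effect of each clasp. The row-by-row propagation you sketch for the uniqueness of the ruling of $\leg_g^0$ is essentially the paper's argument as well (the paper runs it from the top row down rather than bottom up, but this is immaterial).

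However, two concrete details in your ruling computation are incorrect and, as written, the argument would not go through. First, the relationship between $|Aug(\leg;\Z_2)/\Sim|$ and the ruling polynomial is not evaluation at $z=0$; the correct formula (Lemma~\ref{lem:rel}, valid here because $\leg_g^p$ has no negative-degree Reeb chords) is $|Aug(\leg;\Z_2)/\Sim|=2^{\chi(\leg)/2}R_\leg(2^{-1/2})$. Evaluation at $z=0$ does not even make sense for $R_{\leg_g^p}$, which has negative powers of $z$. Second, and relatedly, each clasp admits \emph{two} local ruling patterns (switch at both clasp crossings or at neither), not three; the paper obtains $R_{\leg_g^p}(z)=z^{\chi(\leg_g^p)}(z^{-2}+1)^p$. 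The factor of $3$ per clasp appears only after substituting $z=2^{-1/2}$ and combining with the $2^{\chi/2}$ normalization, not from a direct count of ruling patterns. Your conflation of ``number of augmentations'' with ``number of rulings'' is the underlying issue; the Hopf link $\leg_{\Ho}^0$ itself already has three augmentations but only two graded rulings. Once you correct the evaluation point and the clasp contribution, your argument becomes the paper's proof of Lemma~\ref{lem:leg-g-p-augs}.
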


The proof follows easily from the following calculation, which will be proved in Section~\ref{ssec:g-p-augs}.

\begin{lemma}\label{lem:leg-g-p-augs} $|Aug(\leg_{g}^{p}; \Z_2)/\Sim|=3^{p}$.
\end{lemma}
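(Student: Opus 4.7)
The plan is to combine Corollary~\ref{cor:distinct-not-equiv} with an inductive analysis of the clasp operation, showing that each clasp multiplies the count of augmentations over $\Z_2$ by $3$, mirroring the count $|Aug(\Lambda_{\Ho}^0;\Z_2)/\Sim|=3$ from Lemma~\ref{lem:hopf-aug-counts}. First I would verify that $\leg_g^p$ has no degree $-1$ Reeb chords: from the Maslov potential prescribed in the construction, every crossing of the checkerboard, of a Reidemeister I loop, and of a clasp connects strands whose Maslov potentials differ by $0$ or $1$, so all Reeb chords have grading in $\{0,1\}$. By Corollary~\ref{cor:distinct-not-equiv} over $\F=\Z_2$, this reduces the lemma to the purely combinatorial statement $|Aug(\leg_g^p;\Z_2)| = 3^p$.

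For the base case $p=0$, I would show $|Aug(\leg_g^0;\Z_2)|=1$. The checkerboard front admits a unique graded normal ruling whose ruling disks are exactly the shaded squares; a direct DGA computation (in the spirit of Lemma~\ref{lem:948-augs}) then shows that the degree-$1$ generators force every degree-$0$ generator paired across a shaded square to be augmented to $1$, with no remaining freedom. Either via rulings (using the Ng-Rutherford correspondence, cf.~Corollary~\ref{cor:rul}) or by direct inspection of the $\partial a$-relations, this yields the unique augmentation.

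For the inductive step I would analyze the passage $\leg_g^{p-1}\rightsquigarrow\leg_g^p$, which is local: two downward Reidemeister I moves followed by a clasp, producing a region whose Lagrangian projection matches that of $\Lambda_{\Ho}^0$ (Figure~\ref{fig:Hopf}). This introduces two new degree-$0$ chords $b_1^{(p)},b_2^{(p)}$ (the clasp crossings) and two new degree-$1$ chords $a_1^{(p)},a_2^{(p)}$ (the Reidemeister loop crossings), with differentials of the form $\partial a_i^{(p)} = 1 + b_1^{(p)}b_2^{(p)} + W_i^{(p)}$, where the tails $W_i^{(p)}$ consist of words of length $\geq 2$ each of which either contains a Reeb chord augmented to $0$ by any augmentation of $\leg_g^{p-1}$, or factors as $b_j^{(p)}$ times such a chord. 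Any extension of a given augmentation of $\leg_g^{p-1}$ therefore requires $\e(b_1^{(p)})\e(b_2^{(p)})=0$, giving exactly the three choices $(\e(b_1^{(p)}),\e(b_2^{(p)})) \in \{(0,0),(1,0),(0,1)\}$, precisely as in Example~\ref{ex:aug-hopf}. Since equivalence over $\Z_2$ has already been reduced to equality, the $3$ extensions are pairwise inequivalent, and inductively we obtain $|Aug(\leg_g^p;\Z_2)/\Sim|=3^p$.

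The main obstacle is the precise verification of the form of $\partial a_i^{(p)}$ inside the full DGA of $\leg_g^p$: I must rule out new immersed polygons in $\pi_{xy}(\leg_g^p)$ whose boundaries exit the clasp region, travel through $\leg_g^{p-1}$, and return, producing extra monomials in $\partial a_i^{(p)}$ that could invalidate one of the three proposed extensions. The cleanest way to sidestep this combinatorial check is to pass to the ruling-theoretic side: one shows that each graded normal ruling of $\leg_g^{p-1}$ extends to exactly three graded normal rulings of $\leg_g^p$ localized at the new clasp (the ``switch'', ``return-via-$b_1$'' and ``return-via-$b_2$'' local configurations), which multiplies the ruling polynomial appropriately at $z=q^{1/2}-q^{-1/2}$ with $q=2$. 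Combining this multiplicative behavior of rulings with the Ng-Rutherford-style correspondence between augmentations over $\Z_2$ (up to equivalence) and graded normal rulings then delivers $|Aug(\leg_g^p;\Z_2)/\Sim| = 3 \cdot |Aug(\leg_g^{p-1};\Z_2)/\Sim|$, and induction completes the proof.
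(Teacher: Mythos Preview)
Your overall strategy --- reduce to a raw count of augmentations via Corollary~\ref{cor:distinct-not-equiv} and then compute that count using rulings --- is exactly the paper's approach. The paper packages the reduction and the ruling--augmentation relation into Lemma~\ref{lem:rel} (the Henry--Rutherford formula $|Aug(\Lambda;\Z_2)| = 2^{\chi(\Lambda)/2} R_\Lambda(2^{-1/2})$, valid here since all Reeb chords have degree $0$ or $1$), and then computes $R_{\leg_g^p}(z)=z^{\chi(\leg_g^p)}(z^{-2}+1)^p$ directly.

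There is, however, a concrete error in your ruling count. At each clasp there are \emph{two} local ruling extensions, not three: the Reidemeister~I crossings are forced to be switches, and then at the two clasp crossings one switches at both or at neither. (Your ``switch / return-via-$b_1$ / return-via-$b_2$'' trichotomy does not describe valid normal rulings; switching at exactly one of the two clasp crossings does not produce nested or disjoint ruling disks.) The factor of $3$ in the augmentation count does not come from a bijection with rulings but from evaluating the ruling polynomial: each clasp contributes a factor $(z^{-2}+1)$ to $R_{\leg_g^p}(z)$, and $(z^{-2}+1)\big|_{z=2^{-1/2}}=3$. So your inductive step as written does not go through; you need to invoke Lemma~\ref{lem:rel} (not Corollary~\ref{cor:rul}, which is a different statement) to pass from the $2$-ruling local picture to the multiplicative factor of $3$ on augmentations. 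Once you make that correction, your argument coincides with the paper's.
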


\begin{proof}[Proof of Proposition~\ref{prop:leg-g-p-obstruct}] By Theorem~\ref{thm:maingen1}, the existence of the filling $F_{g-1}^{p+1}$ is equivalent to
the existence of an embedded, Maslov-$0$, exact Lagrangian cobordism of genus $g$ from $\sqcup_{p+1} \leg_{\Ho}^{0}$ to $\leg_{g}^{p}$.  
By Lemma~\ref{lem:hopf-aug-counts} and Lemma~\ref{lem:leg-g-p-augs},
$$ |Aug( \sqcup_{p+1} \leg_{\Ho}^{0}  ; \Z_2)/\Sim|=3^{p+1},\quad\text{and} \quad  |Aug(\leg_{g}^p; \Z_2)/\Sim|=3^{p},$$
and thus by Theorem~\ref{thm:main} such an embedded cobordism from $\sqcup_{p+1} \leg_{\Ho}^{0} $ to $\leg_{k}$ does not exist.
\end{proof}

\begin{proof}[Proof of Theorem~\ref{thm:obstruct}(2)]  Proposition~\ref{prop:leg-g-p-fill} shows the existence of the immersed, Maslov-$0$, exact Lagrangian genus $g$ filling $F_{g}^{p}$ of $\leg_{g}^{p}$ that has $p$ double points of action $0$ and index $0$.
 Proposition~\ref{prop:leg-g-p-obstruct} shows there does not exist  an immersed, Maslov-$0$, exact Lagrangian genus $(g-1)$ filling $F_{g-1}^{p+1}$ of $\leg_{g}^{p}$ with $(p+1)$ double points, all of action $0$ and index $0$. Thus,   by Definition~\ref{defn:not-from-surgery}, $F_{g}^{p}$  does not arise from Lagrangian surgery.   
 \end{proof}

It remains to prove Lemma~\ref{lem:leg-g-p-augs}, which we do in the next subsection. 

\subsection{Proof of Lemma~\ref{lem:leg-g-p-augs}} \label{ssec:g-p-augs}

 As opposed to the more direct counting strategy we employed in Lemma~\ref{lem:948-augs},  
here we count augmentations of these arbitrarily high crossing knots $\leg_{g}^{p}$ using the theory of rulings.  So
we begin with some background on rulings, first defined in  \cite{CP, Fuchs},  and review the definition of the ruling polynomial.  

Following \cite{Sab20},
a (graded, normal) {\bf ruling} of a Legendrian knot $\Lambda$ is a set of crossings (called {\bf switches}) such that resolving the switches yields a link of unknots $\leg_{1}, \dots, \leg_{m}$ such that
\begin{enumerate}
\item At each switch, the two strands have the same Maslov potential;
\item Each  $\leg_{i}$, $i = 1, \dots, m$, is a Legendrian unknot with $0$ crossings and $2$ cusps that bounds a ruling disk $D_{i}$; 
\item Exactly two components are incident to any switch;
\item Near each switch, the incident ruling disks $D_{i}$ bounded by $\leg_{i}$ are either nested or disjoint as shown in Figure~\ref{fig:rul}.
\end{enumerate}
 For each ruling $R$ of a Legendrian $\Lambda$, denote the number of switches and disks by $s(R)$ and $d(R)$, respectively.
For a Legendrian $\Lambda$, the {\bf ruling polynomial} $R_{\Lambda}(z)$ is the polynomial
\begin{equation} \label{eqn:ruling-poly} R_{\Lambda}(z)= \displaystyle{\sum_R z^{s(R)-d(R)}}.
\end{equation}
\begin{figure}[!ht]
\includegraphics[width=3in]{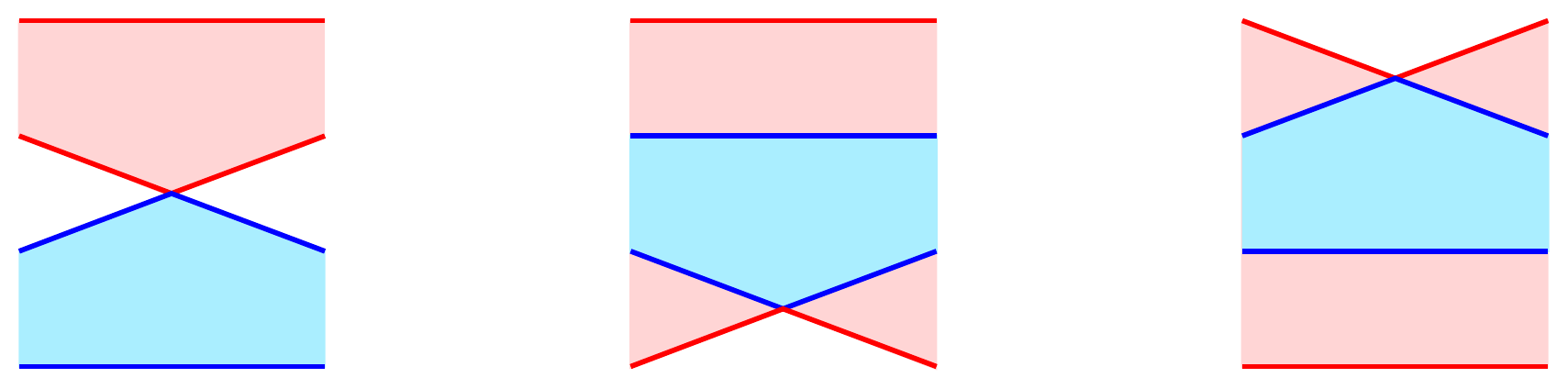}
\caption{Around a switch, the  possible nested or disjoint configurations of the incident disks.}
\label{fig:rul}
\end{figure}

Rulings and augmentations are related: the existence of one implies the existence of the other \cite{Fuchs, FI, Sab}. The following lemma shows how we can use the ruling
polynomial to find our desired count of augmentations.
\begin{lemma}\label{lem:rel}
Let $\Lambda$ be a Legendrian knot with no   negative degree Reeb chords.
Then 
\begin{equation}\label{eq:aug}
 |Aug(\Lambda; \Z_2)/\Sim| = 2^{\chi(\Lambda)/2} R_{\Lambda}(2^{-1/2}),
 \end{equation}
 for $\chi(\Lambda)=\displaystyle{\sum_{k\geq 0} (-1)^ka_k},$ where $a_{k}$ is the number of Reeb chords of degree $k$.
\end{lemma}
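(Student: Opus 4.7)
The plan is to combine the homotopy cardinality formula $\pi_{\geq 0} \aug_+(\Lambda; \F_q)^* = q^{tb(\Lambda)/2} R_\Lambda(q^{1/2} - q^{-1/2})$ recalled just before Corollary~\ref{cor:rul} with a direct analysis of $\aug_+(\Lambda;\Z_2)$ under the no-negative-Reeb-chord hypothesis. First I would specialize to $q=2$; since $2^{1/2}-2^{-1/2}=2^{-1/2}$, this gives
$$\pi_{\geq 0} \aug_+(\Lambda; \Z_2)^* = 2^{tb(\Lambda)/2} R_\Lambda(2^{-1/2}).$$
It then suffices to show (i) $\pi_{\geq 0} \aug_+(\Lambda; \Z_2)^* = |Aug(\Lambda;\Z_2)/\Sim|$ and (ii) $tb(\Lambda) = \chi(\Lambda)$.

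For (i) I would use that the generators of $Hom_+(\e,\e)$ described in Section~\ref{sec:augcat} are the Morse chord $y^{12}$ of degree $0$, the Morse chord $x^{12}$ of degree $1$, and mixed chords $a^{12}$ of degree $|a|+1$. Under the hypothesis all these have non-negative degree, so $Hom_+^{j}(\e,\e)=0$ for $j<0$, giving $H^{-k} Hom_+(\e,\e)=0$ (hence cardinality $1$) for $k\geq 1$ and killing every cohomological factor appearing in the definition of $\pi_{\geq 0}\aug_+(\Lambda;\Z_2)^*$. In addition, $Hom_+^0(\e,\e)$ is the one-dimensional $\Z_2$-space spanned by $y^{12}$, and since $Hom_+^{-1}=0$ while the strict unit $[e_\e]\in H^0Hom_+(\e,\e)$ is nonzero, we obtain $H^0 Hom_+(\e,\e;\Z_2)\cong \Z_2$ as a ring. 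The only unit in $\Z_2$ is $1$, so $|Aut(\e)|=1$, and the homotopy cardinality reduces to $\sum_{[\e]}1=|Aug(\Lambda;\Z_2)/\Sim|$.

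For (ii), I would appeal to the classical Euler characteristic identity for a connected Legendrian $\Lambda\subset\R^3_\std$: $tb(\Lambda)=\sum_k(-1)^k a_k$, obtained by summing the contributions of Reeb chords (equivalently crossings in the Lagrangian projection, or crossings and right cusps in the front) weighted by the parity of their Conley--Zehnder grading. Under the no-negative-degree hypothesis the right-hand side agrees with $\chi(\Lambda)$ as defined in the statement, finishing the proof. The main step is the computation $|Aut(\e)|=1$: one must verify that $[e_\e]\neq 0$ in $H^0 Hom_+(\e,\e;\Z_2)$, which uses the strict unitality of $\aug_+$ recalled in Section~\ref{sec:unital}, and then observe that over the minimal field $\Z_2$ the resulting one-dimensional cohomology ring is forced to be $\Z_2$ itself.
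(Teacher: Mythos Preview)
Your argument is correct and takes a genuinely different route from the paper's. The paper invokes the Henry--Rutherford counting formula \cite[Remark~3.3(ii)]{HR}
\[
2^{-\chi(\Lambda)/2}\,|Aug(\Lambda;\Z_2)| \;=\; R_\Lambda(2^{1/2}-2^{-1/2})
\]
and then applies Corollary~\ref{cor:distinct-not-equiv} (over $\Z_2$, with no degree $-1$ chords, distinct augmentations are inequivalent) to replace $|Aug(\Lambda;\Z_2)|$ by $|Aug(\Lambda;\Z_2)/\Sim|$. You instead start from the NRSS homotopy-cardinality formula with $tb(\Lambda)$ in the exponent, compute the homotopy cardinality directly by showing $H^{<0}Hom_+(\e,\e)=0$ and $|Aut(\e)|=1$, and then identify $tb(\Lambda)=\sum_k(-1)^k a_k$ with $\chi(\Lambda)$ via the no-negative-degree hypothesis. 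Both approaches are short; the paper's is slightly more economical because \cite{HR} already packages the $\chi(\Lambda)$ exponent, whereas you must separately invoke the writhe identity $tb(\Lambda)=\sum_c(-1)^{|c|}$. On the other hand, your route is more self-contained within the $\aug_+$ framework already set up in the paper and makes transparent exactly where the knot (single-component) and no-negative-degree hypotheses enter: they force $Hom_+^0(\e,\e)$ to be the one-dimensional span of the unit.
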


\begin{proof}
Under the assumption of the lemma, following \cite[Remark 3.3(ii)]{HR} the number of  augmentations $| Aug(\leg; \Z_{2}) |$ is related to the ruling polynomial $R_{\Lambda}(z)$ in the following way:
$$2^{-\chi(\Lambda)/2} | Aug(\leg; \Z_{2})  |=R_{\Lambda}(2^{1/2}-2^{-1/2}),$$
where $ \chi(\Lambda)$ is the Euler characteristic of $(\mathcal{A}(\Lambda),\partial)$, defined as $ \chi(\Lambda)=\sum_{k\geq0} (-1)^ka_k,$
where $a_k$ is the number of generators of $\mathcal{A}(\Lambda)$ of degree $k$.
By Corollary~\ref{cor:distinct-not-equiv}, we know that {$| Aug(\leg; \Z_{2}) | = |Aug(\Lambda;\Z_2)/\Sim|$}, and our result follows.
\end{proof}

\begin{proof}[Proof of Lemma~\ref{lem:leg-g-p-augs}]

First consider the checkerboard Legendrian knot $\leg_{g}^{0}$.    Observe that  $\leg_{g}^{0}$ 
has a unique ruling that switches at every crossing: one can check this by considering the shaded regions in the top row: the left cusp of a shaded square has to match with the right cusp of the shaded square, thus it forces the crossings in the  line below to be switches. An inductive argument then proves that each crossing must be a switch.  In this unique ruling $R$, using Equations~\eqref{eqn:CZ} and \eqref{eqn:CZ-grade}, we see that all Reeb chords in $\mathcal A(\leg_{g}^0)$

 have degree $0$ or $1$.  Furthermore, $s(R)-d(R)=\chi(\Lambda_{g}^0)$
since there is a switch at each degree $0$ chord and a one-to-one correspondence between disks and right cusps (which correspond to Reeb chords of degree $1$).  Then applying Lemma~\ref{lem:rel}, we find
$$ |Aug(\leg_{g}^{0}; \Z_2)/\Sim| = 2^{\chi(\leg_{g}^{0})/2} R_{\leg_{g}^{0}}(2^{-1/2}) = 2^{\chi(\leg_{g}^{0})/2} (2^{-1/2})^{\chi(\leg_{g}^{0})} = 1.$$

Now consider $\leg_{g}^{p}$, for $p \geq 1$. 
As shown in Figure~\ref{fig:claspbis}, the first clasp in the construction  $\leg_{g}^{p}$ introduces four new degree $0$ Reeb chords (two from the Reidemeister moves, two in the clasp region), and two rulings.
With just one clasp, the ruling polynomial changes from $R_{ \leg_{g}^{0}}(z)=z^{\chi\left(\leg_{g}^{0}\right)}$ to 
$$R_{\leg_{g}^{1} }(z)=z^{\chi\left(\leg_{g}^{1} \right)}(z^{-2}+1).$$
Each additional  clasp introduces $6$ new degree $0$ Reeb chords ($4$ from Reidemeister moves, $2$ in the clasp region). 
Considering rulings, each new chord coming from a Reidemeister move must be a switch and then one can either switch at both or neither of the two crossings in the clasp region. 
Thus, the ruling polynomial becomes 
$$R_{\leg_{g}^{p}}(z)=z^{\chi\left( \leg_{g}^{p} \right)}(z^{-2}+1)^p.$$
Using Equation~\eqref{eq:aug}, we find that the number of augmentations of $\leg_{g}^{p}$ to $\Z_2$ is $3^p$:
$$|Aug(\leg_{g}^{p}; \Z_2)/\Sim| = 2^{\chi\left(\leg_{g}^{p} \right)/2} (2^{-1/2})^{\chi\left( \leg_{g}^{p} \right)}((2^{-1/2})^{-2}+1)^p  = 3^{p}.   $$
\end{proof}

\appendix
\section{Equivalence in $Aug_+(\Lambda)$ for Legendrian links}\label{appendix:equivalences}

In this appendix we will provide the proof of Proposition~\ref{prop:equivalences} following the proof for the case of single component knots in~\cite{NRSSZ}]Proposition~5.19]. We start by setting some basic notation. Let $\Lambda=\cup_{k=1}^m \Lambda_k$ be a Legendrian link with $m$ link components. For a mixed Reeb chord $a$ that starts on an ith link component $\Lambda_i$ and ends on the jth link component $\Lambda_j$, that is $a\in R(\Lambda_j, \Lambda_i)$, we let $c(a)=i$ and $r(a)=j$.

 Let $\Lambda^n_f$ denote the $n$-copy of $\Lambda$ that has been perturbed by a Morse function $f$ with a single maximum and minimum as in~\cite{NRSSZ}. Note that if $\Lambda= \cup_{k=1}^m \Lambda_k$ is a link with $m$ link components, then $\Lambda^n_f=\cup_{i=1}^n \cup_{k=1}^m \Lambda_{k}^i$ is a link with $mn$ link components. Given a Legendrian link $\Lambda$, and its perturbed two copy $\Lambda_f^2=\Lambda^1\cup \Lambda^2$, 
for any Reeb chord $a\in R(\Lambda^1, \Lambda^2)$, there is a corresponding element $\check{a}\in Hom_+(\e^1, \e^2)$ with degree $|\check{a}|_+=|a|+1$.
Observe that this is a different notation convention than what we use in Section~\ref{sec:augcat}. 
 
Let $(\e^1, \ldots, \e^{n+1})$ be a tuple of augmentations of $(\alg(\Lambda), \partial)$. Define $((\alg(\Lambda^{n+1}_f))^{\e}, \partial_{\e}^{n+1})$ as follows. Let $\alg(\Lambda^{n+1}_f))^{\e}:=(\alg(\Lambda^{n+1}_f)\otimes \F)/(t_k=\e(t_k))$ and set $\partial^{n+1}_{\e}=\phi_{\e}\circ \partial^{n+1} \circ \phi^{-1}_{\e}$, where $\phi_{\e}(a)=a+\e(a)$. Then, the composition maps
$$m_n: Hom_+(\e^n, \e^{n+1})\otimes \cdots \otimes Hom_+(\e^2, \e^3) \otimes Hom_+(\e^1, \e^2)\rightarrow Hom_+(\e^1, \e^{n+1}),$$
are given by
$$m_n(\check{\alpha}_n, \ldots, \check{\alpha}_1)=(-1)^{\sigma}\sum_{a\in \mathcal{R}\cup x_k\cup y_k} \check{a} \cdot \text{Coeff}_{\alpha_1^{12}\alpha^{23}_2\cdots \alpha^{n,n+1}_n}(\partial_{\e}^{n+1}a^{1,n+1})$$
where $\alpha_i\in \{a_1, \ldots, a_r, x_1, \ldots, x_m,y_1, \ldots, y_m\}$ for each $i$, and $\sigma=n(n-1)/2+\sum_{p<q}|\check{\alpha}_p|_+|\check{\alpha}_q|_++|\check{\alpha}_{n-1}|_++|\check{\alpha}_{n-3}|_++\cdots$.

\begin{proposition}[{\cite[Proposition $4.14$]{NRSSZ}}]~\label{prop4.14} Let $\Lambda \subset \R^3_{std}$ be a Legendrian link with $m$ link components, one basepoint $t_k$ per link component and Reeb chords $\mathcal{R}=\{a_1, \ldots, a_r\}$. The DGA of the perturbed $n$-copy of $\Lambda$, $\Lambda^n_f$, is generated by
\begin{enumerate}
\item $(t^i_k)^{\pm1}$ for $1\leq i\leq n, 1\leq k \leq m$, with $|t_k^{i}|=0;$
\item $a_h^{ij}$ for $1\leq i,j\leq n$, and $1\leq h\leq r$ with $|a_h^{ij}|=|a_h|$;
\item $x^{ij}_k$ for $1\leq i,j\leq n$, and $1\leq k \leq m$, with $|x^{ij}_k|=0$;
\item $y^{ij}_k$ for $1\leq i,j\leq n$, $1\leq k \leq m$, with $|y^{ij}|=-1$,
\end{enumerate}
and satisfies the relations $t^i_k(t^i_k)^{-1}=(t^i_k)^{-1}t^i_k=1$ for each $i$ and $k$. The differential of 
$\mathcal{A}(\Lambda^n_f, \partial^n)$
 can be described as follows. Assemble the generators of  $\mathcal{A}(\Lambda^n_f, \partial^n)$
 into $n\times n$ matrices: $A_h=(a_h^{ij})$, $\Delta_k=Diag(t_k^1, \ldots, t_k^n)$, 
$$
X_k=\begin{pmatrix}
1&x_k^{12}&\cdots &x_k^{1n}\\
0&1&\cdots &x_k^{2n}\\
\vdots & \vdots &\ddots & \vdots\\
0&0&\cdots &1
\end{pmatrix}, ~\text{and}~ Y_k=\begin{pmatrix}
0&y_k^{12}&\cdots& y_k^{1n}\\
0&0&\cdots &y_k^{2n}\\
\vdots & \vdots &\ddots & \vdots\\
0&0&\cdots &0
\end{pmatrix} $$
where  $1\leq k \leq m$, and $1\leq h\leq r$. Then, applying $\partial^n$ to matrices entry-by-entry, we have 
\begin{align*}
\partial^n(A_h)&=\Phi(\partial(a_h))+Y_{r(a_h)}A_h+(-1)^{|a_h|+1} A_h Y_{c(a_h)}\\
\partial^n(X_k)&=\Delta_k^{-1}Y_{k}\Delta_kX_k-X_kY_{k}\\
\partial^n(Y_k)&=Y^2_k
\end{align*}
where $\Phi: \alg(\Lambda)\rightarrow Mat(M, \alg^n)$ is a ring of homomorphism such that $\Phi(a_h)=A_h, \Phi(t_k)=\Delta_k X_k, \Phi(t^{-1}_k)=X^{-1}_k\Delta_k$.
\end{proposition}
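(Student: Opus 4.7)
\textbf{Proof proposal for Proposition~\ref{prop4.14}.}

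The plan is to mirror the strategy of \cite[Proposition~4.14]{NRSSZ}, which treats the single-component case, but with enough additional bookkeeping to accommodate the fact that $\Lambda=\cup_{k=1}^m\Lambda_k$ has several link components, hence several basepoints and several families of Morse chords. The proof breaks naturally into three steps: identifying the generators, computing $\partial^n$ on the matrices $A_h$, and computing $\partial^n$ on $X_k$ and $Y_k$.

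\textbf{Step 1 (Generators and degrees).} First I would enumerate the Reeb chords of the perturbed $n$-copy $\Lambda_f^n=\bigcup_{i=1}^n\bigcup_{k=1}^m\Lambda_k^i$. For perturbations by a Morse function $f$ small enough compared to the action of every original Reeb chord, the chords of $\Lambda_f^n$ split into two families: chords that project to a chord $a_h$ of $\Lambda$ (giving $n^2$ lifts $a_h^{ij}$, where $i$ and $j$ index which of the $n$ copies contains the foot, and are subject to the constraint $c(a_h^{ij})=i$ on $\Lambda_{c(a_h)}^i$ and $r(a_h^{ij})=j$ on $\Lambda_{r(a_h)}^j$), and ``Morse chords'' arising from the critical points of $f$ on each $\Lambda_k$ (giving $x_k^{ij}$ from the maxima and $y_k^{ij}$ from the minima for $i<j$). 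The degrees follow immediately from Equation~\eqref{eqn:CZ} together with the convention in~\eqref{eqn:max-min-generators}, and choosing one basepoint per component on each copy yields the $(t_k^i)^{\pm1}$.

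\textbf{Step 2 (Differential on $A_h$).} The key analytic input is a one-to-one correspondence, for $f$ sufficiently small, between rigid $J$-holomorphic disks in the symplectization with boundary on $\Lambda_f^n$ and positive puncture at $a_h^{ij}$, and (rigid disks for $\Lambda$ at $a_h$) $+$ (Morse flow trees on the perturbing copies); this is essentially the Mishachev/Ekholm degeneration argument and is the ingredient that makes Proposition~5.11 of \cite{DR} applicable in the presence of a small Morse perturbation. A disk of the first type at $a_h$ with negative word $a_{\ell_1}\cdots a_{\ell_s}$ interspersed with basepoint monomials lifts to exactly the disks on $\Lambda_f^n$ whose negative-end copy indices $i=i_0,i_1,\ldots,i_s=j$ are consistent along the boundary: the requirement $r(a_{\ell_p}^{i_{p-1}i_p})=c(a_{\ell_{p+1}}^{i_p i_{p+1}})$ at each negative puncture is precisely what makes the lift take place on a single boundary arc. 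Summing these lifts reproduces the matrix product $\Phi(\partial a_h)_{ij}$, with $\Phi(t_k)=\Delta_k X_k$ and $\Phi(t_k^{-1})=X_k^{-1}\Delta_k$ accounting for whether the boundary arc crosses the $k$th basepoint. The second family of disks consists of thin strips attached along a single Morse flow line of a minimum on one sheet near a positive or negative puncture of $a_h$; these strips contribute the terms $Y_{r(a_h)}A_h$ (attached on the side of the upper sheet, containing $r(a_h)$) and $(-1)^{|a_h|+1}A_hY_{c(a_h)}$ (attached on the side of the lower sheet, containing $c(a_h)$), with signs determined by the standard orientation conventions on moduli spaces of holomorphic disks.

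\textbf{Step 3 (Differential on $X_k$ and $Y_k$) and the main obstacle.} For Morse chords the disks are all ``thin'' and reduce to Morse flow trees on $\Lambda_k$: concatenations of minima yield $\partial^n Y_k=Y_k^2$, and the flows into a maximum from two sides yield $\partial^nX_k=\Delta_k^{-1}Y_k\Delta_k X_k-X_kY_k$, where the conjugation by $\Delta_k$ records whether the flow line passes through the basepoint $t_k$. The delicate point, and the place where the multi-component situation differs most from \cite{NRSSZ}, is the bookkeeping in Step~2: one must verify that a negative word in $\partial a_h$ really does produce consistent sequences of copy indices only when the consecutive factors satisfy the ``row equals next column'' matching for the correct link components, and that the $Y$-attachments in Step~2 land on $\Lambda_{r(a_h)}$ respectively $\Lambda_{c(a_h)}$ rather than on some other component. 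Handling this correctly, together with checking that the signs assemble into $(-1)^{|a_h|+1}$ in the right-multiplication by $Y_{c(a_h)}$, is where the bulk of the work lies; everything else is a direct transcription of the knot case. Once these three steps are assembled one reads off $\partial^n(A_h)$, $\partial^n(X_k)$, $\partial^n(Y_k)$ in the matrix form stated, and a check that $(\partial^n)^2=0$ follows from $\partial^2=0$ on $\alg(\Lambda)$ together with the identity $\Phi\circ\partial=\partial^n\circ\Phi$ on the image of $\Phi$.
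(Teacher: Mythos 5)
The paper does not prove this proposition at all: it is imported verbatim (up to the multi-component bookkeeping) from \cite[Proposition 4.14]{NRSSZ} and used as a black box to deduce Lemma~\ref{lem:products}, so there is no in-paper argument to compare yours against. Judged against the actual proof in \cite{NRSSZ}, your outline follows the same strategy: enumerate the chords of $\Lambda_f^n$ as $n^2$ lifts of each original chord plus Morse chords near each basepoint, and then sort the rigid disks of the $n$-copy into ``thick'' disks that project to rigid disks of $\Lambda$ (whose lifts, summed over consistent copy-index sequences along the boundary arcs, assemble into the matrix product $\Phi(\partial a_h)$) and ``thin'' disks localized along the components (which produce the $Y_{r(a_h)}A_h$, $A_hY_{c(a_h)}$ terms and the differentials of $X_k$, $Y_k$). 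The one framing difference is that \cite{NRSSZ} carries this out as a direct combinatorial analysis of the Lagrangian-projection $n$-copy diagram (local $n\times n$ crossing grids plus an explicit local model at the basepoint), whereas you invoke a Mishachev/Ekholm-style degeneration to Morse flow trees; both are legitimate, but the diagrammatic route is what makes the statement checkable without any new analysis.

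Two points in your sketch are asserted rather than established and would need to be filled in if this were to stand as a proof rather than a citation. First, the identity $\Phi(t_k)=\Delta_kX_k$ is not just ``recording whether the arc crosses the basepoint'': it encodes that a boundary arc traversing the basepoint region of the $n$-copy may either pass straight through (contributing $t_k^i$) or pick up a negative puncture at an $x_k^{ij}$ chord and jump copies, and you need the local model at the Morse critical points to see that these are exactly the two options. Second, the sign $(-1)^{|a_h|+1}$ in $A_hY_{c(a_h)}$ and the minus sign in $\partial^nX_k$ come from a specific choice of orientation data on the moduli spaces; saying they are ``determined by the standard conventions'' defers the only genuinely delicate computation. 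Neither is a wrong step, but both are the places where the outline is not yet a proof.
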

The following Lemma~\ref{lem:products}, and Proposition~\ref{prop:alpha} are generalizations of Lemma $5.16$, Proposition $5.17$ and Proposition $5.18$ in \cite{NRSSZ}. Lemma~\ref{lem:products} is an immediate consequence of Proposition~\ref{prop4.14} which allows us to compute $m_1$ and $m_2$ from $(\alg(\Lambda^2_f), \partial^2)$ and  $(\alg(\Lambda^3_f), \partial^3)$.
\begin{lemma}~\label{lem:products}Let $\Lambda\subset \R^3_{std}$ be an $m$ component Legendrian link with one basepoint $t_k$ per link component, Reeb chords $\mathcal{R}=\{a_1, \ldots, a_r\}$ and augmentations $\e^1, \e^2$. In $Hom_+(\e^1, \e^2),$ we have that \begin{align*}
m_1(\check{a}_h)&=\sum_{1\leq l\leq n}\delta_{b_l,a_h} \sigma_u|(\overline{\mathcal{M}}_J^{\R \times \Lambda}(a_{h'}; b_1, \ldots, b_n)|  \e^1(b_1\cdots b_{l-1})\e^2(b_{l+1}\cdots b_n)\check{a}_{h'}\\
m_1(\check{y}_k)&=(\e^1(t_k)^{-1}\e^2(t_k)-1)\check{x}_k+\sum_{a_h\in \{a\in \mathcal{R}| c(a)=k\}}\e^2(a_h)\check{a}_h\\
&+\sum_{a\in \{a\in \mathcal{R}| r(a)=k\}}(-1)^{|a_{h}|+1}\e^1(a_{h})\check{a}_{h}\\
m_1(\check{x}_k) &\in span_{\F}\{\check{a}_1\ldots, \check{a}_r\}~\text{for}~k\in \{1, \ldots, N\}.\end{align*}
where 
$b_i\in span_{\F}\{a_1, \ldots, a_r, t_1, \ldots, t_m\}$, and $\sigma_u\in\{\pm1\}$ 
 denotes the product of all orientation signs at the corners of the disk $u$. 
 We also have that for $i,j\in \{1, \ldots, m\}$, and $1\leq h, h'\leq r$,
\begin{align*}
m_2(\check{y}_i,\check{y}_j)&=\begin{cases}-\check{y}_i &\text{if}~\, i=j\\0 &\text{if}~\, i\neq j\\ \end{cases}&
m_2(\check{x}_i, \check{y}_j)&=\begin{cases}-\e^1(t_i)^{-1}\e^2(t_i)\check{x}_{i} &\text{if}~\, i=j\\0 &\text{if}~\, i\neq j\\ \end{cases}\\
m_2(\check{y}_i, \check{x}_j)&=\begin{cases} -\check{x}_i &\text{if}~\, i=j\\0 &\text{if}~\, i\neq j\\ \end{cases}&
m_2(\check{y}_i, \check{a}_{h})&=\begin{cases}-\check{a}_h &\text{if}~\, c(a_h)=i\\0 &\text{if}~\, c(a_h)\neq i\\ \end{cases}\\
m_2(\check{a}_{h},\check{y}_i)&=\begin{cases}-\check{a}_h &\text{if}~\, r(a_h)=i\\0 &\text{if}~\, r(a_h)\neq i\\ \end{cases}& &
\end{align*}

$m_2(\check{a}_h, \check{a}_{h'}), m_2(\check{x}_i, \check{x}_j), m_2(\check{x}_i, \check{a}_h), m_2(\check{a}_h, \check{x}_j)\in span_{\F}\{\check{a}_1, \ldots, \check{a}_r\}$.  Moreover, If we assume that the Reeb chords of $\Lambda$ are labeled by increasing height, $h(a_1)\leq h(a_2) \leq \cdots \leq h(a_2)$, then $m_2(\check{a}_h, \check{a}_{h'})\in span_{\F}\{ \check{a}_l~|~l\geq max(h,h'), 1\leq h,h'\leq r\}.$
\end{lemma}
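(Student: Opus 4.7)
The proof plan is to apply Proposition~\ref{prop4.14} to the perturbed $2$-copy $\Lambda^2_f$ and the perturbed $3$-copy $\Lambda^3_f$, and to extract the relevant matrix entries from the formulas
\begin{align*}
\partial^n(A_h) &= \Phi(\partial(a_h)) + Y_{r(a_h)} A_h + (-1)^{|a_h|+1} A_h Y_{c(a_h)}, \\
\partial^n(X_k) &= \Delta_k^{-1} Y_k \Delta_k X_k - X_k Y_k, \\
\partial^n(Y_k) &= Y_k^2.
\end{align*}
Although formally identical to the single-component case in \cite[Proposition 4.14]{NRSSZ}, these matrix identities now involve a separate triple $(A_h, X_k, Y_k)$ for each Reeb chord $a_h$ and each basepoint $t_k$, and the information about which link component a mixed chord starts or ends on is captured by the subscripts $c(a_h), r(a_h)$ on the matrices $Y$ flanking $A_h$. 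Once the relevant matrix entries of $\partial^n$ are in hand, the values of $m_1$ and $m_2$ follow by extracting coefficients and applying the sign $(-1)^\sigma$, exactly as in \cite[Propositions 5.17 and 5.18]{NRSSZ}.

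For $m_1$, I would expand the matrix formulas in the $2 \times 2$ case, twist by $\phi_\e$ so that $t^i_k \mapsto \e^i(t_k)$ and $a^{ii}_h \mapsto a^{ii}_h + \e^i(a_h)$, and read off the $(1,2)$-entries. For instance, a direct expansion yields $\partial^2_\e x^{12}_k = (\e^1(t_k)^{-1}\e^2(t_k) - 1)\, y^{12}_k$ from the identity for $X_k$, while the $(1,2)$-entry of $\partial^2_\e A_h$ contains the terms $\e^2(a_h)\, y^{12}_{r(a_h)}$ and $(-1)^{|a_h|+1} \e^1(a_h)\, y^{12}_{c(a_h)}$, together with a contribution to $\Phi(\partial(a_h))$ involving other $a^{12}_l$. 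Since $\sigma = 0$ for $n=1$, summing the coefficients of $y^{12}_k$ across all generators $a^{12}_h$ with $c(a_h) = k$ or $r(a_h) = k$, and adding the $\check{x}_k$ contribution, produces the formula for $m_1(\check{y}_k)$. The formulas for $m_1(\check{a}_h)$ and the containment $m_1(\check{x}_k) \in \operatorname{span}_\F\{\check{a}_1, \ldots, \check{a}_r\}$ follow analogously, using that no $a^{12}_h$ generator appears in $\partial^2_\e x^{12}_k$.

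For $m_2$, I would work in the $3$-copy where the strictly upper triangular matrix
\[
Y_k = \begin{pmatrix} 0 & y^{12}_k & y^{13}_k \\ 0 & 0 & y^{23}_k \\ 0 & 0 & 0 \end{pmatrix}
\]
satisfies $Y_k^2 = y^{12}_k y^{23}_k\, E_{13}$, so that $\partial^3(y^{13}_k) = y^{12}_k y^{23}_k$. Extracting the coefficient of the monomial $y^{12}_j y^{23}_i$ and applying the sign $(-1)^\sigma$ with $\sigma = 2(2-1)/2 + 0 + 0 = 1$ for the pair $(\check{y}_i, \check{y}_j)$ of degree $0$ generators gives $m_2(\check{y}_i, \check{y}_j) = -\delta_{i,j}\, \check{y}_i$. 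The computations for the remaining pairs $(\check{x}_i, \check{y}_j)$, $(\check{y}_i, \check{x}_j)$, $(\check{y}_i, \check{a}_h)$, and $(\check{a}_h, \check{y}_i)$ proceed in the same way: expand $\partial^3_\e$ on $x^{13}_k$ or $a^{13}_h$, read off the $(1,3)$-entry, and locate the monomial of the form $\alpha_1^{12} \alpha_2^{23}$. In each such expansion exactly one $Y$-type $(1,2)$ or $(2,3)$ entry is multiplied against another entry whose subscript pinpoints the Kronecker deltas appearing in the statement.

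I expect the main obstacle to be bookkeeping rather than conceptual: carefully tracking which link component each mixed chord starts and ends on, so that the subscript $c(a_h)$ on $Y_{c(a_h)}$ is correctly paired with the appropriate Kronecker delta, and confirming that the sign conventions from the $\sigma$ formula match those displayed in the lemma. The remaining statement that $m_2(\check{a}_h, \check{a}_{h'}) \in \operatorname{span}_\F\{\check{a}_l : l \geq \max(h,h'), 1 \leq l \leq r\}$ requires a separate ingredient, namely the action/height filtration: if Reeb chords are labelled by increasing length, then any rigid pseudo-holomorphic disk contributing a monomial $a^{12}_h a^{23}_{h'}$ to $\partial^3_\e a^{13}_l$ must satisfy $h(a_l) \geq h(a_h) + h(a_{h'})$ by positivity of symplectic area, forcing $l \geq \max(h, h')$. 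This is the same positivity argument that appears in the knot case, and it extends verbatim to the link setting.
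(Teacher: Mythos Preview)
Your proposal is correct and follows exactly the approach the paper indicates: the paper gives no detailed proof of this lemma, stating only that it ``is an immediate consequence of Proposition~\ref{prop4.14} which allows us to compute $m_1$ and $m_2$ from $(\alg(\Lambda^2_f), \partial^2)$ and $(\alg(\Lambda^3_f), \partial^3)$,'' and your expansion of the matrix entries together with the height/action filtration argument is precisely what is needed to unpack that.
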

\begin{proposition}~\label{prop:alpha} Consider an element $\alpha\in Hom^0_+(\e^1, \e^2)$ of the form $\alpha=-\sum_{k=1}^m c_k \check{y}_k -\sum_{h=1}^r K(a_h) \check{a}_h$

where $K:(\alg(\Lambda), \partial) \rightarrow (\F, 0)$ is an $\F$ linear map. Then, $m_1(\alpha)=0$ if and only if $K$ is a split DGA homotopy from $\e^1$ to $\e^2$.
\end{proposition}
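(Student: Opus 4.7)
The plan is to expand $m_1(\alpha)$ in the natural basis of $Hom^1_+(\e^1,\e^2)$ using the explicit formulas from Lemma~\ref{lem:products}, and then to check that the resulting system of coefficient equations is term-by-term equivalent to the three conditions in Definition~\ref{defn:split-DGA-homotopy} for $K$ to be a split-DGA homotopy, under the identification $\alpha_k:=c_k$. First I would note that since $|\check y_k|_+=0$ and $|\check a_h|_+=|a_h|+1$, the assumption $\alpha\in Hom^0_+$ forces $K(a_h)=0$ whenever $|a_h|\ne -1$. Hence $K$ is supported on degree $-1$ Reeb chords, which is exactly the support of a degree $+1$ $\F$-linear map into $(\F,0)$; this verifies condition (1) of Definition~\ref{defn:split-DGA-homotopy} for free, and also tells us that $m_1(\alpha)$ can only have $\check x_k$ components and $\check a_{h'}$ components for Reeb chords $a_{h'}$ of degree $0$.

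Next I would compute the two types of coefficients of $m_1(\alpha)=-\sum_k c_k\,m_1(\check y_k)-\sum_{h:\,|a_h|=-1}K(a_h)\,m_1(\check a_h)$ separately. The $\check x_k$ component receives contributions only from $m_1(\check y_k)$ and Lemma~\ref{lem:products} gives the equation
\begin{equation*}
c_k\bigl(\e^1(t_k)^{-1}\e^2(t_k)-1\bigr)=0,\quad\text{equivalently}\quad c_k\bigl(\e^1(t_k)-\e^2(t_k)\bigr)=0.
\end{equation*}
The $\check a_{h'}$ component receives contributions from both summands: from $m_1(\check y_k)$ I get $c_{r(a_{h'})}\e^1(a_{h'})-c_{c(a_{h'})}\e^2(a_{h'})$ (using $(-1)^{|a_{h'}|+1}=-1$ for $|a_{h'}|=0$), while from $-\sum_h K(a_h)m_1(\check a_h)$ I get $-K(\partial a_{h'})$, once one extends $K$ to words by the Leibniz-type formula
\begin{equation*}
K(b_1\cdots b_n)=\sum_l \e^1(b_1\cdots b_{l-1})\,K(b_l)\,\e^2(b_{l+1}\cdots b_n),
\end{equation*}
which is exactly the pairing of $K$ with the $(\e^1,\e^2)$-bilinearized differential of $a_{h'}$ that is computed by $m_1(\check a_h)$. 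Setting $m_1(\alpha)=0$ therefore amounts exactly to the system
\begin{equation*}
c_k\bigl(\e^1(t_k)-\e^2(t_k)\bigr)=0\quad\text{and}\quad c_{r(a)}\e^1(a)-c_{c(a)}\e^2(a)=K(\partial a)
\end{equation*}
for each $k$ and each degree-$0$ Reeb chord $a$.

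Finally, I would match this system with the conditions of Definition~\ref{defn:split-DGA-homotopy}, using the identification $\alpha_k:=c_k\in\F^\ast$ and the fact that a Reeb chord $a$ with $c(a)=i$, $r(a)=j$ lies in $\mathcal A_{ij}$. Condition (2) applied to generators is precisely the boxed equations above (the $t_k$ case giving the first equation since $\partial t_k=0$, and the Reeb-chord case giving the second); condition (3) is the Leibniz-type rule already used to extend $K$ to products, so it is satisfied by construction of the extension. Conversely, given a split-DGA homotopy $K$, setting $c_k:=\alpha_k$ and using conditions (2)-(3) backwards produces an $\alpha$ with $m_1(\alpha)=0$. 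The main technical obstacle will be keeping signs, shifted degrees, and the $c(a)/r(a)$ vs. $i/j$ conventions consistent throughout the coefficient computation, so that the identification of the two systems of equations is genuinely an equality and not off by a sign or a transposed index.
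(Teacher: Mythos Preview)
Your proposal is correct and follows essentially the same route as the paper's proof: both arguments expand $m_1(\alpha)$ using the formulas of Lemma~\ref{lem:products}, read off the $\check x_k$- and $\check a_{h'}$-coefficients, and identify the resulting system $c_{r(a)}\e^1(a)-c_{c(a)}\e^2(a)=K\partial(a)$ together with $c_k(\e^1(t_k)^{-1}\e^2(t_k)-1)=0$ with the conditions of Definition~\ref{defn:split-DGA-homotopy}. Your write-up is in fact slightly more explicit than the paper's in that you spell out why condition~(1) holds automatically from the grading and why the $(\e^1,\e^2)$-Leibniz extension of $K$ is exactly condition~(3).
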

\begin{proof}
First, observe that\begin{align*}
&\sum_{h=1}^r K(a_h) m_1(\check{a}_h)\\&=\sum_{h=1}^r K(a_h) [\sum_{1\leq l\leq n}\delta_{b_l,a_h} |(\overline{\mathcal{M}}_J^{\R \times \Lambda}(a_{h'}; b_1, \ldots, b_n)|  \e^1(b_1\cdots b_{l-1})\e^2(b_{l+1}\cdots b_n)\check{a}_{h'}]\\
&= \sum_{h=1}^r[\sum_{1\leq l\leq n}\delta_{b_l,a_h} (-1)^{|b_1\cdots b_{l-1}|}|(\overline{\mathcal{M}}_J^{\R \times \Lambda}(a_{h'}; b_1, \ldots, b_n)|  \e^1(b_1\cdots b_{l-1}) K(b_l)\e^2(b_{l+1}\cdots b_n)]\check{a}_{h'}\\
&= \sum_{h=1}^r K\circ \partial(a_h) \check{a}_h
\end{align*}
where the last equality follows from the fact that $K(t_k^{\pm})=0$ for all $k\in \{1, \ldots, m\}$. Therefore, using Lemma~\ref{lem:products}, we know that 
\begin{align*}
-m_1(\alpha)&= m_1(\sum_{k=1}^m c_k \check{y}_k + \sum_{h=1}^r K(a_h) \check{a}_h)\\
&=\sum_{k=1}^m c_k m_1(\check{y}_k) + \sum_{h=1}^r(K\circ \partial(a_h))\check{a}_h\\
&= \sum_{k=1}^m c_k(\e^1(t_k)^{-1}\e^2(t_k)-1)\check{x}_k+ \sum_{h=1}^r [c_{c(a_h)} \e^2(a_h)+(-1)^{|a_h|+1}c_{r(a_h)}\e^1(a_h)]\check{a}_h\\&+\sum_{h=1}^r(K\circ \partial(a_h))\check{a}_h\\
\end{align*}

Thus, $m_1(\alpha)=0$ if and only if $K\circ \partial(a_h)= -c_{c(a_h)} \e^2(a_h)-(-1)^{|a_h|+1}c_{r(a_h)}\e^1(a_h)$
 for all $h\in\{1, \ldots, r\}$, and $(\e^1(t_k)^{-1}\e^2(t_k)-1)=0$ for all $k\in \{1, \ldots, m\}$. Note that $\F$ is supported in grading $0$, and therefore $\e^1(a_h)=(-1)^{|a_h|}\e^1(a_h)$ for all $h$ since $\e^1$ is supported in grading $0$. If $\mathcal{A}(\Lambda)$ has a $\Z_n$ grading, and $\e^1$ is an $n$-graded augmentation, recall that the grading is defined mod $n$.
 Therefore,  $K\circ \partial(a_h)=c_{r(a_h)}\e^1(a_h) -c_{c(a_h)} \e^2(a_h)$, and $\e^1$ and $\e^2$ are split DGA homotopic via the operator $K$.
\end{proof}

\begin{proof}[ Proof of Proposition~\ref{prop:equivalences}]
Suppose that $\e^1$ and $\e^2$ are equivalent in $\aug_+(\Lambda)$. Then, as stated in Definition~\ref{defn:equal}, there exist cocycles $\alpha\in Hom_+(\e^1, \e^2)$, and $\beta\in Hom_+(\e^2, \e^1)$ such that
$[m_2(\alpha, \beta)]=-\sum_{k=1}^m[\check{y}_k]\in H^0Hom_+(\e^1, \e^2).$That is, $m_2(\alpha, \beta)+\sum_{k=1}^m\check{y}_k=m_1(\gamma)$ for some $\gamma\in Hom_+(\e^2, \e^2)$. By Lemma~\ref{lem:products} and the fact that $\gamma\in Hom_+(\e^2, \e^2)$, we know that $\langle m_1(\gamma), \check{y}_k\rangle=0$ and $\langle m_1(\gamma), \check{x}_k\rangle=0$. Therefore, $m_1(\gamma)=\sum_{h=1}^r K(a_h)\check{a}_h$ for some $\F$ linear map $K: (\alg(\Lambda), \partial)\rightarrow (\F, 0)$ which is naturally split. We can now write $m_2(\alpha, \beta)= -\sum_{k=1}^m \check{y}_k +\sum_{h=1}^r K(a_h)\check{a}_h.$ Again by Lemma~\ref{lem:products} and the fact that $|\alpha|_+= |\beta|_+=0$, while $|\check{x}_k|_+=1,$ we know that
\begin{align*}
\alpha &= \sum_{k=1}^m (c_{\alpha})_k \check{y}_k+\sum_{h=1}^rK_{\alpha}(a_h)\check{a}_h\\
\beta &= \sum_{k=1}^m (c_{\beta})_k \check{y}_k+\sum_{h=1}^rK_{\beta}(a_h)\check{a}_h
\end{align*}
such that $(c_{\alpha})_k, (c_{\beta})_k\in \F^*$ and $(c_{\alpha})_k(c_{\beta})_k=1$ for each $k$, and for some $\F$ linear maps $K_{\alpha}, K_{\beta}: (\alg(\Lambda), \partial) \rightarrow (\F,0)$. Both $\alpha$ and $\beta$ are cocycles so by Proposition~\ref{prop:alpha}, $K_{\alpha}$ and $K_{\beta}$ are DGA homotopies between $\e^1$ and $\e^2$.

Suppose that $\e^1$ and $\e^2$ are split DGA homotopic, such that for any Reeb chord $a$, 
$$c_{c(a)} \e^1(a)-c_{r(a)}\e^2(a)= K\circ \partial(a)$$
for constants $c_i\in \F^*$ and some split DGA homotopy $K: (\alg(\Lambda), \partial) \rightarrow (\F,0)$. We know that $K(a)=0$ for any Reeb chord $a$ such that $|a|\neq -1$. 
By Lemma~\ref{prop:alpha}, $\alpha= \sum_{k=1}^m (c_{\alpha})_k \check{y}_k +\sum_{h=1}^r K(a_h)\check{a}_h$ is a cocycle in $H^0Hom_+(\e^1, \e^2)$. We now construct cocycles $\beta, \gamma \in Hom_+(\e^1, \e^2)$ such that $|\beta|_+=|\gamma|_+=0$, $m_2(\beta, \alpha)=m_2(\alpha, \gamma)=-\sum_{k=1}^m \check{y}_k$. This implies that  $[\beta]=[\gamma]\in H^0Hom_+(\e^1, \e^2)$ is the multiplicative inverse of $[\alpha]$ in $\aug_+(\Lambda)$. The construction of $\gamma$ is similar to the construction of $\beta$ which we now provide.

Suppose that the Reeb chords $\{\check{a}_1, \ldots, \check{a}_r\}$ are ordered by height. Then we can write $\alpha=\sum_{k=1}^m (c_{\alpha})_k \check{y}_k+A$ where $A\in span_{\F}\{\check{a}_1, \ldots, \check{a}_r\}$. Let $\beta=\sum_{k=1}^m (c_{\beta})_k \check{y}_k+B$ where $(c_{\alpha})_k(c_{\beta})_k=1$ for $1\leq k\leq m$, $B\in span_{\F}\{\check{a}_1, \ldots, \check{a}_r\}$ and is defined inductively to satisfy $B=A+m_2(B,A)$. Then, $m_2(\beta, \alpha)=-\sum_k \check{y}_k$. To verify that $\beta$ is a cocycle note that the $A_{\infty}$ relations on $\aug_{+}(\Lambda)$ imply that 
$$m_1(-\sum_{k=1}^m\check{y}_k)=m_1(m_2(\beta,\alpha)=m_2(m_1(\beta), \alpha)+m_2(\beta, m_1(\alpha)).$$
We know that $m_1(\check{y}_k)=0$ for all $1\leq k\leq m$ and that $m_1(\alpha)=0$ so $m_2(\beta, m_1(\alpha))=0$. Therefore, $m_2(m_1(\beta), \alpha)=0$. 

We will show that if $X\in span_{\F}\{\check{a}_1, \ldots, \check{a}_r, \check{x}_1, \ldots, \check{x}_N, \check{y}_1, \ldots, \check{y}_N\}$, then $m_2(X, \alpha)=0$ implies that $X=0$. Note that $m_2(X, A)\in span_{\F}\{\check{a}_1, \ldots, \check{a}_r, \check{x}_1, \ldots, \check{x}_m, \check{y}_1, \ldots, \check{y}_m\}$ by Lemma~\ref{lem:products}. Then,
$$0=m_2(X, \alpha)=m_2( X, \sum_{k=1}^m (c_{\alpha})_k \check{y}_k+A)=m_2(X, \sum_{k=1}^m (c_{\alpha})_k \check{y}_k)+m_2(X,A) $$
Thus, $m_2(X, \sum_{k=1}^m (c_{\alpha})_k \check{y}_k) = m_2(X,A)$.
Note that $m_2(X, A) \in span_{\F}\{\check{a}_1, \ldots, \check{a}_r\}$ because  $A\in span_{\F}\{\check{a}_1, \ldots, \check{a}_r\}$ by Lemma~\ref{lem:products}.  Therefore, we know that $\langle X, \check{x}_k\rangle=\langle X, \check{y}_k\rangle =0$ for all $1\leq k\leq m$, and so $X\in span_{\F}\{\check{a}_1, \ldots, \check{a}_r\}$. Moreover, by induction on the height of Reeb chords, and Lemma~\ref{lem:products}, we know that $\langle X, \check{a}_h\rangle=0$ for all $1\leq h\leq r$. Thus, for $X=m_1(\beta)\in  span\{\check{a}_1, \ldots, \check{a}_r, \check{x}_1, \ldots, \check{x}_m, \check{y}_1, \ldots, \check{y}_m\}$, since $m_2(m_1(\beta), \alpha)=0$ as shown above, we can conclude that $m_1(\beta)=0$. 
\end{proof}

\bibliographystyle{alpha}
\bibliography{WIG}

\end{document}